\begin{document}

\newtheorem{thm}{Theorem}[section]
\newtheorem{cor}[thm]{Corollary}
\newtheorem{prop}[thm]{Proposition}
\newtheorem{lem}[thm]{Lemma}
%

\theoremstyle{definition}
\newtheorem{rem}[thm]{Remark}
\newtheorem{defn}[thm]{Definition}
\newtheorem{note}[thm]{Note}
\newtheorem{eg}[thm]{Example}
\newcommand{\Prf}{\noindent\textbf{Proof.\ }}
\newcommand{\bx}{\hfill$\blacksquare$\medbreak}
\newcommand{\upbx}{\vspace{-2.5\baselineskip}\newline\hbox{}%
\hfill$\blacksquare$\newline\medbreak}
\newcommand{\eqbx}[1]{\medbreak\hfill\(\displaystyle #1\)\bx}

\newcommand{\FFock}{\mathcal{F}}
\newcommand{\kil}{\mathsf{k}}
\newcommand{\Hil}{\mathsf{H}}
\newcommand{\hil}{\mathsf{h}}
\newcommand{\Kil}{\mathsf{K}}
\newcommand{\Real}{\mathbb{R}}
\newcommand{\Rplus}{\Real_+}

\newcommand{\bC}{{\mathbb{C}}}
\newcommand{\bD}{{\mathbb{D}}}
\newcommand{\bN}{{\mathbb{N}}}
\newcommand{\bQ}{{\mathbb{Q}}}
\newcommand{\bR}{{\mathbb{R}}}
\newcommand{\bT}{{\mathbb{T}}}
\newcommand{\bX}{{\mathbb{X}}}
\newcommand{\bZ}{{\mathbb{Z}}}
\newcommand{\bH}{{\mathbb{H}}}
\newcommand{\BH}{{\B(\H)}}
\newcommand{\bsl}{\setminus}
\newcommand{\ca}{\mathrm{C}^*}
\newcommand{\cstar}{\mathrm{C}^*}
\newcommand{\cenv}{\mathrm{C}^*_{\text{env}}}
\newcommand{\rip}{\rangle}
\newcommand{\ol}{\overline}
\newcommand{\td}{\widetilde}
\newcommand{\wh}{\widehat}
\newcommand{\sot}{\textsc{sot}}
\newcommand{\wot}{\textsc{wot}}
\newcommand{\wotclos}[1]{\ol{#1}^{\textsc{wot}}}
 \newcommand{\A}{{\mathcal{A}}}
 \newcommand{\B}{{\mathcal{B}}}
 \newcommand{\C}{{\mathcal{C}}}
 \newcommand{\D}{{\mathcal{D}}}
 \newcommand{\E}{{\mathcal{E}}}
 \newcommand{\F}{{\mathcal{F}}}
 \newcommand{\G}{{\mathcal{G}}}
\renewcommand{\H}{{\mathcal{H}}}
 \newcommand{\I}{{\mathcal{I}}}
 \newcommand{\J}{{\mathcal{J}}}
 \newcommand{\K}{{\mathcal{K}}}
\renewcommand{\L}{{\mathcal{L}}}
 \newcommand{\M}{{\mathcal{M}}}
 \newcommand{\N}{{\mathcal{N}}}
\renewcommand{\O}{{\mathcal{O}}}
\renewcommand{\P}{{\mathcal{P}}}
 \newcommand{\Q}{{\mathcal{Q}}}
 \newcommand{\R}{{\mathcal{R}}}
\renewcommand{\S}{{\mathcal{S}}}
 \newcommand{\T}{{\mathcal{T}}}
 \newcommand{\U}{{\mathcal{U}}}
 \newcommand{\V}{{\mathcal{V}}}
 \newcommand{\W}{{\mathcal{W}}}
 \newcommand{\X}{{\mathcal{X}}}
 \newcommand{\Y}{{\mathcal{Y}}}
 \newcommand{\Z}{{\mathcal{Z}}}

\newcommand{\sgn}{\operatorname{sgn}}
\newcommand{\rank}{\operatorname{rank}}
\newcommand{\Isom}{\operatorname{Isom}}
\newcommand{\qIsom}{\operatorname{q-Isom}}
\newcommand{\Cknet}{{\mathcal{C}_{\text{knet}}}}
\newcommand{\Ckag}{{\mathcal{C}_{\text{kag}}}}
\newcommand{\rind}{\operatorname{r-ind}}
\newcommand{\lind}{\operatorname{r-ind}}

\setcounter{tocdepth}{1}

 \title[The rigidity of infinite graphs]{The  rigidity of infinite graphs}

\author[D. Kitson and S.C. Power]{D. Kitson and S.C. Power}
\thanks{Supported by EPSRC grant  EP/J008648/1.}
\address{Dept.\ Math.\ Stats.\\ Lancaster University\\
Lancaster LA1 4YF \\U.K. }
\email{d.kitson@lancaster.ac.uk, s.power@lancaster.ac.uk}

\subjclass[2010]{52C25, 05C63}
\keywords{Infinite graph, infinitesimal rigidity, continuous rigidity}

\begin{abstract}
A rigidity theory is developed for the Euclidean and non-Euclidean placements of countably infinite simple graphs in 
the normed spaces $(\bR^d,\|\cdot \|_q)$, for $d\geq 2$ and $1 <q < \infty$.
Generalisations are obtained for the Laman and Henneberg
combinatorial characterisations of generic infinitesimal rigidity for  finite graphs in  $(\bR^2,\|\cdot \|_2)$.  
Also Tay's multi-graph characterisation of the rigidity of generic finite body-bar frameworks in $(\bR^d,\|\cdot\|_2)$ is generalised to the non-Euclidean norms and to countably infinite graphs. 
For all dimensions and norms  it is shown that a generically rigid countable simple graph is the direct limit $G= \varinjlim G_k$ of an inclusion tower of finite graphs $G_1 \subseteq G_{2} \subseteq  \dots$ for which the inclusions satisfy a relative rigidity property. For $d\geq 3$ a countable graph which is rigid for generic placements in  $\bR^d$ may fail the stronger property of  sequential rigidity, while for $d=2$ the equivalence with sequential rigidity is obtained from the generalised Laman characterisations. 
Applications are given to the flexibility of
non-Euclidean convex polyhedra and to the infinitesimal and continuous rigidity of compact infinitely-faceted simplicial polytopes.
\end{abstract}
\date{\today}

\maketitle
\tableofcontents

\section{Introduction}
In 1864 James Clerk Maxwell initiated a combinatorial trend in the rigidity theory of finite bar-joint  frameworks in Euclidean space. In two dimensions this amounted to the observation that the underlying structure  graph $G=(V,E)$ must satisfy the simple counting rule $|E|\geq 2|V|-3$.
For minimal rigidity, in which any bar removal renders the framework flexible, equality must hold together with the inequalities
$|E(H)|\leq 2|V(H)|-3$ for subgraphs $H$ with at least two vertices.
The fundamental result that these two
necessary conditions are also sufficient for the minimal rigidity of a generic framework
was obtained by Laman in 1970 and this has given impetus to the development of matroid theory techniques. While corresponding counting rules
are necessary in three dimensions they fail to be sufficient and a purely combinatorial characterisation of generic rigidity is not available.  On the other hand many specific families of finite graphs are known to be generically rigid,  such as the edge graphs of triangular-faced convex polyhedra in three dimensions and the graphs associated with finite triangulations of general surfaces. See, for example, Alexandrov \cite{alex}, Fogelsanger \cite{fogel}, Gluck \cite{glu},
Kann \cite{kan} and Whiteley \cite{whi-poly1,whi-poly2}.

A finite simple graph $G$ is said to be generically $d$-rigid, or simply $d$-rigid, if its realisation as some generic bar-joint framework in the Euclidean space $\bR^d$ is infinitesimally rigid. 
Here \emph{generic} refers to the algebraic independence of the set of coordinates of the vertices and infinitesimal rigidity in this case is equivalent to continuous (nontrivial finite motion) rigidity (Asimow and Roth \cite{asi-rot,asi-rot-2}).
The rigidity analysis of bar-joint frameworks and related frameworks, such as
body-bar frameworks and  body-hinge frameworks, continues to be a focus of investigation, both in the generic case and in the presence of symmetry. For example  Katoh and Tanigawa \cite{kat-tan} have resolved the molecular conjecture
for generic structures, while
Schulze \cite{schulze} has obtained  variants of Laman's theorem for semi-generic symmetric
bar-joint frameworks. 
In the case of infinite frameworks however  developments have  centred mainly on periodic frameworks and the infinitesimal and finite motions which retain some form of periodicity. 
Indeed, periodicity hypotheses  lead to configuration spaces that are real algebraic varieties and so to methods from multi-linear algebra and finite combinatorics.
See, for example,
Borcea and Streinu \cite{bor-str}, Connelly et al. \cite{con-et-al}, Malestein and Theran \cite{mal-the},
Owen and Power \cite{owe-pow-crystal} and Ross, Schulze and Whiteley \cite{ros-sch-whi}. Periodic rigidity, broadly interpreted, is also significant in a range of applied settings, such as the mathematical analysis of rigid unit modes in crystals, as indicated in Power \cite{pow-poly} and  Wegner \cite{weg}, for example.

In the development below we consider general countable simple graphs and the flexibility and rigidity of their placements in the Euclidean spaces $\bR^d$ and in the non-Euclidean spaces $(\bR^d, \|\cdot\|_q)$ for the classical $\ell^q$ norms, for $1< q < \infty$. 
Also we consider several forms of rigidity, namely (absolute) infinitesimal rigidity, infinitesimal rigidity for continuous velocity fields,
and, perhaps the most intriguing, rigidity with respect to continuous time-parametrised flexes (finite flexes). The constraint conditions
for the non-Euclidean norms are no longer given by polynomial equations and  we adapt the Asimow-Roth notion of a regular framework to obtain the appropriate notion of  a generic framework. 
This strand of non-Euclidean rigidity theory for finite frameworks
was begun in Kitson and Power \cite{kit-pow} and here we develop this in parallel with the usual  Euclidean perspective.

In Theorem \ref{InfiniteLaman2} we obtain  generalisations of Laman's theorem in the form of a characterisation of the simple countable graphs which are generically  rigid in two dimensional spaces. In Theorem \ref{t:inductive} we obtain inductive Henneberg move style constructions for the minimally infinitesimally rigid graphs (isostatic graphs) in two dimensions. In the Euclidean case this entails that the minimally $2$-rigid 
graphs $G$ are those that can be given as the limit of a construction chain
\[
K_2 \overset{\mu_1}\longrightarrow G_2 \overset{\mu_2}\longrightarrow G_3 \overset{\mu_3}\longrightarrow \cdots
\]
where each of the moves $\mu_k$ is one of the two usual Henneberg moves
for the plane.
We also see that the infinitesimally rigid countable simple graphs that are infinitesimally rigid for two dimensions are necessarily {sequentially
rigid} in the sense that $G$ contains a spanning subgraph which is
a union of finite graphs, each of which is infinitesimally rigid.
This is the strongest form of infinitesimal rigidity and the corresponding equivalence fails in higher dimensions.

The inductive characterisations in two dimensions for the norms $\|\cdot\|_q, 1<q<\infty,q\neq 2,$ require the rigidity matroid associated with the sparsity count $|E|=2|V|-2$ and  additional construction moves. A countable graph is  infinitesimally rigid in this case if and only if it is inductively constructible from $K_1$ through an infinite sequence of $5$ types of  moves, namely, the Henneberg vertex and edge moves, the vertex-to-$K_4$ move, the vertex-to-$4$-cycle move, and edge additions. 

The results above rest in part on a general characterisation of infinitesimal rigidity in terms of what we refer to as the \emph{relative rigidity} of a finite  graph $G_1$ with respect to a containing finite graph $G_2$. Specifically, in all dimensions we show that a countable simple graph $G$ is infinitesimally rigid if and only if there is a  subgraph inclusion tower
\[
G_1\subseteq G_2 \subseteq G_3 \subseteq \cdots 
\]
which is vertex spanning and
in which for each $k$ the graph $G_k$ is relatively infinitesimally rigid in $G_{k+1}$ (Theorem \ref{t:IR}).
This principle is useful for determining the rigidity of infinite generic structures in a variety of other contexts and we show here that Tay's multi-graph characterisation of the rigidity of generic finite body-bar frameworks in $(\bR^d,\|\cdot\|_2)$, for all $d\geq 2$, can be generalised to the countably infinite setting. Also we obtain a similar infinite multi-graph characterisation of rigidity for the countably infinite body-bar frameworks in non-Euclidean spaces, for $1<q<\infty$. 

It is natural to determine variants of Cauchy's celebrated infinitesimal rigidity theorem for simplicial polytopes in $\bR^3$.
We see that a finite simplicial generic polytope in a non-Euclidean space is infinitesimally flexible but becomes isostatic after the addition of three internal nonincident shafts. 
We also define
a  family of  infinitely-faceted strictly convex compact polytopes in $\bR^3$ and the countable graphs defined by their edges. In the simplicial case such polytopes are associated with triangulations of a finitely punctured $2$-sphere
and we characterise the $3$-rigid graphs of this type. 

In the final section  we consider rigidity relative to continuous motions and give a non-Euclidean variant of the Asimow-Roth theorem on the equivalence of infinitesimal and continuous rigidity 
for  finite frameworks. Countable graphs can have both continuously flexible generic realisations and continuously rigid generic realisations, and the analysis of continuous  rigidity  presents new challenges, even for crystallographic frameworks. We show here that every simplicial polytope graph admits continuously rigid  placements (Theorem \ref{infinitepolytoperigid}) and we see that 
 a countable bar-joint framework
may be both infinitesimally rigid
and continuously flexible (Example \ref{InfRigCtsFlx}).


\section{Preliminaries}
\label{Preliminaries}
In this section we review the necessary background information on the rigidity of finite graphs in $\mathbb{R}^d$ with respect to the Euclidean norm and the non-Euclidean $\ell^q$ norms.
For further details of the Euclidean context we refer the reader to Graver, Servatius and Servatius \cite{gra-ser-ser} and the references therein. This section and each of the subsequent  sections is completed with brief notes.

\subsection{Continuous and infinitesimal rigidity.}
A bar-joint framework in a normed vector space $(X,\|\cdot\|)$ is a pair $(G,p)$ consisting of a simple graph $G=(V(G),E(G))$ and a mapping $p:V(G)\to X$, $v\mapsto p_v$ with the property that $p_v\not=p_w$ whenever $vw\in E(G)$. 
We call  $p$ a {\em placement} of $G$ in $X$ and  the collection of all placements of $G$ in $X$ will be denoted by $P(G,X)$ or simply $P(G)$ when the context is clear. If  $H$ is a subgraph of $G$  then the bar-joint framework $(H,p)$ obtained by restricting $p$ to $V(H)$ is called a {\em subframework} of $(G,p)$.

\begin{defn}\label{d:contsflex}
A {\em continuous flex} of  $(G,p)$  is  a family of continuous paths \[\alpha_v:[-1,1]\to X, \,\,\,\,\,\,\, v\in V(G)\] such that $\alpha_v(0)=p_v$ for all $v\in V(G)$ and  $\|\alpha_v(t)-\alpha_w(t)\|=\|p_v-p_w\|$ for all $t\in [-1,1]$ and all $vw\in E(G)$.
\end{defn}

A continuous flex is regarded as trivial if it results from a continuous isometric motion of the ambient space. Formally, a {\em continuous rigid motion} of $(X,\|\cdot\|)$ is  a mapping $\Gamma(x,t):X\times[-1,1]\to X$ which is isometric in the variable $x$ and continuous in the variable $t$ with $\Gamma(x,0)=x$ for all $x\in X$. 
Every continuous rigid motion gives rise to a continuous flex of $(G,p)$  by setting $\alpha_v:[-1,1]\to X$, $t\mapsto\Gamma(p_{v},t)$ for each $v\in V(G)$.
A continuous flex of $(G,p)$ is {\em trivial} if it can be derived from a continuous rigid motion  in this way. 
If every continuous flex of $(G,p)$ is trivial then we say that $(G,p)$ is {\em continuously rigid}, otherwise we say that $(G,p)$ is {\em continuously flexible}.
A bar-joint framework $(G,p)$ is {\em minimally continuously rigid} if it is continuously rigid and every subframework obtained by removing a single edge from $G$ is continuously flexible. 

\begin{defn}\label{d:infflex}
An {\em infinitesimal flex} of $(G,p)$ is a mapping $u:V(G)\to X$, $v\mapsto u_v$ which satisfies
\[\|(p_v+tu_v)-(p_w+tu_w)\|-\|p_v-p_w\|=o(t), \,\,\,\,\, \mbox{ as } t\to 0\]
for each edge $vw\in E(G)$. 
\end{defn}

We will denote the vector space of infinitesimal flexes of $(G,p)$  by $\mathcal{F}(G,p)$.
An {\em infinitesimal rigid motion} of $(X,\|\cdot\|)$ is a mapping $\gamma:X\to X$ derived from a continuous rigid motion $\Gamma$ by the formula $\gamma(x)=\frac{d}{dt}\Gamma(x,t)\vert_{t=0}$ for all $x\in X$.
The vector space of all infinitesimal rigid motions of $(X,\|\cdot\|)$ is denoted $\T(X)$.
Every infinitesimal rigid motion $\gamma\in\T(X)$ gives rise to an infinitesimal flex of $(G,p)$  by setting $u_v= \gamma(p_v)$ for all $v\in V(G)$.
We regard such infinitesimal flexes as trivial and the collection of all trivial infinitesimal flexes of $(G,p)$ is a vector subspace of $\F(G,p)$ which we denote by $\T(G,p)$. 
The {\em infinitesimal flexibility dimension} of $(G,p)$ is the vector space dimension of the quotient space,
\[
\dim_{\rm fl}(G,p):= \dim \F(G,p)/ \T(G,p)
\] 
If $\T(G,p)$ is a proper subspace  then $(G,p)$ is said to be an {\em infinitesimally flexible} bar-joint framework. 
Otherwise, we say that $(G,p)$ is {\em infinitesimally rigid} and we call $p$ an infinitesimally rigid placement of $G$. 
A bar-joint framework $(G,p)$ is {\em minimally infinitesimally rigid} if it is infinitesimally rigid and removing any edge results in a subframework which is infinitesimally flexible.

We will consider the rigidity properties of bar-joint frameworks in $\bR^d$ with respect to the family 
$\{\|\cdot\|_q:q\in(1,\infty)\}$ of $\ell^q$ norms,
\[\|\cdot\|_q:\mathbb{R}^d\to\mathbb{R},\,\,\,\,\,\,\,\|(x_1,\ldots,x_d)\|_q=\left(\sum_{i=1}^d|x_i|^q\right)^{\frac{1}{q}}\]
We  use a subscript $q$  to indicate the $\ell^q$ norm when referring to 
the collection of infinitesimal rigid motions $\T_q(\bR^d)$ and the infinitesimal flexes $\F_q(G,p)$ and trivial infinitesimal flexes $\T_q(G,p)$ of a bar-joint framework.  

In the Euclidean setting $q=2$ it is well-known that the space of infinitesimal rigid motions $\T_2(\mathbb{R}^d)$ has dimension $\frac{d(d+1)}{2}$.
In the non-Euclidean setting $(\bR^d,\|\cdot\|_q)$ where $q\in(1,2)\cup(2,\infty)$
 the  infinitesimal rigid motions are precisely the constant mappings and so $\T_q(\bR^d)$ is $d$-dimensional
(see \cite[Lemma 2.2]{kit-pow}). 

In the following proposition we write $h_v=(h_{v,1},\ldots,h_{v,d})$ for each $h_v\in \bR^d$. 

\begin{prop}
\label{q-NormFlex}
Let $(G,p)$ be a bar-joint framework in $(\mathbb{R}^d,\|\cdot\|_q)$ where $q\in(1,\infty)$.
Then a mapping $u:V(G)\to \mathbb{R}^d$ is an infinitesimal flex of $(G,p)$ if and only if 
\begin{eqnarray*}
\sum_{i=1}^d \sgn(p_{v,i}-p_{w,i})|p_{v,i}-p_{w,i}|^{q-1}(u_{v,i}-u_{w,i})=0
\end{eqnarray*}
for each edge $vw\in E(G)$. 
\end{prop}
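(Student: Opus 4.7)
The plan is to translate the definition of infinitesimal flex into a differentiability condition and then apply the chain rule edgewise. Fix an edge $vw \in E(G)$ and set $a = p_v - p_w$ and $b = u_v - u_w$. Since $vw$ is an edge, $a \neq 0$ and in particular $\|a\|_q > 0$. Consider the real-valued function
\[
f(t) = \|(p_v + tu_v) - (p_w + tu_w)\|_q = \|a + tb\|_q.
\]
The infinitesimal flex condition $f(t) - f(0) = o(t)$ is precisely the statement that $f$ is differentiable at $t = 0$ with $f'(0) = 0$, so the task reduces to computing this derivative and showing that its vanishing is equivalent to the stated edge equation.

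To compute $f'(0)$, I would work with $g(t) = f(t)^q = \sum_{i=1}^d |a_i + tb_i|^q$. The key analytic point is that for $q > 1$ the scalar function $x \mapsto |x|^q$ is $C^1$ on $\bR$ with derivative $q|x|^{q-1}\sgn(x)$, where the expression is interpreted (and is continuous) as $0$ at $x = 0$. Termwise differentiation therefore gives
\[
g'(0) = q\sum_{i=1}^d \sgn(a_i)|a_i|^{q-1} b_i.
\]
Since $g(0) = \|a\|_q^q > 0$, the function $f = g^{1/q}$ is differentiable at $0$ by the chain rule, and
\[
f'(0) = \tfrac{1}{q}\, g(0)^{\frac{1}{q}-1}\, g'(0) = \|a\|_q^{1-q}\sum_{i=1}^d \sgn(a_i)|a_i|^{q-1} b_i.
\]
The scalar $\|a\|_q^{1-q}$ is nonzero, so $f'(0) = 0$ if and only if the sum in the display vanishes, which upon substituting back $a = p_v - p_w$ and $b = u_v - u_w$ yields the stated identity.

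The only subtle step is the differentiation of $|x|^q$ at $x = 0$, and this is precisely where the hypothesis $q > 1$ is used; the same argument would fail at $q = 1$ because of the corner in $|\cdot|$, and would also fail to give a $C^1$ chain-rule expansion at points where a coordinate $a_i$ vanishes. Everything else is routine bookkeeping, and the edge-by-edge nature of the flex condition means there is nothing to glue.
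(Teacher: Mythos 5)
Your proposal is correct and follows essentially the same route as the paper: write $f=g^{1/q}$ with $g(t)=\sum_i|a_i+tb_i|^q$, differentiate termwise, and apply the chain rule using $g(0)=\|a\|_q^q>0$. The only cosmetic difference is that you dispose of the coordinates with $a_i=0$ by invoking the $C^1$-smoothness of $x\mapsto|x|^q$ for $q>1$, whereas the paper computes the two one-sided derivatives at $0$ explicitly; these are the same calculation.
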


\proof
Given  $u:V(G)\to \mathbb{R}^d$ and an edge $vw\in E(G)$ we define 
\[\zeta_{vw}(t)=\|(p_v+tu_v)-(p_w+tu_w)\|_q\]
Then  $u\in\F_q(G,p)$ if and only if $\zeta_{vw}'(0)=0$ for each $vw\in E(G)$. 
We have 
\[\zeta_{vw}'(0)=\frac{1}{q}\left[\sum_{i=1}^d|p_{v,i}-p_{w,i}|^q\right]^{\frac{1-q}{q}}\left[\sum_{i=1}^d g_i'(0)\right]\]
where 
$g_i(t)=|(p_{v,i}+tu_{v,i})-(p_{w,i}+tu_{w,i})|$.
If $p_{v,i}\not=p_{w,i}$ then 
\[g_i'(0) = q \sgn(p_{v,i}-p_{w,i})|p_{v,i}-p_{w,i}|^{q-1}(u_{v,i}-u_{w,i})\]
If  $p_{v,i}=p_{w,i}$ then $g_i'(0)=0$ since
\[\partial_+g(0)
= \lim_{t\to0^+} |u_{v,i}-u_{w,i}|t^{q-1}=0= \lim_{t\to0^-} -|u_{v,i}-u_{w,i}|t^{q-1} 
=\partial_-g(0)\]
The result  follows.
\endproof

If $G$ is a finite graph then the system of linear equations in Proposition \ref{q-NormFlex} can be expressed as a matrix equation $R_q(G,p)u=0$
where $R_q(G,p)$ is an $|E(G)|\times d|V(G)|$ matrix called the {\em rigidity matrix} for $(G,p)$.
The rows of $R_q(G,p)$ are  indexed by the edges of $G$ and the columns are indexed by the $d$ coordinates of $p_v$ for each vertex $v\in V(G)$. 
The row entries for a particular edge $vw\in E(G)$ are,
\[\kbordermatrix{& & & & p_v & && & p_w & & \\
vw & 0 & \cdots &0 & (p_v-p_w)^{(q-1)} &0& \cdots&0 &-(p_v-p_w)^{(q-1)}  &0& \cdots &0 }\]
where we use the notation  
\[(p_v-p_w)^{(q)}=(\sgn(p_{v,1}-p_{w,1})|p_{v,1}-p_{w,1}|^{q},\ldots,\sgn(p_{v,d}-p_{w,d})|p_{v,d}-p_{w,d}|^{q})\]
Evidently we have $\F_q(G,p)\cong\ker R_q(G,p)$ for all $q\in (1,\infty)$ and it immediately follows that
\[\rank R_q(G,p)\leq d|V(G)|-\dim\T_q(G,p)\]
with equality if and only if $(G,p)$ is infinitesimally rigid.

\begin{defn} 
A finite bar-joint framework $(G,p)$ is  {\em regular}  in $(\mathbb{R}^d,\|\cdot\|_q)$ if the function 
\[P(G,\bR^d)\to\mathbb{R}, \,\,\,\,\,\, x\mapsto \rank R_q(G,x)\]
achieves its maximum value at $p$.
\end{defn}

The equivalence of continuous and infinitesimal rigidity for regular finite bar-joint frameworks in Euclidean space was established by Asimow and Roth \cite{asi-rot,asi-rot-2}.

\begin{thm}[Asimow-Roth, 1978/9]\label{AsimowRoth}
If $(G,p)$ is a finite bar-joint framework in Euclidean space $(\bR^d,\|\cdot\|_2)$ 
then the following statements are equivalent. 
\begin{enumerate}[(i)]
\item $(G,p)$ is continuously rigid and regular.
\item $(G,p)$ is infinitesimally rigid.
\end{enumerate}
\end{thm}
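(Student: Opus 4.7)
The plan is to study continuous and infinitesimal rigidity of $(G,p)$ through the squared-edge-length map
\[
f:P(G,\bR^d)\to \bR^{E(G)}, \qquad f(q)_{vw}=\|q_v-q_w\|_2^2,
\]
which is polynomial in the coordinates of $q$ and whose Jacobian at any $q$ is (up to the factor $2$) the rigidity matrix $R_2(G,q)$ of Proposition \ref{q-NormFlex}.  Continuous flexes of $(G,p)$ correspond to continuous paths in the fibre $f^{-1}(f(p))$ beginning at $p$, while trivial continuous flexes sweep out the orbit $\O_p$ of $p$ under the Euclidean isometry group $\Isom(\bR^d,\|\cdot\|_2)$; this orbit is a smooth submanifold of $f^{-1}(f(p))$ whose tangent space at $p$ is $\T_2(G,p)$.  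In these terms $\F_2(G,p)\cong \ker R_2(G,p)$ plays the role of a formal tangent space to the fibre at $p$.

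For the direction (ii)$\Rightarrow$(i), assume $(G,p)$ is infinitesimally rigid, so $\rank R_2(G,p)=d|V(G)|-\dim\T_2(G,p)$.  Combined with the lower semicontinuity of the rank and the universal bound $\rank R_2(G,q)\leq d|V(G)|-\dim \T_2(G,q)$, a short argument (using that $p$ must be affinely spanning in order for the trivial infinitesimal flexes to account for the full kernel) shows that $\rank R_2(G,p)$ attains its global maximum over $P(G,\bR^d)$, whence $(G,p)$ is regular.  For continuous rigidity, any continuous flex is a continuous path in the real algebraic set $f^{-1}(f(p))$; the curve selection lemma replaces it, up to its initial germ, by a real analytic path.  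Differentiating this analytic path at $t=0$ yields an infinitesimal flex of $(G,p)$, which by hypothesis is trivial, and a jet-theoretic iteration (or appeal to the analytic components of $\O_p$) shows the entire path lies inside $\O_p$.

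For the harder direction (i)$\Rightarrow$(ii), assume $(G,p)$ is regular and continuously rigid and suppose for contradiction that it is infinitesimally flexible.  Regularity means that $\rank R_2(G,\cdot)$ is locally constant on a neighbourhood $U$ of $p$, so the constant rank theorem endows $f^{-1}(f(p))\cap U$ with the structure of a smooth embedded submanifold of dimension $d|V(G)|-\rank R_2(G,p)=\dim \F_2(G,p)$, whose tangent space at $p$ is exactly $\F_2(G,p)$.  Infinitesimal flexibility gives $\dim \F_2(G,p)>\dim \T_2(G,p)$, so after shrinking $U$ the orbit $\O_p\cap U$ is a proper smooth submanifold of $f^{-1}(f(p))\cap U$.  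A smooth curve through $p$ in $f^{-1}(f(p))\cap U$ with initial velocity outside $\T_2(G,p)$ then exists and is a non-trivial continuous flex, contradicting continuous rigidity.

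The main obstacle is the constant rank step in (i)$\Rightarrow$(ii): one must use regularity to upgrade the fibre from a real algebraic set, which can be singular in general, to a smooth manifold on which the tangent space may be identified with $\F_2(G,p)$ and on which a transverse curve to $\O_p$ may be constructed.  A subsidiary technicality in the reverse direction is the analytic-approximation step needed to pass from a merely continuous flex to one that can be differentiated to produce an infinitesimal flex.
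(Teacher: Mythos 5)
Note first that the paper does not prove Theorem \ref{AsimowRoth} itself: it quotes it from \cite{asi-rot,asi-rot-2} and only proves the non-Euclidean analogue, Theorem \ref{t:rigiditythm2}, whose argument explicitly excludes $q=2$ because Lemma \ref{ContinuousRigidMotions} relies on the norm having only finitely many linear isometries. Measured against that in-paper analogue, your direction (i)$\Rightarrow$(ii) is essentially the paper's argument: regularity plus the constant rank theorem (Lemma \ref{t:rigiditythm1}) makes the fibre a $C^1$-manifold whose tangent space at $p$ is $\F_2(G,p)$, and a curve tangent to a vector outside $\T_2(G,p)$ is a non-trivial continuous flex. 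This half is sound (modulo the routine check that a trivial flex which happens to be differentiable at $t=0$ has derivative in $\T_2(G,p)$, which for the Euclidean group requires the affine form $\Gamma_t(x)=A_tx+c_t$ rather than Lemma \ref{ContinuousRigidMotions}).

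Your direction (ii)$\Rightarrow$(i) takes a genuinely different route from the paper's (which pins a vertex $v_0$, forms $F(x)=(f_G(x)-f_G(p),x_{v_0}-p_{v_0})$ with injective differential, and deduces local injectivity), and it is here that there is a gap. The curve selection lemma, properly deployed, gives an analytic arc from $p$ into the semialgebraic set $f^{-1}(f(p))\setminus\O_p$ \emph{provided} you first show a non-trivial continuous flex must actually leave the orbit (equivalently, that a flex whose image stays in $\O_p$ lifts to a continuous rigid motion); you do not set this up. More seriously, ``differentiating this analytic path at $t=0$ yields an infinitesimal flex, which by hypothesis is trivial'' does not close the argument: the first derivative of the arc may vanish, and while the leading non-zero Taylor coefficient is always an infinitesimal flex, it can be a \emph{trivial} one even though the arc leaves $\O_p$. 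Extracting a non-trivial infinitesimal flex requires iteratively composing with isometries to strip off trivial leading jets (or working in a slice transverse to the orbit); the phrase ``jet-theoretic iteration'' names this step without performing it, and it is the crux. The irony is that none of this machinery is needed: having already established regularity in this direction, the fibre is locally a manifold of dimension $\dim\ker R_2(G,p)=\dim\T_2(G,p)=\dim\O_p$ containing the orbit $\O_p$ as an embedded submanifold of the same dimension, so the two coincide near $p$; a continuation argument then keeps any continuous flex inside $\O_p$, and lifting through the orbit map shows it is trivial. Replacing your curve-selection paragraph with this dimension count would make the proof complete and would align it with the structure of the paper's Theorem \ref{t:rigiditythm2}.
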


In Section \ref{AREquivalence} we will extend this result to finite bar-joint frameworks in the non-Euclidean spaces $(\bR^d,\|\cdot\|_q)$ for $q\in (1,\infty)$.

We now formalise our meaning of a generic finite bar-joint framework in $(\mathbb{R}^d,\|\cdot\|_q)$ for $q\in(1,\infty)$.
The complete graph on the vertices $V(G)$ will be denoted $K_{V(G)}$. 

\begin{defn} 
A finite bar-joint framework $(G,p)$ is  {\em generic}   in $(\mathbb{R}^d,\|\cdot\|_q)$ if  
$p\in P(K_{V(G)},\bR^d)$ and every subframework of $(K_{V(G)},p)$ is regular.
\end{defn}

If $(G,p)$ is a finite bar-joint framework then $p$ will frequently be identified with a vector $(p_{v_1}, p_{v_2},\ldots,p_{v_{n}})\in \bR^{d|V(G)|}$ with respect to some fixed ordering of the vertices  $V(G)=\{v_1,v_2,\ldots,v_{n}\}$.
In particular, the collection of all generic placements of $G$ in $(\mathbb{R}^d,\|\cdot\|_q)$ is identified with a subset of $\bR^{d|V(G)|}$. 

\begin{lem} 
\label{OpenDense}
Let $G$ be a finite simple graph and let $q\in (1,\infty)$.
Then the set of  generic placements of $G$  in $(\mathbb{R}^d,\|\cdot\|_q)$ is an open and dense subset of 
$\mathbb{R}^{d|V(G)|}$.
\end{lem}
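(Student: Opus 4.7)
The plan is to verify openness and density separately.

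For openness: Since $q>1$, the map $t\mapsto \sgn(t)|t|^{q-1}$ is continuous on $\bR$, so the entries of the rigidity matrix $R_q(H,p)$ depend continuously on $p$ for each subgraph $H\subseteq K_{V(G)}$. Because matrix rank is lower-semicontinuous in the entries, each set $U_H:=\{p\in\bR^{d|V(G)|}:\rank R_q(H,p)=r_H\}$ is open, where $r_H:=\max_{x\in P(H,\bR^d)}\rank R_q(H,x)$. As $K_{V(G)}$ has only finitely many subgraphs, and $P(K_{V(G)},\bR^d)$ is itself open (the complement of finitely many codimension-$d$ diagonals), the set of generic placements $P(K_{V(G)},\bR^d)\cap\bigcap_{H} U_H$ is a finite intersection of open sets and hence open.

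For density: I would work on the open dense subset $V\subseteq\bR^{d|V(G)|}$ consisting of configurations $p$ with $p_{v,i}\neq p_{w,i}$ for every pair of distinct vertices $v,w$ and every coordinate $i$. On $V$, the map $t\mapsto \sgn(t)|t|^{q-1}$ is real analytic, so every $r_H\times r_H$ minor of $R_q(H,p)$ is a real analytic function on each connected component of $V$. By the identity theorem for real analytic functions, on any component where a given minor is not identically zero its vanishing locus has empty interior. Combining the lower-semicontinuity of rank with the density of $V$ in $P(H,\bR^d)$ shows that $r_H$ is attained at some point $p^*\in V$, and on the component containing $p^*$ the rank-$r_H$ locus is therefore open and dense.

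The main obstacle is propagating density from a single component of $V$ to all of $\bR^{d|V(G)|}$. In the Euclidean case $q=2$ this is immediate because every entry of $R_q(H,p)$ is polynomial in $p$ and standard Zariski-density applies. In the non-Euclidean setting I would exploit rank-preserving symmetries of the rigidity matrix---uniform coordinate reflections $p_{v,i}\mapsto -p_{v,i}$ (which act as negations of a block of columns) together with graph automorphisms and translations---to transport the maximal-rank property between sign regions, thereby showing that each connected component of $V$ attains $r_H$ and so $U_H$ is dense in every component of $V$, hence dense in $\bR^{d|V(G)|}$. A finite intersection of open dense subsets of $\bR^{d|V(G)|}$ is open and dense, so the conclusion follows once each $U_H$ and the open dense set $P(K_{V(G)},\bR^d)$ are handled in this way.
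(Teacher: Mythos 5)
Your openness argument agrees with the paper's, and the first half of your density argument---real analyticity of the matrix entries off the coincidence hyperplanes, the identity theorem on each connected component, lower semicontinuity of rank---is exactly the route the paper takes. The genuine gap is in your final step, where you propose to transport the maximal-rank property between the connected components of $V$ using uniform coordinate reflections, translations and graph automorphisms. That group is not transitive on the components, not even close. The components of $V$ are the chambers of the hyperplane arrangement $\{x_{v,i}=x_{w,i}\}$; they are indexed by a choice, for each coordinate $i\in\{1,\dots,d\}$, of a total ordering of the $n=|V(G)|$ values $x_{v,i}$, so there are $(n!)^d$ of them. Translations act trivially on this index set, the reflection $x_{\cdot,i}\mapsto -x_{\cdot,i}$ merely reverses the $i$-th ordering, and a vertex permutation $\tau$ acts diagonally (the same $\tau$ in every coordinate)---and in any case only $\tau\in\operatorname{Aut}(H)$ preserves $\rank R_q(H,\cdot)$. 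So the orbit of a single chamber under your group has size at most $2^d\,|\operatorname{Aut}(H)|$, which for $d\geq 2$ and $n\geq 4$ is far smaller than $(n!)^d$. Your argument therefore establishes density of $U_H$ only in a small family of chambers, not in $\mathbb{R}^{d|V(G)|}$.

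For comparison: the paper's proof does not perform this step at all. It observes that off the variety $\V(G)$ the set of non-regular placements is locally the common zero set of the real analytic maximal minors, declares it a real analytic set, and concludes that its complement is dense. That conclusion implicitly requires that on no chamber do all the maximal minors vanish identically, i.e.\ that the maximal rank is attained in every chamber---which is precisely the point you isolated (and which is automatic when $q=2$, or more generally when the entries are polynomial, by Zariski density). So you have correctly put your finger on the one delicate step of the argument, but the symmetry patch does not close it; what is still needed is an actual proof that every chamber of the arrangement contains a placement of maximal rank, or some other mechanism for passing the maximal-rank property across the walls $\{x_{v,i}=x_{w,i}\}$.
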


\proof
The set of regular placements of $G$ is an open set since the rank function is lower semi-continuous and the matrix-valued function $x\mapsto R_q(G,x)$ is continuous.

Let $\V_{nr}(G)$ denote the set of all non-regular placements of $G$ and let $\V(G)$ be  the variety
\[\V(G):=\left\{x\in\mathbb{R}^{d|V(G)|}: \prod_{vw\in E(G)}\,\prod_{i=1}^d \, (x_{v,i}-x_{w,i})=0\right\}\]
If $p\in \V_{nr}(G)\backslash \V(G)$ then there exists a neighbourhood $U$ of $p$ such that 
\[\V_{nr}(G)\cap U = \{x\in U: \phi_1(x)=\cdots=\phi_m(x)=0\}\]
where $\phi_1(x),\ldots,\phi_m(x)$ are the minors of $R_q(G,x)$ which correspond to its largest square submatrices.
The entries of  $R_q(G,x)$ when viewed as functions of $x$ 
are real analytic at all points in the complement of $\V(G)$ and so in particular we may assume that $\phi_1,\ldots,\phi_m$ are real analytic on $U$.
This shows that $\V_{nr}(G)\backslash\V(G)$ is a real analytic set in $\bR^{d|V(G)|}$ and so the set of regular placements of $G$ is a dense subset of $\bR^{d|V(G)|}$. 

Finally,  the set of generic placements of $G$ is obtained as a finite intersection of open and dense sets. 
\endproof

Note that the infinitesimal flexibility dimension $\dim_{\rm fl}(G,p)$ is constant on the set of generic placements of $G$
in $(\mathbb{R}^d,\|\cdot\|_q)$.  

\begin{defn} Let $G$ be a finite simple graph.
The {\em infinitesimal flexibility dimension of $G$}  in  $(\mathbb{R}^d,\|\cdot\|_q)$ is 
\[
\dim_{\rm fl}(G):= \dim_{d,q}(G) := \dim_{\rm fl}(G,p)=\dim \F_q(G,p)/ \T_q(G,p).
\] 
where $p$ is any generic placement of $G$.
\end{defn}

\begin{eg}
Let $(K_3,p)$ be the bar-joint framework in $(\mathbb{R}^2,\|\cdot\|_3)$ illustrated in Figure \ref{f:non-Euclideanflexes} where 
$K_3$ is the complete graph on the vertices $v,w,o$ and  \[p_o=(0,0),\,\,\,\,\,\, p_v=(-\sqrt{3},1),\,\,\,\,\,\,p_w=(\sqrt{3},1)\]
A non-trivial infinitesimal flex $u=(u_o,u_v,u_w)\in\F_3(K_3,p)$ is given by 
\[u_o=(0,0), \,\,\,\,\,\,u_v=(-\frac{1}{3},-1),\,\,\,\,\,u_w=(-\frac{1}{3},1)\]
and is  indicated by the dotted arrows which are tangent to a sphere in $(\mathbb{R}^2,\|\cdot\|_3)$.
The rigidity matrix shows that this is a generic placement and so $\dim_{2,3}  K_3 =1$. 

\begin{figure}[h]
\centering
  \begin{tabular}{  c   }

\begin{minipage}{.42\textwidth}
   \includegraphics[scale=0.18]{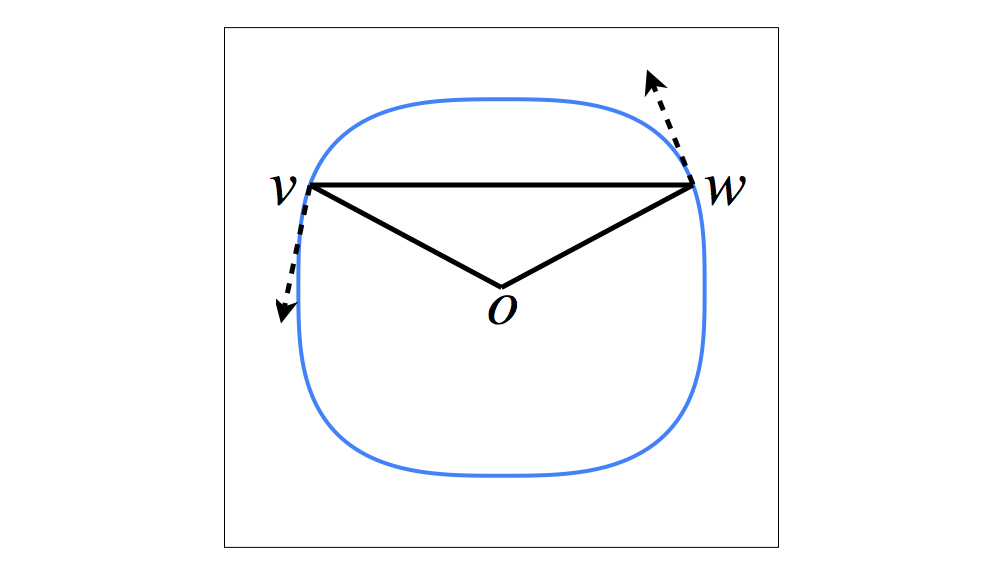}
\end{minipage}

\begin{minipage}{.7\textwidth}
\small{$  \kbordermatrix{
& p_{v,1} & p_{v,2} && p_{w,1} & p_{w,2} &  &  p_{o,1} & p_{o,2} \\
vw& -12  & 0 & \vrule & 12 & 0 & \vrule & 0 & 0 \\
vo& -3 & 1 & \vrule & 0 & 0 & \vrule &  3 & -1\\
ow& 0 & 0 & \vrule & 3 & 1 & \vrule & -3 & -1
}
$}
\end{minipage}

 \end{tabular}
\caption{A non-trivial infinitesimal flex and rigidity matrix for a generic placement of $K_3$ in $(\mathbb{R}^2,\|\cdot\|_3)$.}
\label{f:non-Euclideanflexes}
\end{figure}
\end{eg}

If $G$ has a (minimally) infinitesimally rigid placement then all generic placements of $G$ are (minimally) infinitesimally rigid in $(\mathbb{R}^d,\|\cdot\|_q)$.

\begin{defn} A finite simple graph $G$ is {\em (minimally) rigid} in  $(\mathbb{R}^d,\|\cdot\|_q)$ if 
the generic placements of $G$  are (minimally) infinitesimally rigid.
\end{defn}

One can readily verify that the complete graph $K_{d+1}$ on $d+1$ vertices satisfies 
$\dim_{d,2}(K_{d+1}) =0$ and that $K_{d+1}$ is minimally rigid for $\bR^d$ with the Euclidean norm. Also, in $d$ dimensions we have $\dim_{d,q}(K_{2d}) =0$, with   minimal rigidity, for each of the non-Euclidean $q$-norms.

As noted in Alexandrov \cite{alex} (page 125) Cauchy essentially proved \cite{cau} that if all faces of a closed convex polyhedron are infinitesimally rigid in the Euclidean space $\bR^3$ then the polyhedron itself is infinitesimally rigid.
The following theorem is occasionally referred to as the generic form
of Cauchy's rigidity theorem for convex polyhedra.

\begin{thm}[Cauchy, 1905]
\label{t:GenericCauchy}
Let $G$ be the edge graph of a convex polyhedron with triangular faces. Then 
$G$ is rigid in  $(\mathbb{R}^3,\|\cdot\|_2)$.
\end{thm}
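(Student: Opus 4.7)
The plan is to exhibit a specific infinitesimally rigid placement of $G$ and then promote this to generic rigidity via the rank-maximality characterisation implicit in the definition of a regular (hence generic) placement. By hypothesis $G$ is the edge graph of some strictly convex polyhedron $P\subset\bR^3$ with triangular faces, and we take $p_0\in P(G,\bR^3)$ to be the associated placement, which is affinely spanning. Euler's formula applied to the triangulated $2$-sphere gives $|E(G)|=3|V(G)|-6$, matching the three-dimensional Maxwell count and so allowing $G$ to be isostatic in principle.

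The first step is to invoke the classical infinitesimal version of Cauchy's rigidity theorem (due to Dehn; see \cite{alex}) to conclude that $(G,p_0)$ is infinitesimally rigid in $(\bR^3,\|\cdot\|_2)$. The standard argument proceeds as follows: since each triangular face is rigid in its supporting plane, any infinitesimal flex of $(G,p_0)$ is determined by the induced infinitesimal variations of the dihedral angles along the edges, which assign to each edge of the $1$-skeleton a sign in $\{+,-,0\}$. Cauchy's arm lemma, applied at each vertex $v$ to the spherical polygon formed by the unit normals of the faces meeting $v$, forces at least four sign changes in the cyclic order about $v$ whenever some dihedral variation at $v$ is nonzero. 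A global double-count based on Euler's formula is incompatible with this vertex-wise lower bound unless every dihedral variation vanishes, in which case the flex reduces to a common rigid motion of all the faces and hence to a trivial global infinitesimal rigid motion.

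For the second step, infinitesimal rigidity of $(G,p_0)$ means $\rank R_2(G,p_0)=3|V(G)|-\dim\T_2(G,p_0)=3|V(G)|-6$. At every affinely spanning placement $p$ the six-dimensional space of rigid motions lies in $\ker R_2(G,p)$, so universally $\rank R_2(G,p)\leq 3|V(G)|-6$; hence the value attained at $p_0$ is the maximum possible rank. By definition, every regular placement attains this maximum, and every generic placement is regular, so $\rank R_2(G,p)=3|V(G)|-6$ and $(G,p)$ is infinitesimally rigid for all generic $p$. Thus $G$ is rigid in $(\bR^3,\|\cdot\|_2)$. The main obstacle is the first step --- the infinitesimal Cauchy/Dehn theorem for the specific convex placement --- which carries all the convex-geometric content; once it is secured, the passage from a single rigid placement to generic rigidity is a formal consequence of the rank-maximality definition of a regular placement together with semi-continuity of rank.
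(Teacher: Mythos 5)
Your proof is correct, but it takes a genuinely different route from the one the paper actually uses. The paper does not prove Theorem \ref{t:GenericCauchy} by appealing to the convex realisation at all: its proof is the case $\kappa=0$ of Theorem \ref{t:SimpGraphDim}, where the edge graph of a simplicial polytope is shown to be constructible from $K_3$ by a sequence of vertex splitting moves (the inverse move being contraction of an edge shared by two adjacent triangles), and generic $3$-rigidity is then propagated along the construction chain via Whiteley's theorem that vertex splitting preserves infinitesimal rigidity. You instead exhibit one infinitesimally rigid placement --- the strictly convex polytope itself, rigid by the Cauchy--Dehn infinitesimal rigidity theorem --- and transfer rigidity to generic placements by rank maximality: since $|E(G)|=3|V(G)|-6$, the rank $3|V(G)|-6$ attained at $p_0$ is the maximum possible, every generic placement is regular and so attains it, and at a generic placement this forces $\F_2(G,p)=\T_2(G,p)$. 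That transfer step is exactly the paper's observation that infinitesimal rigidity is a generic property, so the only real content you import is Dehn's theorem, which is classical (see \cite{alex}); this is the traditional reading of the "generic form of Cauchy's theorem" that the paper itself alludes to just before the statement. What the paper's inductive route buys, and yours does not, is portability: the vertex-splitting argument is reused essentially verbatim to handle the non-Euclidean norms (Corollary \ref{t:q-polytope}, starting from $K_6$ with three shafts) and the punctured-sphere and countable cases (Theorems \ref{t:SimpGraphDim}(ii) and \ref{genericCauchy}), whereas Dehn's convex-geometric argument has no evident $\ell^q$ analogue. Conversely, your route yields the stronger geometric fact that the convex placement itself is infinitesimally (hence continuously) rigid, not merely the generic ones. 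One small point worth tightening: the universal bound $\rank R_2(G,p)\leq 3|V(G)|-6$ is cleanest here simply because $R_2(G,p)$ has only $|E(G)|=3|V(G)|-6$ rows; the argument via $\dim\T_2(G,p)=6$ requires $p$ to be affinely spanning, which degenerate placements need not be.
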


In Section \ref{FiniteSimplicial} we extend Cauchy's rigidity theorem to finite non-Euclidean bar-joint frameworks and
in Section \ref{sub:SimplicialGraphs} we obtain a variant of Cauchy's rigidity theorem for a class of infinite
triangulated planar graphs.

\subsection{Sparsity and rigidity.} 
We recall the following classes of multi-graphs.
\begin{defn}
Let $k,l\in\mathbb{N}$. A multi-graph $G$ is 
\begin{enumerate}
\item {\em $(k,l)$-sparse} if $|E(H)|\leq k|V(H)|-l$ for each subgraph $H$ of $G$ which contains at least two vertices. 
\item {\em $(k,l)$-tight} if it is $(k,l)$-sparse and $|E(G)|=k|V(G)|-l$.
\end{enumerate}
\end{defn}

Our main interests are in the classes of simple $(2,2)$-sparse and $(2,3)$-sparse graphs and the class of  $(k,k)$-sparse multi-graphs for  $k\geq 2$.

\begin{eg}
 The complete graph $K_n$ is $(k,k)$-sparse for $1\leq n \leq 2k$, $(k,k)$-tight for $n\in\{1,2k\}$ and fails to be $(k,k)$-sparse for $n>2k$. Also, $K_2$ and $K_3$ are $(2,3)$-tight  while $K_n$ fails to be $(2,3)$-sparse for $n\geq 4$.
\end{eg}

Laman's theorem  (\cite{Lam}) provides a combinatorial characterisation of the class of finite simple graphs  which are rigid in the Euclidean plane and can be restated as follows.

\begin{thm}[Laman, 1970]\label{Laman}
If $G$ is a finite simple graph then the following statements are equivalent.
\begin{enumerate}[(i)]
\item $G$ is rigid in  $(\mathbb{R}^2,\|\cdot\|_2)$.
\item $G$ contains a $(2,3)$-tight spanning subgraph.
\end{enumerate}
\end{thm}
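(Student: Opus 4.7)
The plan is to prove the two directions separately, with the forward direction being an essentially immediate Maxwell-style counting argument and the reverse requiring an inductive construction.

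For the implication (i) $\Rightarrow$ (ii), suppose $G$ is rigid and let $p$ be a generic placement of $G$ in $(\bR^2,\|\cdot\|_2)$. Then $\rank R_2(G,p)=2|V(G)|-\dim\T_2(\bR^2)=2|V(G)|-3$. I would select a set of edges $E'\subseteq E(G)$ indexing a row basis of $R_2(G,p)$, so that $|E'|=2|V(G)|-3$, and consider the spanning subgraph $H=(V(G),E')$. To verify $(2,3)$-sparsity of $H$, take any subgraph $K$ of $H$ with $|V(K)|\geq 2$. The rows of $R_2(H,p)$ indexed by $E(K)$ are linearly independent; their nonzero entries lie in the columns indexed by $V(K)$, and these restricted rows form a submatrix of the rigidity matrix $R_2(K,p|_{V(K)})$. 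Since $\rank R_2(K,p|_{V(K)})\leq 2|V(K)|-3$ (the standard bound coming from the $3$-dimensional space of Euclidean infinitesimal isometries, valid already when $|V(K)|\geq 2$), we obtain $|E(K)|\leq 2|V(K)|-3$, proving that $H$ is a $(2,3)$-tight spanning subgraph of $G$.

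For the implication (ii) $\Rightarrow$ (i), it suffices to show that any $(2,3)$-tight graph $H$ is rigid in the plane, because then the generic rigidity of the spanning subgraph $H$ forces the generic rigidity of $G$ (the row space of $R_2(H,p)$ embeds into that of $R_2(G,p)$). The strategy is Henneberg's inductive theorem: every $(2,3)$-tight graph $H$ can be reduced, by inverse Henneberg moves, to $K_2$. Equivalently, $H$ is obtained from $K_2$ by a sequence of two types of moves, namely the \emph{vertex addition} $\mu^{(1)}$ (adjoining a new vertex of degree $2$) and the \emph{edge split} $\mu^{(2)}$ (removing an edge $xy$ and adjoining a new vertex $w$ adjacent to $x$, $y$, and some third vertex $z$). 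The base case is the trivial rigidity of $K_2$, and the inductive step requires showing that each move preserves generic rigidity. The $\mu^{(1)}$ step is elementary: a generic placement of the new vertex produces two linearly independent new rows orthogonal to the existing trivial motions, keeping $\rank R_2$ maximal.

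The main obstacle is the $\mu^{(2)}$ step. Here one must show that after removing the edge $xy$ one loses exactly one degree of rigidity, and that inserting $w$ with three generic edges restores it. The classical argument proceeds by a perturbation / limiting placement trick: choose a special (non-generic but convenient) placement in which $w$ lies on the line through the original positions of $x$ and $y$, so that the three new rows of $R_2$ become dependent in a controlled way; compute the rank directly to verify that it equals $2(|V(H)|+1)-3$; and then invoke lower semi-continuity of rank together with Lemma \ref{OpenDense} to conclude that the rank stays maximal at all nearby generic placements, hence at every generic placement. Packaging this analysis into a clean rank preservation statement for the edge split, and ensuring the specialised placement is itself a legitimate point where the rigidity matrix can be evaluated, is the delicate part of the argument; once it is in hand, induction on the Henneberg construction of $H$ completes the proof.
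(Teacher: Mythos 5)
The paper does not prove this statement: it is quoted as Laman's classical 1970 theorem with a citation to \cite{Lam}, so there is no internal proof to compare against. Your outline is the standard and correct one. The forward direction via a row basis of $R_2(G,p)$ and the restriction-of-rows argument for $(2,3)$-sparsity is sound (modulo the degenerate case $|V(G)|=1$, where the equivalence fails for trivial reasons and is conventionally excluded). For the converse, you correctly identify the two load-bearing ingredients: (1) the combinatorial fact that every $(2,3)$-tight graph reduces to $K_2$ by inverse Henneberg moves, and (2) the geometric fact that the edge split preserves generic independence via the collinear placement of the new vertex followed by lower semicontinuity of rank. You assert (1) as "Henneberg's inductive theorem" without proof; note that its nontrivial part --- that at a degree-$3$ vertex one can always choose a non-adjacent pair of neighbours whose joining preserves $(2,3)$-sparsity --- is exactly what the paper establishes later in Lemma \ref{H2Lemma}(a) and Proposition \ref{ReverseHennebergLemma} (and Proposition \ref{ConstructionProp} with base $K_2$), so your sketch dovetails with machinery the paper develops for other purposes. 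Your description of the collinear trick in (2) is slightly loose --- the point is not that the three new rows are dependent, but that after eliminating the two columns of the new vertex a combination of the rows for $wx$ and $wy$ reproduces the deleted row for $xy$, so the rank is at least $\bigl(2|V(H)|-3\bigr)+2$ --- but the strategy is the right one and the conclusion follows.
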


In particular, a generic bar-joint framework $(G,p)$ is minimally infinitesimally rigid in $(\mathbb{R}^2,\|\cdot\|_2)$ if and only if $G$ is $(2,3)$-tight.
In \cite{kit-pow} an analogue of Laman's theorem was obtained for bar-joint frameworks in the non-Euclidean spaces $(\mathbb{R}^2,\|\cdot\|_q)$.
We can restate this result as follows.

\begin{thm}[\cite{kit-pow}]\label{qNormLaman}
If $G$ is a finite simple graph  and  $q\in (1,2)\cup(2,\infty)$ then the following statements are equivalent. 
\begin{enumerate}[(i)]
\item $G$ is rigid in  $(\mathbb{R}^2,\|\cdot\|_q)$.
\item $G$ contains a $(2,2)$-tight spanning subgraph.
\end{enumerate}
\end{thm}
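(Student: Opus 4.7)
The plan is to reduce to a rank statement about the rigidity matrix and then deploy a matroidal/inductive argument modelled on the classical proof of Laman's theorem. Since $q\neq 2$, only constant translations are trivial, so $\dim\T_q(\bR^2)=2$ and a generic $(G,p)$ is infinitesimally rigid precisely when $\rank R_q(G,p)=2|V(G)|-2$. Moreover the two coordinate translations lie in $\ker R_q(H,p)$ for every subframework, giving the universal bound
\[
\rank R_q(H,p)\leq 2|V(H)|-2 \qquad \text{for all subgraphs } H\subseteq G.
\]
Consequently every linearly independent set of rows of $R_q(G,p)$ indexes a $(2,2)$-sparse edge set. This yields the direction (i)$\Rightarrow$(ii) immediately: if $G$ is rigid then a maximal set of independent rows has $2|V(G)|-2$ elements, and the corresponding edges form a $(2,2)$-tight spanning subgraph.

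For (ii)$\Rightarrow$(i) it suffices to show that every $(2,2)$-tight simple graph is rigid in $(\bR^2,\|\cdot\|_q)$, and here I would proceed by an inductive construction. The plan is to establish that every such graph is obtainable from a small base (for example $K_4$) by a finite sequence of combinatorial moves each of which preserves $(2,2)$-tightness and, crucially, generic $\ell^q$-rigidity. As signposted in the introduction, the relevant moves are the Henneberg $0$- and $1$-extensions adapted to the count $2|V|-2$, the vertex-to-$K_4$ move, and the vertex-to-$4$-cycle move. The base cases must be verified directly; for $K_4$ this amounts to choosing generic coordinates and checking via Proposition \ref{q-NormFlex} that $\rank R_q(K_4,p)=6$.

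The main obstacle is twofold. Combinatorially, one must prove that $(2,2)$-tight simple graphs do admit such a construction scheme: when no vertex of degree $2$ or $3$ has a valid Henneberg inverse, a case analysis of the local structure is required to locate a vertex-to-$K_4$ or vertex-to-$4$-cycle inverse, in the spirit of constructive characterisations for the $(2,2)$-sparsity matroid. Geometrically, one must show that each move preserves generic $\ell^q$-rigidity. The Henneberg $0$-move is routine, but the $1$-move and the two new moves need a rank-preservation argument adapted from the Euclidean setting. The entries of $R_q(G,p)$ are not polynomial but only real analytic on the complement of the variety $\V(G)$ appearing in Lemma \ref{OpenDense}, so the standard algebraic-geometric averaging arguments must be replaced by an analytic version: exhibit an explicit semi-generic placement of the enlarged graph at which $R_q$ attains the required rank, using the inductive hypothesis for the pre-move framework together with the explicit form of the rows $(p_v-p_w)^{(q-1)}$. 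Getting this rank boost to go through uniformly for all three non-trivial moves and for every $q\in(1,2)\cup(2,\infty)$ is where I expect the main technical work to lie.
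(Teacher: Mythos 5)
The paper does not actually prove Theorem \ref{qNormLaman}: it is imported verbatim from \cite{kit-pow}, and the strategy you outline — deduce the rank/sparsity correspondence from $\dim\T_q(\bR^2)=2$, then prove sufficiency by an inductive construction of $(2,2)$-tight graphs whose moves preserve generic $\ell^q$-rigidity — is precisely the strategy of that source and of the machinery this paper itself assembles in Section \ref{InductiveConstructions} (the construction chain from $K_1$ via the two Henneberg moves, the vertex-to-$K_4$ move and the vertex-to-$4$-cycle move, citing \cite{NOP2}). Your direction (i)$\Rightarrow$(ii) is complete and correct: the two translations lie in $\ker R_q(H,p)$ for every subframework, so independent row sets are $(2,2)$-sparse, and a maximal independent set in a rigid framework has $2|V(G)|-2$ elements and is forced to be spanning by the count. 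Your choice of $K_4$ as base is also harmless, since the only move applicable to $K_1$ is the vertex-to-$K_4$ move, so every construction chain passes through $K_4$.

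The gap is in (ii)$\Rightarrow$(i), where you name the two load-bearing ingredients but prove neither. The combinatorial ingredient (every $(2,2)$-tight simple graph is reachable from $K_1$ by the four moves) is a quotable result of Nixon, Owen and Power \cite{NOP2}, so deferring it is defensible. The geometric ingredient — that the Henneberg edge move, the vertex-to-$K_4$ move and the vertex-to-$4$-cycle move preserve generic infinitesimal rigidity in $(\bR^2,\|\cdot\|_q)$ for every $q\in(1,2)\cup(2,\infty)$ — is the actual mathematical content of the theorem, and your proposal only records that it "needs a rank-preservation argument adapted from the Euclidean setting." This is where care is genuinely required: the standard Euclidean proof of the Henneberg edge move degenerates the new vertex onto the removed edge and invokes a limiting/specialisation argument in the algebraic category, whereas here the row entries $(p_v-p_w)^{(q-1)}$ are only real analytic off the variety $\V(G)$, and the degenerate configurations one wants to specialise to typically lie on $\V(G)$ (coincident coordinates), where $R_q$ is not even differentiable. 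One must therefore exhibit explicit non-degenerate placements at which the rank is attained (as is done for the spanning-tree placements in the proof of Theorem \ref{t:q-Tay}) or argue via lower semi-continuity of rank from a carefully chosen nearby regular point. Until those three rank-boost lemmas are actually carried out, the sufficiency direction is a programme rather than a proof.
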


In Section \ref{LamanSection} we will extend Laman's combinatorial characterisation and its non-Euclidean analogue
to countable graphs in $(\mathbb{R}^2,\|\cdot\|_q)$ for all $q\in (1,\infty)$. 
The class of $(k,k)$-sparse graphs will be used in Section \ref{Body-bar} to obtain analogous results 
for generic finite and countable body-bar frameworks in $(\bR^d,\|\cdot \|_q)$ for all $d\geq 2$.

To end this section we collect the following facts about sparse graphs which will play a key role in later sections.

\begin{lem}
\label{SparseLemma3}
Let $G$ be a $(k,l)$-sparse multi-graph and let $H_1$ and $H_2$ be subgraphs of $G$  both of which  are $(k,l)$-tight. Suppose that one of the following conditions holds. 
\begin{enumerate}[(a)]
\item $k=2$, $l=3$ and $H_1\cap H_2$ contains at least two vertices, or,
\item $k=l$ and $H_1\cap H_2$ contains at least one vertex.
\end{enumerate} 
Then $H_1\cup H_2$ and $H_1\cap H_2$ are $(k,l)$-tight.
\end{lem}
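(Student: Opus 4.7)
The approach is the standard inclusion–exclusion argument for matroidal sparsity counts. The two identities I will use throughout are
\begin{align*}
|V(H_1\cup H_2)| + |V(H_1\cap H_2)| &= |V(H_1)| + |V(H_2)|, \\
|E(H_1\cup H_2)| + |E(H_1\cap H_2)| &= |E(H_1)| + |E(H_2)|,
\end{align*}
where the second identity uses only that every edge of $H_1\cup H_2$ lies in exactly one of $E(H_1)\setminus E(H_2)$, $E(H_2)\setminus E(H_1)$, or $E(H_1\cap H_2)$ (which is valid for multi-graphs as well).

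First I would apply the $(k,l)$-sparsity of $G$ to the subgraph $H_1\cup H_2$ to obtain
\[
|E(H_1\cup H_2)|\leq k|V(H_1\cup H_2)| - l,
\]
noting that $H_1\cup H_2$ has at least two vertices (in case (a) this is immediate from the intersection hypothesis; in case (b) the only way to fail this is $H_1=H_2$ a single vertex, which is trivially tight). Next I would produce the companion inequality for $H_1\cap H_2$. In case (a) the hypothesis $|V(H_1\cap H_2)|\geq 2$ lets me invoke sparsity directly to get $|E(H_1\cap H_2)|\leq k|V(H_1\cap H_2)| - l$. In case (b) the same inequality holds whenever $|V(H_1\cap H_2)|\geq 2$; and when $|V(H_1\cap H_2)|=1$ the intersection has no edges, so $|E(H_1\cap H_2)|=0 = k\cdot 1 - k$, which is exactly the would-be sparsity bound since $k=l$.

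Adding the two inequalities and applying the vertex and edge identities yields
\[
|E(H_1)| + |E(H_2)| \leq k\bigl(|V(H_1)| + |V(H_2)|\bigr) - 2l.
\]
But the tightness of $H_1$ and $H_2$ gives equality on the left-hand side, so equality must hold throughout. This forces both
\[
|E(H_1\cup H_2)| = k|V(H_1\cup H_2)|-l \quad \text{and} \quad |E(H_1\cap H_2)| = k|V(H_1\cap H_2)|-l,
\]
which is exactly the tightness claim for the union and intersection.

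There is no real obstacle here beyond keeping the case analysis straight; the only delicate point is case (b) with a one-vertex intersection, where the $(k,l)$-sparsity condition from the definition is vacuous but the equation $k|V|-l=0$ nonetheless matches the edge count, so the arithmetic of the inclusion–exclusion argument goes through unchanged. The role of the hypotheses $l=3$, $|V(H_1\cap H_2)|\geq 2$ in (a) and $k=l$, $|V(H_1\cap H_2)|\geq 1$ in (b) is precisely to guarantee that the sparsity bound is available (or holds trivially) on the intersection.
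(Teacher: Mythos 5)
Your proof is correct and follows essentially the same route as the paper's: both rest on the vertex and edge inclusion--exclusion identities together with the sparsity bound on $H_1\cap H_2$ (which is exactly where hypotheses (a) and (b) enter), with tightness of $H_1$ and $H_2$ forcing equality. The only cosmetic difference is that you add the two sparsity inequalities and conclude both tightness statements at once, whereas the paper derives tightness of the union first and notes that the intersection is handled similarly.
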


\proof
Applying the sparsity counts for the subgraphs $H_1\cap H_2$ and $H_1\cup H_2$ we have
\begin{eqnarray*}
k|V(H_1\cup H_2)| 
&\leq& k|V(H_1\cup H_2)|  + k|V(H_1\cap H_2)| - l - |E(H_1\cap H_2)| \\
&=& (k|V(H_1)|-l)+(k|V(H_2)|-l)- |E(H_1\cap H_2)| +l\\
&=& |E(H_1)|+|E(H_2)|-|E(H_1\cap H_2)|+l \\
&=& |E(H_1\cup H_2)| +l
\end{eqnarray*}
Thus $H_1\cup H_2$ is $(k,l)$-tight and a similar argument can be applied for the intersection $H_1\cap H_2$.
\endproof

\begin{lem}
\label{SparseLemma2}
Let $G$ be a $(k,l)$-sparse multi-graph containing vertices $v,w\in V(G)$ with $vw\notin E(G)$ and let $G'=G\cup\{vw\}$. 
Then exactly one of the following conditions must hold.
\begin{enumerate}[(i)]
\item $G'$ is $(k,l)$-sparse, or,
\item there exists a $(k,l)$-tight subgraph of $G$ which contains both $v$ and $w$.
\end{enumerate}
\end{lem}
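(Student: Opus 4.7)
The plan is to show the two conditions are mutually exclusive and then that at least one must hold, giving an exactly-one dichotomy.

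For mutual exclusivity, suppose that (ii) holds, so some $(k,l)$-tight subgraph $H$ of $G$ contains both $v$ and $w$. Adjoining the edge $vw$ to $H$ produces a subgraph $H' \subseteq G'$ on the same vertex set with $|E(H')| = |E(H)|+1 = k|V(H)| - l + 1 > k|V(H')| - l$, so $G'$ fails to be $(k,l)$-sparse and (i) is violated.

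For the main direction, I assume $G'$ is not $(k,l)$-sparse and produce a $(k,l)$-tight subgraph of $G$ containing $v$ and $w$. By failure of sparsity there is some subgraph $H' \subseteq G'$ with at least two vertices and $|E(H')| > k|V(H')| - l$. Since $G$ itself is $(k,l)$-sparse, $H'$ must use the new edge $vw$; in particular $v,w \in V(H')$. Let $H = H' \setminus \{vw\}$ viewed as a subgraph of $G$ on the same vertex set. Then
\[
|E(H)| = |E(H')| - 1 \geq k|V(H')| - l = k|V(H)| - l,
\]
and the reverse inequality holds because $H \subseteq G$ is $(k,l)$-sparse. Thus $|E(H)| = k|V(H)| - l$, so $H$ is $(k,l)$-tight, and by construction $v,w \in V(H)$, yielding (ii).

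The argument is largely bookkeeping with the sparsity count; there is no real obstacle, the only subtlety being the observation that any violating subgraph of $G'$ must contain the added edge (otherwise it would already violate $(k,l)$-sparsity in $G$) and hence must contain both of its endpoints $v$ and $w$. Note that the hypotheses of Lemma \ref{SparseLemma3} are not needed here, since we are simply extracting a single tight witness rather than combining two.
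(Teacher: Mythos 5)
Your proof is correct and follows essentially the same route as the paper's: a violating subgraph of $G'$ must contain the new edge $vw$, and deleting that edge leaves a $(k,l)$-tight subgraph of $G$ containing $v$ and $w$, with the converse direction giving mutual exclusivity. You merely spell out the counting that the paper leaves implicit.
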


\proof
If $G'$ is not $(k,l)$-sparse then there exists a subgraph $H'$ of $G'$ which fails the sparsity count.
Now $H'\backslash\{vw\}$ is a $(k,l)$-tight subgraph of $G$ which contains both $v$ and $w$.
Conversely, if $H$ is a $(k,l)$-tight subgraph of $G$ which contains both $v$ and $w$ then $H\cup\{vw\}$ is a subgraph of $G'$ which fails the sparsity count.
\endproof

\begin{prop}
\label{SparseLemma1}
Let $G$ be a $(k,l)$-sparse multi-graph. 
Suppose that one of the following conditions holds. 
\begin{enumerate}[(a)]
\item $k=2$, $l=3$ and $G$ contains at least two vertices, or,
\item $k=l$ and $G$ contains at least $2k$ vertices.
\end{enumerate} 
Then the following statements are equivalent.
\begin{enumerate}[(i)]
\item $G$ is $(k,l)$-tight.
\item Every pair of distinct vertices $v,w\in V(G)$ with $vw\notin E(G)$ is contained in a $(k,l)$-tight subgraph of $G$.
\end{enumerate}
Moreover, $G$ is a spanning subgraph of a $(k,l)$-tight graph $G'$ obtained by adjoining edges
of the form $vw$ to $E(G)$ where $v$ and $w$ are distinct vertices of $V(G)$.  
\end{prop}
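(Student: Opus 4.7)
My plan is as follows. The direction $(i) \Rightarrow (ii)$ is immediate, since $G$ itself is a $(k,l)$-tight subgraph of $G$ containing any prescribed vertex pair. For $(ii) \Rightarrow (i)$, Lemma \ref{SparseLemma2} reinterprets the hypothesis as the statement that $G$ is maximally $(k,l)$-sparse on $V(G)$: adjoining any further edge destroys sparsity. The degenerate case in which $G$ already has no non-edge is disposed of by direct computation, where the vertex bounds in (a) or (b), together with sparsity, identify $G$ as $K_2$, $K_3$, or $K_{2k}$, each of which is $(k,l)$-tight. Otherwise, I pick a non-edge $vw$, use $(ii)$ to produce a tight subgraph containing it, and enlarge by repeated applications of Lemma \ref{SparseLemma3} to a $(k,l)$-tight subgraph $H \subseteq G$ maximal under inclusion; since $vw \notin E(H)$, a short edge count also yields $|V(H)| \geq 4$ in case (a). The target is $V(H) = V(G)$, for then $|E(H)| = k|V(G)| - l$ together with sparsity of $G$ forces $E(G) = E(H)$, giving $(i)$.

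Suppose for contradiction that $u \in V(G) \setminus V(H)$. Sparsity applied to the induced subgraph on $V(H) \cup \{u\}$ shows that $u$ has at least one non-adjacency into $V(H)$ in case (b), and at least two in case (a). In case (b), fix any non-edge $ux$ with $x \in V(H)$; $(ii)$ provides a tight $H_{ux} \subseteq G$, and Lemma \ref{SparseLemma3}(b) merges $H$ and $H_{ux}$ at the common vertex $x$ into a strictly larger tight subgraph, contradicting maximality. In case (a), fix two non-edges $ux_1, ux_2$ with corresponding tight $H_1, H_2$ supplied by $(ii)$: in the generic configuration $V(H) \cap V(H_i) = \{x_i\}$ and $V(H_1) \cap V(H_2) = \{u\}$, a direct inclusion--exclusion count shows that the triple union $H \cup H_1 \cup H_2$ has $V^* := |V(H)| + |V(H_1)| + |V(H_2)| - 3$ vertices and $2V^* - 3$ edges, hence is $(2,3)$-tight; in subcases where some pairwise intersection exceeds one vertex, Lemma \ref{SparseLemma3}(a) applies directly to a partial union instead. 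In every subcase the resulting tight subgraph strictly extends $H$, contradicting its maximality.

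The main obstacle is the residual subcase of case (b) in which $u$ is adjacent to every vertex of $V(H)$, which sparsity then forces to imply $|V(H)| \leq k$. To resolve this I use the global structure entailed by $(ii)$: the vertex set $V(G)$ decomposes as a disjoint union of maximal $(k,k)$-tight subgraphs $T_1,\ldots,T_s$ (each $|V(T_i)| \leq k$) together with a residual set $V_0$ of vertices adjacent to every other vertex of $G$, with complete bipartite adjacency also holding between distinct $T_i$'s and between $V_0$ and each $T_i$. The sparsity count on $V(T_i) \cup V(T_j)$ yields the product bound $|V(T_i)| \cdot |V(T_j)| \leq k$, while the global sparsity count on $V(G)$, combined with $|V(G)| \geq 2k$ and the forced adjacencies, is incompatible with $s \geq 2$ and also with $s = 1, V_0 \neq \emptyset$; the most delicate of these is ruling out $s \geq 2$ with $V_0 \neq \emptyset$, for which the inequality $n(n-k) \leq (k+1)|V_0| + 2k(s-1)$ (derived from the full edge count) must be played against the bound $s \leq k/2$ and the sparsity bound $|V_0| \leq 2k$ on the complete subgraph induced by $V_0$. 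Hence $s = 1$ and $V_0 = \emptyset$, so $H = T_1$ spans $V(G)$ and $G$ is $(k,l)$-tight.

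Finally the moreover clause follows by greedy enlargement: iteratively adjoin any distinct pair $v, w \in V(G)$ (possibly creating a parallel edge in the multi-graph setting of case (b)) whose addition preserves $(k,l)$-sparsity, and continue until no such pair remains. By Lemma \ref{SparseLemma2} the terminal graph $G'$ satisfies $(ii)$, and the equivalence just proved gives that $G'$ is $(k,l)$-tight.
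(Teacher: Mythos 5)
Your argument is correct in substance but takes a genuinely different route from the paper's. The paper proves $(ii)\Rightarrow(i)$ globally: it forms the union $H$ of \emph{all} the tight subgraphs $H_{v,w}$, decomposes $H$ via Lemma \ref{SparseLemma3} into pieces sharing at most one vertex (case (a)) or pairwise vertex-disjoint pieces (case (b)), and then performs a single edge count using the adjacencies forced between distinct pieces and from $V(G)\setminus V(H)$ to everything, concluding $|E(G)|\geq k|V(G)|-l$. You instead fix \emph{one} maximal tight subgraph $H$ containing a chosen non-edge and show it must be spanning by local absorption: in case (b) a single merge through a shared vertex via Lemma \ref{SparseLemma3}(b), and in case (a) either a merge via Lemma \ref{SparseLemma3}(a) or a direct count on the triple union $H\cup H_1\cup H_2$ when all pairwise intersections are singletons. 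These local steps check out (in the triple union the three edge sets are pairwise disjoint because the pairwise vertex intersections are singletons, so the union has exactly $2|V|-3$ edges and sparsity makes it tight), and your handling of the complete case, of deducing tightness from a spanning tight subgraph, and of the moreover clause agrees with the paper's. Your approach is cleaner and more local in the generic situations; the paper's buys uniformity, with one count covering everything.

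The one spot you must write out in full is the residual subcase of (b). You are right that it exists: for a multi-graph, a $(k,k)$-tight subgraph containing a non-adjacent pair need only have $3$ vertices (e.g.\ $k$ parallel edges on each of two of the three pairs), so the maximal tight $H$ can have $|V(H)|\leq k$ and an outside vertex may be adjacent to all of it. (This is in fact a point where the paper's own proof is too quick: it asserts that every $H_{v,w}$ has at least $2k+1$ vertices, which is valid only for simple graphs, whereas the proposition is applied to body-bar multi-graphs.) Your proposed resolution --- decompose $V(G)$ into vertex-disjoint maximal tight pieces $T_1,\dots,T_s$ plus a set $V_0$ of universally adjacent vertices, then play the product bound $|V(T_i)|\,|V(T_j)|\leq k$ and the sparsity of the induced subgraphs on $V_0\cup V(T_i)$ against $|V(G)|\geq 2k$ in a global edge count --- does close: in every configuration with $s\geq 2$ or $V_0\neq\emptyset$ the forced edges strictly exceed $k|V(G)|-k$, contradicting sparsity. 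But as written this is only a sketch, and the inequality you quote, $n(n-k)\leq (k+1)|V_0|+2k(s-1)$, is not derived. Note also the internal inconsistency that your second paragraph asserts unconditionally that $u$ has a non-neighbour in $V(H)$ in case (b), which is precisely what the third paragraph then has to repair. With the residual case written out carefully, the proof is complete.
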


\proof
The implication $(i)\Rightarrow (ii)$ is immediate and so we prove $(ii)\Rightarrow (i)$.
If $G$ is a complete graph then since $G$ is $(k,l)$-sparse we have $G=K_2$ or $G=K_3$ when $(a)$ holds and $G=K_{2k}$ when $(b)$ holds. 
Since $K_{2k}$ is $(k,k)$-tight and $K_2$ and $K_3$ are both $(2,3)$-tight,
the conclusion of the lemma is clear in these cases.

We now assume that $G$ is not a complete graph.
Suppose that every pair of distinct vertices $v,w\in V(G)$ with $vw\notin E(G)$ is contained in a $(k,l)$-tight subgraph $H_{v,w}$.
Let $H$ be the subgraph of $G$ which is the union of all subgraphs $H_{v,w}$.

Consider the case when $k=2$ and $l=3$.
By Lemma \ref{SparseLemma3}, $H$ can be decomposed as a union of $(2,3)$-tight subgraphs $H_1,\ldots,H_n$ such that no pair $H_i$, $H_j$ shares more than one vertex.
 Note that \[|E(H)| = \sum_{j=1}^n |E(H_j)| =\sum_{j=1}^n (2|V(H_j)|-3) \geq 2|V(H)|-3n\]
Each of the subgraphs $H_{v,w}$ must contain at least four vertices since it is $(2,3)$-tight, contains the vertices $v$ and $w$ but does not contain the edge $vw$. 
Suppose $H_i\not= H_j$. Since $H_i$ and $H_j$ intersect in at most one vertex and each contains at least four vertices there exists distinct  $v_i,w_i\in V(H_i)\backslash  V(H_j)$ and 
$v_j,w_j\in V(H_j)\backslash  V(H_i)$.
By our definition of $H$ the four interconnecting edges $v_iv_j,v_iw_j,w_iv_j$ and $w_iw_j$ must  belong to $G$.
It follows that there are at least $4\left(\frac{n(n-1)}{2}\right)$ distinct interconnecting edges between the subgraphs $H_1,\ldots,H_n$.
Also if $v\in V(G)\backslash V(H)$ then again by the definition of $H$ we must have $vw\in E(G)\backslash E(H)$ for all $w\in V(G)\backslash \{v\}$.
Thus \[|E(G)\backslash E(H)|\geq (|V(G)|-1)|V(G)\backslash V(H)|+4\left(\frac{n(n-1)}{2}\right)\]
For $n\geq 1$  we have 
\begin{eqnarray*}
|E(G)| &\geq& |E(H)|+2n(n-1) +  (|V(G)|-1)|V(G)\backslash V(H)|\\
&\geq& 2|V(H)|-3n+2n(n-1) +(|V(G)|-1)|V(G)\backslash V(H)| \\
&=& 2|V(G)|-3n+2n(n-1) +(|V(G)|-3)|V(G)\backslash V(H)| \\
&\geq&  2|V(G)|-3
\end{eqnarray*}
Thus $G$ is $(2,3)$-tight.

Now consider the case when $k=l$.
By Lemma \ref{SparseLemma3}, $H$ can be decomposed as a vertex-disjoint union of $(k,k)$-tight subgraphs $H_1,\ldots,H_n$. 
Note that \[|E(H)| = \sum_{j=1}^n |E(H_j)| =\sum_{j=1}^n (k|V(H_j)|-k) = k|V(H)|-kn\]
Each of the subgraphs $H_{v,w}$ must contain at least $2k+1$ vertices since it is $(k,k)$-tight, contains the vertices $v$ and $w$ but does not contain the edge $vw$. 
It follows that there are at least $\tau(k,n):=(2k+1)^2\left(\frac{n(n-1)}{2}\right)$ distinct interconnecting edges between the subgraphs $H_1,\ldots,H_n$.

For $n\geq 1$  we have 
\begin{eqnarray*}
|E(G)| &\geq& |E(H)|+\tau(k,n)+ (|V(G)|-1)|V(G)\backslash V(H)| \\
&=& k|V(H)|-kn+\tau(k,n)+(|V(G)|-1)|V(G)\backslash V(H)| \\
&=& k|V(G)|-kn +\tau(k,n)+(|V(G)|-(k+1))|V(G)\backslash V(H)|\\
&\geq&  k|V(G)|-k
\end{eqnarray*}
Thus $G$ is $(k,k)$-tight. This completes the proof that $(ii)\Rightarrow (i)$.

To prove the final statement suppose that $G$ is not $(k,l)$-tight.
By the above arguments there exists a pair of distinct vertices $v,w\in V(G)$ with $vw\notin E(G)$ such that no $(k,l)$-tight subgraph of $G$ contains both $v$ and $w$.
Let $G_1=G\cup\{vw\}$.
By Lemma \ref{SparseLemma2}, $G_1$ is also $(k,l)$-sparse and so we can repeat this argument with $G_1$ in place of $G$. In this way we construct  $G_1,G_2,G_3,\ldots$ by adjoining edges to $G$.
This procedure must terminate after finitely many steps and so $G_{n}$ is $(k,l)$-tight for some $n\in\mathbb{N}$.  
\endproof

\subsection{Remarks.}
Accounts of the foundations of geometric rigidity theory can be found in
Alexandrov \cite{alex}, Graver \cite{gra}, Graver, Servatius and Servatius \cite{gra-ser-ser} and Whiteley \cite{whi-1}. Also \cite{gra-ser-ser} has a comprehensive guide to the literature up to 1993.  
Gluck \cite{glu} proved that every infinitesimally rigid finite bar-joint framework in the Euclidean space $\bR^3$ is continuously rigid and showed that for the usual algebraic notion of a generic framework $(G,p)$ (the vertex coordinates of the $p_i$ should be algebraically independent over $\bQ$) there is an equivalence  between infinitesimal rigidity, continuous rigidity and what might be called perturbational rigidity (all sufficiently nearby equivalent frameworks are congruent). 
The influential papers of Asimow and Roth
introduced regular frameworks as a more appropriate form of genericity. We have followed Graver \cite{gra} in our definition of ``generic", requiring that all frameworks supported by the $p_i$ should be regular. 


\section{Rigidity of countable graphs}
\label{CountableGraphs}
In this section we consider bar-joint frameworks $(G,p)$ in which $G$ is a countably infinite graph.
Our main result is to establish the general principle that infinitesimal rigidity is equivalent to 
\emph{local relative rigidity} in the sense that every finite subframework is rigid relative to some finite containing superframework. It follows that every countable bar-joint framework which can be expressed as a union of an increasing sequence of  infinitesimally rigid finite frameworks is itself infinitesimally rigid. In general this implication is only one-way and we illustrate this fact with an example  in $(\mathbb{R}^3,\|\cdot\|_2)$. However, we prove in Theorem \ref{InfiniteLaman} that the two notions are  equivalent for bar-joint frameworks in $(\mathbb{R}^2,\|\cdot\|_q)$ for all $q\in(1,\infty)$.

\subsection{Relative infinitesimal rigidity.}
We first define a weak form of induced rigidity for a subgraph and prove that in two dimensional $\ell^q$ spaces relative infinitesimal rigidity is equivalent to the existence of a rigid containing framework.

\begin{defn} Let $(G,p)$ be a bar-joint framework in a normed  space $(X,\|\cdot\|)$.
\begin{enumerate}
\item
A subframework $(H,p)$ is {\em relatively infinitesimally rigid} in $(G,p)$
if there is no non-trivial infinitesimal flex of $(H,p)$ which extends to an infinitesimal flex of $(G,p)$.
\item
A  subframework $(H,p)$ has an {\em infinitesimally rigid container} in $(G,p)$
if there exists an infinitesimally rigid subframework of $(G,p)$ which contains $(H,p)$ as a subframework.
\end{enumerate}
\end{defn}

If the complete bar-joint framework $(K_{V(H)},p)$ is infinitesimally rigid in $(X,\|\cdot\|)$ 
then relative infinitesimal rigidity is characterised by the property
\[\F(G,p) = \F(G\cup K_{V(H)},p)\]
It follows that relative infinitesimal rigidity is a generic property 
for bar-joint frameworks in $(\mathbb{R}^d,\|\cdot\|_q)$ for all $q\in (1,\infty)$ since if $p$ and $\tilde{p}$ are two generic placements of $G$ then
\[\F_q(G,\tilde{p})\cong \F_q(G,p) = \F_q(G\cup K_{V(H)},p) \cong \F_q(G\cup K_{V(H)},\tilde{p})\]
To ensure that $(K_{V(H)},p)$ is infinitesimally rigid in the Euclidean case we require that $H$ contains at least $d+1$ vertices while in the non-Euclidean cases $H$ must contain at least $2d$ vertices.
We will say that a subgraph $H$ is {\em relatively rigid} in $G$ with respect to $(\mathbb{R}^d,\|\cdot\|_q)$ if the subframework $(H,p)$ is relatively infinitesimally rigid in $(G,p)$ for some (and hence every) generic placement of $G$. Note that the existence of an infinitesimally rigid container is also a generic property for bar-joint frameworks in $(\bR^d,\|\cdot\|_q)$.
We will say that a subgraph $H$ has a  {\em rigid container} in $G$ with respect to $(\mathbb{R}^d,\|\cdot\|_q)$ if there exists a rigid subgraph of $G$ which contains $H$.

If $(H,p)$ has an infinitesimally rigid container in $(G,p)$ then $(H,p)$ is relatively infinitesimally
rigid in $(G,p)$. The converse statement is not true in general as the following example shows.

\begin{eg}
Figure \ref{Banana} illustrates a generic bar-joint framework $(G,p)$ in $(\bR^3,\|\cdot\|_2)$ 
with subframework $(H,p)$ indicated by the shaded region. Note that $(H,p)$ is relatively infinitesimally rigid in $(G,p)$ but does not have an infinitesimally rigid container in $(G,p)$.
\end{eg}
 
\begin{figure}[h]
\centering
    \begin{tikzpicture}[scale=0.24]
 
  \clip (-7.5,-4.2) rectangle (7.5cm,4.2cm); 
  
  \coordinate (A1) at (-6.5,0);
  \coordinate (A2) at (-4.5,0.7);
  \coordinate (A3) at (-4.5,-0.7);

  \coordinate (A4) at (6.5,0);
  \coordinate (A5) at (4.5,0.7);
  \coordinate (A6) at (4.5,-0.7);

  \coordinate (A7) at (0,4);
  \coordinate (A8) at (0,-4);

  \coordinate (A9) at (0,2.4);
  \coordinate (A10) at (-0.9,1.2);
  \coordinate (A11) at (0.9,1.2);

  \draw[fill=gray!12] (A7) -- (A1) -- (A2) -- (A3) -- cycle;
  \draw[fill=gray!12] (A3) -- (A7) -- (A2) -- cycle;
 
  \draw[fill=gray!12] (A8) -- (A1) -- (A2) -- (A3) -- cycle;
  \draw[fill=gray!12] (A3) -- (A8) -- (A2) -- cycle;

  \draw (A7) -- (A1) -- (A2) -- (A3) -- cycle;
  \draw (A3) -- (A7) -- (A2) -- cycle;
 
  \draw (A8) -- (A1) -- (A2) -- (A3) -- cycle;
  \draw (A3) -- (A8) -- (A2) -- cycle;
  \draw (A1) -- (A2) -- (A3) -- cycle;

  \draw[fill=gray!12] (A7) -- (A4) -- (A5) -- (A6) -- cycle;
  \draw[fill=gray!12] (A6) -- (A7) -- (A5) -- cycle;
  \draw (A6) -- (A4);
  \draw[fill=gray!12] (A8) -- (A4) -- (A5) -- (A6) -- cycle;
  \draw[fill=gray!12] (A6) -- (A8) -- (A5) -- cycle;

  \draw (A7) -- (A4) -- (A5) -- (A6) -- cycle;
  \draw (A6) -- (A7) -- (A5) -- cycle;
  \draw (A6) -- (A4);
  \draw (A8) -- (A4) -- (A5) -- (A6) -- cycle;
  \draw (A6) -- (A8) -- (A5) -- cycle;

  \draw (A3) -- (A9) -- (A10) -- (A11) -- (A3) -- (A10);
  \draw (A9) -- (A11);
  \draw (A6) -- (A9) -- (A10) -- (A11) -- (A6) -- (A10);

  \node[draw,circle,inner sep=1.2pt,fill] at (A1) {};
  \node[draw,circle,inner sep=1.2pt,fill] at (A2) {};
  \node[draw,circle,inner sep=1.2pt,fill] at (A3) {};
  \node[draw,circle,inner sep=1.2pt,fill] at (A4) {};
  \node[draw,circle,inner sep=1.2pt,fill] at (A5) {};
  \node[draw,circle,inner sep=1.2pt,fill] at (A6) {};
  \node[draw,circle,inner sep=1.2pt,fill] at (A7) {};
  \node[draw,circle,inner sep=1.2pt,fill] at (A8) {};
  \node[draw,circle,inner sep=1.2pt,fill] at (A9) {};
  \node[draw,circle,inner sep=1.2pt,fill] at (A10) {};
  \node[draw,circle,inner sep=1.2pt,fill] at (A11) {};

\end{tikzpicture}

  \caption{An example of a relatively rigid subgraph in the Euclidean space $\mathbb{R}^3$
which does not have a rigid container.}
\label{Banana}  
\end{figure}
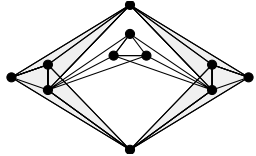

In the following we will say that a finite simple graph $G$ is {\em independent} in $(\mathbb{R}^d,\|\cdot\|_q)$ if the rigidity matrix $R_q(G,p)$ is independent for some (and hence every) generic placement $p:V(G)\to\mathbb{R}^d$. 

\begin{prop}
\label{IndepLemma}
Let $G$ be a finite simple graph and let $q\in(1,\infty)$.
Suppose that one of the following conditions holds.
\begin{enumerate}[(a)]
\item $q=2$, $l=3$ and $G$ contains at least two vertices, or,
\item $q\not=2$, $l=2$ and $G$ contains at least four vertices.
\end{enumerate}
Then the following statements are equivalent.
\begin{enumerate}[(i)]
\item $G$ is independent in $(\mathbb{R}^2,\|\cdot\|_q)$.
\item $G$ is $(2,l)$-sparse.
\end{enumerate}
\end{prop}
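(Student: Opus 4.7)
The plan is to prove the two implications separately, packaging the sparse-extension Proposition~\ref{SparseLemma1} with the Laman-type theorems \ref{Laman} and \ref{qNormLaman}; no new rigidity-theoretic input should be required.

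For $(i)\Rightarrow(ii)$ I would work directly at the level of the rigidity matrix. Fix a generic placement $p$ of $G$ and let $H$ be any subgraph with $|V(H)|\geq 2$. By the row formula for $R_q(G,p)$ recorded in Section~\ref{Preliminaries}, the rows indexed by $E(H)$ have nonzero entries only in the $2|V(H)|$ column blocks corresponding to $V(H)$, so projecting onto those columns reproduces (up to reindexing) precisely $R_q(H,p|_{V(H)})$, with $p|_{V(H)}$ itself a generic placement of $H$. Since restriction of an independent row system remains independent, $|E(H)|=\rank R_q(H,p|_{V(H)})$. Combining this with the general estimate $\rank R_q(H,p|_{V(H)}) \leq 2|V(H)|-\dim\T_q(H,p|_{V(H)})$ and the values $\dim\T_2=3$ and $\dim\T_q=2$ for $q\ne 2$ recalled in Section~\ref{Preliminaries}, both attained on any generic subconfiguration with at least two vertices, yields exactly the sparsity bound $|E(H)|\leq 2|V(H)|-l$.

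For the converse $(ii)\Rightarrow(i)$, the size hypotheses in (a) and (b) are precisely those required to invoke Proposition~\ref{SparseLemma1} with $k=2$ and $l\in\{2,3\}$, so $G$ is a spanning subgraph of some $(2,l)$-tight graph $G'$ on $V(G)$. Laman's theorem (Theorem~\ref{Laman}) in the Euclidean case, and Theorem~\ref{qNormLaman} in the non-Euclidean case, then show that $G'$ is rigid in $(\mathbb{R}^2,\|\cdot\|_q)$, so for generic $p$,
\[
\rank R_q(G',p) = 2|V(G')|-\dim\T_q = 2|V(G')|-l = |E(G')|.
\]
Hence $R_q(G',p)$ has linearly independent rows, and the sub-row-system indexed by $E(G)\subseteq E(G')$ is also independent, giving $G$ independent as required.

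The argument is essentially matroid-style repackaging, so there is no deep obstacle; the main thing to verify is the bookkeeping in $(i)\Rightarrow(ii)$, namely that the restriction of $R_q(G,p)$ to the rows $E(H)$ and the columns for $V(H)$ really is the rigidity matrix of $(H,p|_{V(H)})$ at a generic placement, which is immediate from the explicit form of the rows. The substantive content of the proposition sits in \ref{SparseLemma1} and the Laman-type theorems that are already in hand.
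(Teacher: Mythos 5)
Your proof is correct and follows essentially the same route as the paper: the forward implication via restricting the independent row system of $R_q(G,p)$ to a subgraph and applying the rank bound $\rank R_q(H,p)\leq 2|V(H)|-\dim\T_q(H,p)$, and the converse via Proposition~\ref{SparseLemma1} together with Theorems~\ref{Laman} and~\ref{qNormLaman}. You merely spell out the bookkeeping that the paper leaves implicit.
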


\proof
Let $p:V(G)\to \mathbb{R}^2$ be a generic placement of $G$ in $(\mathbb{R}^2,\|\cdot\|_q)$.
If $G$ is independent and  $H$ is a subgraph of $G$ then
  $|E(H)|=\rank R_q(H,p)\leq2|V(H)|-l$.
We conclude that $G$ is $(2,l)$-sparse.

Conversely, if $G$ is $(2,l)$-sparse then, by Proposition \ref{SparseLemma1}, $G$ is a subgraph of some $(2,l)$-tight graph $G'$ with $V(G)=V(G')$. By Laman's theorem and its analogue for the non-Euclidean case (Theorems \ref{Laman} and \ref{qNormLaman}), $(G',p)$ is minimally infinitesimally rigid and so $G$ is independent. 
\endproof

We now show that relative infinitesimal rigidity does imply the existence of an infinitesimally rigid container for generic bar-joint frameworks in $(\mathbb{R}^2,\|\cdot\|_q)$ for all $q\in (1,\infty)$.

\begin{thm}
\label{RelRigidProp}
Let $G$ be a finite simple graph and let $H$ be a subgraph of $G$.
Suppose that $q\in(1,\infty)$ and that one of the following conditions holds.
\begin{enumerate}[(a)]
\item $q=2$ and $H$ contains at least two vertices, or,
\item $q\not=2$ and $H$ contains at least four vertices.
\end{enumerate}
Then the following statements are equivalent.
\begin{enumerate}[(i)]
\item $H$ is relatively  rigid in $G$ with respect to $(\mathbb{R}^2,\|\cdot\|_q)$.
\item $H$ has a rigid container in $G$ with respect to $(\mathbb{R}^2,\|\cdot\|_q)$.
\end{enumerate}
\end{thm}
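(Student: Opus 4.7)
The plan is to prove the equivalence using the sparsity characterization of the 2D rigidity matroid provided by Proposition \ref{IndepLemma}, together with the gluing Lemmas \ref{SparseLemma2} and \ref{SparseLemma3}. Set $l = 3$ if $q = 2$ and $l = 2$ if $q \neq 2$. The direction $(ii) \Rightarrow (i)$ is immediate: if $F \subseteq G$ is a rigid subgraph with $V(H) \subseteq V(F)$, then any infinitesimal flex of $(G, p)$ restricts to a flex of $(F, p)$, which must be trivial by rigidity of $F$, so its further restriction to $V(H)$ is trivial on $H$.

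For $(i) \Rightarrow (ii)$, I would first extract a maximal $(2, l)$-sparse spanning subgraph $G_0 \subseteq G$. By Proposition \ref{IndepLemma}, $G_0$ is independent in the $(2, l)$-sparsity matroid on $K_{V(G)}$ and has the same rank as $G$. The relative rigidity hypothesis then translates into the matroidal statement that every edge of $K_{V(H)}$ lies in the closure of $E(G_0)$, so Lemma \ref{SparseLemma2} yields, for each pair $v, w \in V(H)$ with $vw \notin E(G_0)$, a $(2,l)$-tight subgraph $H_{vw} \subseteq G_0$ containing $v$ and $w$. In the Euclidean case ($l = 3$), an edge $vw \in E(G_0)$ is itself $(2,3)$-tight (being $K_2$), so every pair of distinct vertices in $V(H)$ is contained in some $(2,3)$-tight subgraph of $G_0$.

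The rigid container is then built inductively as a $(2, l)$-tight subgraph $F \subseteq G_0$ with $V(H) \subseteq V(F)$. Choose an initial tight subgraph $F^*$ with $|V(H) \cap V(F^*)| \geq 2$, furnished by one of the $H_{vw}$ (or by $F^* = K_4$ in the non-Euclidean borderline case $|V(H)| = 4$ with $K_{V(H)} \subseteq E(G_0)$), and iteratively append a vertex $u \in V(H) \setminus V(F^*)$ by merging $F^*$ with appropriate $H_{uv}$ or edges of $G_0$ via Lemma \ref{SparseLemma3}, preserving $(2, l)$-tightness. Theorems \ref{Laman} and \ref{qNormLaman} then ensure that the resulting subgraph $F^* \subseteq G$ is infinitesimally rigid.

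The main obstacle is the Euclidean case $l = 3$, where Lemma \ref{SparseLemma3}(a) requires two common vertices for a tight union, whereas $V(F^*) \cap V(H_{uv})$ may contain only the single vertex $v$. This is overcome by using two witnesses $v, v' \in V(H) \cap V(F^*)$ at each step: an inclusion-exclusion count on vertices and edges shows that $F^* \cup H_{uv} \cup H_{uv'}$ has $|E| = 2|V| - 3$ exactly and so is $(2,3)$-tight even when the pairwise intersections are single vertices. The subcases where $uv$ or $uv'$ lies in $E(G_0)$ are handled by replacing the corresponding $H_{uv}$ with the tight edge $\{uv\} = K_2$, and when all edges from $u$ to $V(H) \cap V(F^*)$ lie in $E(G_0)$, the sparsity of $G_0$ forces exactly two such edges $uv_1, uv_2$, making $F^* \cup \{uv_1, uv_2\}$ tight. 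The non-Euclidean case is analogous but easier since Lemma \ref{SparseLemma3}(b) requires only a single shared vertex, making the successive merging essentially automatic.
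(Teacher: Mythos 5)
Your argument is correct and follows essentially the same route as the paper: reduce to an independent (maximal sparse) subgraph, translate relative rigidity via Proposition \ref{IndepLemma} into the statement that each pair $v,w\in V(H)$ lies in a $(2,l)$-tight subgraph $H_{v,w}$ of it (Lemma \ref{SparseLemma2}), and assemble a rigid container using Theorems \ref{Laman} and \ref{qNormLaman}. The only divergence is the final assembly: the paper simply takes the union of $H$ with all the $H_{v,w}$ and observes it is rigid (any flex restricts to a single rigid motion on $V(H)$ because $K_{V(H)}$ is rigid, and each tight $H_{v,w}$, containing two, resp.\ one, vertices of $V(H)$, is then forced to carry that same motion), whereas you build an honest $(2,l)$-tight spanning container by iterated merging --- more laborious but equally valid; just remember to union the resulting tight graph with $H$ itself at the end, since the container must contain the edges of $H$ as well as its vertices.
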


\proof
$(i)\Rightarrow(ii)$
Consider first the case when $G$ is independent with respect to $(\mathbb{R}^2,\|\cdot\|_q)$.  
By Proposition \ref{IndepLemma}, $G$ is $(2,l)$-sparse (where $l=3$ when $q=2$ and $l=2$ when $q\not=2$).
Since $K_{V(H)}$ is rigid in $(\mathbb{R}^2,\|\cdot\|_q)$ the relative rigidity property implies that 
\[\F_q(G,p)=\F_q(G\cup K_{V(H)},p)\] for every generic placement $p\in\P(G)$.
It follows that if $v,w\in V(H)$ and $vw\notin E(G)$ then $G\cup\{vw\}$ is dependent.
By Proposition \ref{IndepLemma}, $G\cup\{vw\}$ is not $(2,l)$-sparse.
Thus by Lemma \ref{SparseLemma2} there exists a $(2,l)$-tight subgraph $H_{v,w}$ of $G$ with $v,w\in V(H_{v,w})$.
By Theorems \ref{Laman} and \ref{qNormLaman}, $H_{v,w}$ is rigid $(\mathbb{R}^2,\|\cdot\|_q)$.
Let $H'$ be the subgraph of $G$ which consists of $H$ and the subgraphs $H_{v,w}$.
Then $H'$ is  rigid in $(\mathbb{R}^2,\|\cdot\|_q)$ and so
$H'$ is a rigid container for $H$ in $G$.

If $G$ is dependent then let $p:V(G)\to\mathbb{R}^2$ be a generic placement of $G$.
There exists an edge $vw\in E(G)$ such that 
 \[\ker R_q(G,p)=\ker R_q(G\backslash vw,p)\]
Let $G_{1}=G\backslash vw$ and $H_{1}=H\backslash vw$ and note that $H_{1}$ is relatively  rigid in $G_{1}$. Continuing to remove edges in this way we arrive after finitely many iterations at  subgraphs $H_{n}$ and $G_{n}$ such that $V(H_n)=V(H)$, $H_{n}$ is relatively  rigid in $G_{n}$ and $G_{n}$ is independent.
By the above argument there exists a rigid container $H'_{n}$ for $H_{n}$ in  $G_{n}$.
Now $H' = H'_{n} \cup H$ is a rigid container for $H$ in $G$. 

$(ii)\Rightarrow (i)$ Let $H'$ be a rigid container for $H$ in $G$ and let $p:V(G)\to\mathbb{R}^2$ be a generic placement of $G$ in $(\mathbb{R}^2,\|\cdot\|_q)$.
Then no non-trivial infinitesimal flex of $(H,p)$ can be extended to an infinitesimal flex of $(H',p)$ and so the result follows. \endproof

\subsection{Flex cancellation and relatively rigid towers.}
A tower of bar-joint frameworks in a normed vector space $(X,\|\cdot\|)$ is a sequence $\{(G_k,p_k):k\in\mathbb{N}\}$ of finite bar-joint frameworks in $(X,\|\cdot\|)$ such that $(G_k,p_k)$ is a subframework of $(G_{k+1},p_{k+1})$ for each $k\in\mathbb{N}$.
The linear maps  \[\rho_{j,k}:\F(G_{k},p_{k}) \to \F(G_j,p_j)\] 
defined for all $j\leq k$ by the restriction of flexes determine an inverse system $(\F(G_k,p_k),\rho_{j,k})$
with associated vector space inverse limit $\varprojlim \F(G_k,p_k)$.

\begin{defn}
A tower of bar-joint frameworks $\{(G_k,p_k):k\in\mathbb{N}\}$  has the {\em flex cancellation property} if for each $k\in\mathbb{N}$ and any non-trivial infinitesimal flex $u_k$ of $(G_{k},p_k)$ there is an $m>k$ such that $u_k$ does not extend to an infinitesimal flex of $(G_m,p_m)$. 
\end{defn}

If a bar-joint framework $(G_m,p_m)$ in a tower $\{(G_k,p_k):k\in\mathbb{N}\}$ has a non-trivial infinitesimal flex $u_m:V(G_m)\to X$ which can be extended to every containing framework in the tower then we 
call $u_m$ an enduring infinitesimal flex for the tower. 

\begin{lem}
\label{FlexProp}
Let $\{(G_k,p_k):k\in\mathbb{N}\}$ be a tower of bar-joint frameworks in a finite dimensional  normed  space $(X,\|\cdot\|)$ 
and let $u_1$ be an infinitesimal flex of $(G_1,p_1)$ which is an enduring flex for the tower. 
Then there exists a sequence $\{u_k\}_{k=1}^\infty$ such that, for each $k\in\mathbb{N}$, $u_{k}$ is an infinitesimal flex of $(G_k,p_k)$ and $u_{k+1}$ is an extension of $u_k$.
\end{lem}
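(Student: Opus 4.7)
The plan is to build the compatible sequence by an inverse-limit style argument that exploits the fact that each flex space $\F(G_k,p_k)$ is finite-dimensional, since $V(G_k)$ is finite and $X$ is finite-dimensional. For each $k\in\bN$ set
\[E_k = \{v\in\F(G_k,p_k): v|_{V(G_1)}=u_1\}.\]
Each $E_k$ is an affine subspace of $\F(G_k,p_k)$, and it is non-empty by the enduring hypothesis on $u_1$. For $k\leq m$ the restriction map $\rho_{k,m}:\F(G_m,p_m)\to\F(G_k,p_k)$ sends $E_m$ into $E_k$, and I write $E_k^{(m)}:=\rho_{k,m}(E_m)$, a non-empty affine subspace of $E_k$.

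Fix $k\in\bN$. The sequence $E_k^{(k)}\supseteq E_k^{(k+1)}\supseteq E_k^{(k+2)}\supseteq\cdots$ is a descending chain of non-empty affine subspaces of the finite-dimensional vector space $\F(G_k,p_k)$, so it must stabilise. Hence
\[E_k^\infty := \bigcap_{m\geq k} E_k^{(m)}\]
is a non-empty affine subspace of $\F(G_k,p_k)$. Note that $E_1=\{u_1\}$, so $E_1^\infty=\{u_1\}$.

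The crux of the argument is the claim that $\rho_{k,k+1}(E_{k+1}^\infty)=E_k^\infty$ for every $k$. The inclusion $\subseteq$ is immediate from the definitions, since restricting a flex that survives to every level still survives to every level. For the reverse inclusion, fix $w\in E_k^\infty$ and, for each $m\geq k+1$, consider
\[S_m=\{v\in E_{k+1}^{(m)}:\rho_{k,k+1}(v)=w\}.\]
Because $w\in E_k^{(m)}=\rho_{k,m}(E_m)$, there exists $v_m\in E_m$ with $\rho_{k,m}(v_m)=w$; then $\rho_{k+1,m}(v_m)\in S_m$, so each $S_m$ is non-empty. Each $S_m$ is an affine subspace of $\F(G_{k+1},p_{k+1})$, the $S_m$ are nested and descending, and finite-dimensionality once more forces stabilisation, so $\bigcap_{m\geq k+1} S_m$ is non-empty. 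Any element of this intersection lies in $E_{k+1}^\infty$ and restricts to $w$, establishing the claim. The compatible sequence is then built by induction: the given $u_1$ lies in $E_1^\infty$, and given $u_k\in E_k^\infty$ the claim supplies some $u_{k+1}\in E_{k+1}^\infty$ with $\rho_{k,k+1}(u_{k+1})=u_k$, which is exactly the required extension property.

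The main obstacle is precisely the surjectivity claim $\rho_{k,k+1}(E_{k+1}^\infty)=E_k^\infty$. Naively selecting some extension of $u_k$ to $(G_{k+1},p_{k+1})$ may yield a flex that cannot be further extended at some later level, breaking the inductive step; the fix is to discard at each level those flexes in $E_k$ which do not extend arbitrarily far and to work inside the ``persistent'' set $E_k^\infty$, where finite-dimensional stabilisation applied twice (once to define $E_k^\infty$ and again inside each fibre of $\rho_{k,k+1}$) is what guarantees the compatibility of consecutive choices.
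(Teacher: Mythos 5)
Your proof is correct and rests on the same mechanism as the paper's: a descending chain of subspaces of a finite-dimensional flex space must stabilise, and the stabilised (``persistent'') flexes are exactly those that extend compatibly one level at a time. The paper packages this slightly differently --- it works with the linear spaces of flexes restricting to scalar multiples of $u_1$, stabilises one chain at a time and treats the trivial-flex case separately --- whereas your affine fibres $E_k^\infty$ together with the surjectivity claim $\rho_{k,k+1}(E_{k+1}^\infty)=E_k^\infty$ give a cleaner, uniform Mittag--Leffler formulation of the same idea.
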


\begin{proof}
If $u_1$ is a trivial infinitesimal flex of $(G_1,p_1)$ then there exists $\gamma\in \mathcal{T}(X)$ such that $u_1(v)=\gamma(p_1(v))$ for all $v\in V(G_1)$.
Define $u_k(v)=\gamma(p_k(v))$ for all $v\in V(G_k)$ and all $k\in\mathbb{N}$. Then $\{u_k\}_{k=1}^\infty$ is the required sequence.
Now suppose that $u_1$ is non-trivial.
Denote by $\mathcal{F}^{(k)}\subset\mathcal{F}(G_k,p_k)$ the vector space of all infinitesimal flexes $u\in\F(G_k,p_k)$ with the property that there exists a scalar $\lambda\in\mathbb{K}$ such that
$u(v)=\lambda u_1(v)$ for all $v\in V(G_1)$. 
Let $\rho_k:\mathcal{F}^{(k)}\to\mathcal{F}^{(2)}$ be the restriction map
and note that since $u_1$ is an enduring flex we have a decreasing chain of non-zero finite dimensional linear spaces
\[\mathcal{F}^{(2)}\supseteq\rho_3(\mathcal{F}^{(3)})\supseteq \rho_{4}(\mathcal{F}^{(4)})\supseteq\rho_{5}(\mathcal{F}^{(5)})\supseteq\cdots\]
Thus there exists $m\in\mathbb{N}$ such that $\rho_n(\mathcal{F}^{(n)})= \rho_{m}(\mathcal{F}^{(m)})$ for all $n>m$.
Since $u_1$ is non-trivial and enduring there is a necessarily non-trivial extension $\tilde{u}_{m}$ say in $\mathcal{F}^{(m)}$. 
Let $u_2$ be the restriction of $\tilde{u}_{m}$ to $(G_2,p_2)$.
Note that $u_2$ is an enduring flex since 
for each $n>m$ we have $u_2\in \rho_{m}(\mathcal{F}^{(m)})= \rho_{n}(\mathcal{F}^{(n)})$. 
Also $u_2$ is an extension of $u_1$. 
An induction argument can now be applied to obtain a sequence of consecutive extensions $u_{k}\in\F(G_{k}, p_{k})$. 
\end{proof}

A bar-joint framework  $(G,p)$ contains a tower $\{(G_k,p_k):k\in\mathbb{N}\}$ if  $(G_k,p_k)$ is a subframework of $(G,p)$ for each $k\in\mathbb{N}$.
A tower in $(G,p)$ is vertex-complete if $V(G)=\cup_{k\in\mathbb{N}} V(G_k)$ and edge-complete if
$E(G)=\cup_{k\in\mathbb{N}} E(G_k)$.
If a tower is edge-complete then the vector space $\F(G,p)$ of infinitesimal flexes is naturally isomorphic to the vector space inverse limit
\[\F(G,p)\cong\varprojlim\, \F(G_k,p_k)\]

\begin{prop}
\label{RelRigidLemma}
Let $(G,p)$ be a countable bar-joint framework in a finite dimensional normed space $(X,\|\cdot\|)$.
If $(G,p)$  is infinitesimally rigid then every edge-complete tower in $(G,p)$  has the flex cancellation property.
\end{prop}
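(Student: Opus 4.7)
The natural approach is proof by contradiction, feeding an assumed enduring flex into Lemma \ref{FlexProp} to assemble a flex of all of $(G,p)$ and then using infinitesimal rigidity of $(G,p)$ to derive a contradiction. Suppose the tower $\{(G_k,p_k):k\in\mathbb{N}\}$ is edge-complete but fails the flex cancellation property at some index $k$: there is a non-trivial $u_k\in\F(G_k,p_k)$ which extends to $\F(G_m,p_m)$ for every $m>k$. After re-indexing, this says exactly that $u_k$ is an enduring flex of the tail tower $\{(G_m,p_m):m\geq k\}$, which is itself a tower in the finite-dimensional space $(X,\|\cdot\|)$.

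Step one is to apply Lemma \ref{FlexProp} to this tail tower, obtaining a sequence $\{u_m\}_{m\geq k}$ with $u_m\in\F(G_m,p_m)$ and $u_{m+1}$ extending $u_m$ for every $m\geq k$. Step two is to assemble these into a single map $u:V(G)\to X$. Since $G_m\subseteq G_{m+1}$, edge-completeness of the original tower gives $E(G)=\bigcup_{m\geq k}E(G_m)$, and any vertex of $V(G)$ that lies outside $\bigcup_{m\geq k}V(G_m)$ must be isolated in $G$. Define $u(v):=u_m(v)$ for any $m\geq k$ with $v\in V(G_m)$ (consistent by construction) and $u(v):=0$ on the remaining isolated vertices. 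For any edge $vw\in E(G)$, pick $m$ with $vw\in E(G_m)$; the flex condition for $u_m$ on $vw$ is precisely the flex condition for $u$ on $vw$, so $u\in\F(G,p)$.

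Step three uses the hypothesis that $(G,p)$ is infinitesimally rigid to conclude that $u$ is trivial: there exists $\gamma\in\T(X)$ with $u(v)=\gamma(p_v)$ for all $v\in V(G)$. Since $p_k$ is the restriction of $p$ to $V(G_k)$, restricting this identity to $V(G_k)$ shows $u_k(v)=\gamma(p_{k,v})$ for all $v\in V(G_k)$, so $u_k\in\T(G_k,p_k)$. This contradicts the non-triviality of $u_k$ and completes the argument.

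The substantive content of the argument sits in Lemma \ref{FlexProp}, whose invocation is the one non-trivial step (and this is where finite-dimensionality of $X$ is used, via the stabilisation of a descending chain of finite-dimensional flex subspaces). The remaining passage from the lemma's sequential conclusion to a genuine flex of $(G,p)$ is forced by edge-completeness, and the descent of triviality under restriction is immediate from the definition of $\T(G,p)$ since a single ambient infinitesimal rigid motion $\gamma$ witnesses triviality for every subframework simultaneously. Consequently I do not anticipate any real obstacle beyond correctly lining up the re-indexed tail tower so that Lemma \ref{FlexProp} applies verbatim.
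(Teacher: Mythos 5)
Your argument is correct and follows essentially the same route as the paper: negate flex cancellation to obtain an enduring flex, invoke Lemma \ref{FlexProp} to build a compatible sequence of extensions, assemble them into a flex of $(G,p)$ via edge-completeness, and contradict infinitesimal rigidity. Your treatment is in fact slightly more careful than the paper's in two small respects — explicitly assigning zero to any isolated vertices outside $\bigcup_m V(G_m)$, and spelling out why triviality of $u$ would force triviality of $u_k$ upon restriction — but these are cosmetic refinements of the same proof.
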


\proof
Suppose there exists an edge-complete tower $\{(G_k,p_k):k\in\mathbb{N}\}$ of finite frameworks in $(G,p)$ which does not have the flex cancellation property. 
Then there exists a non-trivial infinitesimal flex of some $(G_k,p_k)$ which
is an enduring flex for the tower. We may assume without loss of generality that $k=1$.
By Lemma \ref{FlexProp} there is a sequence of infinitesimal flexes $u_1,u_2,u_3,\ldots$ for the chain with each flex extending the preceding flex. 
The tower is edge-complete and so this sequence defines an infinitesimal flex $u$ for $(G,p)$ by setting $u(v)=u_k(v)$ for all $v\in V(G_k)$ and all $k\in\bN$.
Since $u_1$ is a non-trivial infinitesimal flex of $(G_1,p_1)$ the flex $u$ is a non-trivial infinitesimal flex of $(G,p)$.  
\endproof

We can now establish the connection between relative rigidity, flex cancellation and infinitesimal rigidity for countable bar-joint frameworks.

\begin{defn}
A tower of bar-joint frameworks $\{(G_k,p_k):k\in\mathbb{N}\}$ is {\em relatively infinitesimally rigid} if $(G_{k},p_k)$ is relatively infinitesimally rigid in $(G_{k+1},p_{k+1})$ for each $k\in\mathbb{N}$.
\end{defn}

\begin{lem}
\label{RelRigidLemma2}
Let $\{(G_k,p_k):k\in\mathbb{N}\}$ be a framework tower in a finite dimensional normed  space $(X,\|\cdot\|)$.
If $\{(G_k,p_k):k\in\mathbb{N}\}$ has the flex cancellation property then there exists an increasing  sequence $(m_k)_{k=1}^\infty$ of natural numbers such that the tower $\{(G_{m_k},p_{m_k}):k\in\mathbb{N}\}$ is relatively infinitesimally rigid. 
\end{lem}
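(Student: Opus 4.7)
The plan is to combine finite-dimensional linear algebra with the flex cancellation hypothesis to extract an appropriate subsequence of indices. First, for each $k \in \bN$ and each $n \geq k$, I would consider the subspace
\[
V_{k,n} := \{\, u \in \F(G_k,p_k) : u \text{ extends to a flex in } \F(G_n,p_n)\,\}
\]
of $\F(G_k,p_k)$, which is simply the image of the restriction map $\F(G_n,p_n) \to \F(G_k,p_k)$. Since $V(G_k)$ is finite and $X$ is finite-dimensional, $\F(G_k,p_k)$ is a finite-dimensional subspace of $X^{V(G_k)}$. The chain $V_{k,k} \supseteq V_{k,k+1} \supseteq V_{k,k+2} \supseteq \cdots$ is decreasing in this finite-dimensional space, so it must stabilise at some least index $N(k) \geq k$, meaning $V_{k,n} = V_{k,N(k)}$ for all $n \geq N(k)$.

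The next step is to identify the stable value as exactly $\T(G_k,p_k)$. Trivial flexes extend automatically along the tower, since the ambient rigid motion $\Gamma$ producing a trivial flex of $(G_k,p_k)$ also produces a trivial flex of every containing $(G_n,p_n)$; this gives the inclusion $\T(G_k,p_k) \subseteq V_{k,N(k)}$. For the reverse inclusion, observe that any $u \in V_{k,N(k)}$ lies in $V_{k,n}$ for every $n \geq N(k)$, so $u$ extends to a flex of $(G_n,p_n)$ for every $n \geq k$. The flex cancellation property then forces $u$ to be trivial. Hence $V_{k,N(k)} = \T(G_k,p_k)$.

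With the stabilisation established, I would construct the required subsequence recursively by setting $m_1 := 1$ and $m_{k+1} := N(m_k)$. The identity
\[
V_{m_k,\,m_{k+1}} = \T(G_{m_k},p_{m_k})
\]
says precisely that every infinitesimal flex of $(G_{m_k},p_{m_k})$ which admits an extension to a flex of $(G_{m_{k+1}},p_{m_{k+1}})$ is already trivial, which is the definition of $(G_{m_k},p_{m_k})$ being relatively infinitesimally rigid in $(G_{m_{k+1}},p_{m_{k+1}})$. Thus the subtower $\{(G_{m_k},p_{m_k}) : k \in \bN\}$ is relatively infinitesimally rigid.

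There is no deep obstacle here; the argument is essentially bookkeeping combined with finite-dimensional stabilisation. The one subtle point worth verifying carefully is that flex cancellation is a pointwise statement about individual flexes and individual extension indices, rather than a statement about coherent towers of extensions as in Lemma \ref{FlexProp} — but this matches precisely the content of the chain $V_{k,\cdot}$ stabilising, so the desired conclusion drops out without invoking any coherent-extension argument.
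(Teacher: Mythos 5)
Your proof is correct and follows essentially the same route as the paper: both arguments use finite-dimensionality to force the decreasing chain of extendable-flex subspaces to stabilise (the paper phrases this as the eventual emptiness of the difference sets $\mathcal{F}^{(k)}$ of flexes extending to $(G_k,p_k)$ but not $(G_{k+1},p_{k+1})$, proved via a linear independence argument), and both then invoke flex cancellation to identify the stable subspace with $\mathcal{T}(G_k,p_k)$ before concluding by induction. The only cosmetic point is that $N(m_k)$ could equal $m_k$ (exactly when $(G_{m_k},p_{m_k})$ is already infinitesimally rigid), so to guarantee a strictly increasing sequence one should set $m_{k+1}=\max\{N(m_k),\,m_k+1\}$, which changes nothing since the chain is constant beyond $N(m_k)$.
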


\proof
Let $\mathcal{F}^{(k)}\subset\mathcal{F}(G_1,p_1)$ denote the set of all infinitesimal flexes of $(G_1,p_1)$ which extend to $(G_{k},p_{k})$ but not $(G_{k+1},p_{k+1})$.
Suppose there exists an increasing sequence $(n_k)_{k=1}^\infty$ of natural numbers such that $ \mathcal{F}^{(n_k)}\not=\emptyset$ for all $k\in\mathbb{N}$.
Choose an element $u_k\in\mathcal{F}^{(n_k)}$ for each $k\in\mathbb{N}$ and note that $\{u_k:k\in\mathbb{N}\}$ is a linearly independent set in $\mathcal{F}(G_1,p_1)$.
Since $\mathcal{F}(G_1,p_1)$ is finite dimensional we have a contradiction. Thus there exists $m_1\in\mathbb{N}$ such that $\mathcal{F}^{(k)}=\emptyset$ for all $k\geq m_1$ and so $(G_1,p_1)$ is relatively infinitesimally rigid in $(G_{m_1},p_{m_1})$. The result now follows by an induction argument. 
\endproof

\begin{thm}
\label{t:IR}
Let $(G,p)$ be a countable bar-joint framework in a finite dimensional real normed linear space $(X,\|\cdot\|)$.
Then the following statements are equivalent.
\begin{enumerate}[(i)]
\item $(G,p)$ is infinitesimally rigid.
\item $(G,p)$ contains a vertex-complete tower   which has the flex cancellation property.
\item $(G,p)$ contains a vertex-complete tower   which is relatively infinitesimally rigid.
\end{enumerate}
\end{thm}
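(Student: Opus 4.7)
The plan is to prove the cycle $(i) \Rightarrow (ii) \Rightarrow (iii) \Rightarrow (i)$. The first two implications come almost directly from the preceding results, while the last contains the real content.

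For $(i) \Rightarrow (ii)$, since $G$ is countable I enumerate $V(G) = \{v_1, v_2, \dots\}$ and $E(G) = \{e_1, e_2, \dots\}$ and define $G_k$ to be the finite subgraph with edge set $\{e_1, \dots, e_k\}$ and vertex set equal to $\{v_1,\dots,v_k\}$ together with the endpoints of $e_1,\dots,e_k$. Restricting $p$ produces a tower $(G_k, p_k)$ in $(G,p)$ that is simultaneously vertex-complete and edge-complete, and Proposition \ref{RelRigidLemma} immediately supplies the flex cancellation property. For $(ii) \Rightarrow (iii)$, given a vertex-complete tower with the flex cancellation property, Lemma \ref{RelRigidLemma2} yields an increasing sequence $m_1 < m_2 < \dots$ such that the subtower $\{(G_{m_k}, p_{m_k})\}$ is relatively infinitesimally rigid; since $m_k \to \infty$, vertex-completeness is preserved because $\bigcup_k V(G_{m_k}) = \bigcup_k V(G_k) = V(G)$.

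The substantive step is $(iii) \Rightarrow (i)$. I fix a vertex-complete, relatively infinitesimally rigid tower $\{(G_k,p_k)\}$ and any flex $u \in \F(G,p)$, and show $u$ is trivial. The restriction $u^{(k)} := u|_{V(G_k)}$ is a flex of $(G_k,p_k)$ that extends to the flex $u^{(k+1)}$ of $(G_{k+1},p_{k+1})$, so by relative infinitesimal rigidity $u^{(k)}$ is forced to be trivial. Thus each of the sets
\[
\T_k := \{\gamma \in \T(X) : \gamma(p_v) = u_v \text{ for all } v \in V(G_k)\}
\]
is non-empty, and they form a decreasing chain $\T_1 \supseteq \T_2 \supseteq \cdots$ of affine subspaces of $\T(X)$. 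Because $X$ is finite-dimensional, so is $\T(X)$, so the integer sequence $\dim \T_k$ eventually stabilizes, forcing the chain of affine subspaces to stabilize at some non-empty $\T_N$. Any $\gamma \in \T_N$ then satisfies $\gamma(p_v) = u_v$ for all $v \in V(G_k)$ with $k \geq N$, and by vertex-completeness this covers every vertex of $G$, so $u$ is trivial in $(G,p)$.

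The main obstacle lies in this last step. Relative rigidity yields, for each $k$, a trivializing rigid motion $\gamma_k \in \T(X)$ for $u^{(k)}$, but different $k$ can produce different $\gamma_k$; the crux is to exhibit a single global $\gamma \in \T(X)$ that simultaneously trivializes $u$ on every $V(G_k)$. Recasting this as a stabilization problem on a nested chain of affine subspaces, and exploiting finite-dimensionality of $\T(X)$ (which requires the finite-dimensionality of $X$ hypothesized in the theorem), is what makes the argument go through.
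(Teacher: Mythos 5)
Your proof is correct and follows essentially the same route as the paper: the same implication cycle, with $(i)\Rightarrow(ii)$ and $(ii)\Rightarrow(iii)$ obtained from Proposition \ref{RelRigidLemma} and Lemma \ref{RelRigidLemma2} exactly as in the text. For $(iii)\Rightarrow(i)$ the paper argues by contradiction, extracting a sequence of rigid motions whose successive differences would be linearly independent in $\T(X)$; your direct stabilization of the nested non-empty affine subspaces $\T_k\subseteq\T(X)$ is just a cleaner packaging of the same finite-dimensionality argument.
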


\proof
The implication $(i)\Rightarrow (ii)$ is a consequence of Proposition \ref{RelRigidLemma}. To prove $(ii)\Rightarrow(iii)$ apply Lemma \ref{RelRigidLemma2}.
We now prove $(iii)\Rightarrow(i)$.
Let $\{(G_k,p_k):k\in\mathbb{N}\}$ be a vertex-complete tower in $(G,p)$ which  is relatively infinitesimally rigid and suppose $u$ is a non-trivial infinitesimal flex of $(G,p)$.  
We will construct inductively a sequence $(\gamma_{n})_{n=1}^\infty$ of infinitesimal rigid motions of $X$ and an increasing sequence $(k_n)_{n=1}^\infty$ of natural numbers satisfying
\begin{itemize}
\item $u(v)=\gamma_{n}(p(v))$ for all $v\in V(G_{k_n})$
\item $u(v_{k_{n+1}})\not=\gamma_{n}(p(v_{k_{n+1}}))$ for some $v_{k_{n+1}}\in V(G_{k_{n+1}})$
\end{itemize}
Since the tower $\{(G_k,p_k):k\in\mathbb{N}\}$  is relatively infinitesimally rigid the restriction of $u$ to $(G_k,p_k)$ is  trivial for each $k\in\mathbb{N}$.
Thus there exists $\gamma_{1}\in \mathcal{T}(X)$ such that $u(v)=\gamma_{1}(p(v))$ for all $v\in V(G_1)$. Let $k_1=1$.
Since $u$ is non-trivial and the tower is vertex-complete  there exists $k_2>k_1$ such that 
$u(v_{k_2})\not=\gamma_1(p(v_{k_2}))$ for some $v_{k_2}\in V(G_{k_2})$.
Now the restriction of $u$ to $(G_{k_2},p_{k_2})$ is trivial and so there exists $\gamma_{2}\in \mathcal{T}(X)$ such that $u(v)=\gamma_{2}(p(v))$ for all $v\in V(G_{k_2})$.
In general, given $\gamma_n\in \mathcal{T}(X)$  and $k_n\in\mathbb{N}$  we construct $\gamma_{n+1}$ and $k_{n+1}$ using the same argument. 

Let $s_n=\gamma_{n+1}-\gamma_{n}\in \mathcal{T}(X)$. Then $s_n(p(v))=0$ for all $v\in V(G_{k_n})$ and $s_n(p(v_{k_{n+1}}))\not=0$  for some $v_{k_{n+1}}\in V(G_{k_{n+1}})$.
Thus $\{s_n:n\in\mathbb{N}\}$ is a linearly independent set in $\mathcal{T}(X)$ and since  $\mathcal{T}(X)$ is finite dimensional we have a contradiction.
We conclude that  $(G,p)$ is infinitesimally rigid.
\endproof

Theorem \ref{t:IR} gives useful criteria for the determination of infinitesimal rigidity of a countable framework $(G,p)$. 

\begin{defn}
A countable bar-joint framework $(G,p)$ is {\em sequentially infinitesimally rigid} if there exists a vertex-complete tower of bar-joint frameworks $\{(G_k,p_k):k\in\mathbb{N}\}$ in $(G,p)$ such that $(G_k,p_k)$ is infinitesimally rigid for each $k\in\mathbb{N}$. 
\end{defn}

\begin{cor}
\label{sequential}
Let $(G,p)$ be a countable bar-joint framework in a  finite dimensional normed space $(X,\|\cdot\|)$.
If $(G,p)$ is sequentially infinitesimally rigid then $(G,p)$ is infinitesimally rigid.
\end{cor}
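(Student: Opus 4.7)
The plan is to derive this directly from Theorem \ref{t:IR}, specifically the implication (iii)$\Rightarrow$(i). By hypothesis we are given a vertex-complete tower $\{(G_k,p_k):k\in\bN\}$ inside $(G,p)$ such that each finite framework $(G_k,p_k)$ is infinitesimally rigid. So the tower is already in hand, and what remains is to verify the relative rigidity property at each step.

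To that end, I would observe that relative infinitesimal rigidity of $(G_k,p_k)$ in $(G_{k+1},p_{k+1})$ is, by definition, the statement that no non-trivial infinitesimal flex of $(G_k,p_k)$ extends to an infinitesimal flex of $(G_{k+1},p_{k+1})$. Since $(G_k,p_k)$ is assumed to be infinitesimally rigid, $\F(G_k,p_k)=\T(G_k,p_k)$, so there are no non-trivial flexes of $(G_k,p_k)$ at all; the relative rigidity condition is therefore satisfied vacuously. Consequently the tower $\{(G_k,p_k):k\in\bN\}$ is both vertex-complete (by hypothesis) and relatively infinitesimally rigid, and so condition (iii) of Theorem \ref{t:IR} holds.

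Applying Theorem \ref{t:IR}, condition (iii) implies condition (i), i.e., $(G,p)$ is infinitesimally rigid, which is exactly the conclusion.

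There is essentially no technical obstacle: the only thing to check is that ``infinitesimally rigid'' is stronger than ``relatively infinitesimally rigid in any ambient framework,'' which is immediate from the definitions. The content of the corollary lies entirely in Theorem \ref{t:IR}; the corollary simply packages the sequential-rigidity version of condition (iii). The converse direction, namely that infinitesimal rigidity in dimensions $d\geq 3$ need not imply sequential rigidity, is the genuinely interesting half and is not addressed here.
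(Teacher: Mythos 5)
Your proof is correct and follows exactly the same route as the paper: observe that an infinitesimally rigid $(G_k,p_k)$ has no non-trivial flexes, so the tower is vacuously relatively infinitesimally rigid, and then invoke Theorem \ref{t:IR} (iii)$\Rightarrow$(i). Nothing further is needed.
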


\proof
If there exists a vertex-complete tower  $\{(G_k,p_k):k\in\mathbb{N}\}$ in $(G,p)$ such that $(G_k,p_k)$ is infinitesimally rigid for each $k\in\mathbb{N}$
then this framework tower is relatively infinitesimally rigid. The result now follows from Theorem \ref{t:IR}.
\endproof

\subsection{A characterisation of rigidity for countable graphs.}
\label{LamanSection}
Let $G$ be a countably infinite simple graph and let $q\in (1,\infty)$.

\begin{defn}
A placement $p:V(G)\to \mathbb{R}^d$ is {\em generic} in $(\mathbb{R}^d,\|\cdot\|_q)$ if every finite subframework of $(G,p)$ is generic.
\end{defn}

A tower of graphs is a sequence of finite graphs $\{G_k:k\in\mathbb{N}\}$ such that $G_k$ is a subgraph of $G_{k+1}$ for each $k\in\mathbb{N}$.  
A countable graph $G$ contains a vertex-complete tower $\{G_k:k\in\mathbb{N}\}$ if each $G_k$ is a subgraph of $G$ and $V(G)=\cup_{k\in\mathbb{N}} V(G_k)$.

\begin{prop}
Every countable simple graph $G$ has a generic placement in $(\mathbb{R}^d,\|\cdot\|_q)$ for $q\in(1,\infty)$.
\end{prop}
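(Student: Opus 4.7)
The plan is to use a Baire category argument on the Polish space of all mappings $V(G)\to \mathbb{R}^d$. Enumerate the (at most) countably many vertices $V(G)=\{v_1,v_2,\ldots\}$ and identify the full mapping space $X=(\mathbb{R}^d)^{V(G)}$ with $(\mathbb{R}^d)^{\mathbb{N}}$ equipped with the product topology. This is a completely metrisable topological space, so the Baire category theorem applies.

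For each finite subset $S\subseteq V(G)$, let $\pi_S\colon X\to \mathbb{R}^{d|S|}$ denote the coordinate projection and let $G_S\subseteq\mathbb{R}^{d|S|}$ denote the set of generic placements of the complete graph $K_S$ in $(\mathbb{R}^d,\|\cdot\|_q)$. By Lemma \ref{OpenDense}, $G_S$ is open and dense in $\mathbb{R}^{d|S|}$. The first key step is to check that $\pi_S^{-1}(G_S)$ is open and dense in $X$. Openness is immediate from continuity of $\pi_S$. For density, observe that any basic open set in $X$ has the form $\pi_T^{-1}(V)$ for some finite $T\supseteq S$ and open $V\subseteq \mathbb{R}^{d|T|}$; since $G_S$ pulled back to $\mathbb{R}^{d|T|}$ is a cylinder over a dense open set, hence dense and open there, it meets $V$ in Euclidean space, and this intersection lifts to the required nonempty intersection in $X$.

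There are only countably many finite subsets of $V(G)$, so the Baire category theorem yields
\[
\bigcap_{\substack{S\subseteq V(G)\\ |S|<\infty}} \pi_S^{-1}(G_S)\neq\emptyset.
\]
Any placement $p$ belonging to this intersection has the property that, for every finite $S\subseteq V(G)$, the restriction $p|_S$ is a generic placement of $K_S$ in $(\mathbb{R}^d,\|\cdot\|_q)$. The second key step is to verify that such a $p$ satisfies the definition of a generic placement of the countable graph $G$: given any finite subframework $(H,p)$, taking $S=V(H)$ shows that $p|_{V(H)}\in P(K_{V(H)},\mathbb{R}^d)$ and that every subframework of $(K_{V(H)},p|_{V(H)})$ is regular, which is exactly what is required. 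In particular, the condition $p_v\neq p_w$ for all $vw\in E(G)$ is automatic because it is part of genericity for the pair $\{v,w\}$.

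The only place where care is needed — and I expect to be the main obstacle — is in the density verification for $\pi_S^{-1}(G_S)$, since cylinder arguments in the product topology have to be set up correctly; once this is in place, the rest of the argument is a routine invocation of the Baire category theorem and of Lemma \ref{OpenDense}. A finite-$G$ case is trivial from Lemma \ref{OpenDense} and can be dispatched separately at the start.
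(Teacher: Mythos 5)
Your proof is correct. The underlying engine is the same as the paper's -- Lemma \ref{OpenDense} supplies open dense sets of generic placements for each finite (complete) subgraph, and completeness of the ambient space lets you intersect countably many of them -- but the packaging is genuinely different. The paper fixes a vertex-complete tower $G_1\subseteq G_2\subseteq\cdots$ and runs an explicit nested-balls construction: it inductively chooses balls $B(p_k,r_k)$ of generic placements in $\bR^{d|V(G_k)|}$ whose projections are nested with rapidly shrinking radii, extracts Cauchy sequences coordinatewise, and defines $p$ as the limit. You instead pass to the Polish space $(\bR^d)^{V(G)}$ with the product topology and invoke the Baire category theorem on the countable family $\pi_S^{-1}(G_S)$ indexed by \emph{all} finite vertex subsets $S$; your density check for the pullbacks via cylinder sets is the right one, and your closing observation that genericity of an arbitrary finite subframework $(H,p)$ depends only on $K_{V(H)}$ (so that intersecting over complete graphs suffices, and $p_v\neq p_w$ for edges comes for free) is exactly the point that makes the reduction work. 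What your route buys is brevity and a slightly stronger conclusion for free -- the set of generic placements of $G$ is comeagre, hence dense, in the product topology -- at the cost of invoking a named theorem whose proof is essentially the paper's explicit construction; the paper's version is self-contained and produces the point by hand. Neither has a gap.
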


\proof
Let $\{G_k:k\in\mathbb{N}\}$ be a vertex-complete tower in $G$ and let \[\pi_{j,k}:\bR^{d|V(G_k)|}\to\bR^{d|V(G_j)|}, \,\,\,\,\,\,\, (x_v)_{v\in V(G_k)}\mapsto (x_v)_{v\in V(G_j)}\] be the natural projections whenever $G_j\subseteq G_k$.
By Lemma \ref{OpenDense} the set of generic placements of each $G_k$ is an open and dense subset of
$\bR^{d|V(G_k)|}$. 
It follows by an induction argument that for each $k\in\bN$ there exists  an open ball $B(p_k,r_k)$ in $\bR^{d|V(G_k)|}$ consisting of generic placements of $G_k$ such that $r_{k+1}<r_{k}$ and
the projection $\pi_{k,k+1}(p_{k+1})$ is contained in the open ball $B(p_{k},\frac{r_k}{2^{k}})$. 
 For each $j\in\mathbb{N}$ the sequence $\{\pi_{j,k}(p_k)\}_{k=j}^\infty$  is a Cauchy sequence of points in  $B(p_j,\frac{r_j}{2})\subset \mathbb{R}^{d|V(G_j)|}$
and hence converges to a point in  $B(p_j,r_j)$.
Define $p:V(G)\to\mathbb{R}^d$ by setting \[p(v)=\lim_{k\to\infty, \,k\geq j} p_k(v), \,\,\,\,\,\,\,\, \forall\, v\in V(G_j), \,\,\,\,\,\,\forall \, j\in \mathbb{N}\]
The restriction of $p$ to $V(G_j)$  is a generic placement of $G_j$ for all $j\in\mathbb{N}$ and so $p$ is a generic placement of $G$.
\endproof

We now show that infinitesimal rigidity and sequential infinitesimal rigidity are generic properties for countable bar-joint frameworks in $(\mathbb{R}^d,\|\cdot\|_q)$ for all  $q\in (1,\infty)$.

\begin{prop}
Let $(G,p)$ be a generic countable bar-joint framework in $(\mathbb{R}^d,\|\cdot\|_q)$ where  $q\in (1,\infty)$.
\begin{enumerate}[(i)]
\item The infinitesimal flex dimension $\dim_{\rm fl}(G,p)$ is constant on the set of all generic placements of $G$. 
\item If $(G,p)$ is infinitesimally rigid then every generic placement of $G$   is infinitesimally rigid. 
\item If $(G,p)$ is sequentially infinitesimally rigid then every generic placement of $G$   is sequentially infinitesimally rigid. 
\end{enumerate}
\end{prop}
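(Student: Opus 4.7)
All three parts proceed by the same principle: realise $(G,p)$ via a vertex-complete tower of finite subframeworks and transfer the known genericity of the corresponding finite notions to the countable setting.

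For part (iii), fix a vertex-complete tower $\{G_k\}$ witnessing sequential infinitesimal rigidity of $(G,p)$, so each $(G_k,p|_{V(G_k)})$ is infinitesimally rigid. Since $p$ is generic, each restriction is a generic placement of the finite graph $G_k$, and infinitesimal rigidity at one generic placement forces the rigidity matrix $R_q(G_k,\cdot)$ to attain maximum rank there, a property shared by every generic placement of $G_k$. Hence $(G_k,\tilde p|_{V(G_k)})$ is infinitesimally rigid for any generic $\tilde p$ of $G$, and the same tower witnesses sequential infinitesimal rigidity of $(G,\tilde p)$. Part (ii) follows by the same template, mediated by Theorem \ref{t:IR}: infinitesimal rigidity of $(G,p)$ yields a vertex-complete tower $\{G_k\}$ with $G_k$ relatively infinitesimally rigid in $G_{k+1}$ at $p$; relative infinitesimal rigidity is a generic property of the pair (as noted preceding Theorem \ref{RelRigidProp}), so the same tower remains relatively infinitesimally rigid at any generic $\tilde p$, and Theorem \ref{t:IR} applied again yields the infinitesimal rigidity of $(G,\tilde p)$.

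Part (i) concerns the cardinal dimension $\dim_{\rm fl}(G,p)=\dim(\F_q(G,p)/\T_q(G,p))$, which may be infinite, so the argument must compare two potentially infinite-dimensional quotients. The plan is to fix a vertex-complete edge-complete tower $\{G_k\}$, so that $\F_q(G,p)\cong\varprojlim_k\F_q(G_k,p|_{V(G_k)})$, and to construct by induction on $k$ a linear isomorphism $\phi_k:\F_q(G_k,p|_{V(G_k)})\to\F_q(G_k,\tilde p|_{V(G_k)})$ which (a) carries $\T_q(G_k,p|_{V(G_k)})$ onto $\T_q(G_k,\tilde p|_{V(G_k)})$ and (b) commutes with the restriction maps $\rho_{k-1,k}$ in the inverse system on either side. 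The inductive step is enabled by the fact that all of the relevant finite-dimensional ranks ---  $\dim\F_q(G_k,\cdot)$, $\dim\T_q(G_k,\cdot)$ and $\dim\rho_{k-1,k}(\F_q(G_k,\cdot))$ --- are computed from the rigidity matrix $R_q(G_k,\cdot)$ and agree at $p$ and $\tilde p$ by genericity. Passing to the inverse limit yields an isomorphism $\F_q(G,p)\to\F_q(G,\tilde p)$ sending $\T_q(G,p)$ onto $\T_q(G,\tilde p)$, whence the equality of quotient dimensions.

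The principal obstacle is the inductive construction of the $\phi_k$ in part (i): at each step one must simultaneously respect the trivial flex subspace and the restriction compatibility with $\phi_{k-1}$, requiring carefully matched bases in which the trivial subspace, the kernel of restriction, and a chosen complement to each are all lined up on the two sides. Parts (ii) and (iii) are then essentially immediate applications of the tower-based rigidity criteria developed earlier in the section.
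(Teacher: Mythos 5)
Your proof is correct and, for parts (i) and (iii), follows essentially the same route as the paper: fix an edge-complete tower, identify $\F_q(G,p)$ with the inverse limit $\varprojlim\,\F_q(G_k,p)$, and transfer the generic constancy of the finite-dimensional data. In fact you are more careful than the paper on the one delicate point: the paper simply asserts that the level-wise isomorphisms $\F_q(G_k,p)\cong\F_q(G_k,p')$ yield isomorphic inverse limits, whereas you correctly note that the isomorphisms must be chosen compatibly with the restriction maps and sketch an inductive construction of such $\phi_k$. For that construction you need $\dim\rho_{j,k}(\F_q(G_k,\cdot))$ to be generically constant; this is not literally one of the ranks controlled by the paper's definition of generic (which only constrains $\rank R_q(H,\cdot)$ for subgraphs $H$ of $K_{V(G)}$), but it does follow, because the kernel of $\rho_{j,k}$ on $\F_q(G_k,\cdot)$ differs from $\F_q(G_k\cup K_{V(G_j)},\cdot)$ only by the constant-dimensional space of trivial motions, so its dimension is governed by the rank of the rigidity matrix of the subgraph $G_k\cup K_{V(G_j)}$ of $K_{V(G)}$; this is worth spelling out, and one must also track the whole decreasing chain of images $\rho_{j,k}(\F_q(G_k,\cdot))$, $k>j$, when aligning bases. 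For part (ii) you take a genuinely different and heavier route through Theorem \ref{t:IR} and the genericity of relative rigidity; the paper obtains (ii) for free from (i), since infinitesimal rigidity is precisely $\dim_{\rm fl}(G,p)=0$. Your route is also valid (modulo arranging that each $G_k$ in the tower has enough vertices for $K_{V(G_k)}$ to be infinitesimally rigid, which is what makes relative rigidity a generic property of the pair), but it is unnecessary once (i) is established.
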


\proof 
To show $(i)$ choose an edge complete tower $\{(G_k,p):k\in\mathbb{N}\}$ in $(G,p)$. Then 
$\F_q(G,p)$ is isomorphic to the inverse limit of the inverse system 
$(\F_q(G_k,p),\rho_{j,k})$ and similarly $\T_q(G,p)$ is isomorphic to the inverse limit of the inverse system 
$(\T_q(G_k,p),\rho_{j,k})$ where $\rho_{j,k}$ are restriction maps. 
If $p'$ is another generic placement of $G$ then $\F_q(G_k,p)$ is isomorphic to $\F_q(G_k,p')$
and $\T_q(G_k,p)$ is isomorphic to $\T_q(G_k,p')$ for each $k$. 
Hence the corresponding inverse limits are isomorphic,
\[\F_q(G,p)\cong \varprojlim\, \F_q(G_k,p)\cong \varprojlim\, \F_q(G_k,p')\cong\F_q(G,p')\]
\[\T_q(G,p)\cong \varprojlim\, \T_q(G_k,p)\cong \varprojlim\, \T_q(G_k,p')\cong\T_q(G,p')\]
 In particular the 
infinitesimal flex dimensions agree,
\[\dim_{\rm fl}(G,p) =\dim \F_q(G,p)/ \T_q(G,p) =\dim \F_q(G,p')/\T_q(G,p')= \dim_{\rm fl}(G,p')\]

Statement $(ii)$ follows immediately from $(i)$ and 
$(iii)$  holds since  infinitesimal rigidity is a generic property for finite bar-joint frameworks.

\endproof

The infinitesimal flex dimension of a countable graph  and the classes of countable rigid and sequentially rigid graphs are now defined.

\begin{defn} Let $G$ be a countable simple graph. 
\begin{enumerate}[(i)]
\item
$G$ is {\em (minimally) rigid} in $(\mathbb{R}^d,\|\cdot\|_q)$ if the generic placements of $G$  are (minimally) infinitesimally rigid.
\item
$G$ is {\em sequentially rigid} in $(\mathbb{R}^d,\|\cdot\|_q)$  if the generic placements of $G$ are sequentially
infinitesimally rigid.
\item
The {\em infinitesimal flexibility dimension of $G$}  in  $(\mathbb{R}^d,\|\cdot\|_q)$ is 
\[
\dim_{\rm fl}(G):= \dim_{d,q}(G) := \dim_{\rm fl}(G,p)=\dim \F_q(G,p)/ \T_q(G,p).
\] 
where $p$ is any generic placement of $G$.
\end{enumerate}
\end{defn}

The following example demonstrates the non-equivalence of rigidity and sequential rigidity for countable graphs.
The surprising fact that these properties are in fact equivalent in two dimensions is established in Theorem \ref{InfiniteLaman} below. 

\begin{eg}
\label{BananaTower}
Figure \ref{Banana2} illustrates the first three graphs in a tower $\{G_n:n\in\mathbb{N}\}$ in which  
$G_n$  is constructed inductively from a double banana graph $G_1$ by flex cancelling additions of copies of $K_5\backslash e$  (single banana graphs). 
The union $G$ of these graphs is a countable graph whose maximal rigid subgraphs are copies of  $K_5\backslash e$. Thus the generic placements of $G$ are not sequentially infinitesimally rigid.
However the tower is relatively rigid in $(\bR^3,\|\cdot\|_2)$ and so $G$ is rigid.
\end{eg}

\begin{figure}[h]
\centering
  \begin{tabular}{  c   }
  
   \begin{minipage}{.28\textwidth}
    \begin{tikzpicture}[scale=0.27]
 
  \clip (-6.7,-4.2) rectangle (6.8cm,4.2cm); 
  
  \coordinate (A1) at (-6.5,0);
  \coordinate (A2) at (-4.5,0.7);
  \coordinate (A3) at (-4.5,-0.7);

  \coordinate (A4) at (6.5,0);
  \coordinate (A5) at (4.5,0.7);
  \coordinate (A6) at (4.5,-0.7);

  \coordinate (A7) at (0,4);
  \coordinate (A8) at (0,-4);

  \draw[fill=gray!10] (A7) -- (A1) -- (A2) -- (A3) -- cycle;
  \draw[fill=gray!10] (A3) -- (A7) -- (A2) -- cycle;
 
  \draw[fill=gray!10] (A8) -- (A1) -- (A2) -- (A3) -- cycle;
  \draw[fill=gray!10] (A3) -- (A8) -- (A2) -- cycle;

  \draw (A7) -- (A1) -- (A2) -- (A3) -- cycle;
  \draw (A3) -- (A7) -- (A2) -- cycle;
 
  \draw (A8) -- (A1) -- (A2) -- (A3) -- cycle;
  \draw (A3) -- (A8) -- (A2) -- cycle;
  \draw (A1) -- (A2) -- (A3) -- cycle;

  \draw[fill=gray!10] (A7) -- (A4) -- (A5) -- (A6) -- cycle;
  \draw[fill=gray!10] (A6) -- (A7) -- (A5) -- cycle;
  \draw (A6) -- (A4);
  \draw[fill=gray!10] (A8) -- (A4) -- (A5) -- (A6) -- cycle;
  \draw[fill=gray!10] (A6) -- (A8) -- (A5) -- cycle;

  \draw (A7) -- (A4) -- (A5) -- (A6) -- cycle;
  \draw (A6) -- (A7) -- (A5) -- cycle;
  \draw (A6) -- (A4);
  \draw (A8) -- (A4) -- (A5) -- (A6) -- cycle;
  \draw (A6) -- (A8) -- (A5) -- cycle;

  \node[draw,circle,inner sep=1.2pt,fill] at (A1) {};
  \node[draw,circle,inner sep=1.2pt,fill] at (A2) {};
  \node[draw,circle,inner sep=1.2pt,fill] at (A3) {};
  \node[draw,circle,inner sep=1.2pt,fill] at (A4) {};
  \node[draw,circle,inner sep=1.2pt,fill] at (A5) {};
  \node[draw,circle,inner sep=1.2pt,fill] at (A6) {};
  \node[draw,circle,inner sep=1.2pt,fill] at (A7) {};
  \node[draw,circle,inner sep=1.2pt,fill] at (A8) {};

\end{tikzpicture}
\end{minipage}

\begin{minipage}{.28\textwidth}
\begin{tikzpicture}[scale=0.27]
 
  \clip (-7.5,-4.2) rectangle (7cm,4.2cm); 
  
  \coordinate (A1) at (-6.5,0);
  \coordinate (A2) at (-4.5,0.7);
  \coordinate (A3) at (-4.5,-0.7);

  \coordinate (A4) at (6.5,0);
  \coordinate (A5) at (4.5,0.7);
  \coordinate (A6) at (4.5,-0.7);

  \coordinate (A7) at (0,4);
  \coordinate (A8) at (0,-4);

  \coordinate (A9) at (0,2.4);
  \coordinate (A10) at (-0.9,1.2);
  \coordinate (A11) at (0.9,1.2);

  \draw[fill=gray!10] (A7) -- (A1) -- (A2) -- (A3) -- cycle;
  \draw[fill=gray!10] (A3) -- (A7) -- (A2) -- cycle;
 
  \draw[fill=gray!10] (A8) -- (A1) -- (A2) -- (A3) -- cycle;
  \draw[fill=gray!10] (A3) -- (A8) -- (A2) -- cycle;

  \draw (A7) -- (A1) -- (A2) -- (A3) -- cycle;
  \draw (A3) -- (A7) -- (A2) -- cycle;
 
  \draw (A8) -- (A1) -- (A2) -- (A3) -- cycle;
  \draw (A3) -- (A8) -- (A2) -- cycle;
  \draw (A1) -- (A2) -- (A3) -- cycle;

  \draw[fill=gray!10] (A7) -- (A4) -- (A5) -- (A6) -- cycle;
  \draw[fill=gray!10] (A6) -- (A7) -- (A5) -- cycle;
  \draw (A6) -- (A4);
  \draw[fill=gray!10] (A8) -- (A4) -- (A5) -- (A6) -- cycle;
  \draw[fill=gray!10] (A6) -- (A8) -- (A5) -- cycle;

  \draw (A7) -- (A4) -- (A5) -- (A6) -- cycle;
  \draw (A6) -- (A7) -- (A5) -- cycle;
  \draw (A6) -- (A4);
  \draw (A8) -- (A4) -- (A5) -- (A6) -- cycle;
  \draw (A6) -- (A8) -- (A5) -- cycle;

  \draw (A3) -- (A9) -- (A10) -- (A11) -- (A3) -- (A10);
  \draw (A9) -- (A11);
  \draw (A6) -- (A9) -- (A10) -- (A11) -- (A6) -- (A10);

  \node[draw,circle,inner sep=1.2pt,fill] at (A1) {};
  \node[draw,circle,inner sep=1.2pt,fill] at (A2) {};
  \node[draw,circle,inner sep=1.2pt,fill] at (A3) {};
  \node[draw,circle,inner sep=1.2pt,fill] at (A4) {};
  \node[draw,circle,inner sep=1.2pt,fill] at (A5) {};
  \node[draw,circle,inner sep=1.2pt,fill] at (A6) {};
  \node[draw,circle,inner sep=1.2pt,fill] at (A7) {};
  \node[draw,circle,inner sep=1.2pt,fill] at (A8) {};
  \node[draw,circle,inner sep=1.2pt,fill] at (A9) {};
  \node[draw,circle,inner sep=1.2pt,fill] at (A10) {};
  \node[draw,circle,inner sep=1.2pt,fill] at (A11) {};

\end{tikzpicture}
\end{minipage}

\begin{minipage}{.28\textwidth}
\begin{tikzpicture}[scale=0.27]
 
  \clip (-8.4,-4.2) rectangle (7.5cm,4.2cm); 
  
  \coordinate (A1) at (-6.5,0);
  \coordinate (A2) at (-4.5,0.7);
  \coordinate (A3) at (-4.5,-0.7);

  \coordinate (A4) at (6.5,0);
  \coordinate (A5) at (4.5,0.7);
  \coordinate (A6) at (4.5,-0.7);

  \coordinate (A7) at (0,4);
  \coordinate (A8) at (0,-4);

  \coordinate (A9) at (0,2.4);
  \coordinate (A10) at (-0.9,1.2);
  \coordinate (A11) at (0.9,1.2);

  \coordinate (A12) at (-0.8,-1.4);
  \coordinate (A13) at (-1.4,-0.9);
  \coordinate (A14) at (-0.6,-0.6);

  \draw[fill=gray!10] (A7) -- (A1) -- (A2) -- (A3) -- cycle;
  \draw[fill=gray!10] (A3) -- (A7) -- (A2) -- cycle;
 
  \draw[fill=gray!10] (A8) -- (A1) -- (A2) -- (A3) -- cycle;
  \draw[fill=gray!10] (A3) -- (A8) -- (A2) -- cycle;

  \draw (A7) -- (A1) -- (A2) -- (A3) -- cycle;
  \draw (A3) -- (A7) -- (A2) -- cycle;
 
  \draw (A8) -- (A1) -- (A2) -- (A3) -- cycle;
  \draw (A3) -- (A8) -- (A2) -- cycle;
  \draw (A1) -- (A2) -- (A3) -- cycle;

  \draw[fill=gray!10] (A7) -- (A4) -- (A5) -- (A6) -- cycle;
  \draw[fill=gray!10] (A6) -- (A7) -- (A5) -- cycle;
  \draw (A6) -- (A4);
  \draw[fill=gray!10] (A8) -- (A4) -- (A5) -- (A6) -- cycle;
  \draw[fill=gray!10] (A6) -- (A8) -- (A5) -- cycle;

  \draw (A7) -- (A4) -- (A5) -- (A6) -- cycle;
  \draw (A6) -- (A7) -- (A5) -- cycle;
  \draw (A6) -- (A4);
  \draw (A8) -- (A4) -- (A5) -- (A6) -- cycle;
  \draw (A6) -- (A8) -- (A5) -- cycle;

  \draw[fill=gray!10] (A3) -- (A9) -- (A10) -- (A11) -- cycle;
  \draw[fill=gray!10] (A11) -- (A9) -- (A10) -- cycle;
  \draw (A9) -- (A11);
  \draw[fill=gray!10] (A6) -- (A9) -- (A10) -- (A11) -- cycle;
  \draw[fill=gray!10] (A11) -- (A6) -- (A10) -- cycle;

  \draw (A3) -- (A9) -- (A10) -- (A11) -- cycle;
  \draw (A11) -- (A9) -- (A10) -- cycle;
  \draw (A9) -- (A11);   \draw (A3) -- (A10);
  \draw (A6) -- (A9) -- (A10) -- (A11) -- cycle;
  \draw (A11) -- (A6) -- (A10) -- cycle;

  \draw (A8) -- (A12) -- (A13) -- (A14) -- cycle;
  \draw (A8) -- (A13) -- (A11) -- (A12) -- (A14);
  \draw (A14) -- (A11);

  \node[draw,circle,inner sep=1.2pt,fill] at (A1) {};
  \node[draw,circle,inner sep=1.2pt,fill] at (A2) {};
  \node[draw,circle,inner sep=1.2pt,fill] at (A3) {};
  \node[draw,circle,inner sep=1.2pt,fill] at (A4) {};
  \node[draw,circle,inner sep=1.2pt,fill] at (A5) {};
  \node[draw,circle,inner sep=1.2pt,fill] at (A6) {};
  \node[draw,circle,inner sep=1.2pt,fill] at (A7) {};
  \node[draw,circle,inner sep=1.2pt,fill] at (A8) {};
  \node[draw,circle,inner sep=1.2pt,fill] at (A9) {};
  \node[draw,circle,inner sep=1.2pt,fill] at (A10) {};
  \node[draw,circle,inner sep=1.2pt,fill] at (A11) {};
  \node[draw,circle,inner sep=1.2pt,fill] at (A12) {};
  \node[draw,circle,inner sep=1.2pt,fill] at (A13) {};
  \node[draw,circle,inner sep=1.2pt,fill] at (A14) {};

\end{tikzpicture}

  \end{minipage}
 \end{tabular}
  \caption{The graphs $G_1$, $G_2$ and $G_3$ in Example \ref{BananaTower}.}
\label{Banana2}  
\end{figure}
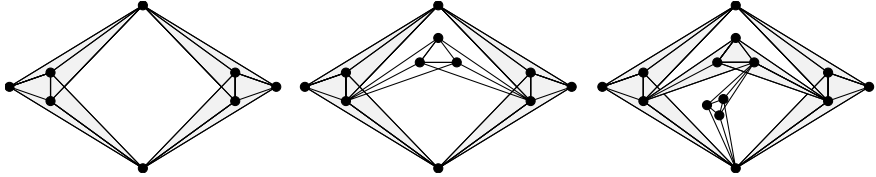

We now prove the equivalence of rigidity and sequential rigidity for countable graphs in $(\bR^2,\|\cdot\|_q)$ for $q\in (1,\infty)$.

\begin{thm}
\label{InfiniteLaman}
Let $G$ be a countable simple graph and let $q\in (1,\infty)$. 
Then the following statements are equivalent.
\begin{enumerate}[(i)]
\item
$G$ is  rigid in $(\mathbb{R}^2,\|\cdot\|_q)$.
\item
$G$ is  sequentially rigid in $(\mathbb{R}^2,\|\cdot\|_q)$.
\end{enumerate}
\end{thm}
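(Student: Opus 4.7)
The plan is to prove the two implications separately. The direction $(ii) \Rightarrow (i)$ is immediate: pick any generic placement $p$ of $G$; by hypothesis $(G,p)$ is sequentially infinitesimally rigid, so Corollary \ref{sequential} tells us that $(G,p)$ is infinitesimally rigid, and hence $G$ is rigid.

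For the substantive direction $(i) \Rightarrow (ii)$, I would proceed in two steps. First, fix a generic placement $p$ of $G$ in $(\mathbb{R}^2,\|\cdot\|_q)$; since $G$ is rigid, $(G,p)$ is infinitesimally rigid. Apply Theorem \ref{t:IR} to the countable framework $(G,p)$ in the finite dimensional normed space $(\mathbb{R}^2,\|\cdot\|_q)$: this produces a vertex-complete tower $\{(G_k,p_k):k\in\mathbb{N}\}$ of finite subframeworks of $(G,p)$ which is relatively infinitesimally rigid, i.e., $G_k$ is relatively rigid in $G_{k+1}$ for every $k$. Second, I would use Theorem \ref{RelRigidProp}, the dimension-two equivalence between relative rigidity and the existence of a rigid container, to replace each $G_k$ by a larger finite subgraph $H_k\subseteq G_{k+1}$ which is itself rigid in $(\mathbb{R}^2,\|\cdot\|_q)$. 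Because $G_k\subseteq H_k\subseteq G_{k+1}\subseteq H_{k+1}$, the sequence $\{H_k\}$ is nested, and
\[
\bigcup_{k} V(H_k)\supseteq \bigcup_k V(G_k)=V(G),
\]
so the tower $\{H_k\}$ is vertex-complete. Each $H_k$ is rigid, so any generic placement of $G$ (including $p$) restricts on the $H_k$ to an infinitesimally rigid framework; this exhibits $(G,p)$ as sequentially infinitesimally rigid, proving $G$ is sequentially rigid.

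The crux, and the only place where dimension two is used essentially, is the invocation of Theorem \ref{RelRigidProp}; the double-banana construction of Example \ref{BananaTower} shows that the same implication fails in $\mathbb{R}^3$, so one cannot avoid the Laman-type combinatorics that underlies that theorem. The only minor bookkeeping issue is the vertex-count proviso in Theorem \ref{RelRigidProp}, which requires $|V(G_k)|\geq 2$ when $q=2$ and $|V(G_k)|\geq 4$ when $q\neq 2$. For infinite $G$ this is trivially arranged by passing to a cofinal subsequence of the tower (and absorbing the discarded initial terms into the first remaining $G_k$, which preserves relative rigidity); when $V(G)$ is too small for the proviso to apply the only rigid possibilities are $K_1$ and $K_2$, for which sequential rigidity holds by inspection with $G_k=G$ for all $k$.
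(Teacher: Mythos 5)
Your proof is correct and follows essentially the same route as the paper's: Corollary \ref{sequential} gives $(ii)\Rightarrow(i)$, and for $(i)\Rightarrow(ii)$ Theorem \ref{t:IR} produces a vertex-complete relatively rigid tower whose terms are then replaced by rigid containers $H_k\subseteq G_{k+1}$ via Theorem \ref{RelRigidProp}. Your additional remarks on the nesting $H_k\subseteq G_{k+1}\subseteq H_{k+1}$ and on the vertex-count proviso are harmless refinements that the paper leaves implicit.
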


\proof
$(i)\Rightarrow (ii)$ Suppose $G$ is rigid in $(\mathbb{R}^2,\|\cdot\|_q)$
and let $p:V(G)\to\mathbb{R}^2$ be a generic placement.
By Theorem \ref{t:IR}, $(G,p)$ has a vertex-complete framework tower $\{(G_k,p):k\in\mathbb{N}\}$ which is relatively infinitesimally rigid.
By Theorem \ref{RelRigidProp}, $G_k$ has a rigid container $H_k$ in $G_{k+1}$ for each $k\in\mathbb{N}$.
Thus $\{H_k:k\in\mathbb{N}\}$ is the required vertex-complete tower of rigid subgraphs in $G$.

$(iii)\Rightarrow (i)$ 
If $p:V(G)\to\mathbb{R}^2$ is a generic placement of $G$ in $(\mathbb{R}^2,\|\cdot\|_q)$ then by Corollary \ref{sequential} $(G,p)$ is infinitesimally rigid and so $G$ is rigid.
\endproof

The following two theorems  can be viewed as an extension of Laman's theorem and its analogue for the non-Euclidean $\ell^q$ norms to countable graphs.
We use the convention that if $P$ is a property of a graph then a $P$-tower is a tower for which each graph $G_k$ has property $P$. 
Thus a \emph{$(2,3)$-tight tower}  is a nested sequence of subgraphs $\{G_k:k\in \bN\}$ each of which is $(2,3)$-tight.

\begin{thm}
\label{InfiniteLaman2}
Let $G$ be a countable simple graph. 
\begin{enumerate}[\bf (A)]
\item
The following statements are equivalent.
\begin{enumerate}[(i)]
\item
$G$ is rigid in $(\mathbb{R}^2,\|\cdot\|_2)$.
\item
$G$ contains a $(2,3)$-tight vertex-complete tower. 
\end{enumerate}
\item
If $q\in (1,2)\cup(2,\infty)$ then the following statements are equivalent.
\begin{enumerate}[(i)]
\item
$G$ is  rigid in $(\mathbb{R}^2,\|\cdot\|_q)$.
\item
$G$ contains a $(2,2)$-tight vertex-complete tower. 
\end{enumerate}
\end{enumerate}
\end{thm}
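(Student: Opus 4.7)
The plan is to reduce to Laman's theorem (for $q=2$) and its non-Euclidean analogue (Theorem \ref{qNormLaman}, for $q\neq 2$) by first passing through sequential rigidity via Theorem \ref{InfiniteLaman}, and then building a \emph{nested} tower of tight subgraphs using a matroid extension argument. Throughout, set $l=3$ when $q=2$ and $l=2$ when $q\neq 2$.

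For the forward implication (i)$\Rightarrow$(ii), suppose $G$ is rigid. By Theorem \ref{InfiniteLaman} $G$ is sequentially rigid, so there is a vertex-complete tower $\{H_k\}$ of finite rigid subgraphs of $G$. By Laman (or Theorem \ref{qNormLaman}) each $H_k$ contains a $(2,l)$-tight spanning subgraph, but these need not form a tower. To remedy this, I would build $T_k\subseteq H_k$ inductively. For the inductive step, assume $T_k$ is $(2,l)$-tight with $V(T_k)=V(H_k)$ and work in the generic $2$-dimensional $\ell^q$ rigidity matroid $\mathcal{M}$ on the edge set of $K_{V(H_{k+1})}$. By Proposition \ref{IndepLemma}, the independent sets of $\mathcal{M}$ restricted to $E(H_{k+1})$ are precisely the $(2,l)$-sparse subgraphs (for $q\neq 2$ this needs $|V(H_{k+1})|\geq 4$, which can be arranged by passing to a tail of the tower). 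Since $T_k$ is $(2,l)$-sparse, $E(T_k)$ is independent in $\mathcal{M}$, and since $H_{k+1}$ is rigid the matroid restricted to $E(H_{k+1})$ has rank $2|V(H_{k+1})|-l$. Extend $E(T_k)$ to a basis $B$ of this matroid and let $T_{k+1}$ be the subgraph on $V(H_{k+1})$ with edge set $B$. Then $T_{k+1}$ is $(2,l)$-sparse with $|B|=2|V(H_{k+1})|-l$ edges, hence $(2,l)$-tight; moreover a short sparsity check applied to $V(H_{k+1})\setminus\{v\}$ for any putatively isolated vertex $v$ shows $B$ meets every vertex of $H_{k+1}$, so $T_{k+1}$ is vertex-spanning. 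The resulting $\{T_k\}$ is the desired $(2,l)$-tight vertex-complete tower in $G$.

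For the backward implication (ii)$\Rightarrow$(i), suppose $\{T_k\}$ is a $(2,l)$-tight vertex-complete tower in $G$. Each $T_k$ is finite and $(2,l)$-tight, hence rigid by Laman (or by Theorem \ref{qNormLaman}). For any generic placement $p$ of $G$, each restriction $(T_k,p|_{V(T_k)})$ is generic and so infinitesimally rigid, so $\{(T_k,p|_{V(T_k)}):k\in\mathbb{N}\}$ witnesses sequential infinitesimal rigidity of $(G,p)$; Corollary \ref{sequential} then yields infinitesimal rigidity of $(G,p)$, i.e. $G$ is rigid.

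The main obstacle is the inductive edge-nesting step: one needs a $(2,l)$-tight subgraph $T_k\subseteq H_k\subseteq H_{k+1}$ to extend, as an edge set, to a $(2,l)$-tight spanning subgraph $T_{k+1}$ of $H_{k+1}$. The matroid reformulation together with Proposition \ref{IndepLemma} does the essential work, and the minor four-vertex hypothesis in the non-Euclidean case is absorbed by truncating the tower to indices with $|V(H_k)|\geq 4$.
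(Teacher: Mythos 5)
Your proposal is correct and follows essentially the same route as the paper: both directions reduce to Laman's theorem and Theorem \ref{qNormLaman} via the equivalence with sequential rigidity (Theorem \ref{InfiniteLaman}), and the nesting of the tight subgraphs is obtained by the same mechanism — the paper's step ``we can arrange that $H_n$ is a subgraph of $H_{n+1}$'' (removing dependent edges to reach a minimally rigid spanning subgraph containing the previous one) is exactly your basis-extension step in the rigidity matroid, made explicit through Proposition \ref{IndepLemma}. The only cosmetic difference is that the paper deduces tightness from minimal rigidity via the finite Laman theorems, whereas you deduce it from sparsity plus the edge count.
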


\proof
$(i)\Rightarrow (ii)$ If $G$ is rigid then by Theorem \ref{InfiniteLaman} $G$ is sequentially rigid and so there exists a vertex-complete tower $\{G_k:k\in\bN\}$ of rigid subgraphs in $G$.
We will construct a tower $\{H_k:k\in\bN\}$ of $(2,l)$-tight subgraphs of $G$ satisfying $V(H_k)=V(G_k)$ for each $k\in\bN$.

Let $H_1=G_1\backslash E_1$ be a minimally rigid spanning subgraph of $G_1$ obtained by removing a set $E_1\subset E(G_1)$ of edges from $G_1$. It follows on considering the rigidity matrix for a generic placement of $G_k$ that $G_k\backslash E_1$ is rigid for each $k\in\bN$. Letting  $G_k'=G_k\backslash E_1$ for all $k\geq 2$ we obtain a vertex-complete tower 
of rigid subgraphs in $G$,
\[H_1\subset G_2'\subset  G_{3}'\subset \cdots\]
where $H_1$ is minimally rigid, $V(H_1)=V(G_1)$ and $V(G_k')=V(G_k)$ for all $k\geq2$.

Suppose we have constructed a vertex-complete tower of rigid subgraphs in $G$,
\[H_1\subset H_2\subset \cdots\subset H_n\subset G_{n+1}'\subset G_{n+2}'\subset \cdots\]
where $H_1,H_2,\ldots,H_n$ are minimally rigid, $V(H_k)=V(G_k)$ for each $k=1,2,\ldots,n$ and 
$V(G_k')=V(G_k)$ for all $k\geq n+1$.
Let $H_{n+1}=G_{n+1}\backslash E_{n+1}$ be a minimally rigid spanning subgraph of $G_{n+1}'$ obtained by removing a set 
$E_{n+1}\subset E(G_{n+1}')$ of edges from $G_{n+1}'$. We can arrange that $H_n$ is a subgraph of $H_{n+1}$.
It follows on considering the rigidity matrix for a generic placement of $G_k'$ that $G_k'\backslash E_{n+1}$ is rigid for each $k\geq n+1$. Replacing $G_k'$ with $G_k'\backslash E_{n+1}$ for each $k\geq n+2$ we obtain a vertex-complete tower in $G$,
\[H_1\subset H_2\subset \cdots\subset H_{n+1}\subset G_{n+2}'\subset G_{n+3}'\subset \cdots\]
consisting of rigid subgraphs with $H_1,H_2,\ldots,H_{n+1}$ minimally rigid, $V(H_k)=V(G_k)$ for each $k=1,2,\ldots,n+1$ and 
$V(G_k')=V(G_k)$ for all $k\geq n+2$.

By induction there exists a vertex-complete tower $\{H_k:k\in\bN\}$ of minimally rigid subgraphs in $G$.
In case (A), Theorem \ref{Laman} implies that each $H_k$ is $(2,3)$-tight and in case (B) Theorem \ref{qNormLaman} implies that each $H_k$ is $(2,2)$-tight.

$(ii)\Rightarrow (i)$ 
Let $\{G_k:k\in\mathbb{N}\}$ be a $(2,l)$-tight vertex-complete tower in $G$.
By Theorems \ref{Laman} and  \ref{qNormLaman},  each $G_k$ is a rigid graph in $(\mathbb{R}^2,\|\cdot\|_q)$ and so $G$ is sequentially rigid.
By Theorem \ref{InfiniteLaman}, $G$ is rigid. 
\endproof

With the convention that a Laman graph is a $(2,3)$-tight finite simple graph the above theorem states that a countable simple graph is minimally  $2$-rigid  if and only if it is the union of an increasing sequence of Laman graphs. 
It follows that some or even all of the vertices of a countable minimally $2$-rigid graph  may  have  infinite degree.

\begin{cor}
\label{MinInfiniteLaman}
Let $G$ be a countable simple graph. 
\begin{enumerate}[\bf (A)]
\item
The following statements are equivalent.
\begin{enumerate}[(i)]
\item
$G$ is minimally rigid in $(\mathbb{R}^2,\|\cdot\|_2)$.
\item
$G$ contains a $(2,3)$-tight edge-complete tower. 
\end{enumerate}
\item
If $q\in (1,2)\cup(2,\infty)$ then the following statements are equivalent.
\begin{enumerate}[(i)]
\item
$G$ is  minimally rigid in $(\mathbb{R}^2,\|\cdot\|_q)$.
\item
$G$ contains a $(2,2)$-tight edge-complete tower. 
\end{enumerate}
\end{enumerate}
\end{cor}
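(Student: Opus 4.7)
My plan is to deduce both parts simultaneously from Theorem \ref{InfiniteLaman2} and the independence characterisation of Proposition \ref{IndepLemma}. Write $l$ for the parameter $l=3$ in case (A) and $l=2$ in case (B).

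For the implication $(i)\Rightarrow (ii)$, I would start with a vertex-complete $(2,l)$-tight tower $\{H_k\}$ supplied by Theorem \ref{InfiniteLaman2} applied to the rigid graph $G$, and then use minimality of $G$ to upgrade this to an edge-complete tower by showing that each $H_k$ equals the induced subgraph $G[V(H_k)]$. Suppose to the contrary that some edge $e=vw$ of $G$ has both endpoints in $V(H_k)$ but is not in $E(H_k)$. Then $H_k\cup\{e\}$ is a finite subgraph of $G$ with $|E(H_k)|+1=2|V(H_k)|-l+1$ edges, so it fails to be $(2,l)$-sparse and is therefore dependent in the generic rigidity matroid by Proposition \ref{IndepLemma}. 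Since $H_k$ itself is $(2,l)$-tight and hence independent, the dependency must involve the row of $R_q(G,p)$ corresponding to $e$; that is, the row for $e$ lies in the linear span of the rows for $E(H_k)$. Consequently $\F_q(G\setminus e,p)=\F_q(G,p)$ for a generic $p$, and rigidity of $G$ forces rigidity of $G\setminus e$, contradicting minimal rigidity. The identity $H_k=G[V(H_k)]$ together with vertex-completeness immediately gives edge-completeness of the tower, because any edge of $G$ has both endpoints inside some $V(H_k)$ and so belongs to $E(H_k)$.

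For the converse $(ii)\Rightarrow (i)$, rigidity of $G$ is a direct consequence of Theorem \ref{InfiniteLaman2}, since an edge-complete tower is in particular vertex-complete. The key observation for minimality is that the edge-complete $(2,l)$-tight tower forces every finite subgraph $H$ of $G$ to be $(2,l)$-sparse: the finitely many edges of $H$ all appear in some $G_j$ of the tower, so $H\subseteq G_j$ and inherits sparsity from $G_j$. Now suppose for contradiction that $G\setminus e$ is rigid for some edge $e=vw\in E(G)$. Theorem \ref{InfiniteLaman2} produces a vertex-complete $(2,l)$-tight tower $\{H_k'\}$ inside $G\setminus e$, and we may choose $k$ with $v,w\in V(H_k')$. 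Then $H_k'\cup\{e\}$ is a finite subgraph of $G$ with $2|V(H_k')|-l+1$ edges, contradicting the sparsity property just established. Hence no edge of $G$ is redundant and $G$ is minimally rigid.

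The most delicate step is the transfer of the finite linear dependency in the $(i)\Rightarrow (ii)$ argument: one must justify that a linear relation producing the row for $e$ from the rows of $R_q(H_k,p)$ extends verbatim to the (countably infinite) rigidity matrix of $G$ in order to conclude $\F_q(G\setminus e,p)=\F_q(G,p)$. This follows because every row involved has support only in the $V(H_k)$-coordinates, so the finite matrix relation witnessed by dependency of $R_q(H_k\cup\{e\},p)$ lifts unchanged inside $R_q(G,p)$. Once this observation is in place, the remaining verifications are routine bookkeeping with sparsity counts and the tower constructions already supplied by Theorem \ref{InfiniteLaman2}.
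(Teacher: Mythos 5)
Your proof is correct and follows essentially the same route as the paper: both directions are reduced to Theorem \ref{InfiniteLaman2}, with minimality forcing edge-completeness in one direction and a sparsity violation ($|E(H'_k\cup\{e\})| = 2|V(H'_k)|-l+1$ inside a sparse member of the tower) ruling out redundant edges in the other. The only difference is that your $(i)\Rightarrow(ii)$ justifies edge-completeness via a row-dependency argument through Proposition \ref{IndepLemma}, which is valid but heavier than needed — if an edge $e$ were missed by the tower, the vertex-complete $(2,l)$-tight tower would survive intact in $G\setminus e$, so Theorem \ref{InfiniteLaman2} would make $G\setminus e$ rigid, contradicting minimality directly.
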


\proof
$(i)\Rightarrow (ii)$ If $G$ is minimally rigid in $(\mathbb{R}^2,\|\cdot\|_q)$ then
by Theorem \ref{InfiniteLaman2}, $G$ contains a $(2,l)$-tight vertex-complete tower $\{G_k:k\in\bN\}$ and this tower must be edge-complete. 

$(ii)\Rightarrow (i)$ 
If $G$ contains a $(2,l)$-tight edge-complete tower $\{G_k:k\in\bN\}$ then by Theorem \ref{InfiniteLaman2}, $G$ is rigid. 
Let $vw\in E(G)$ and suppose $G\backslash \{vw\}$ is rigid.
By Theorem \ref{InfiniteLaman} $G\backslash \{vw\}$ is sequentially rigid and so 
there exists a vertex-complete tower $\{H_k:k\in\bN\}$ in $G\backslash \{vw\}$ consisting of rigid subgraphs.
Choose a sufficiently large $k$ such that  $v,w\in V(H_k)$ and choose a sufficiently large $n$ such that
$vw\in E(G_n)$ and $H_k$ is a subgraph of $G_n$. Then $H_k\cup\{vw\}$ is a subgraph of $G_n$ which fails the sparsity count for $G_n$. We conclude that $G\backslash \{vw\}$ is not rigid in $(\mathbb{R}^2,\|\cdot\|_q)$ for all $vw\in E(G)$. 
\endproof

\subsection{Remarks}
The rigidity of general infinite graphs as bar-joint frameworks was considered first in Owen and Power \cite{owe-pow-hon}, \cite{owe-pow-crystal}.
Part (A) of Theorem \ref{InfiniteLaman2}  answers a question posed in Section 2.6 of Owen and Power \cite{owe-pow-crystal}.

In their analysis of globally linked pairs of vertices in rigid frameworks Jackson, Jordan and Szabadka  \cite{jac-jor-sza}  remark that it follows from the characterisation of independent sets for the rigidity matroid for the Euclidean plane that \emph{linked vertices} $\{v_1,v_2\}$ must lie in the same rigid component. (See also 
\cite{jac-jor}). This assertion is essentially equivalent to part (a) of our Theorem \ref{RelRigidProp}.
The terminology here is that a pair of vertices $\{v_1,v_2\}$ in a graph $G$ is \emph{linked} in $(G,p)$ if there exists an $\epsilon >0$ such that if $q \in P(G)$ is another placement of $G$ with $\|q_v-q_w\|_2=\|p_v-p_w\|_2$ for all $vw\in E(G)$ 
and $\|q_v-p_v\|_2 <\epsilon$ for all $v\in V(G)$ then $\|q_{v_1}-q_{v_2}\|_2=\|p_{v_1}-p_{v_2}\|_2$. 
It can be shown that this is a generic property and
that a subgraph $H\subseteq G$ is relatively rigid in $G$ if and only if for a generic placement $(G,p)$ each pair of vertices in $H$ is linked in $(G,p)$.  

The elementary but key Lemma \ref{FlexProp} is reminiscent of the compactness principle for locally finite structures to the effect that certain properties prevailing for all finite substructures  hold also for the infinite structure. 
For example the $k$-colourability of a graph is one such property.
See Nash-Williams \cite{nas-wil-infinite}.

One can also take a matroidal view for infinitesimally rigid frameworks and define the infinite matroid $\R_2$ (resp $\R_{2,q}$) on the set $S$ of edges of the countable complete graph $K_\infty$. The independent sets in this matroid are the subsets of edges of a sequential Laman graph (resp. sequentially $(2,2)$-tight graph). 
Such  matroids are finitary (see Oxley \cite{oxl}
and Bruhn et al \cite{bru}) and so closely related to their finite matroid counterparts.


\section{Inductive constructions of countable graphs}
\label{InductiveConstructions}
In this section we establish the existence of Henneberg-type construction chains  for the countable graphs that are minimally rigid in 
$(\bR^2,\|\cdot \|_q)$ for $1 < q < \infty$. We also use inductive methods for countably infinite graphs associated with certain  infinitely faceted polytopes in $\bR^3$ and obtain a counterpart of the generic Cauchy theorem. We revisit these frameworks in Section 
\ref{ContinuousRigidity} in connection with more analytic forms of rigidity.

\subsection{Existence of construction chains for  countable rigid graphs.}
\label{ConstructionChains}
Given two finite simple graphs $H$ and $G$ we will use the notation $H\overset{\mu}\longrightarrow G$ to indicate that $G$ is the result of a graph move $\mu$ applied to $H$. 
The following  two graph moves  are designed to preserve
the sparsity count $d|V(G)|-|E(G)|$ appropriate for bar-joint frameworks $(G,p)$ in $\bR^d$.

\begin{defn} Let $G$ be a simple graph.
\begin{enumerate}
\item
A graph $G'$ is said to be obtained from $G$ by applying a {\em Henneberg  vertex extension move of degree $d$} if it results from adjoining a vertex $v$ to $V(G)$ and $d$ edges $v_1v, \dots,v_dv$ to $E(G)$.
\item
A graph $G'$ is said to be obtained from $G$ by applying a {\em Henneberg edge move of degree $d$} if it results from removing an edge $v_1v_2$ from $E(G)$ and applying a Henneberg vertex extension move of degree $d+1$ to $G\backslash \{v_1v_2\}$ where the $d+1$ new edges include $v_1v$ and $v_2v$. 
\end{enumerate}
\end{defn}

An inverse Henneberg vertex extension move can be applied whenever $G$ contains  a vertex $v$ of degree $d$ by simply removing $v$ and all edges incident with $v$.
To obtain conditions under which  an inverse  Henneberg edge move  can be applied we require the following lemma.

\begin{lem}
\label{H2Lemma}
Let $G$ be a $(k,l)$-sparse simple graph  and let $v\in V(G)$ such that $\deg(v)\geq k+1$ and $vv_1,vv_2,\ldots,vv_{k+1}\in E(G)$.
Suppose that  one of the following conditions holds.
\begin{enumerate}[(a)]
\item $k=2$ and $l=3$, or, 
\item $k=l$ and $v_iv_j\notin E(G)$ for some distinct pair $v_i,v_j\in \{v_1,v_2,\ldots,v_{k+1}\}$.
\end{enumerate}
Then there exists  $v_i,v_j\in \{v_1,v_2,\ldots,v_{k+1}\}$ with $v_iv_j\notin E(G)$ such that no $(k,l)$-tight subgraph of $G\backslash\{v\}$  contains both $v_i$ and $v_j$. 
\end{lem}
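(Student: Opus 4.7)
The plan is a proof by contradiction: assume that every non-adjacent pair among $\{v_1,\ldots,v_{k+1}\}$ is contained in some $(k,l)$-tight subgraph $H_{ij}$ of $G\setminus\{v\}$, and build a subgraph of $G$ that violates $(k,l)$-sparsity. In case (a), we first note that at least one non-adjacent pair must exist, because otherwise $\{v,v_1,v_2,v_3\}$ induces $K_4$, which has $6>2\cdot 4-3=5$ edges and contradicts $(2,3)$-sparsity. In case (b), at least one non-adjacent pair is given by hypothesis, so in both cases we have at least one tight subgraph $H_{ij}$ to start with.

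The core of the argument is to amalgamate the tight subgraphs $H_{ij}$ into a single $(k,l)$-tight subgraph $W\subseteq G\setminus\{v\}$ whose vertex set absorbs the relevant neighbors. Any two $H_{ij}$ and $H_{ij'}$ share the common neighbor $v_i$, so in case (b) Lemma \ref{SparseLemma3}(b) immediately yields a tight union. In case (a) I would iterate Lemma \ref{SparseLemma3}(a) whenever two tight subgraphs share at least two vertices, and, when two of them meet only at a single $v_i$, first adjoin an edge $v_jv_{j'}\in E(G)$ to bring the count up to tightness before merging. A standard argument, using Lemma \ref{SparseLemma2}, then shows that $W$ coincides with the induced subgraph $G[V(W)]$ on $V(W)$ (inside $G\setminus\{v\}$), because any additional $G$-edge on $V(W)$ would produce a subgraph of $G$ with $k|V(W)|-l+1$ edges.

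To conclude, I would extend $W$ to a subgraph $W^{\ast}\subseteq G$ by adjoining $v$ with the $k+1$ edges $vv_i$ together with each neighbor $v_i\notin V(W)$ and all of its $G$-edges into $V(W)\cup\{v\}$. The contradiction hypothesis forces every such ``outside'' neighbor $v_i$ to be $G$-adjacent to every other neighbor, since a non-adjacency would produce an $H_{i\cdot}$ merging $v_i$ into $V(W)$. A careful count of this enlarged $W^{\ast}$ is designed to yield $|E(W^{\ast})|>k|V(W^{\ast})|-l$, contradicting the $(k,l)$-sparsity of $G$ and completing the argument.

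The principal technical obstacle is precisely this final counting step in case (b) when several of the $v_i$'s remain outside $V(W)$, i.e.\ when the non-adjacency graph on $\{v_1,\ldots,v_{k+1}\}$ has several connected components. The outside neighbors form a clique in $G[N]$ and contribute a full complete-bipartite set of edges to $V(W)\cap N$, but each outside vertex also expands the sparsity budget by $k$. Closing the resulting narrow gap requires exploiting both the $k+1$ extra edges through $v$ and the structural fact that outside neighbors are forced to be universally adjacent in $N$; I expect the cleanest argument to proceed by a case split on the number of components of the non-adjacency graph on $N$, treating the ``almost connected'' regime directly and the degenerate cases via an explicit tally.
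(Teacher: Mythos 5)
Your strategy is the same as the paper's: assume every non-adjacent pair among $v_1,\dots,v_{k+1}$ lies in a $(k,l)$-tight subgraph of $G\setminus\{v\}$, amalgamate these into a subgraph $W$, reattach $v$ with its $k+1$ edges together with the edges $v_iv_j$ present in $G$, and contradict sparsity by counting. But the step you yourself flag as ``the principal technical obstacle'' is a genuine gap, and in case (b) it cannot be closed at the stated level of generality. Write $m$ for the number of neighbours $v_i$ lying outside $V(W)$ (each such $v_i$ is, as you note, forced to be adjacent to every other $v_j$). When $W$ is a single tight component the tally gives
\[
|E(H)| \;\geq\; \bigl(k|V(W)|-k\bigr) + (k+1) + \binom{m}{2} + m(k+1-m)
\;=\; \bigl(k|V(H)|-k\bigr) + 1 - \binom{m}{2},
\]
so the strict violation you need holds only for $m\leq 1$; for $m=2$ the construction produces a subgraph that is exactly $(k,k)$-tight, which is no contradiction. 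This is not merely a weakness of the count: for $k=l=3$ the lemma is actually false. Take $H_{1,2}=K_7\setminus\{v_1v_2,\,ab,\,cd\}$ on $\{v_1,v_2,a,b,c,d,e\}$ (a $(3,3)$-tight graph omitting $v_1v_2$), adjoin $v_3,v_4$ each joined to $v_1,v_2$ and to each other, and adjoin $v$ joined to $v_1,v_2,v_3,v_4$. The resulting $10$-vertex, $27$-edge graph is $(3,3)$-sparse, $\deg(v)=4$, the unique non-adjacent pair among the neighbours of $v$ is $\{v_1,v_2\}$, and $H_{1,2}$ is a tight subgraph of $G\setminus\{v\}$ containing both. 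So no case split will close the gap for general $k=l$.

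The statement and your outline are sound in the two regimes the paper actually uses. In case (a) there are only three neighbours; your sub-case analysis (one, two or three non-adjacent pairs; pairwise intersections of size one or at least two) always yields $|E(W)|\geq 2|V(W)|-3$ once the present edges among $\{v_1,v_2,v_3\}$ are adjoined, and reattaching $v$ then gives $|E(H)|\geq 2|V(H)|-2$. In case (b) with $k=l=2$, every tight component of $W$ absorbs at least two of the three neighbours, so $m\leq 1$ and the displayed count closes strictly. If you restrict to these cases and write the tallies out, your outline becomes a complete proof and coincides with the paper's own (equally terse) argument, which likewise forms the union of the $H_{i,j}$ with $v$ and the present edges and asserts a violated count. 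As submitted, however, the decisive inequality is conjectured rather than proved, and for $k=l\geq 3$ it is not provable.
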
 

\proof
In case $(a)$, since $\deg(v)\geq 3$  there must exist a pair $v_i,v_j\in \{v_1,v_2,v_3\}$ with $v_iv_j\notin E(G)$ since otherwise  the subgraph of $G$ induced by the vertices 
$\{v,v_1,v_2,v_3\}$ would contradict the $(2,3)$-sparsity count for $G$.
Suppose that every pair of distinct vertices $v_i,v_j\in\{v_1,v_2,v_3\}$ with $v_iv_j\notin E(G)$ is contained in a $(k,l)$-tight subgraph $H_{i,j}$ of $G\backslash\{v\}$.
Let $H$ be the union of the subgraphs $H_{i,j}$ together with the vertex $v$, the edges $vv_1,vv_2,vv_3$ and the edge $v_iv_j$ whenever $v_iv_j\in E(G)$.
By Proposition \ref{SparseLemma1}  $H$ is a subgraph of $G$ which contradicts the $(2,3)$-sparsity count.

Similarly in case $(b)$, suppose that every pair  $v_i,v_j\in\{v_1,v_2,\ldots,v_{k+1}\}$ with $v_iv_j\notin E(G)$ is contained in a $(k,k)$-tight subgraph $H_{i,j}$ of $G\backslash\{v\}$.
Let $H$ be the union of the subgraphs $H_{i,j}$ together with the vertex $v$, the edges $vv_1,vv_2,\ldots, vv_{k+1}$ and the edge $v_iv_j$ whenever $v_iv_j\in E(G)$.
By Proposition \ref{SparseLemma1}  $H$ is a subgraph of $G$ which contradicts the $(k,k)$-sparsity count.

\begin{prop}
\label{ReverseHennebergLemma}
Let $G$ be a $(k,l)$-sparse simple graph  and let $v\in V(G)$
such that  $\deg(v)= k+1$ and $vv_1,vv_2,\ldots,vv_{k+1}\in E(G)$.
Suppose that  one of the following conditions holds.
\begin{enumerate}[(a)]
\item $k=2$ and $l=3$, or, 
\item $k=l$ and $v_iv_j\notin E(G)$ for some distinct pair $v_i,v_j\in \{v_1,v_2,\ldots,v_{k+1}\}$.
\end{enumerate}
Then there exists a Henneberg edge move  $\mu:H\to G$ of degree $k$ where 
$H$ is a $(k,l)$-sparse graph obtained by removing the vertex $v$ and adjoining an edge of the form $v_iv_j$.

Moreover,  if $G$ is $(k,l)$-tight then $H$ is also $(k,l)$-tight.
\end{prop}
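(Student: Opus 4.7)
The plan is to invoke Lemma \ref{H2Lemma} directly to locate the right non-edge $v_iv_j$, then verify that adjoining it preserves $(k,l)$-sparsity via Lemma \ref{SparseLemma2}, and finally package the result as a Henneberg edge move. The moreover statement will follow from a straightforward edge-count bookkeeping once the sparsity of $H$ is in hand.

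First, I would apply Lemma \ref{H2Lemma} under the hypothesis (a) or (b): this yields indices $i,j$ with $v_iv_j \notin E(G)$ such that no $(k,l)$-tight subgraph of $G\setminus\{v\}$ contains both $v_i$ and $v_j$. Define $H := (G \setminus \{v\}) \cup \{v_iv_j\}$. Since $G\setminus\{v\}$ is a subgraph of the $(k,l)$-sparse graph $G$, it is itself $(k,l)$-sparse; by Lemma \ref{SparseLemma2} (in the direction that a failure of sparsity forces the existence of a tight subgraph containing the two endpoints), the graph $H$ remains $(k,l)$-sparse.

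Next I would exhibit the move $\mu:H\to G$. The Henneberg edge move of degree $k$ on $H$ consists of deleting the edge $v_iv_j\in E(H)$, introducing the new vertex $v$, and adjoining the $k+1$ edges $vv_1,\dots,vv_{k+1}$ (which include $vv_i$ and $vv_j$ as required). By construction the resulting graph is precisely $G$, so $\mu$ is a legitimate Henneberg edge move of degree $k$.

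For the moreover statement, assume $G$ is $(k,l)$-tight. Then $|V(H)|=|V(G)|-1$ and $|E(H)|=|E(G)|-(k+1)+1=|E(G)|-k$, so
\[
|E(H)| \;=\; k|V(G)|-l-k \;=\; k|V(H)|-l,
\]
and combined with the $(k,l)$-sparsity of $H$ this yields $(k,l)$-tightness. The main obstacle, such as it is, lies in confirming that Lemma \ref{H2Lemma} indeed supplies a pair $(v_i,v_j)$ in both cases (a) and (b); the subtlety is the asymmetry between the cases, which is already hidden inside the hypotheses of that lemma, so once it is cited the remainder of the argument is essentially bookkeeping and a direct application of Lemma \ref{SparseLemma2}.
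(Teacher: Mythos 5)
Your proof is correct and follows essentially the same route as the paper: invoke Lemma \ref{H2Lemma} to find the non-edge $v_iv_j$, form $H=(G\setminus\{v\})\cup\{v_iv_j\}$, and conclude with the edge count for tightness. The only cosmetic difference is that you obtain the $(k,l)$-sparsity of $H$ by citing Lemma \ref{SparseLemma2}, whereas the paper re-verifies the same dichotomy directly on subgraphs $H'$ of $H$; the underlying argument is identical.
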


\proof
By Lemma \ref{H2Lemma} there exists $v_i,v_j\in \in \{v_1,v_2,\ldots,v_{k+1}\}$ with $v_iv_j\notin E(G)$ such that no $(k,l)$-tight subgraph of $G$ contains both $v_i$ and $v_j$.
Let $H$ be the graph with vertex set $V(H)=V(G)\backslash\{v\}$ and edge set $E(H)=(E(G)\backslash\{vv_1,\ldots,vv_{k+1}\})\cup\{v_iv_j\}$.
Then
\[|E(H)|=|E(G)|-k\leq k|V(G)|-l-k=k(|V(H)|+1)-l-k=k|V(H)|-l\]
If  $H'$ is a subgraph of $H$ and $v_iv_j\notin E(H')$ then $H'$ is a subgraph of $G$ and so the $(k,l)$-sparsity count holds.
If $v_iv_j\in E(H')$ then $H'\backslash\{v_iv_j\}$ is a subgraph of $G$ which contains the vertices $v_i$ and $v_j$ and so $H'\backslash\{v_iv_j\}$  is $(k,l)$-sparse but not $(k,l)$-tight.
Hence
\[|E(H')|=|E(H'\backslash\{v_iv_j\})|+1\leq (k|V(H'\backslash\{v_iv_j\})|-l-1)+1=k|V(H')|-l\]
Thus $H$ is $(k,l)$-sparse and there exists a Henneberg edge move $\mu:H\to G$ of degree $k$.
The final statement is clear.
\endproof

\begin{defn}  
A {\em  Henneberg construction chain}  is a finite or countable sequence of graphs $G_1,G_2,G_3,\ldots$  together with graph moves,
\[G_1\overset{\mu_1}\longrightarrow  G_2\overset{\mu_2}\longrightarrow G_3\overset{\mu_{3}}\longrightarrow \cdots\]
such that each  $\mu_{k}$ is either a Henneberg vertex addition move of degree $2$ or a Henneberg edge move of degree $2$. 
\end{defn}

If $G$ is a $(2,3)$-tight simple graph then there exists a Henneberg construction chain of finite length from $K_2$ to $G$ (see \cite{gra-ser-ser}, \cite{Hen}),
\[K_2=G_1\overset{\mu_1}\longrightarrow  G_2\overset{\mu_2}\longrightarrow  \cdots \overset{\mu_{n-1}}\longrightarrow G_n=G\]
We prove below that such a construction chain exists between any nested pair of $(2,3)$-tight graphs.

\begin{lem}
\label{DegreeLemma}
Let $G$ be a $(k,l)$-sparse graph with $k,l\geq 1$.
\begin{enumerate}[(i)]
\item $\deg(v)\leq 2k-1$ for some $v\in V(G)$.
\item If  $G$ is $(k,l)$-tight then $\min \{\deg (v):v\in V(G)\}\in[k, 2k-1]$.
\item If $G$ is $(k,l)$-tight and $H$ is a subgraph of $G$ which is not a spanning subgraph then there exists a vertex $v\in V(G)\backslash V(H)$ such that $\deg(v)\in [k,2k-1]$. 
\end{enumerate}
\end{lem}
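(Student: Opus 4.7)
I plan to establish the three parts in order, each by a direct degree-counting argument that becomes progressively more delicate.

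For (i), I would apply the handshake lemma: since $G$ is $(k,l)$-sparse with $l\geq 1$,
\[\sum_{v\in V(G)}\deg(v)=2|E(G)|\leq 2k|V(G)|-2l<2k|V(G)|,\]
so the average degree is strictly less than $2k$ and at least one vertex must have degree at most $2k-1$.

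For (ii), the upper bound on $\min\{\deg(v):v\in V(G)\}$ is immediate from (i). For the lower bound I would argue by contradiction: if some $v\in V(G)$ had $\deg(v)\leq k-1$, then the subgraph $G\setminus\{v\}$ would have $|V(G)|-1$ vertices and
\[|E(G)|-\deg(v)\geq k|V(G)|-l-(k-1)=k(|V(G)|-1)-l+1\]
edges, violating the $(k,l)$-sparsity bound applied to $G\setminus\{v\}$.

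For (iii), the plan is a double-counting argument by contradiction. Set $V':=V(G)\setminus V(H)$, $n':=|V'|\geq 1$, $n_H:=|V(H)|$, and partition $E(G)$ into edges internal to $V(H)$, crossing edges, and edges internal to $V'$, with respective counts $e_{V(H)}$, $e_{\text{cross}}$, $e_{V'}$. Suppose for contradiction that every $v\in V'$ has $\deg_G(v)\geq 2k$. Summing degrees over $V'$ yields $2e_{V'}+e_{\text{cross}}\geq 2kn'$, while tightness of $G$ gives the identity $e_{V(H)}+e_{\text{cross}}+e_{V'}=k(n_H+n')-l$. Eliminating $e_{\text{cross}}$ produces
\[e_{V'}\geq kn'-kn_H+l+e_{V(H)},\]
so that the extremal sparsity bound $e_{V(H)}=kn_H-l$ on $G[V(H)]$ forces $e_{V'}\geq kn'$, contradicting the sparsity bound $e_{V'}\leq kn'-l$ for $G[V']$ obtained whenever $n'\geq 2$.

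The main obstacle lies in the boundary regime $n'=1$ (and correspondingly $n_H\leq 1$), where the sparsity inequality for $G[V']$ or $G[V(H)]$ is not available. In the $n'=1$ case with $V'=\{v\}$, I would work instead from the identity $\deg_G(v)=e_{\text{cross}}=|E(G)|-e_{V(H)}=k(n_H+1)-l-e_{V(H)}$, applying sparsity to the containing induced subgraph $G[V(H)\cup\{v\}]$ together with the extremal lower bound on $e_{V(H)}$ coming from its sparsity count, to conclude $\deg_G(v)\leq 2k-1$ directly. Balancing the sparsity and tightness inputs sharply enough to close both the generic $n'\geq 2$ regime and this degenerate regime simultaneously is the main technical point.
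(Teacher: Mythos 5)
Your arguments for (i) and (ii) are correct and coincide with the paper's: the handshake lemma against the sparsity count for $G$ gives (i), and deleting a vertex of degree less than $k$ violates the sparsity count for $G\setminus\{v\}$, giving (ii). The gap is in (iii), at the step where you invoke ``the extremal sparsity bound $e_{V(H)}=kn_H-l$ on $G[V(H)]$.'' Sparsity supplies only the \emph{upper} bound $e_{V(H)}\leq kn_H-l$, while your inequality $e_{V'}\geq kn'-kn_H+l+e_{V(H)}$ needs the \emph{lower} bound $e_{V(H)}\geq kn_H-l$ in order to reach $e_{V'}\geq kn'$; the same phantom ``extremal lower bound on $e_{V(H)}$ coming from its sparsity count'' is what you lean on again in the $n'=1$ regime. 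Nothing in the hypotheses provides it: $H$ is an arbitrary non-spanning subgraph, and $G[V(H)]$ may be far from tight.

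This cannot be patched under the literal hypotheses, because (iii) as stated is false: for $k=2$, $l=3$ take $G$ to be the fan on $v_0,v_1,\dots,v_n$ (the hub $v_0$ joined to each $v_i$, plus the rim path $v_1v_2\cdots v_n$), which is $(2,3)$-tight, and take $H$ to be the rim path; then $V(G)\setminus V(H)=\{v_0\}$ and $\deg_G(v_0)=n\notin[2,3]$ once $n\geq 4$. The hypothesis that is silently in force wherever the lemma is applied (Propositions \ref{ConstructionProp} and \ref{NonEuclideanConstructionProp}) is that $H$ is itself $(k,l)$-tight. Granting that, $e_{V(H)}\geq |E(H)|=kn_H-l$, which together with sparsity gives exactly the identity you wanted, and your double count then closes in every case --- including $n'=1$, which needs no separate treatment since $e_{V'}=0<k\leq kn'$ is already a contradiction, so the ``main technical point'' you flag evaporates. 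With that extra hypothesis your computation is in fact the right proof of what the applications require, namely a bound on $\deg_G(v)$ for some $v$ outside $H$; by contrast the paper's own one-line derivation of (iii), which applies part (i) to the induced subgraph $G[V(G)\setminus V(H)]$, controls only the degree of $v$ within that induced subgraph rather than in $G$.
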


\proof
$(i)$
If $\deg(v)\geq 2k$ for all $v\in V(G)$ then 
\[|E(G)|=\frac{1}{2}\sum_{v\in V(G)}\deg(v) \geq k|V(G)|> k|V(G)|-l\]
and this contradicts the sparsity count for $G$.

$(ii)$
Suppose $\deg(v) <k$ for some $v\in V(G)$ and let $H$ be the vertex-induced subgraph of $G$ obtained from $V(G)\backslash\{v\}$.
Then 
\[|E(H)| = |E(G)|- \deg(v) 
= k(|V(H)|+1) -l-\deg(v) 
> k|V(H)|-l\]
This contradicts the sparsity count for the subgraph $H$ and so $\deg(v)\geq k$ for all $v\in V(G)$.
The result now follows from $(i)$.

$(iii)$ The vertex-induced subgraph of $G$ determined by the vertices in $V(G)\backslash V(H)$ is $(k,l)$-sparse and so it follows from $(i)$ that there exists $v\in V(G)\backslash V(H)$ with $\deg (v)\leq 2k-1$. The result now follows from $(ii)$. 
\endproof

\begin{prop}
\label{ConstructionProp}
Let $G \subseteq G'$ be an inclusion of  $(2,3)$-tight graphs.
Then there exists a Henneberg  construction chain from $G$ to $G'$,
\[G=G_1\overset{\mu_1}\longrightarrow  G_2\overset{\mu_2}\longrightarrow  \cdots \overset{\mu_{n-1}}\longrightarrow G_n=G'\]

\end{prop}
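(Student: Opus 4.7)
\proof
The plan is to proceed by induction on the non-negative integer $n = |V(G')| - |V(G)|$, peeling off an inverse Henneberg move at each step in such a way that $G$ remains a subgraph.

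\textbf{Base case:} If $n = 0$, then $V(G) = V(G')$ and the $(2,3)$-tightness of both graphs forces $|E(G)| = 2|V(G)|-3 = 2|V(G')|-3 = |E(G')|$. Combined with $E(G) \subseteq E(G')$, this gives $G = G'$, and the empty chain suffices.

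\textbf{Inductive step:} Assume the result for all inclusions with vertex deficit less than $n$, and suppose $n \geq 1$. Since $V(G) \subsetneq V(G')$, the subgraph $G$ is not spanning in $G'$, so Lemma \ref{DegreeLemma}(iii) (with $k=2, l=3$) yields a vertex $v \in V(G') \setminus V(G)$ with $\deg_{G'}(v) \in \{2,3\}$. We split into two cases.

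If $\deg_{G'}(v) = 2$, set $G'' := G' \setminus \{v\}$, the graph obtained by deleting $v$ and its two incident edges. Then $|V(G'')| = |V(G')|-1$ and $|E(G'')| = |E(G')|-2$, so $G''$ satisfies the tightness equality $|E(G'')| = 2|V(G'')|-3$, while sparsity of all subgraphs is inherited from $G'$. Since $v \notin V(G)$ and $G \subseteq G'$, we have $G \subseteq G''$, and the forward move $G'' \overset{\mu}{\longrightarrow} G'$ is a Henneberg vertex extension of degree $2$.

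If $\deg_{G'}(v) = 3$, with neighbours $v_1, v_2, v_3$, Proposition \ref{ReverseHennebergLemma}(a) applied to $G'$ (which is $(2,3)$-tight, hence $(2,3)$-sparse) produces a $(2,3)$-tight graph $G''$ on vertex set $V(G') \setminus \{v\}$ with $E(G'') = (E(G') \setminus \{vv_1,vv_2,vv_3\}) \cup \{v_iv_j\}$ for an appropriate non-adjacent pair $v_i, v_j$. Since $v \notin V(G)$, no edge of $G$ is among the deleted edges $vv_k$, and the addition of $v_iv_j$ only enlarges the edge set, so $G \subseteq G''$. The reverse move $G'' \overset{\mu}{\longrightarrow} G'$ is then a Henneberg edge move of degree $2$.

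In both cases $G \subseteq G'' \subsetneq G'$ with $|V(G')| - |V(G'')| = 1$, so by the inductive hypothesis there is a Henneberg construction chain $G = G_1 \overset{\mu_1}{\longrightarrow} \cdots \overset{\mu_{n-1}}{\longrightarrow} G_{n-1} = G''$, and appending the final move $G'' \overset{\mu}{\longrightarrow} G' =: G_n$ yields the required chain.

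\textbf{Main obstacle:} The crucial point is that every inverse move must be performed on a vertex outside $V(G)$; this is precisely what Lemma \ref{DegreeLemma}(iii) supplies, by locating a low-degree vertex in the complement $V(G') \setminus V(G)$. Without this refinement, a low-degree vertex could lie in $V(G)$, forcing the inverse move to damage the subgraph $G$.
\endproof
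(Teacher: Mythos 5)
Your proof is correct and follows essentially the same route as the paper's: both arguments locate a vertex of degree $2$ or $3$ in $V(G')\setminus V(G)$ via Lemma \ref{DegreeLemma}(iii), peel it off by the corresponding inverse Henneberg move (invoking Proposition \ref{ReverseHennebergLemma} in the degree-$3$ case to preserve $(2,3)$-tightness while keeping $G$ intact), and descend; the paper merely packages the descent as a minimal-counterexample argument over a partially ordered set of pairs rather than as induction on $|V(G')|-|V(G)|$. Your closing remark correctly identifies the one essential point, namely that the low-degree vertex must be chosen outside $V(G)$.
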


\begin{proof}
Suppose there is no Henneberg construction chain  from $G$ to $G'$.
Let $\S(G')$ be the set of all pairs $(H_1,H_2)$ of $(2,3)$-tight subgraphs of $K_{V(G')}$ such that $H_1$ is a proper subgraph of $H_2$. Then $\S(G')$  can be endowed with a partial order by the rule $(H_1,H_2)\leq (\tilde{H}_1,\tilde{H}_2)$ if and only if either $|V(H_1)|<|V(\tilde{H}_1)|$, or, $|V(H_1)|=|V(\tilde{H}_1)|$ and $|V(H_2)|\leq |V(\tilde{H}_2)|$.
Let $\tilde{\S}(G')\subset \S(G')$ be the subset of all pairs $(H_1,H_2)$ with the property that there does not exist  a Henneburg construction chain from $H_1$ to $H_2$.
Then $\tilde{\S}(G')\not=\emptyset$ and so there exists a minimal element $(H_1,H_2)\in\tilde{\S}(G')$.
By Lemma \ref{DegreeLemma} there exists $v\in V(H_2)\backslash V(H_1)$ such that 
$v$ has degree $2$ or degree $3$ in $H_2$.

Suppose $\deg(v)=2$ with $vv_1,vv_2\in E(H_2)$. 
Let $\tilde{H}_2$ be the vertex-induced subgraph of $H_2$ with $V(\tilde{H}_2)=V(H_2)\backslash\{v\}$.
Then $H_1$ is a subgraph of $\tilde{H}_2$ and $\tilde{H}_2$ is $(2,3)$-tight. Thus $(H_1,\tilde{H}_2)\in \S(G')$ and $(H_1,\tilde{H}_2)<(H_1,H_2)$.
By the minimality of $(H_1,H_2)$ in $\tilde{\S}(G')$ there must exist a Henneberg construction chain of finite length from $H_1$ to $\tilde{H}_2$,
\[H_1=H_{1,1}\overset{\mu_1}\longrightarrow  H_{1,2}\overset{\mu_2}\longrightarrow  \cdots \overset{\mu_{n-1}}\longrightarrow H_{1,n}=\tilde{H}_2\]
 Applying a Henneburg vertex extension move of degree $2$ to $\tilde{H}_2$ based on the vertices $v_1$ and $v_2$ we obtain $H_2$. This is a contradiction as there does not exist  a Henneberg construction chain from $H_1$ to $H_2$. 

Suppose $\deg(v)=3$ with $vv_1,vv_2,vv_3\in E(H_2)$. 
By Proposition \ref{ReverseHennebergLemma} there exists a $(2,3)$-tight graph $\tilde{H}_2$ obtained by removing the vertex $v$ from $H_2$ and adjoining an edge of the form $v_iv_j$ together with a Henneberg edge move $\mu:\tilde{H}_2\to H_2$ of degree $2$.
Now $H_1$ is a subgraph of $\tilde{H}_2$ and so $(H_1,\tilde{H}_2)\in \S(G')$ with $(H_1,\tilde{H}_2)<(H_1,H_2)$.
By the minimality of $(H_1,H_2)$ in $\tilde{\S}(G')$ there must exist a Henneberg  construction chain from $H_1$ to $\tilde{H}_2$,
\[H_1=H_{1,1}\overset{\mu_1}\longrightarrow  H_{1,2}\overset{\mu_2}\longrightarrow  \cdots \overset{\mu_{n-1}}\longrightarrow H_{1,n}=\tilde{H}_2\]
Applying a Henneburg edge move of degree $2$ to $\tilde{H}_2$ based on the vertices $v_1,v_2,v_3$ and the edge $v_iv_j$ we obtain $H_2$.
This is again a contradiction and so we conclude that there must exist a Henneburg  construction chain from $G$ to $G'$.
\end{proof}

To establish the existence of construction chains in the class of $(2,2)$-tight graphs we require the following two additional graph moves.

\begin{defn} Let $G$ be a simple graph.
\begin{enumerate}
\item A graph $G'$ is obtained from $G$ by a {\em vertex-to-$K_4$ move} if a vertex $w_0$ is chosen in $G$ and three vertices $w_1,w_2, w_3$ are adjoined to $V(G)$ together with all interconnecting edges $w_iw_j$ and every edge of the form $vw_0\in E(G)$ is either left unchanged or is replaced with an edge of the form $vw_j$ for some $j\in\{1,2,3\}$. 
\item A graph $G'$ is obtained from $G$ by a {\em vertex to $4$-cycle move} if a vertex $v$ of degree $2$ is chosen in $G$ with $vv_1,vv_2\in E(G)$ and a vertex $v_0$ is adjoined to $V(G)$ together with the edges $v_0v_1,v_0v_2$ such that every edge of the form $vw\in E(G)$ is either left unchanged or is replaced with the edge $v_0w$.
\end{enumerate}
\end{defn}

We  prove below an analogue of Proposition \ref{ConstructionProp} for the class of $(2,2)$-tight graphs.
The method of proof is similar although more involved due to the additional graph moves required to construct $(2,2)$-tight graphs.

\begin{defn}
A {\em non-Euclidean Henneberg  construction chain} is  a finite or countable sequence of graphs $G_1,G_2,G_3,\ldots$  together with graph moves,
\[G_1\overset{\mu_1}\longrightarrow  G_2\overset{\mu_2}\longrightarrow G_3\overset{\mu_{3}}\longrightarrow \cdots\]
such that each  $\mu_{k}$ is either a Henneberg vertex addition move of degree $2$, a Henneberg edge move of degree $2$, a vertex-to-$K_4$ move or a vertex to $4$-cycle move. 
\end{defn}

If $G$ is  a $(2,2)$-tight  simple graph then there exists a non-Euclidean Henneberg  construction chain  from $K_1$ to $G$ (see \cite{NOP2}),
\[K_1=G_1\overset{\mu_1}\longrightarrow  G_2\overset{\mu_2}\longrightarrow  \cdots \overset{\mu_{n-1}}\longrightarrow G_n=G\]
More generally, we have the following proposition.

\begin{prop}
\label{NonEuclideanConstructionProp}
Let $G \subseteq G'$ be an inclusion of  $(2,2)$-tight graphs.
Then there exists a non-Euclidean Henneberg construction chain from $G$ to $G'$,
\[G=G_1\overset{\mu_1}\longrightarrow  G_2\overset{\mu_2}\longrightarrow  \cdots \overset{\mu_{n-1}}\longrightarrow G_n=G'\]
\end{prop}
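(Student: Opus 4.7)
The approach is to mirror the minimal counterexample argument of Proposition \ref{ConstructionProp}, enlarged by the two additional non-Euclidean moves. Let $\S(G')$ denote the set of pairs $(H_1, H_2)$ of $(2,2)$-tight subgraphs of $K_{V(G')}$ with $H_1$ a proper subgraph of $H_2$, partially ordered as in the proof of Proposition \ref{ConstructionProp}. Assuming no non-Euclidean Henneberg chain from $G$ to $G'$ exists, the subset $\tilde{\S}(G') \subseteq \S(G')$ of pairs admitting no such chain is nonempty, and we fix a minimal element $(H_1, H_2)$. Since two $(2,2)$-tight graphs with equal vertex sets automatically have equal edge counts, $V(H_1)$ is properly contained in $V(H_2)$, and Lemma \ref{DegreeLemma}(iii) with $k = l = 2$ supplies a vertex $v \in V(H_2) \setminus V(H_1)$ with $\deg_{H_2}(v) \in \{2, 3\}$.

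The cases $\deg_{H_2}(v) = 2$ and $\deg_{H_2}(v) = 3$ with some pair of neighbors non-adjacent in $H_2$ go through exactly as in the $(2,3)$-tight argument: an inverse Henneberg vertex extension in the first case, and an application of Proposition \ref{ReverseHennebergLemma}(b) with $k = l = 2$ in the second, each producing a $(2,2)$-tight subgraph $\tilde{H}_2$ with $H_1 \subseteq \tilde{H}_2 \subsetneq H_2$. Minimality yields a chain from $H_1$ to $\tilde{H}_2$, which extends by the corresponding Henneberg move to reach $H_2$, contradicting $(H_1, H_2) \in \tilde{\S}(G')$.

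The genuinely new case is $\deg_{H_2}(v) = 3$ with $\{v\} \cup N_{H_2}(v)$ inducing a $K_4$, so that neither Henneberg reduction inverts. Here the non-Euclidean moves enter: if the three neighbors of $v$ have pairwise disjoint external neighborhoods in $H_2$, an inverse vertex-to-$K_4$ move contracts this $K_4$ to a single vertex, reducing $|V|$ by $3$ and $|E|$ by $6$ and preserving $(2,2)$-tightness; otherwise a common external neighbor is present, and an inverse vertex-to-$4$-cycle move reduces the configuration by identifying two of the $K_4$-vertices through the resulting $4$-cycle. In either sub-case $\tilde{H}_2$ is $(2,2)$-tight, strictly smaller than $H_2$, and still contains $H_1$, because the identified or removed vertices lie in $V(H_2) \setminus V(H_1)$ (otherwise Proposition \ref{SparseLemma1} would force the $K_4$ into a $(2,2)$-tight subgraph of $H_1$ also containing $v$, contradicting $v \notin V(H_1)$). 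The usual minimality argument then delivers the contradiction.

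The main obstacle is precisely this last case: verifying that for some $v$ of degree $3$ with triangular neighborhood in $V(H_2) \setminus V(H_1)$, one of the two inverse non-Euclidean moves is always applicable while preserving both $(2,2)$-tightness and the inclusion $H_1 \subseteq \tilde{H}_2$. The required combinatorial analysis parallels the unconditional construction of $(2,2)$-tight graphs from $K_1$ in \cite{NOP2}, but must be executed relative to the fixed subgraph $H_1$, choosing $v$ judiciously among several degree-$3$ candidates outside $V(H_1)$ and, if necessary, iterating a small reduction when the first candidate $K_4$ overlaps $H_1$ obstructively.
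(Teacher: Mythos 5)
Your overall strategy coincides with the paper's: a minimal counterexample $(H_1,H_2)$ in the poset of nested $(2,2)$-tight pairs, reduced by inverse moves, with the degree-$2$ case and the degree-$3$ case with a non-adjacent pair of neighbours handled exactly as in Proposition \ref{ConstructionProp}. The gap lies in the remaining case, where $\{v_0,v_1,v_2,v_3\}$ induces a $K_4$ --- the case you yourself flag as ``the main obstacle'' and defer to a combinatorial analysis paralleling \cite{NOP2}. This is precisely where the new content of the proposition sits, and your sketch of it is both incomplete and partly incorrect. First, your description of the inverse vertex-to-$4$-cycle move is wrong: if $w_0$ is a common neighbour of $v_i$ and $v_j$, the relevant $4$-cycle is $v_0v_iw_0v_j$ and the only pair that the inverse move can identify is the non-adjacent pair $\{v_0,w_0\}$; the pair $\{v_i,v_j\}$ is joined by an edge of the $K_4$, so ``identifying two of the $K_4$-vertices'' is not an available reduction. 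Second, your dichotomy (pairwise disjoint external neighbourhoods versus a common external neighbour) ignores the position of $H_1$: if the only common neighbour $w_0$ lies in $V(H_1)$, contracting it destroys the inclusion $H_1\subseteq\tilde{H}_2$, so the case split must be drawn according to whether some vertex of $V(H_2)\setminus V(H_1)$ is adjacent to two of $v_1,v_2,v_3$ --- which is how the paper proceeds, after first noting that sparsity forbids adjacency to all three. Third, your parenthetical appeal to Proposition \ref{SparseLemma1} to show the removed vertices avoid $V(H_1)$ does not do the required work; the paper's actual argument applies Lemma \ref{SparseLemma3} to the intersection $K\cap H_1$ of two $(2,2)$-tight graphs to conclude that it is a single vertex (the only proper $(2,2)$-tight subgraph of $K_4$), and then contracts the $K_4$ onto that vertex.

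In short, the skeleton is right, but the verification that one of the two inverse non-Euclidean moves is always applicable, preserves $(2,2)$-tightness and simplicity, and keeps $H_1$ intact is exactly the part you have postponed, and the hints you give for it would need correction before they could be expanded into a proof. The missing material is the paper's treatment of the $K_4$ case: the sparsity bound showing each vertex of $V(H_2)\setminus V(H_1)$ meets $\{v_1,v_2,v_3\}$ in at most two vertices, the dichotomy on whether some such vertex meets it in exactly two, the contraction of $w_0$ onto $v_0$ in the affirmative case, and the contraction of $K$ onto the single vertex of $K\cap H_1$ (or onto any vertex when $K\cap H_1=\emptyset$) in the negative case.
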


\begin{proof}
Suppose there does not exist a non-Euclidean Henneberg  construction chain  from $G$ to $G'$. Let $\S(G')$ be the set of all pairs $(H_1,H_2)$ of $(2,2)$-tight subgraphs of $K_{V(G')}$ such that $H_1$ is a proper subgraph of $H_2$. 
Let $\tilde{\S}(G')\subset \S(G')$ be the subset of all pairs $(H_1,H_2)$ with the property that there does not exist  a non-Euclidean Henneburg  construction chain from $H_1$ to $H_2$.
Choose a minimal pair $(H_1,H_2)\in \tilde{\S}(G')$, as in the proof of Proposition \ref{ConstructionProp}. 
By Lemma \ref{DegreeLemma} there exists $v_0\in V(H_2)\backslash V(H_1)$ such that 
$v_0$ has degree $2$ or degree $3$ in $H_2$.

If  $\deg(v_0)=2$ then we can apply an inverse Henneberg vertex addition move as in Proposition \ref{ConstructionProp} to obtain a contradiction.

Suppose $\deg(v_0)=3$ with $v_0v_1,v_0v_2,v_0v_3\in E(H_2)$.
If  $v_iv_j\notin E(H_2)$ for some distinct pair $v_i,v_j\in\{v_1,v_2,v_3\}$ then we can apply an inverse Henneberg edge move as in Proposition \ref{ConstructionProp} to obtain a contradiction.

If the complete graph $K$ on the vertices $\{v_0,v_1,v_2,v_3\}$ is a subgraph of $H_2$ then
every vertex  $v\in V(H_2)\backslash V(H_1)$ is incident with at most two vertices in $\{v_1,v_2,v_3\}$.
Otherwise, the vertex-induced subgraph on $\{v,v_0,v_1,v_2,v_3\}$ would contradict the sparsity count for $H_2$.

We consider two possible cases.
Firstly, the case when there does not exist a vertex in $V(H_2)\backslash V(H_1)$ which is incident with two vertices in $\{v_1,v_2,v_3\}$. And secondly, the case when there does exist a vertex in $V(H_2)\backslash V(H_1)$ which is incident with two vertices in $\{v_1,v_2,v_3\}$.

In the first case, if $K\cap H_1 = \emptyset$ then let $\tilde{H}_2$ be the $(2,2)$-tight graph obtained from $H_2$ by contracting the complete graph on $\{v_0,v_1,v_2,v_3\}$ to any one of the vertices $v_0,v_1,v_2,v_3$.
If $K\cap H_1\not=\emptyset$ then, by Lemma \ref{SparseLemma3},  $K\cap H_1$ is $(2,2)$-tight and 
hence must consist of a single vertex $v_i\in\{v_1,v_2,v_3\}$.
Let $\tilde{H}_2$ be the $(2,2)$-tight graph obtained from $H_2$ by contracting the complete graph on $\{v_0,v_1,v_2,v_3\}$ to $v_i$.
Then $(H_1,\tilde{H}_2)\in \S(G')$ and $(H_1,\tilde{H}_2)<(H_1,H_2)$.
By the minimality of $(H_1,H_2)$ in $\tilde{\S}(G')$ there must exist a non-Euclidean Henneberg  construction chain from $H_1$ to $\tilde{H}_2$,
\[H_1=H_{1,1}\overset{\mu_1}\longrightarrow  H_{1,2}\overset{\mu_2}\longrightarrow  \cdots \overset{\mu_{n-1}}\longrightarrow H_{1,n}=\tilde{H}_2\]
Applying a vertex-to-$K_4$ move to $\tilde{H}_2$  we obtain $H_2$.
This is  a contradiction as there does not exist  a non-Euclidean Henneberg  construction chain from $H_1$ to $H_2$.

In the second case, suppose $w_0\in V(H_2)\backslash V(H_1)$ and $w_0$ is incident with the vertices $v_i,v_j\in \{v_1,v_2,v_3\}$.
Let $\tilde{H}_2$ be the graph obtained from $H_2$ by contracting $w_0$ to $v_0$.
Thus the edges $w_0v_i$ and $w_0v_j$ are removed, every remaining edge of the form $w_0v$ is replaced with $v_0v$ and the vertex $w_0$ is removed.
Since $w_0\notin V(H_1)$, $H_1$ is a subgraph of $\tilde{H}_2$. 
We have $(H_1,\tilde{H}_2)\in \S(G')$ and $(H_1,\tilde{H}_2)<(H_1,H_2)$.
By the minimality of $(H_1,H_2)$ in $\tilde{\S}(G')$ there must exist a non-Euclidean Henneberg  construction chain from $H_1$ to $\tilde{H}_2$,
\[H_1=H_{1,1}\overset{\mu_1}\longrightarrow  H_{1,2}\overset{\mu_2}\longrightarrow  \cdots \overset{\mu_{n-1}}\longrightarrow H_{1,n}=\tilde{H}_2\]
Now $H_2$ is obtained by applying a vertex to $4$-cycle move to $\tilde{H}_2$ which is based on the edges $v_0v_i$ and $v_0v_j$.
This is  a contradiction as there does not exist  a non-Euclidean Henneberg  construction chain from $H_1$ to $H_2$. 

We conclude that there must exist a non-Euclidean Henneberg  construction chain from $G$ to $G'$.
\end{proof}

We now prove that every minimally rigid graph in $(\bR^2,\|\cdot\|_q)$ arises as the graph limit determined by a countable construction chain.

\begin{defn}
Let $\{G_k:k\in\bN\}$ be a sequence of finite simple graphs such that $V(G_k)\subseteq V(G_{k+1})$ for all $k\in\bN$.
The {\em countable graph limit} $\varinjlim\, G_k$ is the countable graph with vertex set
\[V(\varinjlim\, G_k)=\bigcup_{k\in\mathbb{N}} V(G_k)\] 
and edge set \[E(\varinjlim\, G_k)=\{vw:\exists\,n\in\mathbb{N} \, |\, vw\in E(G_k), \,\,\forall\, k\geq n\}\]
\end{defn}

The edge set of a countable graph limit consists of edges which are contained in some $G_k$ and are not subject to removal further along the chain.
It may be that every edge in every finite graph in the sequence is subject to later removal, in which case the limit graph will have no edges. 

The graph moves that we have considered have the property that each move $G \overset{\mu}\longrightarrow G'$ is associated with a vertex set inclusion $V(G)\subset V(G')$. In particular, a countable Henneberg construction chain or a countable non-Euclidean Henneberg construction chain 
\[G_1\overset{\mu_1}\longrightarrow  G_2\overset{\mu_2}\longrightarrow G_3\overset{\mu_{3}}\longrightarrow \cdots\]
has an associated graph limit $\varinjlim\, G_k$.

\begin{thm}\label{t:inductive}
Let $G$ be a countable simple graph which is minimally rigid in $(\bR^2,\|\cdot\|_q)$ where $q\in (1,\infty)$. 

\begin{enumerate}[\bf (A)]
\item 
If $q=2$ then there exists a countable Henneberg construction chain 
\[K_2=G_1\overset{\mu_1}\longrightarrow  G_2\overset{\mu_2}\longrightarrow G_3\overset{\mu_{3}}\longrightarrow \cdots\] 
such that
 $G=\varinjlim\,G_k$.
\item 
If $q\not=2$ then there exists a countable non-Euclidean Henneberg construction chain 
\[K_1=G_1\overset{\mu_1}\longrightarrow  G_2\overset{\mu_2}\longrightarrow G_3\overset{\mu_{3}}\longrightarrow \cdots\] 
such that
 $G=\varinjlim\,G_k$.
\end{enumerate}
\end{thm}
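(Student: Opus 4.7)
The plan is to combine the $(2,l)$-tight tower furnished by Corollary \ref{MinInfiniteLaman} with the finite inductive constructions of Propositions \ref{ConstructionProp} and \ref{NonEuclideanConstructionProp}, and then concatenate these into a single countable chain whose graph limit recovers $G$. Because $G$ is minimally rigid in $(\bR^2,\|\cdot\|_q)$, Corollary \ref{MinInfiniteLaman} supplies a $(2,l)$-tight edge-complete tower $H_1\subseteq H_2\subseteq\cdots$ in $G$, with $l=3$ in case (A) and $l=2$ in case (B). Assuming $|V(G)|\geq 2$, I prepend a base graph: in case (A) set $H_0=K_2$ to be any edge of $H_1$ (which exists since $|E(H_1)|=2|V(H_1)|-3\geq 1$), and in case (B) set $H_0=K_1$ to be any vertex of $H_1$. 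Since $K_2$ and $K_1$ are $(2,3)$-tight and $(2,2)$-tight respectively, $H_0\subseteq H_1\subseteq H_2\subseteq\cdots$ is still a tower of $(2,l)$-tight graphs.

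Next, for each $n\geq 0$ I invoke Proposition \ref{ConstructionProp} in case (A) and Proposition \ref{NonEuclideanConstructionProp} in case (B) to produce a finite Henneberg (respectively non-Euclidean Henneberg) construction chain from $H_n$ to $H_{n+1}$, and concatenate these end-to-end into a countable chain
\[
K_2=G_1\overset{\mu_1}\longrightarrow G_2\overset{\mu_2}\longrightarrow G_3\overset{\mu_3}\longrightarrow\cdots
\]
(or starting with $K_1$ in case (B)), together with indices $k_0=1<k_1<k_2<\cdots$ for which $G_{k_n}=H_n$.

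It then remains to verify that $\varinjlim G_k=G$. Vertex-completeness is immediate: each of the four moves only adjoins vertices, so $V(H_n)\subseteq V(G_k)$ for all $k\geq k_n$, and every intermediate graph between $H_n$ and $H_{n+1}$ has vertex set contained in $V(H_{n+1})\subseteq V(G)$. For the edge set the crucial observation is that each of the four graph moves adds only edges incident to a newly created vertex; consequently, once an edge $vw$ between two already-existing vertices is deleted, no subsequent move can reinstate it. Applied to any $e\in E(H_n)$: if $e$ were removed somewhere in the finite subchain leading to $H_{n+1}$, then $e\notin E(H_{n+1})$, contradicting $H_n\subseteq H_{n+1}$. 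Hence $e\in E(G_k)$ for every $k\geq k_n$, so $e\in E(\varinjlim G_k)$. Conversely, any $e\in E(\varinjlim G_k)$ lies in $E(G_{k_m})=E(H_m)$ for all sufficiently large $m$, whence $e\in E(G)$ by edge-completeness of the tower.

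The main obstacle is precisely this edge-completeness check: intermediate graphs in a Henneberg chain need not be subgraphs of either endpoint, so it is not automatic that the edges of $H_n$ survive through the finite subchain to $H_{n+1}$. The ``no reintroduction'' property of the four moves, which must be inspected move by move, is what forces these edges to persist and thereby pins down the limit.
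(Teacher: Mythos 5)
Your proposal is correct and follows essentially the same route as the paper: extract a $(2,l)$-tight edge-complete tower via Corollary \ref{MinInfiniteLaman}, bridge consecutive terms with the finite construction chains of Propositions \ref{ConstructionProp} and \ref{NonEuclideanConstructionProp}, and concatenate. Your explicit check that the limit equals $G$ — via the observation that every move adds only edges incident to a new vertex, so deleted edges between existing vertices can never be reinstated — is a detail the paper leaves implicit, and it is argued correctly.
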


\proof
(A) If $G$ is minimally rigid in $(\mathbb{R}^2,\|\cdot\|_2)$ then by Corollary \ref{MinInfiniteLaman}(A) there exists an edge-complete tower $\{G_k:k\in\mathbb{N}\}$ in $G$ such that
each $G_k$ is $(2,3)$-tight.
By Proposition \ref{ConstructionProp} there exists a Henneberg  construction chain of finite length, \[G_k=H_{k,1} \overset{\mu_{k,1}}\longrightarrow  H_{k,2}\overset{\mu_{k,2}}\longrightarrow \cdots\overset{\mu_{k,n_k-1}}\longrightarrow  H_{k,n_k}=G_{k+1}\] for each $k\in\mathbb{N}$.
We can concatenate these construction chains to obtain a  countable Henneberg  construction chain \[K_2=\tilde{G}_1\overset{\tilde{\mu}_1}\longrightarrow  \tilde{G}_2\overset{\tilde{\mu}_2}\longrightarrow \tilde{G}_3\overset{\tilde{\mu}_{3}}\longrightarrow \cdots\] which satisfies   $G= \varinjlim\,\tilde{G}_k$.

For the proof of (B) apply a similar argument using Corollary \ref{MinInfiniteLaman}(B) and Proposition \ref{NonEuclideanConstructionProp}.
\endproof


\subsection{Simplicial graphs and Cauchy's rigidity theorem}\label{FiniteSimplicial}
First we show how one may compute the flexibility dimension $\dim_{3,q}(G)$
of the edge graph of any polytope.  The following terminology  will be convenient. 

\begin{defn}\label{d:SimplicialGraph} A finite simple graph is said to be {\em simplicial with 
topological connectivity $\kappa$} if it has at least three edges and has a planar embedding in which exactly $\kappa$ faces 
are not triangular. Furthermore 
these faces have no edges in common.
\end{defn}

If $\kappa=0$ then such a graph is the edge graph of a convex bounded polyhedron with triangular 
faces. Such polyhedra are the simplicial polytopes in $\bR^3$.

The following vertex splitting construction move preserves the class of $(3,6)$-tight graphs
and the class of $(3,3)$-tight graphs, and  will be particularly useful in connection with the  rigidity and flexibility of placements in three dimensional normed spaces. 
Note that a Henneberg vertex addition move which adds a degree $3$ vertex to the face of a planar graph is in fact a vertex splitting move.

\begin{defn} Let $G$ and $G'$ be  finite simple graphs.
Then $G'$ is said to be obtained from $G$ by applying a {\em  vertex 
splitting move for three dimensions} if it results from
\begin{enumerate}[(i)]
\item adding a vertex $v_0$ to $V(G)$,
\item adding an edge $v_0v_1$ for a vertex $v_1$ in $V(G)$ which has at 
least two incident edges $v_1v_2$ and $v_1v_3$,
\item adding the edges $v_0v_2, v_0v_3$,
\item replacing any number of the edges $wv_1, w\neq v_2,v_3$ by edges 
$wv_0$.
\end{enumerate}
\end{defn}

The preservation of $3$-rigidity under the Henneberg vertex addition move on vertices $v_1,v_2,v_3\in V(G)$ is clear, since for any generic placement $(G',p')$ with $p_v,p_{v_1},p_{v_2},p_{v_3}$ in general position we have \[\rank R_q(G',p') = \rank R_q(G,p')+3\]
The proof for the Henneberg edge move in three dimensions is also straightforward, being a  variation of the proof of \cite[Lemma 3.4]{kit-pow}.
The  preservation of $3$-rigidity under the vertex splitting move is well-known for $q=2$. 
See Whiteley \cite[Theorem 1.4.6]{whi-1}. This argument also has a
straightforward $q$-norm variant.

\begin{thm}\label{t:SimpGraphDim}
Let $G=(V(G),E(G))$ be a finite simplicial graph of connectivity $\kappa$  whose non-triangular faces are bordered by cycles of length $\gamma_1,\dots ,\gamma_\kappa$.
Then 
\begin{enumerate}[(i)]
\item
\[
\dim_{3,2}(G)= \gamma_1+\dots +\gamma_\kappa-3\kappa
\]
\item For $|V(G)|\geq 6$ and $1<q<\infty$, $q\neq 2$,
\[
\dim_{3,q}(G)= \dim_{3,2}(G)+3
\]
\end{enumerate}
\end{thm}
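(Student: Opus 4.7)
The strategy is to pass to a triangulated completion $G^*$ of $G$, obtained by inserting non-crossing diagonals inside each of the $\kappa$ non-triangular faces. First I would compute $|E(G)|$ via Euler's formula. Counting edge-face incidences gives $2|E(G)|=3(|F|-\kappa)+\gamma_1+\dots+\gamma_\kappa$, and combining this with $|V(G)|-|E(G)|+|F|=2$ yields
\[
|E(G)| \;=\; 3|V(G)|-6-(\gamma_1+\dots+\gamma_\kappa-3\kappa).
\]
Since the $\kappa$ non-triangular faces share no edges, the diagonals used to form $G^*$ are pairwise disjoint and $V(G^*)=V(G)$, so $G^*$ is a simplicial graph with connectivity $0$ and $|E(G^*)|=3|V(G)|-6$, with $G\subseteq G^*$.

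For part (i), Theorem \ref{t:GenericCauchy} asserts that $G^*$ is rigid in $(\bR^3,\|\cdot\|_2)$. Since for any placement $\rank R_2(G^*,p)\leq 3|V|-\dim\T_2(\bR^3)=3|V|-6=|E(G^*)|$, rigidity forces equality at a generic $p$, so $R_2(G^*,p)$ has independent rows. Because $G\subseteq G^*$, the rows of $R_2(G,p)$ form a sub-collection of these and are therefore also independent, giving $\rank R_2(G,p)=|E(G)|$. Hence
\[
\dim_{3,2}(G) \;=\; \dim\F_2(G,p)-\dim\T_2(G,p) \;=\; (3|V|-|E(G)|)-6 \;=\; \gamma_1+\dots+\gamma_\kappa-3\kappa.
\]

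For part (ii), the argument is formally identical but relies on the non-Euclidean Cauchy-type theorem to be established in Section \ref{FiniteSimplicial}: a generic finite simplicial polytope in $(\bR^3,\|\cdot\|_q)$ becomes isostatic after the adjunction of three non-incident internal shafts. Isostaticity of $G^*$ together with three shafts gives $\rank R_q=3|V|-\dim\T_q(\bR^3)=3|V|-3$; deleting the three shaft rows drops the rank by at most three, so $\rank R_q(G^*,p)\geq 3|V|-6=|E(G^*)|$, which is equality. As before this propagates to $\rank R_q(G,p)=|E(G)|$, and since now $\dim\T_q(\bR^3)=3$ we obtain
\[
\dim_{3,q}(G) \;=\; (3|V|-|E(G)|)-3 \;=\; \dim_{3,2}(G)+3.
\]
The hypothesis $|V(G)|\geq 6$ ensures $G^*$ has enough vertices for the non-Euclidean Cauchy-type theorem to apply, since minimal $q$-rigidity requires $2d=6$ vertices. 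The principal obstacle is thus the independence of $R_q(G^*,p)$, which rests on the new non-Euclidean Cauchy result rather than the classical one; once this is in hand the remainder of the proof is direct bookkeeping with Euler's formula and the trivial-flex dimensions.
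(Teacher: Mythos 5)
Your part (i) is correct in outline and the Euler-formula bookkeeping is right, but part (ii) contains a genuine circularity. The ``non-Euclidean Cauchy-type theorem'' you invoke --- that a generic simplicial polytope framework with three non-incident shafts is isostatic in $(\bR^3,\|\cdot\|_q)$ --- is Corollary \ref{t:q-polytope} of the paper, and there it is \emph{deduced from} Theorem \ref{t:SimpGraphDim}(ii); it is not available beforehand. The entire content of part (ii) is precisely the row-independence of the rigidity matrix of $G^*$ augmented by three shafts, and you assume it rather than prove it. The paper supplies the missing argument by induction on vertex splitting: $K_6$ is $(3,3)$-tight and minimally rigid for the non-Euclidean norms (it is the octahedron with three internal shafts), every connectivity-$0$ simplicial graph augmented by three non-incident shafts is obtainable from $K_6$ by vertex splitting moves (inverting edge contractions, carrying the shaft edges along), and vertex splitting preserves infinitesimal rigidity for the $q$-norms. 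Without some such argument, part (ii) is not a proof; one also needs to observe that $|V(G)|\geq 6$ permits the insertion of three pairwise non-incident shafts in the first place.

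In part (i) there is a smaller technical wrinkle: a diagonal of a non-triangular face may already be an edge of $G$ drawn on the other side of that face, in which case your $G^*$ is a multigraph and is not the edge graph of a convex polyhedron, so Theorem \ref{t:GenericCauchy} does not literally apply; moreover it is not obvious that one can always choose a full set of non-crossing diagonals avoiding existing edges. The paper sidesteps this by coning each non-triangular face with a new vertex (adding $\kappa$ vertices and $\gamma_i$ edges per face), which always yields a simple planar triangulation; the price is that one must then track that a degree-$3$ vertex addition leaves $\dim_{\rm fl}$ unchanged while each of the remaining $\gamma_i-3$ edges is independent and lowers it by one. Modulo that repair, your route for (i) --- quoting the generic Cauchy theorem and extracting independence from the edge count $|E(G^*)|=3|V|-6$ --- is a legitimate and slightly quicker alternative to the paper's self-contained vertex-splitting induction from $K_3$, but the same shortcut is not available for (ii), where the analogous input must actually be established.
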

\begin{proof}Consider first the case $\kappa=0$. Observe that any simplicial  graph $G$ of connectivity $0$ can be constructed
from $K_3$ by vertex splitting moves.  To see this note that the inverse of a vertex splitting move is an edge contraction move. 
Also, if $G$ is not equal to $K_3$ then there are adjacent triangles, sharing an edge, and edge contraction of this edge results in a simplicial graph of connectivity $0$ which is an antecedent  of $G$ for vertex splitting.
In view of this constructibility of $G$ and the fact that  $K_3$ is  $3$-rigid it follows that $G$ is $3$-rigid and 
$\dim_{3,2}(G)=0$. 
\begin{center}
\begin{figure}[ht]
\centering
\includegraphics[width=8cm]{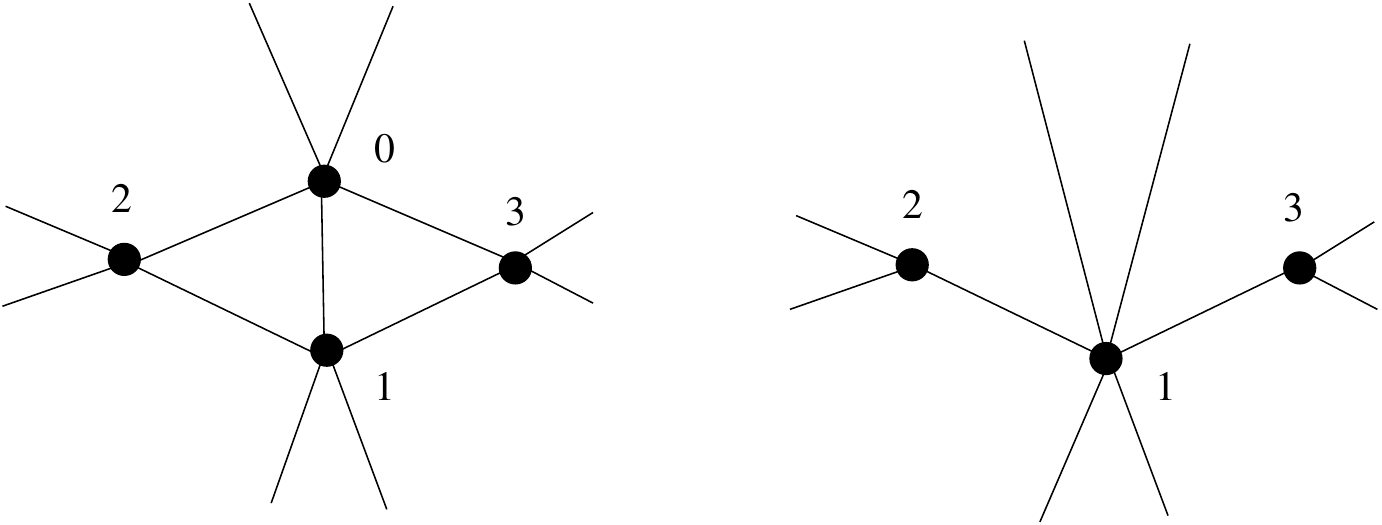}
\caption{Edge contraction.}
\label{vertexmerge}
\end{figure}
\end{center}
Since
$|E|=3|V|-6$ the graph $G$ is minimally $3$-rigid and in particular is $3$-independent. The formula for $\dim_{3,2}(G)$ in the case $\kappa >0$ now follows since in this case we may add $\kappa$ vertices and $\gamma_1+\dots +\gamma_\kappa-3\kappa$ edges 
to $G$ to create a $3$-independent simplicial graph of connectivity $0$.
For $q\neq 2$  note first  that the $(3,3)$-tight graph $K_6$ may be viewed as the graph of the regular octahedron together with three added internal edges (or "shafts"). An argument paralleling that above shows that
any simplicial graph $G$ of connectivity $0$ to which three nonincident internal edges have been added, gives a graph $G^+$ which can be constructed from $K_6$ by vertex splitting moves (which carry the shaft edges). Since $K_6$ is minimally rigid it follows that $G^+$ is minimally rigid and the proof is completed as for the case $q=2$.
To see this form of constructibility suppose that $G^+$ is not so constructed and that $|V(G^+)|$ is minimal amongst such graphs. We show that $G^+=K_6$. If this is not the case then there are more than $6$ vertices and thus a vertex $v$ in $V(G)=V(G^+)$ with the same degree in $G$ and 
$G^+$. But then there is a vertex splitting move
$G_-\to G$ for which the antecedent is simplicial of connectivity $0$, with an implied (shaft carrying) vertex splitting move $G_-^+\to G^+$. This is a contradiction since $G_-^+$
cannot be constructible and yet it has fewer vertices than $G^+$.
Since $K_6$ is minimally rigid for the non-Euclidean norms the proof is completed as before.
\end{proof}

The case $\kappa =0$ is the generic Cauchy rigidity theorem. For the non-Euclidean norms we have the following counterpart.
Borrowing terminology from Whitelely \cite{whi-poly2} we shall refer to an  internal bar that is added to a polytope framework as a shaft.

\begin{cor}\label{t:q-polytope}
A generic simplicial polytope framework with  three non-incident shafts is
isostatic in $(\bR^3,\|\cdot \|_q)$ for $1<q<\infty$, $q\neq 2$.
\end{cor}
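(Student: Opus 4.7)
The plan is to observe that this corollary is essentially a direct consequence of the constructibility argument already deployed in the proof of Theorem \ref{t:SimpGraphDim}(ii). First I would reduce the assertion to a combinatorial one. A simplicial polytope in $\bR^3$ has edge graph $G$ which is a simplicial graph of topological connectivity $\kappa=0$, so by Euler's formula $|E(G)|=3|V(G)|-6$. Adjoining three non-incident shafts yields a graph $G^+$ with
\[
|E(G^+)| = |E(G)|+3 = 3|V(G^+)|-3,
\]
which is exactly the edge count required for minimal rigidity in $(\bR^3,\|\cdot\|_q)$ for $q\neq 2$, since $\dim\T_q(\bR^3)=3$. Thus isostaticity amounts to $R_q(G^+,p)$ having full row rank at a generic placement.

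Next I would take the regular octahedron graph (with $|V|=6$, $|E|=12$) together with its three main diagonals as shafts; this is precisely $K_6$, which is $(3,3)$-tight and, by the remarks in the Preliminaries, minimally rigid in $(\bR^3,\|\cdot\|_q)$ for $q\neq 2$. This serves as the base case. The inductive step is the vertex splitting for three dimensions constructibility established in the proof of Theorem \ref{t:SimpGraphDim}(ii): starting from $G^+$, one repeatedly contracts an edge lying in two adjacent triangles and \emph{not} incident to any of the three shafts, reducing to a smaller simplicial graph of connectivity zero that still carries the three non-incident shafts. Since such an edge can always be found unless the graph has already collapsed to $K_6$ (this is the content of the minimality-of-$|V(G^+)|$ argument in the proof of Theorem \ref{t:SimpGraphDim}(ii)), the inverse sequence presents $G^+$ as the result of vertex splitting moves applied to $K_6$, each move carrying the shaft edges untouched.

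Finally, since vertex splitting preserves minimal rigidity in the Euclidean case (Whiteley) and the argument admits a straightforward $q$-norm variant, as noted in the paper just before Theorem \ref{t:SimpGraphDim}, iterated application of vertex splitting to the minimally rigid $K_6$ yields that $G^+$ is minimally rigid in $(\bR^3,\|\cdot\|_q)$, i.e.\ isostatic.

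The main obstacle, and the only nontrivial point, is verifying the existence at each stage of a contractible edge which preserves both the non-incidence of the three shafts and the simplicial-of-connectivity-zero structure. This is exactly the combinatorial step handled inside the proof of Theorem \ref{t:SimpGraphDim}(ii); once that inductive bookkeeping is in place, the corollary follows without further work.
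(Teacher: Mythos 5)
Your proposal is correct and follows essentially the same route as the paper: the corollary is obtained there as a by-product of the proof of Theorem \ref{t:SimpGraphDim}(ii), via exactly the induction you describe --- $K_6$ viewed as the octahedron with three non-incident shafts as the minimally rigid base case, and shaft-carrying vertex splitting moves (equivalently, edge contractions at a vertex not incident to any shaft) reducing a general $G^+$ to $K_6$ while preserving minimal rigidity in the $q$-norm. The only nontrivial point you flag, the existence of a suitable contractible edge, is handled in the paper by the same minimal-counterexample argument you invoke.
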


We see that as a corollary of infinitesimal rigidity preservation the vertex splitting move applied to a simplicial graph of connectivity $\kappa$ preserves independence, and preserves the joint cycle index $(\gamma_1, \dots ,\gamma_\kappa)$.

\subsection{Countable simplicial graphs.}\label{sub:SimplicialGraphs}
We now consider the countable graphs that can be obtained
from finite simplicial graphs of connectivity type $\kappa$ by certain directed construction chains. 
Viewing such graphs as inscribed on the surface of a sphere this "directedness" corresponds, roughly speaking,  to the $\kappa$ nontriangular faces (the topological "holes") converging towards a set $F$  on the sphere consisting of $\kappa$ points.

These graphs can be thought of as  simplicial triangulations of a finitely punctured sphere, $S^2\backslash F$, and we give the following direct definition in these terms, as a class of infinite planar graphs. The class is somewhat larger than that already alluded to above since it also allows for the infinite internal refinement of a finite number of triangular faces of the initial graph $G_1$.

Recall that a countable graph has a planar embedding or planar 
realisation, or is a planar graph,
if it may be realised by a set of distinct points in the plane and a 
family of non-crossing continuous paths. Let $S^2$ be the one 
point compactification of the plane.

\begin{defn}\label{simplicialgraph}Let $G$ be a countable graph and 
$\rho$ a positive integer.
Then $G$ is said to be
a {\em simplicial graph of refinement type $\rho $}
if there is a planar realisation of $G$ which is a triangulation of 
$S^2\backslash F$, where $|F|=\rho$, 
and
\begin{enumerate}[(i)]
\item the vertices of $G$ have finite incidence,
\item the set $F$ is the set of accumulation points of the vertex points,
\item $S^2\backslash F$ is the union of the triangular faces of 
the realisation of $G$.
\end{enumerate}
\end{defn}

\begin{defn}
A simplicial graph of refinement type $\rho\geq 1$ has {\em connectivity 
$0\leq\kappa\leq \rho$}
if in its representation in $S^2\backslash F$ there are 
$\kappa$ points $p$ of $F$ with the following property: there is a 
neighbourhood of $p$
which contains no $3$-cycles of represented edges around $p$.
\end{defn}

It follows that the simplicial graphs of finite refinement type and 
connectivity $\kappa$ have edge-complete construction chains 
\[
G_1 \overset{\mu_1}\longrightarrow G_2 \overset{\mu_2}\longrightarrow G_3 \overset{\mu_3}\longrightarrow \cdots
\]
In this chain successive graphs
are obtained by partially paving in the $\kappa$ nontriangular faces 
and $\rho -\kappa$ triangular faces. In particular a simplicial graph with $(\rho, \kappa)=(1,0)$ is obtained  by  
joining a sequence of simplicial polyhedral graphs over common faces so 
that each polyhedron (except the first) has two neighbours. Similarly, 
the simplicial graphs with $(\rho, \kappa)=(1,1)$ are obtained by 
joining together a sequence of simplicial tubes (except for the first) 
over identified end cycles.

\begin{thm}\label{genericCauchy}Let $G$ be a countable simplicial  graph of 
refinement type $\rho$ and connectivity $\kappa$. Then the following conditions are equivalent.
\begin{enumerate}[(i)]
\item $G$ is $3$-rigid.
\item $G$ is sequentially $3$-rigid.
\item $\kappa=0.$
\end{enumerate}

Furthermore, in this case
$G=\varinjlim\,G_k$ where 
\[K_4=G_1\overset{\mu_1}\longrightarrow  G_2\overset{\mu_2}\longrightarrow G_3\overset{\mu_{3}}\longrightarrow \cdots\] 
is a construction chain of 
convex simplicial polyhedral graphs and the construction moves are 
vertex splitting moves.
\end{thm}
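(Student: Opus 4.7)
The plan is to prove the cycle of implications $(iii) \Rightarrow (ii) \Rightarrow (i) \Rightarrow (iii)$, from which the equivalence follows, and then to deduce the construction-chain statement by combining the tower constructed for $(iii) \Rightarrow (ii)$ with the vertex-splitting constructibility results underlying the proof of Theorem \ref{t:SimpGraphDim}. The implication $(ii) \Rightarrow (i)$ is immediate from Corollary \ref{sequential}.

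For $(iii) \Rightarrow (ii)$, assume $\kappa = 0$, so at every puncture $p \in F$ every neighbourhood contains a $3$-cycle of $G$ enclosing $p$. Choose, for each $k \in \bN$ and each $p \in F$, a triangular cycle $T_{k,p} \subseteq G$ around $p$ with $T_{k+1,p}$ strictly enclosed by $T_{k,p}$ in the planar embedding and with the $T_{k,p}$ shrinking to $\{p\}$ as $k\to\infty$. Let $G_k$ be the subgraph of $G$ consisting of all vertices and edges lying outside the open interiors of the $\rho$ triangles $\{T_{k,p}\}_{p\in F}$. Because vertices of $G$ accumulate only at $F$, each $G_k$ is finite and the tower $\{G_k\}$ is vertex- and edge-complete in $G$. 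In the planar embedding $G_k$ triangulates a closed subregion of $S^2$ whose $\rho$ boundary components are triangles, so capping each $T_{k,p}$ with a triangular face produces a simplicial triangulation of $S^2$ with edge graph $G_k$. Hence every $G_k$ is a generic simplicial polyhedron graph and is $3$-rigid by the generic Cauchy theorem (Theorem \ref{t:GenericCauchy}); therefore $G$ is sequentially $3$-rigid.

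For $(i) \Rightarrow (iii)$, I argue contrapositively and assume $\kappa \geq 1$. At each puncture $p$ choose a nested shrinking sequence of cycles $C_{k,p} \subseteq G$ around $p$, taking cycles of minimum available length in each successive neighbourhood, so that the lengths $|C_{k,p}|$ are non-decreasing in $k$ at every puncture; at the $\kappa$ bad punctures the hypothesis forces $|C_{k,p}| \geq 4$. Form the associated edge-complete tower $\{G_k\}$. An Euler-characteristic count for $G_k$, which triangulates a sphere with $\rho$ open disks removed of total boundary length $B_k$, gives $E(G_k) = 3|V(G_k)| - B_k - 6 + 3\rho$, and hence at a generic placement
\[
\dim \F_2(G_k,p) - \dim \T_2(G_k,p) \;\geq\; B_k - 3\rho \;\geq\; \kappa \;\geq\; 1.
\]
The annular subtriangulation $G_{k+1}\setminus G_k$ between the cycles $C_{k,p}$ and $C_{k+1,p}$ satisfies $\Delta E_k = 3\Delta V_k + \sum_p (|C_{k,p}| - |C_{k+1,p}|) \leq 3\Delta V_k$, so for a generic placement the rows of the rigidity matrix indexed by the new edges have full row rank in the new flex coordinates, and every flex of $G_k$ extends to one of $G_{k+1}$. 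Consequently the restriction maps $\F_2(G_{k+1},p) \to \F_2(G_k,p)$ are surjective. Since the tower is edge-complete, $\F_2(G,p) \cong \varprojlim \F_2(G_k,p)$ and the projection of this inverse limit onto $\F_2(G_1,p)$ is surjective, so $\dim \F_2(G,p) \geq \dim \F_2(G_1,p) \geq 6 + \kappa > 6$. A non-trivial infinitesimal flex of $(G,p)$ therefore exists, contradicting infinitesimal rigidity.

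For the \emph{furthermore} statement, start from the tower $\{G_k\}$ of simplicial polyhedral graphs produced in $(iii)\Rightarrow(ii)$. The edge-contraction argument in the proof of Theorem \ref{t:SimpGraphDim} realises each $G_k$ as the outcome of a finite vertex-splitting chain starting at $K_4$, and the same argument applied to $G_k \subseteq G_{k+1}$ realises $G_{k+1}$ as a finite vertex-splitting extension of $G_k$ carried out inside the $\rho$ capping triangles of $G_k$. Concatenating these finite chains yields the countable construction chain $K_4 = G_1 \to G_2 \to \cdots$ whose graph limit equals $G$. The main obstacle is the generic-surjectivity step in $(i) \Rightarrow (iii)$: in the absence of a three-dimensional sparsity characterisation of rigidity, one must verify by a direct genericity argument that the rigidity matrix of each annular extension has full row rank on the new coordinates, and this is exactly where the combinatorial constraint $|C_{k+1,p}| \geq |C_{k,p}|$, forced by the $\kappa$-property, is essential.
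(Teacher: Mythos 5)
There is a genuine gap in your proof of $(i)\Rightarrow(iii)$, and it sits exactly where you flag it yourself: the assertion that, because the annular extension satisfies $\Delta E_k\leq 3\Delta V_k$, the new rows of the rigidity matrix have full row rank on the new coordinates and hence every infinitesimal flex of $(G_k,p)$ extends to $(G_{k+1},p)$. In three dimensions a Maxwell-type count never certifies independence of rows (this is the whole point of the double-banana phenomenon, cf.\ Example \ref{BananaTower}), so the surjectivity of the restriction maps $\F_2(G_{k+1},p)\to\F_2(G_k,p)$ is not established, and without it your inverse-limit argument giving $\dim\F_2(G,p)\geq 6+\kappa$ does not go through. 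Acknowledging the obstacle is not the same as closing it, so as written the implication $(i)\Rightarrow(iii)$ is incomplete.

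The paper closes this gap by a different mechanism that avoids surjectivity altogether. It augments each $G_k$ by $\gamma-4$ edges across a nontriangular $\gamma$-face to obtain $G_k^+$, a simplicial polytope graph with one edge missing; by the generic Cauchy theorem and the count of Theorem \ref{t:SimpGraphDim} this carries an (essentially unique) nontrivial infinitesimal flex. It then invokes Whiteley's theorem on modified spherical frameworks --- removing an edge $v_1v_2$ from a $4$-connected simplicial polytope graph and adding an edge $v_iv_j$ ($i,j\neq 1,2$) forming a $K_4$ yields a $3$-rigid graph --- to conclude that this flex restricts \emph{nontrivially} to $(G_1,p)$: if the restriction were trivial one could add an edge inside $G_1$ without killing the flex, contradicting the rigidity of the modified graph. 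Thus only the much weaker statement that each restriction map $\F(G_k,p)\to\F(G_1,p)$ is nonzero modulo trivial flexes is needed, and the finite-dimensionality of $\F(G_1,p)$ together with Lemma \ref{FlexProp} then produces an enduring flex and hence a nontrivial flex of $(G,p)$. If you want to salvage your route you would need to prove the full-row-rank claim for the annular extensions directly (for instance by a vertex-splitting induction on the annulus), which is substantially more work than the count you give. Your $(iii)\Rightarrow(ii)\Rightarrow(i)$ direction and the concatenation argument for the construction chain are in line with the paper, though the relative vertex-splitting constructibility of $G_{k+1}$ from $G_k$ also deserves an explicit argument.
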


\begin{proof}Condition $(iii)$ implies that there is an edge-complete inclusion chain of finite graphs each of which is the edge graph of a simplicial polytope. Thus $(ii)$ holds, by the generic Cauchy Theorem \ref{t:GenericCauchy}, and $(i)$ follows.

To see that $(i)$ implies $(iii)$ we make use of the following theorem of Whiteley \cite{whi-poly2}: Let $G$ be a $4$-connected simplicial polytope graph. Then any graph formed  by the removal of an edge $v_1v_2$ and the addition of an edge $v_iv_j$ with $i,j \neq 1,2$ to create a subgraph isomorphic to $K_4$ is $3$-rigid. This may also be proven using a similar argument to that of the proof of Theorem \ref{t:SimpGraphDim} and the fact that small graphs of this type are $3$-rigid. 

Suppose then that $(iii)$ does not hold and consider first the case $0<\kappa=\rho=1$. Thus there is an edge-complete
inclusion chain $G_1\subseteq G_2 \subseteq \dots $ of graphs for polytopes
each of which has one nontriangular face which is a $\gamma$-cycle, with $\gamma \geq 4$. Let $(G,p)$ be a generic placement. For every $k>1$ the restriction map $\F(G_{k},p)\to \F(G_1,p)$ is nonzero. Indeed consider a nonzero infinitesimal flex of the augmentated framework $(G_k^+,p)$ of $(G_k,p)$ obtained by adding $\gamma - 4$ edges
across the nonsimplicial face. By the generic Cauchy theorem such a flex exists and by Whiteley's theorem its  restriction to $(G_1,p)$ is nonzero and nontrivial. It follows from the finite-dimensionality of the flex space of $(G_1,p)$ that there is an infinitesimal flex of $(G_1,p)$ that is an enduring flex for the tower. Thus there exists a nontrivial infinitesimal flex of $(G,p)$.

The general case follows from the same argument after taking an augmentation by
edges to create a simplicial  polytope with one edge missing.
\end{proof}

We note below some examples of simplicial graphs of finite 
refinement type. In Section \ref{ContinuousRigidity} we shall see that there exists 
continuously rigid strictly convex placements for all of these graphs.

As we have seen, for $1<q<\infty$, $q\neq 2,$ a convex simplicial 
polytope in three dimensions
needs the addition of three internal shafts to become a rigid graph for
$(\bR^3,\|\cdot \|_q)$. The argument above adjusts readily to show that the same is true
for the simplicial graphs of finite refinement type with $\kappa =0$.

\subsection{Infinitely-faceted polytopes.}
Let $G=(V(G),E(G))$ be a countable simplicial graph
of finite refinement type $\rho\geq 1$.
In particular $G$ has a planar  representation with $\rho$  points of 
accumulation for the vertex points.
It follows that $G$ admits natural bounded placements   $p: V(G) \to\bR^3$ 
with  $\rho$ points of
accumulation. Formally we define a \emph{continuous placement} of 
$G$ to be a placement for which the mapping $v \mapsto p_v$ is continuous for the 
relative topology on $V(G)$ coming from the one-point compactification of 
the plane.
We now define strictly convex and semi-convex continuous placements
of simplicial graphs of finite refinement type as well as 
infinitely-faceted generalisations of finite simplicial polytopes.

Recall that a $d$-dimensional finite polytope in $\bR^3$ is a compact 
subset $\P$ which is the convex hull of a non-coplanar finite set.
In particular polytopes are assumed to be convex, with nonzero volume,
and have well-defined closed subsets called faces, edges and vertices.
A finite polytope is {\em simplicial} if its faces are triangles.

In the next definition we refer to the faces of a countable simplicial graph
of finite refinement type. These are defined as the particular 
$3$-cycles of vertices corresponding to the faces of any planar 
representation.

\begin{defn}
Let $G$ be a countable simplicial graph of refinement type $\rho\geq 1$ and let 
$p:V(G) \to \bR^3$ be a placement. Then $(G,p)$ is a {\em strictly convex 
simplicial polytope framework} if the following two conditions hold.
\begin{enumerate}[(i)]
\item $p:V(G) \to \bR^3$ is a continuous placement.
\item For  every face  $\{a,b,c\}$ one of the open half-spaces for the 
plane through $\{p_a,p_b,p_c\}$ contains the  set $\{p_v: v\neq a,b,c\}$.
\end{enumerate}
Furthermore, the {\em body} $\P(G,p)$ of $(G,p)$ is defined to be the closed 
convex hull  of the set $\{p_v:v\in V\}$.
\end{defn}

A generalised polytope $\P$, with infinitely many faces, could be defined
as a strictly convex body with infinitely many polygonal faces whose 
union and closure form the topological boundary. The following subclass 
is a natural one from a rigidity perspective.

\begin{defn}\label{infinitepolytope}
A  {\em simplicial polytope of refinement type $\rho\geq 1$} is the body of a strictly convex continuous placement of a simplicial graph of refinement type  
$\rho\geq 1$.
\end{defn}

Note that 
the topological boundary  of the body $\P(G,p)$ is the union of the $\rho$ accumulation points and the 
closed convex sets determined by the placement of the faces of $G$.

One can similarly define a convex simplicial polytope framework in terms 
of closed half spaces, rather than open half-spaces. However, this  
class is not quite appropriate for us since there may be a proliferation of infinitesimal flexes arising from vertices 
whose incident edges lie in a common plane. There is nevertheless an 
appropriate intermediate
class which we refer to as \emph{semi-convex}. This  requires the convexity
of the placement together with the property that if $p_v$ lies in a 
convex hull of a finite number of vertices then it lies in the convex 
hull of two of them.  The body of such a framework may be a classical 
simplicial polytope
with countably many ``extra" vertices distributed on the edges of the 
polytope.
The semi-convex placements of finite simplicial polytopes are known to 
be infinitesimally rigid, so this is a natural class.

\begin{eg}\label{polytopeframeworks}
{\bf Infinite frameworks from finite polytopes.}
Figure \ref{trianglepyramid} indicates a bar-joint framework 
$\D_3=(G,p)$ in $3$-dimensional Euclidean space which is the placement of 
a simplicial graph $G$ with $(\rho(G),\kappa(G))=(1,0)$. The vertex placements 
occur on three edges of a triangle based pyramid and have a single point 
of accumulation. The simplicial graph is minimally $3$-rigid and 
sequentially rigid. That the semi-convex placement $(G,p)$ itself is sequentially infinitesimally 
rigid follows from the rigidity of finite semi-convex simplicial polytope frameworks (see \cite{alex}).

One can  define similar simplicial graphs and semiconvex placements 
by a similar triangular refinement on any a finite polytope,
with accumulation points at $\kappa$ vertices of the polytope.
Consider, for example, the graphs  that arise from the 
double cones over  regular $n$-gons, for $n=4,5,\dots $, by infinite 
triangulation towards one of the high degree vertices. This is illustrated in
Figure \ref{octagon} for $n=4$.
Here we  take the north and south pole vertices
to be located  at  $p_1=(0,0, 1)$ and $p_2=(0,0,-1)$, and the equatorial 
vertices $p_3, \dots ,p_{n+2}$ to be located symmetrically on the unit circle 
about the origin in the
plane $z=0$. The points on the $k^{th}$ latitude occupy the positions
$(1-t_k)p_1+t_kp_j$ where $3\leq j \leq n+2$ and where $(t_k)$ is a decreasing 
sequence tending to zero.
\end{eg}

\begin{eg}\label{eg:diamond}{\bf Diamond polytopes.}
Figure  \ref{delta3} indicates a  bar-joint framework  which is 
associated with a strictly convex compact  polytope $\P_{\rm dia}$, in 
the sense given above. (Its appearance has a passing resemblance to a 
cut diamond). The polytope vertices $p_1, p_2, \dots $ lie on the unit 
sphere and  $\P_{\rm dia}$ is the convex hull of these points together with
the north polar point.
The associated bar-joint framework $(G,p)$ is determined by the edges of
$\P_{\rm dia}$ and the underlying structure graph has refinement-connectivity type $(\rho(G),\kappa(G))=(1,1)$.
Once again, different placements of $G$ are 
possible for different choices of a sequence $(t_k)$, where $t_k$ is 
decreasing and tends to zero, representing the positioning of the 
latitudes below the north pole. In this example
the number of vertices of $G$ for successive latitudes doubles on passing to 
the next highest latitude and it follows that $\dim_{\rm fl}(G)$ is infinite. For further examples of interest one can vary 
this  multiplicity of increase and the spherical surface can be replaced 
by other convex rotationally symmetric surface.
\end{eg}

\begin{center}
\begin{figure}[ht]
\centering
\includegraphics[width=6cm]{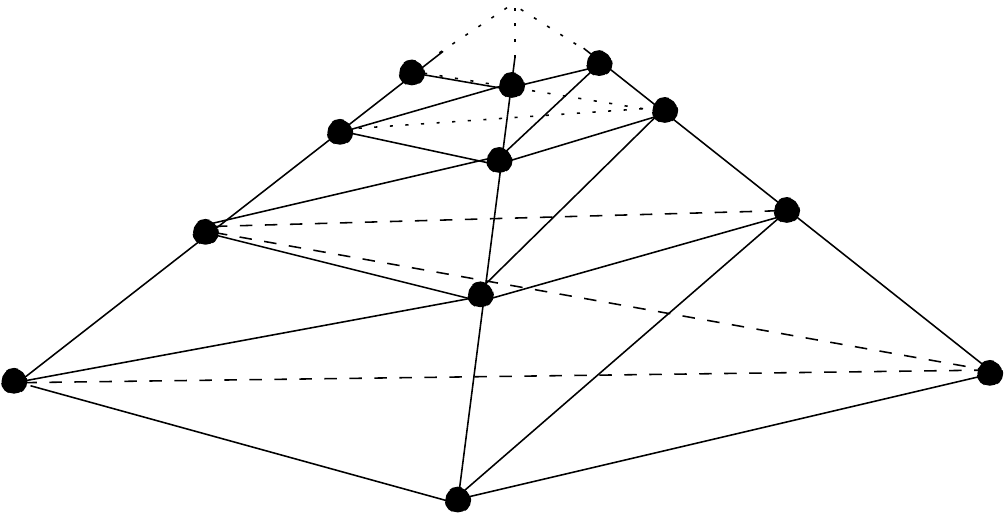}
\caption{Tetrahedral placement $\D_3=(G,p)$ of a simplicial graph $G$ 
with $(\kappa(G),\rho(G)) =(0,1)$.}
\label{trianglepyramid}
\end{figure}
\end{center}

\begin{center}
\begin{figure}[ht]
\centering
\includegraphics[width=5cm]{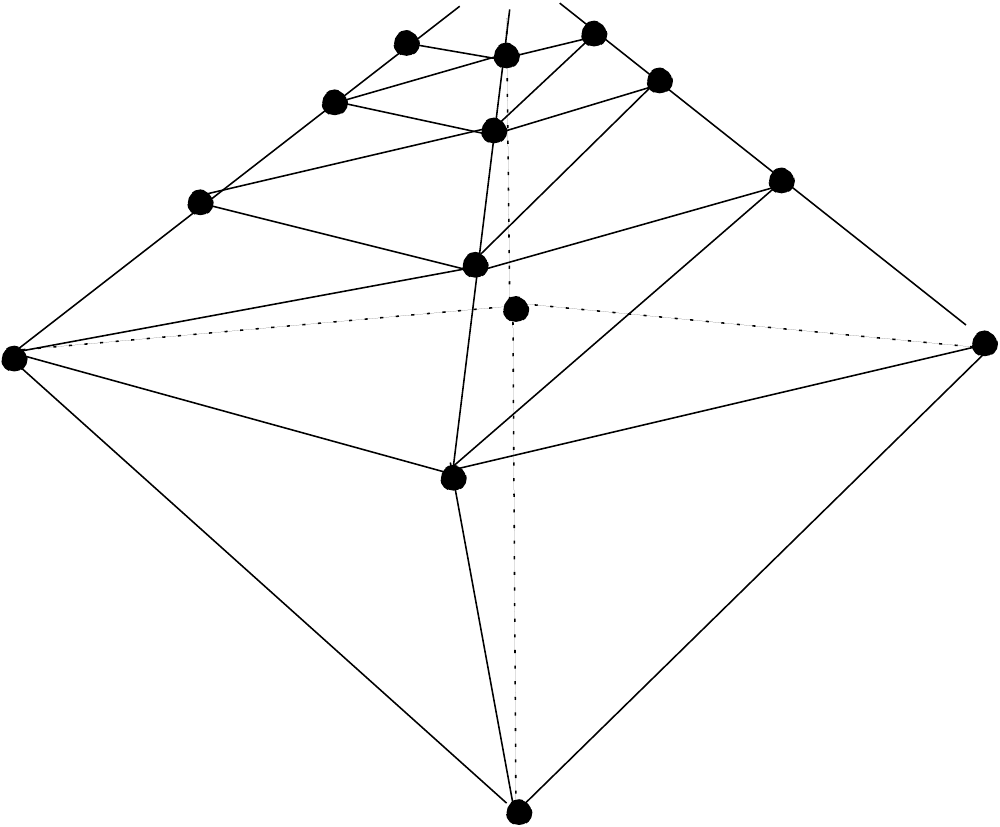}
\caption{A semi-convex placement $\D_4=(G,p)$ whose polytope body $\P(G,p)$ is 
an octahedron.
}
\label{octagon}
\end{figure}
\end{center}

\begin{center}
\begin{figure}[ht]
\centering
\includegraphics[width=5.5cm]{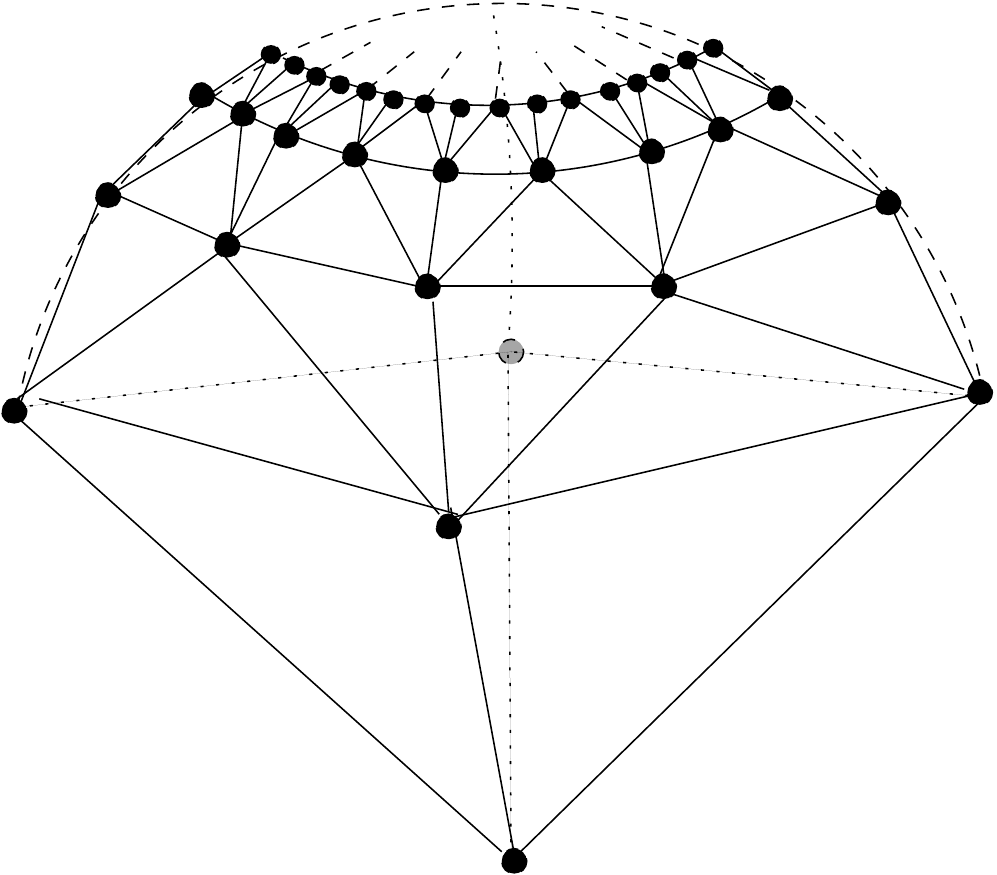}
\caption{A diamond polytope.}
\label{delta3}
\end{figure}
\end{center}

\subsection{Remarks.}It seems possible that Proposition \ref{ConstructionProp}, asserting that the sequential construction of a Laman graph is possible from \emph{any} Laman subgraph, is a known fact but we are unaware of a reference. However our proof provides a model for the $(2,2)$-tight counterpart given in Proposition \ref{NonEuclideanConstructionProp}.

In Nixon, Owen and Power \cite{NOP} (see also \cite{NOP2}) the constructive characterisation of finite $(2,2)$-tight graphs was obtained in order to characterise the generically rigid finite graphs with respect to placements
on a circular cylinder in $\bR^3$. In this setting the placements are viewed as movably attached to the cylinder so that the admissible flexes 
(and velocity fields, in the sense of Section \ref{ContinuousRigidityVelocityField}) are tangential to the cylinder. It follows from the arguments above that the countable simple graphs that are minimally rigid for the cylinder in this sense are the sequential $(2,2)$-tight graphs.

A recent survey of inductive methods for bar-joint frameworks
is given in Nixon and Ross \cite{nix-ros}. 

\section{Rigidity of multi-body graphs}
\label{Body-bar}
Tay's theorem \cite{tay} provides a combinatorial characterisation of the finite multi-graphs without reflexive edges  which have infinitesimally rigid generic realisations as  body-bar frameworks in Euclidean space. In this section our main goals are to extend Tay's characterisation to countable multi-graphs and to obtain analogues of both characterisations for the non-Euclidean $q$-norms for all dimensions $d\geq 2$.

\subsection{Tay's theorem and non-Euclidean rigidity}
We now consider bar-joint frameworks in $(\bR^d,\|\cdot\|_q)$, where $q\in (1,\infty)$, which arise from the following class of simple graphs.

\begin{defn}
A {\em multi-body graph} for $(\bR^d,\|\cdot\|_q)$ is a finite or countable simple graph $G$ for which there exists a vertex partition \[V(G)=\bigcup_k \,V_k\]  
consisting of a finite or countable collection of subsets $V_k$ such that  for each $k$,
\begin{enumerate}
\item the vertex-induced subgraph determined by $V_k$ is a rigid graph in $(\mathbb{R}^d,\|\cdot\|_q)$, and,
\item every vertex $v\in V_k$ is adjacent to at most one vertex in $V(G)\backslash V_k$.
\end{enumerate} 
\end{defn}
 
 The rigid vertex-induced subgraph determined by $V_k$ is denoted $B_k$ and is called a {\em body} of $G$.
An edge $vw\in E(G)$ which is incident with vertices from two distinct bodies $B_i$ and $B_j$ is called an {\em inter-body edge}.
Thus a multi-body graph is composed of pairwise vertex-disjoint bodies together with inter-body edges such that no pair of inter-body edges of $G$ share a vertex.
 
Each multi-body graph $G$ has an associated finite or countable {\em body-bar graph} $G_b=(V(G_b),E(G_b))$ which is the multi-graph  with vertex set labelled by the  bodies of $G$ 
and with  edge set derived from the inter-body edges of $G$. 

Tay's theorem may be restated as follows.

\begin{thm}[Tay, 1984]
\label{t:Tay}
Let $G$ be a finite multi-body graph for $(\bR^d,\|\cdot\|_2)$ and suppose that $G$ contains at least two bodies. Then the following statements are equivalent.
\begin{enumerate}[(i)]
\item $G$ is rigid in Euclidean space $(\bR^d,\|\cdot\|_2)$.
\item $G_b$ contains a $\left(\frac{d(d+1)}{2},\frac{d(d+1)}{2}\right)$-tight spanning subgraph.
\end{enumerate} 
\end{thm}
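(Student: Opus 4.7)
The plan is to analyse body-bar rigidity via a suitable rigidity matrix and invoke the matroidal equivalence between its row-independence and $(k,k)$-sparsity on multi-graphs, where $k := \frac{d(d+1)}{2} = \dim \T_2(\bR^d)$. First I would reduce the bar-joint rigidity problem on $G$ to a body-bar rigidity problem on $G_b$. Since each body $B_i$ is infinitesimally rigid, any infinitesimal flex $u$ of $(G,p)$ restricts on $V_i$ to an infinitesimal rigid motion $\gamma_i\in \T_2(\bR^d)$. Modulo the $k$-dimensional subspace of global rigid motions, the relevant vector space is thus identified with the quotient $\T_2(\bR^d)^{V(G_b)}/\T_2(\bR^d)$, which has dimension $k|V(G_b)|-k$. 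Each inter-body edge $vw \in E(G)$, with $v\in V_i$ and $w\in V_j$, contributes the linear constraint $\langle p_v-p_w,\,\gamma_i(p_v)-\gamma_j(p_w)\rangle=0$. Assembling these gives the body-bar rigidity matrix $M(G_b,p)$ of size $|E(G_b)|\times k|V(G_b)|$, and $G$ is rigid precisely when $\rank M(G_b,p) = k|V(G_b)| - k$.

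The heart of the proof is to show that, for generic $p$, the row matroid of $M(G_b,p)$ coincides with the $(k,k)$-sparsity matroid on $E(G_b)$. The easy direction---row-independence implies $(k,k)$-sparsity of the supporting subgraph---is a dimension count: for any subgraph $H \subseteq G_b$, the rows indexed by $E(H)$ lie in a subspace of dimension at most $k|V(H)|-k$, since the bodies of $V(H)$ share a common $k$-dimensional space of rigid motions. The converse implication, that $(k,k)$-sparsity forces generic row-independence, is the substantive content of Tay's theorem and the main obstacle. I would proceed by induction on $|V(G_b)|$, mirroring the Henneberg-style analysis in Proposition \ref{ReverseHennebergLemma}: by Lemma \ref{DegreeLemma} a $(k,k)$-tight multi-graph admits a vertex $v$ of degree in $[k,2k-1]$, and after verifying that a suitable subset of $k+1$ edges at $v$ can be replaced by the deletion of $v$ so as to preserve $(k,k)$-tightness, one checks the parallel geometric fact that an appropriate generic deletion of a body preserves the independence of the remaining rows of $M(G_b,p)$. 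The base case reduces to two bodies joined by $k$ bars, where independence is a direct computation from the fact that a generic $k$-tuple of line-coordinate vectors is linearly independent in $k$-space.

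Granting the matroid equivalence, the theorem is immediate. If $G_b$ contains a $(k,k)$-tight spanning subgraph $H$, the $k|V(G_b)|-k$ rows of $M(G_b,p)$ indexed by $E(H)$ are linearly independent and therefore span the full row space, so $G$ is rigid. Conversely, if $G$ is rigid then $\rank M(G_b,p) = k|V(G_b)|-k$, and any maximal independent set of rows corresponds, via the matroid equivalence, to the edge set of a $(k,k)$-tight spanning subgraph of $G_b$, completing the argument.
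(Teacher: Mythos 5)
First, a point of reference: the paper does not prove this statement at all --- Theorem \ref{t:Tay} is quoted from Tay's 1984 paper, and the only argument of this kind actually carried out in the text is for the non-Euclidean analogue, Theorem \ref{t:q-Tay}. Your overall framing is the standard one and is correct as far as it goes: for generic $p$ each body is infinitesimally rigid, so flexes of $(G,p)$ are identified with assignments of elements of $\T_2(\bR^d)$ to the bodies subject to one linear constraint per inter-body bar; the resulting matrix $M(G_b,p)$ has $k=\tfrac{d(d+1)}{2}$ columns per body; the ``independence implies $(k,k)$-sparsity'' direction is the rank bound $\rank \leq k|V(H)|-k$ coming from the shared global rigid motions; and the theorem reduces to showing that the generic row matroid of $M(G_b,p)$ is the $(k,k)$-count matroid on $E(G_b)$.

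The gap is in the hard direction, which you correctly identify as ``the substantive content'' but then dispatch in one sentence. The Henneberg induction you propose does not go through as described. Lemma \ref{DegreeLemma} supplies a vertex of degree $j\in[k,2k-1]$; when $j=k$ deletion works and your base-case computation applies, but when $k<j\leq 2k-1$ one must delete the vertex and insert $j-k$ new edges among its neighbours, not ``replace a suitable subset of $k+1$ edges,'' and Proposition \ref{ReverseHennebergLemma}, on which you lean, is stated for \emph{simple} graphs and a single edge-splitting move --- it covers neither multigraphs $G_b$ with parallel edge classes nor the simultaneous insertion of several edges. More importantly, the geometric half of the inductive step --- that for \emph{some} admissible choice of replacement edges the generic independence of $M(G_b,p)$ is preserved --- is precisely where all the work in a Henneberg-style proof lies (it usually requires a specialization or limiting argument in which the deleted body is pushed onto a neighbour), and you give no argument for it. This is why the standard proofs of Tay's theorem, like the paper's own proof of the parallel Theorem \ref{t:q-Tay}, avoid induction entirely: a $(k,k)$-tight multigraph decomposes by Nash--Williams into $k$ edge-disjoint spanning trees, and one exhibits a single explicit placement in which the bars belonging to the $i$th tree have line (Pl\"{u}cker) coordinates equal to the $i$th standard basis vector of $\bR^k$, so that the matrix visibly has rank $k|V(G_b)|-k$; generic independence then follows from lower semicontinuity of rank. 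To complete your argument you should either import that tree-decomposition construction or supply both the combinatorial edge-splitting lemma for $(k,k)$-tight multigraphs and its geometric independence-preservation counterpart.
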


The following lemma shows that the bodies $B_1,B_2,\ldots $ of a multi-body graph $G$ may be modeled in a number of different ways without altering the rigidity properties of $G$. 

\begin{lem}
\label{BodyLemma}
Let $G$ and $G'$ be two finite multi-body graphs  for $(\bR^d,\|\cdot\|_q)$ with isomorphic body-bar graphs and $q\in (1,\infty)$.
Then $\dim_{d,q}(G)=\dim_{d,q}(G')$.
\end{lem}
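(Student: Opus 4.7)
The strategy is to parametrize the infinitesimal flex space of a generic placement in terms of rigid motions, one per body, reducing the flex computation to a linear system indexed by the inter-body edges. Fix generic placements $p$ of $G$ and $p'$ of $G'$, with body decompositions $\{B_k\}$ and $\{B_k'\}$, and an isomorphism $\phi : G_b \to G_b'$ pairing bodies and inter-body edges. Since each body $B_k$ is rigid, the restriction of any infinitesimal flex $u \in \F_q(G,p)$ to $V(B_k)$ is a trivial flex, so there exists $\gamma_k \in \T_q(\bR^d)$ with $u_v = \gamma_k(p_v)$ for every $v \in V(B_k)$. This $\gamma_k$ is unique: for $q \neq 2$ because $\T_q(\bR^d)$ consists of translations, and for $q=2$ because a generically placed rigid body affinely spans $\bR^d$. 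Hence the assignment $u \mapsto (\gamma_k)_k$ is a linear injection of $\F_q(G,p)$ into $\bigoplus_k \T_q(\bR^d)$.

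Next, identify the image. A tuple $(\gamma_k)_k$ comes from some $u \in \F_q(G,p)$ if and only if, for every inter-body edge $vw$ with $v\in V(B_i)$, $w\in V(B_j)$, the values $u_v := \gamma_i(p_v)$ and $u_w := \gamma_j(p_w)$ satisfy the linear constraint of Proposition \ref{q-NormFlex}. This yields one equation on $(\gamma_i,\gamma_j)$ whose coefficients depend \emph{only} on the attachment positions $p_v, p_w$. Collecting the $|E(G_b)|$ such equations defines a linear map
\[
\Psi_p : \bigoplus_{k} \T_q(\bR^d) \to \bR^{|E(G_b)|}, \qquad \F_q(G,p) \cong \ker \Psi_p.
\]
The trivial flexes $\T_q(G,p)$ correspond to the diagonal copy $\{(\gamma,\ldots,\gamma)\}$ inside $\bigoplus_k \T_q(\bR^d)$, which always lies in $\ker \Psi_p$ and has dimension $\dim \T_q(\bR^d)$. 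Therefore
\[
\dim_{\rm fl}(G,p) \;=\; |V(G_b)|\cdot \dim \T_q(\bR^d) \;-\; \rank \Psi_p \;-\; \dim \T_q(\bR^d),
\]
and the analogous formula holds for $G'$ with $G_b' \cong G_b$ and the same value of $|V(G_b)|$.

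To conclude, compare ranks via genericity. The matrix $\Psi_p$ depends on $p$ only through the positions of the attachment vertices (the endpoints of inter-body edges). Viewed as a matrix-valued function of these attachment positions, its entries are real-analytic away from the coordinate variety where two attached vertices share a coordinate, and the rank function is lower semi-continuous; hence $\rank \Psi_p$ achieves its maximum on an open dense subset of attachment-point configurations in the product of Euclidean spaces. Because $p$ is generic for $G$, the coordinates of the attachment vertices are algebraically independent over $\bQ$ and thus lie in this open dense set, so $\rank \Psi_p$ equals the combinatorial maximum. The same conclusion applies to $p'$ via $\phi$, since $\Psi_{p'}$ is built from the same data (same body-bar combinatorics, same attachment formula) evaluated at a different but equally generic configuration. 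Thus $\rank \Psi_p = \rank \Psi_{p'}$, and the displayed formula gives $\dim_{d,q}(G)=\dim_{d,q}(G')$.

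The principal obstacle is isolating the entries of $\Psi_p$ as functions of the attachment positions \emph{alone}, so that the surrounding internal edges of each body play no role in the rank computation; this is what ultimately lets the isomorphism of body-bar graphs transfer rank information from one framework to the other. Once this separation is made, lower semi-continuity and the algebraic independence built into a generic placement finish the argument; the verification that both placements genuinely induce generic configurations of attachment points is immediate from the definition of generic placement, since any subfamily of vertices of a generic framework again has algebraically independent coordinates.
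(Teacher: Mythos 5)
Your overall strategy --- parametrising flexes by one rigid motion per body, reducing to a ``body-bar rigidity matrix'' $\Psi_p$ indexed by the edges of $G_b$, and comparing ranks --- is genuinely different from the paper's proof, which instead embeds $G$ and $G'$ into a common multi-body graph $H$ whose bodies are large complete graphs and checks that restriction of flexes gives isomorphisms $\F_q(H,p_H)\cong\F_q(G,p)$ and $\F_q(H,p_H)\cong\F_q(G',p')$. Your reduction and the resulting dimension formula are essentially sound (modulo the small-body caveat below), but the final rank-comparison step has a genuine gap.

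You justify $\rank\Psi_p=\rank\Psi_{p'}$ by asserting that a generic placement has vertex coordinates that are algebraically independent over $\bQ$. That is not this paper's notion of generic: here ``generic'' means every subframework of $(K_{V(G)},p)$ is regular, i.e.\ rank-maximising for its bar-joint rigidity matrix --- an \emph{open} condition, explicitly contrasted in the paper's remarks with the algebraic-independence notion --- and such a placement can perfectly well have rational coordinates. Moreover, even granting algebraic independence, for $q\neq2$ the entries of $\Psi_p$ involve $\sgn(\cdot)|\cdot|^{q-1}$ and are only real-analytic, not polynomial over $\bQ$, so the principle ``algebraically independent $\Rightarrow$ avoids every proper exceptional set'' has no force: such a point can still lie in a real-analytic null set. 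What genericity actually controls is $\rank R_q(H,p)$ for subgraphs $H$, so you must route the comparison through the bar-joint rigidity matrix: on the open dense set of placements where every body is infinitesimally rigid (and affinely spanning when $q=2$) one has $\rank R_q(G,p)=d|V(G)|-n\dim\T_q(\bR^d)+\rank\Psi_p$ with $n$ the number of bodies, so regularity of $(G,p)$ forces $\rank\Psi_p$ to attain its maximum over all attachment configurations; applying the same to $G'$ gives equality, since $\Psi$ is literally the same function of the attachment configuration for both graphs. Separately, your uniqueness claim for $\gamma_k$ fails for small bodies ($K_2$ is rigid in $\bR^3$ for $q=2$ but does not affinely span, and a single-vertex body is vacuously rigid); the usual remedy of replacing each body by a large complete graph is exactly what this lemma is meant to license, so you should either handle small bodies directly or restrict the statement you prove.
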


\proof
Choose a multi-body graph $H$ with body-bar graph $H_b$ isomorphic to $G_b$ and $G'_b$ such that each body  of $H$ is a complete graph with more vertices than the corresponding bodies of $G$ and $G'$.
Then there exist natural graph homomorphisms $\phi:G\to H$ and $\phi':G'\to H$.
If $p_H:V(H)\to \bR^d$ is a generic placement of $H$ then $p:V(G)\to \bR^d$  defined by $p_v=(p_H)_{\phi(v)}$ is a generic placement of $G$.
Now the linear mapping $A:\F_q(H,p_H)\to\F_q(G,p)$, $A(u)_v=u_{\phi(v)}$ is an isomorphism.
Applying the same argument to $G'$ we obtain a generic placement $p:V(G')\to \bR^d$ and a linear isomorphism $A':\F_q(H,p_H)\to\F_q(G',p')$. The result follows.
\endproof

\begin{eg}
The complete graph $K_{d+1}$ is $(d,\frac{d(d+1)}{2})$-tight and is minimally rigid for $(\bR^d,\|\cdot\|_2)$.
The complete graph $K_{2d}$ is $(d,d)$-tight and  is a minimally rigid graph for  $(\bR^d,\|\cdot\|_q)$ for each of the non-Euclidean $\ell^q$-norms.
These sparsity and rigidity properties persist for graphs obtained from these complete graphs by a finite sequence of Henneberg vertex extension moves of degree $d$. Thus we may assume without loss of generality that the  bodies of a finite multi-body graph for  $(\bR^d,\|\cdot\|_q)$ are $(d,\frac{d(d+1)}{2})$-tight in the Euclidean case and $(d,d)$-tight 
in the non-Euclidean case. The convenience of modeling multi-body graphs in this way is that the combinatorial and $\ell^q$-norm analysis of earlier sections is ready-to-hand.
\end{eg}

There is a natural vertex-induced surjective graph homomorphism $\pi:G \to \bar{G}_b$ where $\bar{G}_b$ is the multi-graph obtained by contracting the bodies of $G$. The body-bar graph $G_b$ is a subgraph of $\bar{G}_b$
obtained by removing reflexive edges and  $\pi$ gives a bijection between the inter-body edges of $G$ and the edges of $G_b$.

\begin{thm}\label{t:q-Tay} Let $G$ be a finite multi-body graph for $(\bR^d,\|\cdot\|_q)$ where $q\in (1,2)\cup(2,\infty)$. Then the following statements are equivalent.
\begin{enumerate}[(i)]
\item $G$ is  rigid in  $(\bR^d,\|\cdot \|_q)$.
\item $G_b$ has a  $(d,d)$-tight spanning subgraph.
\end{enumerate} 
\end{thm}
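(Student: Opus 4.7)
The plan is to reduce the theorem to a matroidal analysis on $G_b$ by exploiting the internal rigidity of each body. First, by Lemma \ref{BodyLemma}, we may assume without loss of generality that every body $B_k$ is $(d,d)$-tight, and hence minimally rigid in $(\bR^d,\|\cdot\|_q)$. Fix a generic placement $p$ of $G$ in $(\bR^d,\|\cdot\|_q)$, and let $n=|V(G_b)|$.

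The key decomposition exploits the fact that since each body $B_k$ is rigid, the restriction of any $u\in\F_q(G,p)$ to $(B_k,p)$ is trivial, and in the non-Euclidean setting trivial flexes are constant translations. Hence there exists a unique $t_k\in\bR^d$ with $u_v=t_k$ for every $v\in V(B_k)$, so $u$ is determined by $(t_1,\ldots,t_n)\in\bR^{dn}$. The constraint from each inter-body edge $vw$, with $v\in V(B_i)$ and $w\in V(B_j)$, becomes
\[
\langle (p_v-p_w)^{(q-1)},\, t_i-t_j\rangle = 0.
\]
Collecting these constraints yields a matrix $M=M(G,p)$ of size $|E(G_b)|\times dn$, with $\F_q(G,p)\cong\ker M$ and $\T_q(G,p)$ corresponding to the $d$-dimensional diagonal subspace $\{(t,\ldots,t):t\in\bR^d\}\subseteq\ker M$. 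Thus $G$ is rigid in $(\bR^d,\|\cdot\|_q)$ if and only if $\rank M = dn-d$.

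I would then identify $M$ with the rigidity matrix of a generic multi-graph bar-joint framework on $G_b$. The row of $M$ for an inter-body edge $vw$ has $(p_v-p_w)^{(q-1)}$ in the $i$-th block of $d$ columns and its negative in the $j$-th block, matching exactly the form of a rigidity matrix row for $G_b$ with edge direction $(p_v-p_w)^{(q-1)}$. Crucially, the definition of a multi-body graph forces distinct inter-body edges of $G$ to have disjoint endpoint sets, so the direction vectors for distinct edges of $G_b$ involve disjoint coordinates of $p$. For generic $p$ these vectors are therefore generic and algebraically independent, so $\rank M$ equals the generic rank of the body-bar rigidity matroid on $G_b$.

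The final step is the combinatorial identification of this matroid as the $(d,d)$-sparsity matroid, i.e., $\rank M = dn-d$ iff $G_b$ contains a $(d,d)$-tight spanning subgraph. Necessity follows immediately from a subgraph dimension count: the rows of $M$ indexed by the edges of any subgraph $H\subseteq G_b$ have support on $d|V(H)|$ columns and always admit the $d$-dimensional translational kernel, giving $\rank \leq d|V(H)|-d$. Sufficiency is the main obstacle; I would prove it by induction along a Henneberg-type construction of $(d,d)$-tight multi-graphs (degree-$d$ vertex additions together with an edge-redirection move), verifying inductively that each move preserves maximal rank of the body-bar matrix under generic direction vectors. For the base case $d=2$ this is Theorem \ref{qNormLaman}, while for $d\geq 3$ the induction step is proved by exhibiting a single convenient choice of direction vectors (which is permissible since maximal rank is an open condition) and passing to a limit perturbation, following the non-Euclidean rank-preservation arguments of \cite{kit-pow}.
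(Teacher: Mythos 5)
Your reduction of the problem to the ``body-bar'' matrix $M$ on $G_b$ is sound and matches the paper's viewpoint: since each body is rigid and non-Euclidean trivial flexes are translations, a flex is determined by one translation vector per body, and the non-incidence condition for inter-body edges makes the direction vectors $(p_v-p_w)^{(q-1)}$ effectively independent generic data. Your necessity argument (the subgraph count $\rank\leq d|V(H)|-d$) is also essentially the paper's.

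The gap is in sufficiency, which you yourself flag as ``the main obstacle'' and then only sketch. Two concrete problems. First, your proposed base case is wrong: Theorem \ref{qNormLaman} characterises rigidity of \emph{simple} graphs in $(\bR^2,\|\cdot\|_q)$ via $(2,2)$-tight simple spanning subgraphs, whereas $G_b$ is a multi-graph that will in general have parallel edges, so that theorem does not apply to the $d=2$ case of the body-bar matroid. Second, the inductive engine is not supplied: an inductive construction of $(d,d)$-tight loopless multi-graphs for general $d$ requires vertex additions of every degree from $d$ up to $2d-1$ (by the minimum-degree bound of Lemma \ref{DegreeLemma}), not just degree-$d$ additions and one edge-redirection move, and the rank-preservation of each such move for the matrix $M$ is precisely the content that needs proof. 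The paper avoids all of this by invoking the Nash--Williams decomposition of a $(d,d)$-tight multi-graph into $d$ edge-disjoint spanning trees and then exhibiting a single explicit placement in which every inter-body bar assigned to the tree $T_i$ is aligned with the $i$-th coordinate axis; the block structure of the rigidity matrix then forces the $i$-th coordinates of all body translations to agree along $T_i$, giving a $d$-dimensional kernel, and genericity of the rank function does the rest. If you want to complete your argument along your own lines you must either prove the multi-graph inductive characterisation and verify each move, or switch to the tree-packing argument, which is substantially shorter.
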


\proof
$(i)\Rightarrow (ii)$
We can assume without loss of generality that each body of $G$ is $(d,d)$-tight.
Suppose that $G$ is minimally rigid in  $(\bR^d,\|\cdot \|_q)$ with bodies $B_1,B_2,\ldots,B_n$.
If $G_b$ is the body-bar graph for $G$ then $|V(G_b)|=n $ and we have
\begin{eqnarray*}
|E(G_b)| &=& |E(G)|-\sum_{i=1}^{n}|E(B_i)| \\
&=&  (d|V(G)|-d)-\sum_{i=1}^{n}(d|V(B_i)|-d) \\ 
&=& d|V(G_b)|-d
\end{eqnarray*}
Let $H_b$ be a subgraph of $G_b$ and let $\pi:G\to  \bar{G}_b$ be the natural graph homomorphism.
Define $H$ to be the subgraph of $G$ with $V(H)=\pi^{-1}(V(H_b))$ such that $H$ contains the body $B_i$ whenever $\pi(V(B_i))\in V(H_b)$ and $H$ contains the inter-body edge $vw$ whenever $\pi(v)\pi(w)\in E(H_b)$.
Then $|V(H_b)|=|\I|$ where  $\I=\{i\in \{1,\ldots,n\}: B_i\subset H\}$ and 
\begin{eqnarray*}
|E(H_b)| &=& |E(H)|-\sum_{i\in \I}|E(B_i)| \\
&\leq& (d|V(H)|-d)-\sum_{i\in\I}(d|V(B_i)|-d) \\
&=& d|V(H_b)|-d
\end{eqnarray*}
Thus $G_b$ is $(d,d)$-tight.
For the general case note that by removing edges from $G$ we obtain a minimally rigid multi-body graph $\tilde{G}$. Thus by the above argument $\tilde{G}_b$ is a vertex-complete $(d,d)$-tight subgraph of $G_b$.

$(ii)\Rightarrow (i)$ If $G_b$ is $(d,d)$-tight then 
it admits a partition as an edge-disjoint union of $d$ spanning trees 
$T_1,T_2,\ldots,T_d$ (see \cite{N-W_1964}).
We will construct a placement of $G$ such that $p_v-p_w$ lies on the $i^{th}$ coordinate axis in $\bR^d$
whenever $vw$ is an inter-body edge with $\pi(vw)\in T_i$.  

By Lemma \ref{BodyLemma} we can assume that the bodies $B_1,B_2,\ldots,B_n$ of $G$ are copies of the complete graph $K_m$  for some sufficiently large $m$.
Let $p_1:V(B_1)\to\bR^d$ be a generic placement of the body $B_1$ and 
define inductively the placements $p_k:V(B_k)\to \bR^d$ for $k=2,\ldots,n$ so that 
\begin{itemize}
\item $p_k(V(B_k))=p_1(V(B_1))$, and,
\item $p_j(v)=p_k(w)$ whenever $j<k$ and $vw\in E(G)$ is an inter-body edge
with $v\in V(B_j)$ and $w\in V(B_k)$.
\end{itemize}
Then $(B_k,p_k)$ is a generic, and hence infinitesimally rigid, bar-joint framework for each $k=1,2,\ldots,n$.

Define $p:V(G)\to \bR^d$ by setting $p(v)=p_i(v)$ whenever $v\in V(B_i)$.
Note that $p$ is not a placement of $G$ since $p_v=p_w$ for each inter-body edge $vw\in E(G)$. 
However, by perturbing $p$ by a small amount we can obtain a placement $p'$.
Let $\epsilon>0$ and let $e_1,e_2,\ldots,e_d$ be the usual basis in $\bR^d$.
If $v\in V(G)$ is not incident with an inter-body edge then set $p'_v=p_v$.
If $vw\in E(G)$ is an  inter-body edge  and $\pi(vw)\in T_i$ then let 
$p'_v=p_v+\epsilon e_i$ and $p'_w=p_w$. 
The rigidity matrix for  $(G,p')$ has the form,
\[R_q(G,p')=\left[ \begin{array}{ccc}
R_q(B_1,p') & & \\
&\ddots&\\
&&R_q(B_n,p') \\\hline
&Z &
\end{array} \right]\]
where the rows of  the submatrix $Z$ correspond to the inter-body edges in $G$. 

Suppose $u=(u_1,\ldots,u_n)\in\ker R_q(G,p')$. For a sufficiently small $\epsilon$ each subframework $(B_i,p')$ is infinitesimally rigid,
and so $u_i=(a_i,\ldots,a_i)$ for some $a_i\in \bR^d$.
If $vw$ is an inter-body edge with $\pi(vw)\in T_i$ then the corresponding row entries in $R_q(G,p')$  are non-zero in the 
$p_{v,i}$ and $p_{w,i}$ columns only.
The spanning tree property now ensures that $a_1=\cdots=a_n$ and so 
the kernel of $R_q(G,p')$ has dimension $d$.
Thus $p'$ is an infinitesimally  rigid placement of $G$  in  $(\bR^d,\|\cdot \|_q)$. More generally if $G_b$ contains a vertex-complete $(d,d)$-tight subgraph then by the above argument $G$ is rigid  in  $(\bR^d,\|\cdot \|_q)$.
\endproof

We will require the following definition and corollary to characterise the countable rigid multi-body graphs for
$(\bR^d,\|\cdot\|_q)$. 

\begin{defn}
A multi-body graph for $(\bR^d,\|\cdot\|_q)$ is {\em essentially minimally rigid} if it is rigid and removing any inter-body edge 
results in a multi-body graph which is not rigid.
\end{defn}

\begin{cor}\label{MinTay}
Let $G$ be a finite multi-body graph for $(\bR^d,\|\cdot\|_q)$ and suppose that one of the following conditions holds.
\begin{enumerate}[(a)]
\item $q=2$ and $k=\frac{d(d+1)}{2}$, or,
\item $q\in(1,2)\cup(2,\infty)$ and $k=d$.
\end{enumerate} 
Then the following statements are equivalent.
\begin{enumerate}[(i)]
\item $G$ is essentially minimally rigid in  $(\bR^d,\|\cdot \|_q)$.
\item $G_b$ is a $(k,k)$-tight multi-graph.
\end{enumerate} 
\end{cor}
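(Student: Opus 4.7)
The plan is to reduce the corollary directly to Theorems \ref{t:Tay} and \ref{t:q-Tay} by exploiting the correspondence between inter-body edges of $G$ and edges of the body-bar graph $G_b$. The key structural observation is that removing an inter-body edge $e$ from $G$ produces a multi-body graph whose body-bar graph is precisely $G_b\setminus\pi(e)$, so a ``single edge removal'' in $G$ corresponds to a single edge removal in $G_b$, while rigidity of $G$ is controlled (by the two theorems) by the existence of a $(k,k)$-tight spanning subgraph of $G_b$.

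For the implication $(i)\Rightarrow(ii)$, I would assume $G$ is essentially minimally rigid. By Theorem \ref{t:Tay} (case $q=2$) or Theorem \ref{t:q-Tay} (case $q\ne 2$), the body-bar graph $G_b$ contains a $(k,k)$-tight spanning subgraph $H_b$. Suppose for contradiction that $G_b$ has an edge $f$ not in $H_b$, and let $e$ be the inter-body edge of $G$ with $\pi(e)=f$. Then the body-bar graph of $G\setminus e$ still contains $H_b$ as a $(k,k)$-tight spanning subgraph, so by the same theorem $G\setminus e$ is rigid, contradicting essential minimal rigidity. Hence $E(G_b)=E(H_b)$ and so $G_b$ is itself $(k,k)$-tight.

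For the converse $(ii)\Rightarrow(i)$, suppose $G_b$ is $(k,k)$-tight. Then $G_b$ trivially contains itself as a $(k,k)$-tight spanning subgraph, so $G$ is rigid by the appropriate Tay theorem. For any inter-body edge $e\in E(G)$, the body-bar graph of $G\setminus e$ has $|E(G_b)|-1=k|V(G_b)|-k-1$ edges on $|V(G_b)|$ vertices. Any $(k,k)$-tight spanning subgraph of it would need exactly $k|V(G_b)|-k$ edges, which is impossible. Therefore $G\setminus e$ is not rigid, so $G$ is essentially minimally rigid.

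The proof is essentially bookkeeping once the bijection between inter-body edges of $G$ and edges of $G_b$ is invoked; the only mild subtlety I anticipate is confirming that removing an inter-body edge does indeed preserve the multi-body structure (the bodies themselves are untouched, and no other inter-body edge is affected because inter-body edges are vertex-disjoint by definition). No deeper obstacle is expected since the heavy lifting has already been done in Theorems \ref{t:Tay} and \ref{t:q-Tay}.
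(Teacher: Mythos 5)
Your proof is correct and follows the same route as the paper, which simply states that the corollary is immediate from Theorems \ref{t:Tay} and \ref{t:q-Tay}; you have merely made explicit the bookkeeping (the bijection between inter-body edges of $G$ and edges of $G_b$, and the edge-count obstruction after deleting an inter-body edge) that the paper leaves implicit.
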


\proof
The proof  follows  immediately from Theorem \ref{t:Tay} in case (a) and from Theorem \ref{t:q-Tay} in case (b).
\endproof

\subsection{Rigidity of countable multi-body graphs}
We are now able to characterise the countable rigid multi-body graphs in $(\bR^d,\|\cdot\|_q)$ for all dimensions $d\geq 2$ and all $q\in (1,\infty)$. 

Given a finite bar-joint framework $(G,p)$ in $(\bR^d,\|\cdot\|_q)$
we denote by $X_{row}(G,p)$ the row space of the rigidity matrix $R_q(G,p)$.

\begin{defn}
A finite multi-body graph for $(\bR^d,\|\cdot\|_q)$  is {\em essentially independent} 
if given any generic placement $p\in\P(G)$ the row space of the rigidity matrix $R_q(G,p)$ 
may be expressed as a direct sum
\[X_{row}(G,p) = X_{B_1}\oplus\cdots\oplus X_{B_n}\oplus X_{IB}\]
where $X_{B_i}$ is the subspace of $X_{row}(G,p)$ spanned by the rows of $R_q(G,p)$ which correspond to the edges of the body $B_i$ and $X_{IB}$ is the subspace  spanned by the rows  which correspond to the inter-body edges of $G$.
\end{defn}

The following result is an analogue of Proposition \ref{IndepLemma}.

\begin{prop}
\label{IndepBodyLemma}
Let $G$ be a finite multi-body graph for $(\bR^d,\|\cdot\|_q)$  and suppose that one of the following conditions holds.
\begin{enumerate}[(a)]
\item $q=2$, $k=\frac{d(d+1)}{2}$ and $G$ contains at least $d(d+1)$ vertices, or,
\item $q\in(1,2)\cup(2,\infty)$, $k=d$ and $G$ contains at least $2d$ vertices.
\end{enumerate}
Then the following statements are equivalent.
\begin{enumerate}[(i)]
\item $G$ is essentially independent with respect to $(\mathbb{R}^d,\|\cdot\|_q)$.
\item $G_b$ is $(k,k)$-sparse.
\end{enumerate}
\end{prop}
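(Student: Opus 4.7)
The plan is to adapt the proof of Proposition \ref{IndepLemma} to the body-bar setting, with Corollary \ref{MinTay} playing the role that Theorems \ref{Laman} and \ref{qNormLaman} played there. Two structural observations will be used throughout. First, by Lemma \ref{BodyLemma} I may assume without loss of generality that each body $B_i$ is $(d,\tfrac{d(d+1)}{2})$-tight (in the Euclidean case) or $(d,d)$-tight (in the non-Euclidean case), so that each body contributes rank exactly $d|V(B_i)|-k$ to $R_q(G,p)$ at a generic placement. Second, inter-body rows have pairwise disjoint column-supports, because by definition of a multi-body graph no two inter-body edges share a vertex; in particular, the inter-body rows are automatically linearly independent among themselves.

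For the direction (i)$\Rightarrow$(ii), let $H_b$ be an arbitrary subgraph of $G_b$ and form the corresponding subframework $H\subseteq G$ consisting of the bodies indexed by $V(H_b)$ together with the inter-body edges indexed by $E(H_b)$. The direct sum decomposition assumed in essential independence restricts to $(H,p)$, giving
\[
\rank R_q(H,p) = \sum_{i\in V(H_b)}\bigl(d|V(B_i)|-k\bigr)+|E(H_b)|.
\]
The vertex-count hypotheses on $G$ guarantee that $H$ carries a full $k$-dimensional space of trivial infinitesimal flexes, so $\rank R_q(H,p)\leq d|V(H)|-k$. Using $|V(H)|=\sum_{i\in V(H_b)}|V(B_i)|$ and rearranging yields $|E(H_b)|\leq k|V(H_b)|-k$. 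Since $H_b$ was arbitrary, $G_b$ is $(k,k)$-sparse.

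For the direction (ii)$\Rightarrow$(i), assume $G_b$ is $(k,k)$-sparse. Proposition \ref{SparseLemma1}(b) (applicable because the hypothesis on $|V(G)|$ ensures $G_b$ has the required number of vertices) extends $G_b$ to a $(k,k)$-tight multi-graph $G'_b$ on the same vertex set. Let $G'$ be the multi-body graph obtained from $G$ by adjoining the corresponding new inter-body edges, so that $G'$ shares the same bodies as $G$. By Corollary \ref{MinTay}, $G'$ is essentially minimally rigid, so $\rank R_q(G',p)=d|V(G')|-k$. Matching this rank against the body contributions and the inter-body rows gives
\[
\sum_i\bigl(d|V(B_i)|-k\bigr)+|E(G'_b)| = d|V(G)|-kn+\bigl(k|V(G'_b)|-k\bigr) = d|V(G')|-k,
\]
which forces the sum $X_{B_1}+\cdots+X_{B_n}+X_{IB(G')}$ to be direct. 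The row space $X_{row}(G,p)$ is then recovered as $X_{B_1}+\cdots+X_{B_n}+X_{IB(G)}$, a subspace sum sitting inside this already-direct sum, hence itself direct.

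The main obstacle will be justifying cleanly that the sum $X_{B_1}+\cdots+X_{B_n}+X_{IB(G')}$ is direct in the tight case. Pairwise independence among inter-body rows is transparent from disjoint supports, but an inter-body row $r_{vw}$ with $v\in B_i$ has non-zero entries in $p_v$-columns that are also populated by rows of $X_{B_i}$, so its independence from $X_{B_i}+X_{B_j}$ is \emph{not} visible from column supports alone; instead, directness is forced indirectly by the dimension count supplied by Corollary \ref{MinTay}. Once established for $G'$, descent to $G$ is immediate because the bodies are unchanged and $X_{IB(G)}$ is the span of a sub-collection of the inter-body rows of $G'$.
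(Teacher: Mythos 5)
Your proof is correct and takes essentially the same route as the paper: the forward direction is the same rank/edge-count comparison over subgraphs $H_b$ (using that bodies contribute rank $d|V(B_i)|-k$ and that inter-body rows are counted by the direct sum), and the converse extends $G_b$ to a $(k,k)$-tight multigraph, applies Corollary \ref{MinTay}, and forces directness by the dimension count --- a step the paper compresses into ``it follows'' and which you rightly make explicit. One small slip worth noting: your parenthetical claim that the hypothesis on $|V(G)|$ ensures $G_b$ has at least $2k$ vertices is false (two large bodies already give $|V(G_b)|=2$), but the extension of a $(k,k)$-sparse multigraph to a $(k,k)$-tight one by adjoining (possibly parallel) edges goes through anyway, exactly as the paper tacitly assumes when it invokes Proposition \ref{SparseLemma1}.
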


\proof
Suppose $G$ is essentially independent and  let $p:V(G)\to \mathbb{R}^d$ be a generic placement of $G$.
If  $H_b$ is a subgraph of $G_b$  and $B_1,B_2,\ldots,B_n$ are the bodies of $G$ then let $H$ be the  subgraph of $G$ with $V(H)=\pi^{-1}(V(H_b))$ such that $H$ contains the body $B_i$ whenever $\pi(V(B_i))\in V(H_b)$ and $H$ contains the inter-body edge $vw$ whenever $\pi(vw)\in E(H_b)$.
If  $\I=\{i\in \{1,\ldots,n\}: B_i\subset H\}$
then
\begin{eqnarray*}
|E(H_b)| &=& \rank R_q(H,p)-\sum_{i\in \I}\rank R_q(B_i,p)\\ &\leq& (d|V(H)|-k)-\sum_{i\in\I}(d|V(B_i)|-k) \\
&=& k|V(H_b)|-k
\end{eqnarray*}
Thus $G_b$ is $(k,k)$-sparse.

Conversely, if $G_b$ is $(k,k)$-sparse then
by Proposition \ref{SparseLemma1} $G_b$ is a vertex-complete subgraph of a $(k,k)$-tight multi-graph $G'_b$ which has no reflexive edges.  
Let $G'$ be a multi-body graph with body-bar graph isomorphic to $G'_b$ and which contains $G$ as a subgraph.
By Corollary \ref{MinTay}, 
$G'$ is essentially minimally rigid
and it follows that $G$ is essentially independent. 
\endproof

We now prove an analogue of Theorem \ref{RelRigidProp} which shows that in the category of multi-body graphs relative rigidity is equivalent to the existence of a rigid container for all dimensions $d$ and for all $\ell^q$ norms.

\begin{thm}
\label{RelRigidBody}
Let $G$ be a finite multi-body graph for $(\bR^d,\|\cdot\|_q)$ and let $H$ be a subgraph of $G$ which is a multi-body graph whose body subgraphs are bodies of $G$. 
Suppose that  one of the following conditions holds.
\begin{enumerate}[(a)]
\item $q=2$ and $H$ contains at least $d(d+1)$ vertices, or,
\item $q\in(1,2)\cup(2,\infty)$  and $H$ contains at least $2d$ vertices.
\end{enumerate}
Then the following statements are equivalent.
\begin{enumerate}[(i)]
\item $H$ is relatively  rigid in $G$ with respect to $(\mathbb{R}^d,\|\cdot\|_q)$.
\item $H$ has a rigid container in $G$ with respect to $(\mathbb{R}^d,\|\cdot\|_q)$ which is a multi-body graph.
\end{enumerate}

\end{thm}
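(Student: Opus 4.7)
The plan is to adapt the argument of Theorem \ref{RelRigidProp} to the body-bar setting, translating statements about bar-joint sparsity and minimal rigidity into the corresponding body-bar ones via Proposition \ref{IndepBodyLemma} and Corollary \ref{MinTay}. Throughout, write $k = \tfrac{d(d+1)}{2}$ in case (a) and $k = d$ in case (b); by Lemma \ref{BodyLemma} I may enlarge the bodies of $G$ to be complete graphs $K_m$ of any prescribed size without altering rigidity properties, and this slack allows the introduction of new inter-body edges between fresh vertices of any pair of bodies.

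The implication $(ii) \Rightarrow (i)$ is immediate from the definitions: no non-trivial infinitesimal flex of $(H,p)$ extends to an infinitesimally rigid container, so none extends to $(G,p)$. For $(i) \Rightarrow (ii)$, first assume $G$ is essentially independent, so that $G_b$ is $(k,k)$-sparse by Proposition \ref{IndepBodyLemma}. Let $H_b \subseteq G_b$ be the body-bar graph of $H$, and consider two non-adjacent body-vertices $x, y \in V(H_b)$. Relative rigidity of $H$ in $G$ forces every infinitesimal flex of $(G,p)$ to move the bodies of $H$ as a single rigid piece, so adjoining one further inter-body bar between $B_x$ and $B_y$ must produce an essential dependency; equivalently, $G_b \cup \{xy\}$ violates $(k,k)$-sparsity, and Lemma \ref{SparseLemma2} supplies a $(k,k)$-tight subgraph $T_{x,y} \subseteq G_b$ containing both $x$ and $y$. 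Because every $T_{x,y}$ contains the vertex $x \in V(H_b)$, repeated applications of Lemma \ref{SparseLemma3}(b) show that the union $T := \bigcup_{x,y} T_{x,y}$, taken over non-adjacent pairs in $V(H_b)$, is $(k,k)$-tight and spans $V(H_b)$. The multi-body subgraph $H' \subseteq G$ assembled from the bodies $B_z$ for $z \in V(T) \cup V(H_b)$ together with the inter-body edges corresponding to $E(T) \cup E(H_b)$ then has body-bar graph containing the $(k,k)$-tight spanning subgraph $T$, so $H'$ is rigid in $(\bR^d, \|\cdot\|_q)$ by Theorems \ref{t:Tay} and \ref{t:q-Tay}, and is the desired multi-body container for $H$.

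For the general essentially dependent case, I would inductively delete inter-body edges of $G$ whose removal preserves the kernel of the rigidity matrix at a generic placement, terminating at an essentially independent multi-body subgraph $G_n \subseteq G$ in which $H$ remains relatively rigid. Applying the previous step to $G_n$ produces a multi-body rigid container $H'_n$ for $H$ in $G_n$, whereupon $H'_n \cup H \subseteq G$ is the required multi-body rigid container in $G$. The main obstacle will be verifying cleanly that the body-bar translation of relative rigidity is faithful: specifically, that adjoining a single inter-body bar of $G$ between fresh vertices of $B_x$ and $B_y$ corresponds exactly to adjoining the edge $xy$ to $G_b$, so that the sparsity dichotomy of Lemma \ref{SparseLemma2} really captures the obstruction to extending an infinitesimal flex. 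Some additional bookkeeping is likely needed in small-size edge cases where every pair of body-vertices in $V(H_b)$ is already adjacent in $G_b$, which will be handled using the vertex-count hypotheses (a) and (b) together with Lemma \ref{BodyLemma}.
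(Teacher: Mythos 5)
Your proposal follows the paper's proof almost step for step: the reduction to the essentially independent case by deleting kernel-preserving inter-body edges, the translation of relative rigidity into failure of $(k,k)$-sparsity of $G_b\cup\{xy\}$ via Proposition \ref{IndepBodyLemma}, the extraction of a $(k,k)$-tight subgraph of $G_b$ through each non-adjacent pair via Lemma \ref{SparseLemma2}, and the conclusion via Corollary \ref{MinTay}. The one place you genuinely diverge is the assembly of the container, and it is the one place your argument as written does not go through: you claim that repeated applications of Lemma \ref{SparseLemma3}(b) make $T=\bigcup_{x,y}T_{x,y}$ tight ``because every $T_{x,y}$ contains the vertex $x$,'' but for two pairs $\{x,y\}$ and $\{x',y'\}$ with no common vertex the subgraphs $T_{x,y}$ and $T_{x',y'}$ need not intersect at all, so the hypothesis of Lemma \ref{SparseLemma3} is not available and the union could a priori split into several vertex-disjoint tight components, which is not tight and not rigid. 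The paper avoids this by taking the union of $H_b$ \emph{together with} the tight pieces and invoking Proposition \ref{SparseLemma1} (whose counting argument handles a decomposition into several tight components), rather than chaining pairwise unions; your flagged ``bookkeeping'' for the case where all pairs in $V(H_b)$ are already adjacent is a second symptom of the same issue, since then $T$ is empty and you have produced no container at all. Patching your write-up amounts to replacing the chaining step by the paper's use of Proposition \ref{SparseLemma1} applied to $H_b\cup\bigcup_{x,y}T_{x,y}$; with that substitution the two proofs coincide.
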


\proof
$(i)\Rightarrow(ii)$
Consider first the case when $G$ is essentially  independent with respect to $(\mathbb{R}^d,\|\cdot\|_q)$.  
By Proposition \ref{IndepBodyLemma}, the body-bar graph $G_b$ is $(k,k)$-sparse.
Let $\pi(v),\pi(w)\in V(H_b)$ be distinct vertices of $H_b$ with $\pi(vw)\notin E(H_b)$.
By enlarging the bodies of $G$ and $H$ we can assume without loss of generality that there exist representative vertices 
$v,w\in V(H)$ such that $v$ and $w$ are not incident with any inter-body edges of $G$.
Since $K_{V(H)}$ is rigid in $(\mathbb{R}^d,\|\cdot\|_q)$ the relative rigidity property implies that 
\[\F_q(G,p)=\F_q(G\cup K_{V(H)},p)\] for every generic placement $p$.
It follows that $G'=G\cup \{vw\}$ is a multi-body graph which is not essentially independent. 
Note that $G'$ has the same bodies as $G$ and
so by Proposition \ref{IndepBodyLemma}, the associated body-bar graph $(G')_b=G_b\cup\{\pi(vw)\}$ is not $(k,k)$-sparse where $\pi:G\to \bar{G}_b$ is the natural graph homomorphism.
Thus by Lemma \ref{SparseLemma2} there exists a $(k,k)$-tight subgraph $(H_{v,w})_b$ of $G_b$ with $\pi(v),\pi(w)\in V((H_{v,w})_b)$.

Define $H'_b$ to be the union of $H_b$ together with the subgraphs $(H_{v,w})_b$ for all such pairs $\pi(v),\pi(w)$.
By Proposition \ref{SparseLemma1}, $H'_b$ is $(k,k)$-tight.
Let $H'$ be the induced multi-body subgraph of $G$ with body-bar graph isomorphic to $H'_b$.
By Corollary \ref{MinTay}   $H'$ is rigid in $(\mathbb{R}^d,\|\cdot\|_q)$ and so $H'$ is a rigid container for $H$ in $G$.

If $G$ is not essentially independent then let $p:V(G)\to\mathbb{R}^d$ be a generic placement of $G$.
There exists an inter-body edge $vw\in E(G)$ such that 
 \[\ker R_q(G,p)=\ker R_q(G\backslash vw,p)\]
Let $G_{1}=G\backslash vw$ and $H_{1}=H\backslash vw$ and note that $H_{1}$ is relatively  rigid in $G_{1}$. Continuing to remove edges in this way we arrive after finitely many iterations at  subgraphs $H_{n}$ and $G_{n}$ such that $V(H_n)=V(H)$, $H_{n}$ is relatively  rigid in $G_{n}$ and $G_{n}$ is essentially independent.
By the above argument there exists a rigid container $H'_{n}$ for $H_{n}$ in  $G_{n}$.
Now $H' = H'_{n} \cup H$ is a rigid container for $H$ in $G$. 

$(ii)\Rightarrow (i)$ If $H$ has a rigid container $H'$ in $G$ and $p:V(G)\to\bR^d$ is a generic placement of $G$ 
then no non-trivial  infinitesimal flex of $(H,p)$ extends to $(H',p)$. The result follows.
\endproof

We now  prove the equivalence of rigidity and sequential rigidity for multi-body graphs with respect to all $\ell^q$-norms and in all dimensions $d\geq 2$.

\begin{thm}\label{t:q-Tayinfinite}
Let $G$ be a  countable multi-body graph for $(\bR^d,\|\cdot \|_q)$ where $q\in (1,\infty)$. 
The following statements are equivalent.
\begin{enumerate}[(i)]
\item $G$ is rigid in $(\bR^d,\|\cdot \|_q)$.
\item $G$ is sequentially rigid in $(\bR^d,\|\cdot \|_q)$.
\end{enumerate}
\end{thm}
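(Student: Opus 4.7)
The direction (ii)$\Rightarrow$(i) is immediate from Corollary \ref{sequential}, so the substantive task is (i)$\Rightarrow$(ii). The strategy is to mimic the proof of Theorem \ref{InfiniteLaman}, with the multi-body relative rigidity result Theorem \ref{RelRigidBody} playing the role previously taken by Theorem \ref{RelRigidProp}. Invoking Lemma \ref{BodyLemma} I shall first replace each body of $G$ by a model complete-graph body with very many vertices, so that the cardinality hypotheses of Theorem \ref{RelRigidBody} (at least $d(d+1)$ vertices when $q=2$, at least $2d$ vertices when $q\neq 2$) are automatically satisfied for any multi-body subgraph that contains at least one body.

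Fixing a generic placement $p$ of $G$ in $(\bR^d,\|\cdot\|_q)$, I will construct a vertex-complete tower $\{G_k\}$ of finite multi-body subgraphs of $G$ by enumerating the bodies of $G$ as $B_1,B_2,\dots$ and letting $G_k$ consist of $B_1,\dots,B_k$ together with those inter-body edges of $G$ that join two of these bodies (repeating bodies at the tail of the chain if only finitely many bodies are present but $V(G)$ is infinite). Then $\{(G_k,p)\}$ is a vertex-complete framework tower in $(G,p)$. Since $(G,p)$ is infinitesimally rigid, Proposition \ref{RelRigidLemma} implies that this tower has the flex cancellation property, and then Lemma \ref{RelRigidLemma2} produces an increasing sequence $(k_n)$ for which the subtower $\{(G_{k_n},p)\}$ is relatively infinitesimally rigid.

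For each $n$ the inclusion $G_{k_n}\subseteq G_{k_{n+1}}$ is an inclusion of multi-body graphs whose body subgraphs are bodies of $G$ and for which $G_{k_n}$ is relatively rigid in $G_{k_{n+1}}$. Theorem \ref{RelRigidBody} therefore yields a rigid container $H_n$ for $G_{k_n}$ in $G_{k_{n+1}}$ which is itself a multi-body graph. Assembling these containers produces the chain
\[
G_{k_1}\subseteq H_1\subseteq G_{k_2}\subseteq H_2\subseteq G_{k_3}\subseteq H_3\subseteq \cdots
\]
In particular $H_1\subseteq H_2\subseteq\cdots$ is a vertex-complete tower of rigid subgraphs of $G$, witnessing that $G$ is sequentially rigid.

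The main obstacle I anticipate is administrative rather than conceptual: one must ensure that the ad hoc tower $\{G_k\}$ consists of subgraphs to which Theorem \ref{RelRigidBody} genuinely applies, namely that each $G_k$ is a multi-body subgraph of $G$ whose body subgraphs are \emph{entire} bodies of $G$, not partial ones. The normalization via Lemma \ref{BodyLemma} together with the construction of each $G_k$ from full bodies of $G$ (rather than from arbitrary vertex sets, which might slice a body in two) is precisely what guarantees this and lets the argument proceed uniformly in $q\in(1,\infty)$ and $d\geq 2$.
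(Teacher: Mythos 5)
Your proposal is correct and follows essentially the same route as the paper: extract a vertex-complete relatively rigid tower of multi-body subgraphs (the paper cites Theorem \ref{t:IR} directly where you unwind it into Proposition \ref{RelRigidLemma} and Lemma \ref{RelRigidLemma2} applied to an explicitly body-respecting tower), then apply Theorem \ref{RelRigidBody} to replace relative rigidity by rigid containers, and use Corollary \ref{sequential} for the converse. Your explicit construction of the tower from whole bodies merely fills in the detail behind the paper's remark that ``we can assume that each $G_k$ is a multi-body graph.''
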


\proof
 Suppose $G$ is rigid in $(\mathbb{R}^d,\|\cdot\|_q)$
and let $p:V(G)\to\mathbb{R}^d$  be a generic placement.
By Theorem \ref{t:IR}, there exists a vertex-complete tower $\{(G_k,p_k):k\in\mathbb{N}\}$ in $(G,p)$ which is relatively infinitesimally rigid. Moreover, we can assume that each $G_k$ is a multi-body graph.
By Proposition \ref{RelRigidBody}, $G_k$ has a rigid container $H_k$ in $G_{k+1}$ for each $k\in\mathbb{N}$.
Thus the sequence $\{H_k:k\in\mathbb{N}\}$ is a vertex-complete tower of rigid graphs in $G$.
For the converse apply Corollary \ref{sequential}.
\endproof

\begin{thm}\label{t:q-Tayinfinite2}
Let $G$ be a  countable multi-body graph for $(\bR^d,\|\cdot \|_q)$ where $q\in (1,\infty)$. 
\begin{enumerate}[{\bf (A)}]
\item The following statements are equivalent.
\begin{enumerate}[(i)]
\item $G$ is rigid in $(\bR^d,\|\cdot \|_2)$.
\item $G_b$ has a $\left(\frac{d(d+1)}{2},\frac{d(d+1)}{2}\right)$-tight vertex-complete  tower.
\end{enumerate}
\item If $q\in (1,2)\cup(2,\infty)$ then the following statements are equivalent.
\begin{enumerate}[(i)]
\item $G$ is rigid in $(\bR^d,\|\cdot \|_q)$.
\item $G_b$ has a $(d,d)$-tight vertex-complete  tower.
\end{enumerate}
\end{enumerate}
\end{thm}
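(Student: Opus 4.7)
The plan is to mimic the proof of Theorem \ref{InfiniteLaman2}, substituting the role of Laman's theorem (Theorem \ref{Laman}) and its non-Euclidean counterpart (Theorem \ref{qNormLaman}) with Tay's theorem (Theorem \ref{t:Tay}) and its $q$-norm counterpart (Theorem \ref{t:q-Tay}), and using Corollary \ref{MinTay} in place of the sparsity characterisation of minimal rigidity. Throughout, write $\kappa = \tfrac{d(d+1)}{2}$ in case (A) and $\kappa = d$ in case (B), and let $\pi: G \to \bar{G}_b$ be the natural vertex-induced graph homomorphism. The bijection between inter-body edges of $G$ and the edges of $G_b$ yields, for every subgraph $H_b \subseteq G_b$, a unique multi-body subgraph $H \subseteq G$ whose bodies are those bodies $B_i$ of $G$ with $\pi(V(B_i)) \in V(H_b)$ and whose inter-body edges are the $vw \in E(G)$ with $\pi(vw) \in E(H_b)$.

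For the direction (ii)$\Rightarrow$(i): given a $(\kappa,\kappa)$-tight vertex-complete tower $\{(H_k)_b : k \in \bN\}$ in $G_b$, let $H_k \subseteq G$ be the corresponding multi-body subgraph of $G$. Then $\{H_k : k\in \bN\}$ is a vertex-complete tower in $G$, and by Corollary \ref{MinTay} each $H_k$ is essentially minimally rigid in $(\bR^d,\|\cdot\|_q)$, hence rigid. Thus $G$ is sequentially rigid and Corollary \ref{sequential} (or Theorem \ref{t:q-Tayinfinite}) gives the rigidity of $G$.

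For the direction (i)$\Rightarrow$(ii): by Theorem \ref{t:q-Tayinfinite}, $G$ is sequentially rigid, so there is a vertex-complete tower $\{G_k : k \in \bN\}$ of rigid multi-body subgraphs of $G$, which we may choose so that each $G_k$ contains every body of $G$ it meets. I will construct by induction a tower $\{H_k : k \in \bN\}$ of essentially minimally rigid multi-body subgraphs with $V(H_k) = V(G_k)$. Choose $H_1 = G_1 \setminus E_1$ by removing a minimal set $E_1$ of inter-body edges so that $H_1$ is essentially minimally rigid. A standard rigidity matrix argument (identical to the one in the proof of Theorem \ref{InfiniteLaman2}, noting that the removed edges lie in the closure of the remaining rows in the row matroid) shows that $G_k \setminus E_1$ is still rigid for every $k$. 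Iterating this step at each level, we obtain a vertex-complete tower $H_1 \subseteq H_2 \subseteq \cdots$ of essentially minimally rigid multi-body subgraphs of $G$. By Corollary \ref{MinTay} each body-bar graph $(H_k)_b$ is $(\kappa,\kappa)$-tight, and since $V((H_k)_b) = \pi(V(G_k))$ exhausts $V(G_b)$, the sequence $\{(H_k)_b\}$ is the desired vertex-complete tower in $G_b$.

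The main obstacle is exactly the inductive step of the last paragraph: one must confirm that after removing inter-body edges to make an early subframework essentially minimally rigid, the \emph{containing} members of the tower remain rigid. This is not automatic at the level of multi-body graphs, but it follows from the linear-algebraic observation that if $e \in E(G_k)$ is an edge whose row in $R_q(G_k,p)$ lies in the span of the other rows, then the same row remains dependent in $R_q(G_m,p)$ for any $m \geq k$, so that $G_m \setminus \{e\}$ is still rigid. Once this is available, the rest is bookkeeping and parallels the Laman/non-Euclidean Laman proof verbatim.
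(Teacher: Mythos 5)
Your proposal is correct and follows essentially the same route as the paper's proof: both directions reduce to sequential rigidity via Theorem \ref{t:q-Tayinfinite}, with the forward direction running the edge-removal induction from Theorem \ref{InfiniteLaman2} (justified by the row-dependence observation you state, which is exactly the paper's ``considering the rigidity matrix'' step) and Corollary \ref{MinTay} converting essential minimal rigidity into $(k,k)$-tightness of the body-bar graphs. The only cosmetic difference is that in (ii)$\Rightarrow$(i) you invoke Corollary \ref{MinTay} where the paper cites Theorems \ref{t:Tay} and \ref{t:q-Tay} directly; these are interchangeable here.
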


\proof
$(i)\Rightarrow (ii)$
If $G$ is rigid then by Theorem \ref{t:q-Tayinfinite}, $G$ is sequentially rigid.
Let $\{G_k:k\in\mathbb{N}\}$ be a vertex-complete tower of rigid subgraphs in $G$ and let $B_1,B_2,\ldots$ be the bodies of $G$.
We may assume that each $G_k$ is a multi-body graph.
Applying the induction argument used in Theorem \ref{InfiniteLaman2} we construct a vertex-complete tower of essentially minimally rigid multi-body subgraphs in $G$.
To do this let $H_1$ be the multi-body graph obtained by taking all bodies which lie in $\tilde{G}_1$ and adjoining inter-body edges of $G_1$ until an essentially minimally rigid graph is reached.
The induced sequence of body-bar graphs $\{(H_{k})_b:k\in\mathbb{N}\}$ is a vertex-complete tower in $G_b$.
By Corollary \ref{MinTay}  each body-bar graph $(H_k)_b$ is $\left(\frac{d(d+1)}{2},\frac{d(d+1)}{2}\right)$-tight  in case (A) and $(d,d)$-tight in case (B).

$(ii)\Rightarrow (i)$ 
Let $\{G_{k,b}:k\in\mathbb{N}\}$ be a $\left(\frac{d(d+1)}{2},\frac{d(d+1)}{2}\right)$-tight vertex-complete tower in $G_b$ and let $\pi:G\to  \bar{G}_b$ be the natural graph homomorphism.
Define $G_k$ to be the subgraph of $G$ with $V(G_k)=\pi^{-1}(V(G_{k,b}))$ such that $G_k$ contains the body $B_i$ whenever $\pi(V(B_i))\in V(G_{k,b})$ and $G_k$ contains the inter-body edge $vw$ whenever $\pi(vw)\in E(G_{k,b})$.
Then $G_{k,b}$ is the body-bar graph for $G_k$ and so $G_k$ is rigid by Theorem \ref{t:Tay}.
Thus $\{G_k:k\in\mathbb{N}\}$ is a vertex-complete  tower of rigid subgraphs in $G$ and so $G$ is sequentially rigid.
By Theorem \ref{t:q-Tayinfinite}, $G$ is rigid.

To prove (B) we apply similar arguments to the above using  the non-Euclidean versions of the relevant propositions and substituting Theorem \ref{t:q-Tay} for Theorem \ref{t:Tay}.  
\endproof

\begin{cor}\label{MinInfiniteTay}
Let $G$ be a  countable multi-body graph for $(\bR^d,\|\cdot \|_q)$ where $q\in (1,\infty)$. 
\begin{enumerate}[{\bf (A)}]
\item The following statements are equivalent.
\begin{enumerate}[(i)]
\item $G$ is essentially minimally rigid in $(\bR^d,\|\cdot \|_2)$.
\item $G_b$ has a $\left(\frac{d(d+1)}{2},\frac{d(d+1)}{2}\right)$-tight edge-complete  tower.
\end{enumerate}
\item If $q\in (1,2)\cup(2,\infty)$ then the following statements are equivalent.
\begin{enumerate}[(i)]
\item $G$ is essentially minimally rigid in $(\bR^d,\|\cdot \|_q)$.
\item $G_b$ has a $(d,d)$-tight edge-complete  tower.
\end{enumerate}
\end{enumerate}
\end{cor}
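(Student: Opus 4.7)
The plan is to mimic the two-step argument used for Corollary \ref{MinInfiniteLaman}, with Theorem \ref{t:q-Tayinfinite2} taking the role of Theorem \ref{InfiniteLaman2} and Theorem \ref{t:q-Tayinfinite} taking the role of Theorem \ref{InfiniteLaman}. Write $k=\tfrac{d(d+1)}{2}$ in case (A) and $k=d$ in case (B), and let $\pi:G\to \bar G_b$ be the natural vertex-induced surjection, which is a bijection between the inter-body edges of $G$ and the edges of $G_b$. The key observation is that if $vw$ is an inter-body edge of $G$, then $G\setminus\{vw\}$ is still a multi-body graph with the same bodies, and $(G\setminus\{vw\})_b = G_b\setminus\{\pi(vw)\}$.

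For $(i)\Rightarrow(ii)$: assume $G$ is essentially minimally rigid. Since $G$ is rigid, Theorem \ref{t:q-Tayinfinite2} produces a $(k,k)$-tight vertex-complete tower $\{(G_n)_b:n\in\bN\}$ in $G_b$. I will show this tower is necessarily edge-complete. If some edge $\pi(vw)\in E(G_b)$ lies outside $\bigcup_n E((G_n)_b)$, then this very tower still sits inside $(G\setminus\{vw\})_b$, so a second application of Theorem \ref{t:q-Tayinfinite2} would give that $G\setminus\{vw\}$ is rigid, contradicting essential minimal rigidity.

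For $(ii)\Rightarrow(i)$: given a $(k,k)$-tight edge-complete tower $\{(G_n)_b\}$ in $G_b$, Theorem \ref{t:q-Tayinfinite2} yields immediately that $G$ is rigid. To rule out redundancy, suppose for contradiction that some inter-body edge $vw\in E(G)$ can be deleted with $G\setminus\{vw\}$ still rigid. By Theorem \ref{t:q-Tayinfinite}, $G\setminus\{vw\}$ is then sequentially rigid, so it contains a vertex-complete tower $\{H_m\}$ of finite rigid multi-body subgraphs. Choose $m$ large enough that both bodies $B_v$ and $B_w$ of $G$ containing $v$ and $w$ are bodies of $H_m$, and apply Theorem \ref{t:Tay} (resp.\ Theorem \ref{t:q-Tay}) to extract a vertex-complete $(k,k)$-tight subgraph $(\tilde H_m)_b$ of $(H_m)_b$. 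Then $\pi(B_v),\pi(B_w)\in V((\tilde H_m)_b)$ but $\pi(vw)\notin E((\tilde H_m)_b)$, since $H_m\subseteq G\setminus\{vw\}$. By edge-completeness of the tower $\{(G_n)_b\}$ and finiteness of $(\tilde H_m)_b$, choose $n$ large enough that $E((\tilde H_m)_b)\cup\{\pi(vw)\}\subseteq E((G_n)_b)$. Then $(\tilde H_m)_b\cup\{\pi(vw)\}$ is a subgraph of $(G_n)_b$ with $k|V|-k+1$ edges, violating the $(k,k)$-sparsity of $(G_n)_b$.

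The arguments are routine bookkeeping once Theorem \ref{t:q-Tayinfinite2} and Theorem \ref{t:q-Tayinfinite} are in hand; the one point demanding mild care is the passage between $G$ and $G_b$, specifically ensuring that the edge being removed is an inter-body edge (so that the body structure and hence the body-bar graph identification survive intact) and that the finite $(k,k)$-tight subgraph extracted from $(H_m)_b$ genuinely embeds into some $(G_n)_b$, which is exactly what edge-completeness of the hypothesised tower supplies.
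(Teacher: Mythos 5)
Your proposal is correct and follows essentially the same route as the paper: both directions are obtained exactly as in the paper's proof, deducing edge-completeness of the tower from Theorem \ref{t:q-Tayinfinite2} together with essential minimal rigidity, and in the converse direction using Theorem \ref{t:q-Tayinfinite} to extract a rigid multi-body subgraph containing the bodies of $v$ and $w$ whose ($(k,k)$-tight spanning part of its) body-bar graph, augmented by $\pi(vw)$, violates the sparsity of some $(G_n)_b$. Your write-up is in fact slightly more explicit than the paper's at the two points it leaves tacit (why the vertex-complete tower must be edge-complete, and the extraction of the tight spanning subgraph before the sparsity count).
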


\proof
$(i)\Rightarrow (ii)$ If $G$ is essentially minimally rigid in $(\mathbb{R}^d,\|\cdot\|_q)$ then
by Theorem \ref{t:q-Tayinfinite2}, $G_b$ contains a $(k,k)$-tight vertex-complete tower $\{G_k:k\in\bN\}$ and this tower must be edge-complete. 

$(ii)\Rightarrow (i)$ 
If $G_b$ contains a $(k,k)$-tight edge-complete tower $\{G_{k,b}:k\in\bN\}$ then by Theorem \ref{t:q-Tayinfinite2}, $G$ is rigid. 
Let $vw\in E(G)$ be an inter-body edge and suppose $G\backslash \{vw\}$ is rigid.
By Theorem \ref{t:q-Tayinfinite} $G\backslash \{vw\}$ is sequentially rigid and so 
there exists a vertex-complete tower $\{H_k:k\in\bN\}$ in $G\backslash \{vw\}$ consisting of rigid subgraphs.
Moreover, we can assume that each $H_k$ is a multi-body graph.
Choose a sufficiently large $k$ such that  $v,w\in V(H_k)$ and choose a sufficiently large $n$ such that
$vw\in E(G_n)$ and $H_k$ is a subgraph of $G_n$. Then the body-bar graph for $H_k\cup\{vw\}$ is a subgraph of $(G_n)_b$ which fails the sparsity count for $(G_n)_b$. 
We conclude that $G\backslash \{vw\}$ is not rigid in $(\mathbb{R}^d,\|\cdot\|_q)$ for all $vw\in E(G)$. 
\endproof

\subsection{Remarks}
A key feature of body-bar frameworks is the nonincidence condition for the bars. This makes available special realisations which are rigid, as we have seen in the proof of the analogue of Tay's theorem, Theorem \ref{t:q-Tay}. 
Other instances of this can be seen in the 
matroid analysis of Whiteley \cite{whi-union} and in the analysis of
Borcea and Streinu \cite{bor-str-2} and Ross \cite{ros-kavli} of locally finite graphs with periodically rigid periodic bar-joint frameworks.


\section{Continuous rigidity for countable graphs}
\label{ContinuousRigidityVelocityField}
In this section we extend the Asimow-Roth theorem (Theorem \ref{AsimowRoth}) on the equivalence of infinitesimal rigidity and continuous rigidity for  finite bar-joint frameworks  to the non-Euclidean  $\ell^q$-norms. 
Both directions of this equivalence
are shown to fail for placements of countable graphs in the Euclidean plane.
We also discuss other forms of rigidity beyond absolute infinitesimal rigidity, including  infinitesimal rigidity relative to continuous  velocity fields and continuous rigidity relative to bounded  motions. These forms  are appropriate for the analysis of infinite bar-joint frameworks that sit in a bounded region of space and in particular we show that the infinitely-faceted simplicial polytope graphs of Section \ref{InductiveConstructions} admit various continuously rigid placements.

\subsection{Asimow-Roth theorem and  $\ell^q$ norms.}
\label{AREquivalence}
Let $(G,p)$ be a  bar-joint framework in  a normed vector space $(\bR^d,\|\cdot\|)$.
The \textit{edge-function} $f_G(x)$ for $G$ is the mapping
\[
f_G : \bR^{d|V(G)|} \to \bR^{|E(G)|}, \quad x=(x_v)_{v\in V(G)} \mapsto (\|x_v-x_w\|)_{vw\in E(G)}
\]
and the {\em configuration space} $V(G,p)$ for $(G,p)$ is the pre-image 
\[V(G,p) = \{x\in \bR^{d|V(G)|}:f_G(x)=f_G(p)\}\]
There is a one-to-one correspondence between the continuous flexes $\{\alpha_v:v\in V(G)\}$ of $(G,p)$ and 
continuous paths in the configuration space $\alpha:[-1,1]\to V(G,p)$ which are based at $p$. 
The following lemma establishes the connection between the configuration space and the space of infinitesimal flexes $\F(G,p)$. 

\begin{lem}
\label{t:rigiditythm1}
Let $(G,p)$ be a finite bar-joint framework in  $(\bR^d,\|\cdot\|)$.
Suppose there exists  a neighbourhood $U$ of $p$ on which $f_G(x)$ is a $C^1$ mapping and $df_G(x)$ has constant rank $k$.
Then, 
\begin{enumerate}[(i)]
\item $V(G,p)\cap U$ is a $C^1$-manifold of codimension $k$ in $\bR^{d|V(G)|}$.
\item $\F(G,p)$ is linearly isomorphic to the tangent space to $V(G,p)\cap U$ at $p$. 
\end{enumerate}
\end{lem}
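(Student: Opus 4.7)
The plan is to invoke the constant rank theorem from differential calculus to get a local $C^1$ normal form for $f_G$ near $p$, and then to identify the kernel of $df_G(p)$ with the space of infinitesimal flexes by a direct inspection of the definition.

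For part (i) I would proceed as follows. Since $f_G: U\to \bR^{|E(G)|}$ is $C^1$ with constant rank $k$ on $U$, the constant rank theorem provides, after possibly shrinking $U$, $C^1$ diffeomorphisms $\phi$ of a neighbourhood $U_1\subseteq U$ of $p$ onto an open subset of $\bR^{d|V(G)|}$ and $\psi$ of a neighbourhood of $f_G(p)$ onto an open subset of $\bR^{|E(G)|}$, such that $\psi\circ f_G\circ\phi^{-1}$ is the canonical projection $(y_1,\dots,y_{d|V(G)|})\mapsto (y_1,\dots,y_k,0,\dots,0)$. Under this normal form, $V(G,p)\cap U_1 = f_G^{-1}(f_G(p))\cap U_1$ is the preimage of a single point under this projection, hence is carried by $\phi$ onto a $C^1$-coordinate slice of codimension $k$. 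This realises $V(G,p)\cap U_1$ as an embedded $C^1$-submanifold of codimension $k$ in $\bR^{d|V(G)|}$, giving (i).

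For part (ii), the same normal form shows that the tangent space to $V(G,p)\cap U_1$ at $p$ equals $\ker df_G(p)$: one direction is immediate by differentiating any $C^1$ path in the manifold through $p$, and the reverse inclusion comes from using $\phi$ to lift any $v\in\ker df_G(p)$ to a tangent vector of the coordinate slice and pushing forward. It then suffices to exhibit a linear isomorphism between $\ker df_G(p)$ and $\F(G,p)$. For each edge $vw$, the $vw$-component of $f_G$ is the map $x\mapsto \|x_v-x_w\|$, so
\[
(df_G(p)\,u)_{vw} \;=\; \left.\tfrac{d}{dt}\right|_{t=0}\bigl\|(p_v+tu_v)-(p_w+tu_w)\bigr\|,
\]
which is well defined because $p_v\neq p_w$ and $f_G$ is $C^1$ at $p$. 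Thus $u\in\ker df_G(p)$ if and only if the above derivative vanishes for every edge $vw$, and this is precisely the $o(t)$-condition of Definition \ref{d:infflex} characterising $\F(G,p)$. Hence $\ker df_G(p)=\F(G,p)$ as subspaces of $\bR^{d|V(G)|}$, and the required isomorphism is the identity.

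There is no serious obstacle here; the lemma is a packaging of the constant rank theorem together with a direct computation of $df_G(p)$. The only mild care required is in translating the $o(t)$ formulation of the infinitesimal flex condition into an honest derivative statement, but this is immediate from the assumed $C^1$ regularity of $f_G$ on $U$ and the fact that each edge component of $f_G$ is differentiable at $p$ because $p_v\neq p_w$ for every $vw\in E(G)$.
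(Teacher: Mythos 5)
Your proposal is correct and follows essentially the same route as the paper: part (i) is the constant rank theorem, and part (ii) identifies $\F(G,p)$ with $\ker df_G(p)$ via the observation that the $o(t)$ condition for an infinitesimal flex is exactly the vanishing of the directional derivative $D_u f_G(p)=df_G(p)u$. You simply spell out the normal-form and tangent-space details that the paper leaves implicit.
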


\proof
$(i)$ Apply the rank theorem \cite[Theorem 1.3.14]{Narasimhan}.

$(ii)$
A mapping $u:V(G)\to \bR^d$ is an infinitesimal flex of $(G,p)$ if and only if $D_u f_G(p)=0$
where  $D_uf_G$ denotes the directional derivative of $f_G(x)$ in the direction of $u=(u_{v_1},\ldots,u_{v_n})\in \bR^{d|V(G)|}$.
Since $f_G(x)$ is differentiable at $p$ we have $D_uf_G(p)=df_G(p)u$ and so the result follows.
\endproof

The following lemma provides a characterisation for the trivial continuous flexes of a bar-joint framework with respect to the non-Euclidean $\ell^q$ norms.

\begin{lem}
\label{ContinuousRigidMotions}
Let $\{\alpha_v:v\in V(G)\}$ be a continuous flex of a bar-joint framework $(G,p)$ in $(\bR^d,\|\cdot\|)$.
If $(\bR^d,\|\cdot\|)$ admits only  finitely many linear isometries $T:\bR^d\to \bR^d$
then the following statements are equivalent.
\begin{enumerate}[(i)]
\item
$\{\alpha_v:v\in V(G)\}$ is a trivial continuous flex of $(G,p)$.
\item There exists $\delta>0$ such that $\alpha_v(t)=p_v+c(t)$ for all $t\in(-\delta,\delta)$ and all $v\in V(G)$
where $c:[-1,1]\to \bR^d$ is a continuous path in $(\bR^d,\|\cdot\|)$.
\end{enumerate}
\end{lem}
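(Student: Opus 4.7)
The direction $(ii)\Rightarrow(i)$ is essentially immediate. The map $\Gamma(x,t):=x+c(t)$ is a continuous rigid motion of $(\bR^d,\|\cdot\|)$: translations preserve any norm, $\Gamma(\cdot,0)$ is the identity, and continuity in $t$ is inherited from $c$. The continuous flex induced by $\Gamma$ sends each vertex $v$ to $p_v+c(t)$, which agrees with $\alpha_v(t)$ on $(-\delta,\delta)$, so $\{\alpha_v\}$ is trivial in the sense of Definition \ref{d:contsflex}.

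For $(i)\Rightarrow(ii)$, I would begin with a continuous rigid motion $\Gamma:\bR^d\times[-1,1]\to\bR^d$ realising $\alpha_v(t)=\Gamma(p_v,t)$ for all $v\in V(G)$. For each fixed $t$ the map $\Gamma(\cdot,t):\bR^d\to\bR^d$ is an isometry of the normed space. The first preparatory observation is that in a finite-dimensional normed space every isometry of $\bR^d$ into itself is surjective (the image is open by invariance of domain and closed by completeness, and $\bR^d$ is connected). Hence by the Mazur--Ulam theorem each $\Gamma(\cdot,t)$ is affine, and we may write $\Gamma(x,t)=T_tx+c(t)$ with $T_t$ a linear isometry of $(\bR^d,\|\cdot\|)$ and $c(t)\in\bR^d$; the identity-at-zero condition gives $T_0=I$ and $c(0)=0$.

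The next step upgrades the pointwise-in-$x$ continuity of $\Gamma$ in $t$ to continuity of $t\mapsto T_t$. Setting $x=0$ shows $c(t)=\Gamma(0,t)$ is continuous, and then for each fixed $x$ the map $T_tx=\Gamma(x,t)-c(t)$ is continuous in $t$. Testing against a basis $e_1,\dots,e_d$ shows that the matrix entries of $T_t$ depend continuously on $t$. The decisive final step then invokes the standing hypothesis: the linear isometry group of $(\bR^d,\|\cdot\|)$ is a finite, hence discrete, subset of $\operatorname{End}(\bR^d)$, so a continuous map from the connected interval $[-1,1]$ into it must be constant. Since $T_0=I$, we conclude $T_t\equiv I$ on $[-1,1]$, whence $\alpha_v(t)=p_v+c(t)$ for every $v$ and every $t\in[-1,1]$, so any $\delta\le 1$ suffices.

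The main obstacle to be handled carefully is the affine factorisation of $\Gamma(\cdot,t)$, which requires surjectivity to be in place before Mazur--Ulam can be applied; once the factorisation is available, the finiteness hypothesis converts the pointwise continuity of $\Gamma$ into the rigidity statement $T_t\equiv I$ essentially for free. It is worth emphasising that the finiteness assumption is essential: for the Euclidean norm, the continuous family of rotations provides non-translational trivial flexes, and the conclusion $(ii)$ genuinely fails.
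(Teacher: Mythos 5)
Your proof is correct and follows essentially the same route as the paper's: apply Mazur--Ulam to write $\Gamma(\cdot,t)=T_t(\cdot)+c(t)$ and use finiteness of the linear isometry group to force $T_t=I$. You supply some details the paper leaves implicit (surjectivity of the isometries so that Mazur--Ulam applies, continuity of $t\mapsto T_t$, and the connectedness argument), which only strengthens the write-up.
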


\proof
If $\{\alpha_v:v\in V(G)\}$ is a trivial continuous flex of $(G,p)$ then there exists a continuous rigid motion $\Gamma(x,t):\bR^d\times [-1,1]\to \bR^d$ with
$\Gamma(p_v,t)=\alpha_v(t)$ for all $v\in V(G)$ and all $t\in[-1,1]$.
By the Mazur-Ulam theorem, $\Gamma_t:\bR^d\to \bR^d$, $x\mapsto \Gamma(x,t)$ is an affine transformation and so there exists a linear map $A_t$ and a vector $c(t)\in \bR^d$ with $\Gamma_t(x)=A_t(x)+c(t)$ for all $x\in \bR^d$.
Since $(\bR^d,\|\cdot\|)$ admits only finitely many linear isometries it follows 
 that $A_t=I$ for all sufficiently small values of $t$.
Conversely, if $(ii)$ holds then
$\Gamma(x,t):\bR^d\times [-1,1]\to \bR^d$, $(x,t)\mapsto x+c(t)$ defines a continuous rigid motion of $\bR^d$ which induces  the continuous flex $\{\alpha_v:v\in V(G)\}$. 

\endproof

We now prove the equivalence of continuous and infinitesimal rigidity with respect to the non-Euclidean $\ell^q$ norms on $\bR^d$
 for regular placements of a finite graph $G$  which lie in the complement of the variety 
 \[\V(G):=\left\{x\in\mathbb{R}^{d|V(G)|}: \prod_{vw\in E(G)}\,\prod_{i=1}^d \, (x_{v,i}-x_{w,i})=0\right\}\]

\begin{thm}\label{t:rigiditythm2}
Let $(G,p)$ be a finite bar-joint framework in $(\bR^d,\|\cdot\|_q)$ where $q\in (1,2)\cup(2,\infty)$.
If $p\notin \V(G)$ then the following statements are equivalent.
\begin{enumerate}[(i)]
\item
$(G,p)$ is continuously rigid and regular.
\item
$(G,p)$ is infinitesimally rigid.
\end{enumerate}
\end{thm}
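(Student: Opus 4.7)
The plan is to adapt the classical Asimow-Roth argument to the non-Euclidean setting, using Lemma \ref{t:rigiditythm1} to realize the configuration space $V(G,p)$ as a $C^1$-manifold whose tangent space at $p$ is $\F_q(G,p)$, and Lemma \ref{ContinuousRigidMotions} to identify the trivial continuous flexes with global translations. The hypothesis $p \notin \V(G)$ is essential: the component functions $\|x_v - x_w\|_q = \bigl(\sum_i |x_{v,i}-x_{w,i}|^q\bigr)^{1/q}$ of the edge-function $f_G$ are smooth precisely on the complement of $\V(G)$, so there is an open neighbourhood of $p$ on which $f_G$ is $C^1$; regularity of $p$ then provides the locally constant rank condition required by the rank theorem.

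For the direction $(ii)\Rightarrow(i)$, infinitesimal rigidity forces $\rank R_q(G,p) = d|V(G)| - \dim \T_q(\bR^d) = d|V(G)| - d$, which is the maximum possible value, so $p$ is regular. Shrinking to a neighbourhood $U$ of $p$ inside $\bR^{d|V(G)|} \setminus \V(G)$ on which $df_G$ has constant rank, Lemma \ref{t:rigiditythm1} identifies $V(G,p) \cap U$ as a $C^1$-manifold of dimension $\dim \F_q(G,p) = d$ with tangent space $\F_q(G,p) = \T_q(G,p)$ at $p$. The translates of $p$ form a smooth $d$-dimensional submanifold of $V(G,p)$ with the same tangent space at $p$, so by the inverse function theorem these two manifolds coincide on a smaller neighbourhood of $p$. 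Hence for any continuous flex $\{\alpha_v\}$ of $(G,p)$ and all sufficiently small $t$ there exists a continuous path $c : (-\delta,\delta) \to \bR^d$ with $\alpha_v(t) = p_v + c(t)$ for every $v \in V(G)$. Since the linear isometry group of $(\bR^d,\|\cdot\|_q)$ is finite for $q \neq 2$, Lemma \ref{ContinuousRigidMotions} then shows that the flex is trivial.

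For the converse $(i)\Rightarrow(ii)$, suppose $(G,p)$ is continuously rigid and regular but infinitesimally flexible, so $\F_q(G,p) \supsetneq \T_q(G,p)$ and we may fix $u \in \F_q(G,p) \setminus \T_q(G,p)$. By regularity and Lemma \ref{t:rigiditythm1} there is a neighbourhood $U$ of $p$ inside $\bR^{d|V(G)|} \setminus \V(G)$ on which $V(G,p) \cap U$ is a $C^1$-manifold of dimension $\dim \F_q(G,p) > d$, with tangent space $\F_q(G,p)$ at $p$. The main step, and the principal technical obstacle, is producing from $u$ an actual continuous flex: using a local $C^1$-chart for this manifold one constructs a $C^1$-path $\beta : (-\delta,\delta) \to V(G,p) \cap U$ with $\beta(0) = p$ and $\beta'(0) = u$, and defines $\alpha_v(t)$ to be the $v$-component of $\beta(t)$, yielding a continuous flex of $(G,p)$. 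If this flex were trivial then Lemma \ref{ContinuousRigidMotions} would provide a continuous $c(t) \in \bR^d$ with $\beta(t) = (p_v + c(t))_{v \in V(G)}$ for all small $t$, so $\beta$ would lie entirely in the $d$-dimensional submanifold of translations; but then $\beta'(0) = u$ would belong to its tangent space $\T_q(G,p)$, contradicting the choice of $u$. Therefore $(G,p)$ admits a non-trivial continuous flex, contradicting continuous rigidity, and we conclude that $(i)$ implies $(ii)$.
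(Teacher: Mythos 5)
Your proof of (i)$\Rightarrow$(ii) is essentially the paper's argument: realise an infinitesimal flex $u$ as the tangent vector at $p$ of a $C^1$ path in the configuration-space manifold supplied by Lemma \ref{t:rigiditythm1}, observe that this path is a continuous flex, and use continuous rigidity together with Lemma \ref{ContinuousRigidMotions} to force $u$ into $\T_q(G,p)$. For (ii)$\Rightarrow$(i), however, you take a genuinely different route. The paper pins a base vertex $v_0$, replaces a given continuous flex $\alpha$ by the normalised flex $\tilde\alpha_v(t)=\alpha_v(t)-(\alpha_{v_0}(t)-p_{v_0})$, and shows that the augmented map $F(x)=(f_G(x)-f_G(p),\,x_{v_0}-p_{v_0})$ has injective differential at $p$, hence is locally injective; since $F\circ\tilde\alpha\equiv 0$ this forces $\tilde\alpha$ to be locally constant, i.e.\ $\alpha$ is locally a translation. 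You instead invoke Lemma \ref{t:rigiditythm1} a second time to see that $V(G,p)\cap U$ is a $d$-dimensional $C^1$-manifold and argue that the translation orbit $\{(p_v+a)_{v}:a\in\bR^d\}$, being a $d$-dimensional submanifold \emph{contained} in it, is an open neighbourhood of $p$ in $V(G,p)\cap U$, so every continuous flex is locally a translation. This is valid, but the phrasing ``same tangent space, so by the inverse function theorem these two manifolds coincide'' should be tightened: tangency alone never forces local coincidence of two manifolds; what does the work is the containment of the translation orbit in $V(G,p)$ together with equality of dimensions (a full-dimensional submanifold is open, by the inverse function theorem applied in a chart, or by invariance of domain). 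The paper's pinning argument buys a slightly more elementary proof of this direction, needing only local injectivity of a $C^1$ map with injective differential rather than the manifold structure, while your version is more uniform in that both implications run through Lemma \ref{t:rigiditythm1}.
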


\proof
$(i)\Rightarrow (ii)$.
As $p\notin \V(G)$ there exists a neighbourhood of $p$ on which $f_G(x)$ is a $C^1$ mapping and, since $(G,p)$ is regular, $df_G(x)$ is constant on a neighbourhood of $p$. 
By Lemma \ref{t:rigiditythm1}, $V(G,p)\cap U$ is a $C^1$-manifold and we can identify the tangent space at $p$ with $\F(G,p)$. 
If $u=(u_v)_{v\in V(G)}\in \F(G,p)$ then there exists a continuous path $\alpha:[-1,1]\to V(G,p)\cap U$ with 
$\alpha(0)=p$ and $\alpha'(0)=u$. 
Now the collection of component functions $\{\alpha_v:v\in V(G)\}$ is a continuous flex of $(G,p)$.
By Lemma \ref{ContinuousRigidMotions}, there exists a continuous path $c:[-1,1]\to \bR^d$ and a positive real number $\delta$ such that $\alpha_v(t)=p_v+c(t)$ for all $|t|<\delta$.
Hence $u_v=\alpha_v'(0)=c'(0)$ for all $v\in V(G)$ and so $u$ is a trivial infinitesimal flex of $(G,p)$.

$(ii)\Rightarrow (i)$.
Suppose $(G,p)$ is minimally infinitesimally rigid and let $\{\alpha_v:v\in V(G)\}$ be a continuous flex of $(G,p)$.
Choose a vertex $v_0\in V(G)$ and 
define \[\tilde{\alpha}_v:[-1,1]\to V(G,p), \,\,\,\,\,\,\, 
\tilde{\alpha}_v(t)=\alpha_v(t)-(\alpha_{v_0}(t)-p_{v_0})\]
Note that $\{\tilde{\alpha}_v:v\in V(G)\}$ is a continuous flex of $(G,p)$ which  determines a continuous path
$\tilde{\alpha}:[-1,1]\to V(G,p)$, $t\mapsto (\tilde{\alpha}_v(t))_{v\in V(G)}$.
By Proposition \ref{t:rigiditythm1}$(ii)$ we have 
\[\ker df_G(p) = \F(G,p) = \T(G,p)=\{(a,\ldots,a)\in \bR^{d|V(G)|}:
a\in \bR^d\}\] 
and since $(G,p)$ is minimally infinitesimally rigid it follows that
\[|E(G)|=\rank df_G(p)=d|V(G)|-d\] 
Define  \[F:\bR^{d|V(G)|}\to \bR^{d|V(G)|}, \,\,\,\,\,\, x=(x_v)_{v\in V(G)}\mapsto (f_G(x)-f_G(p), \, x_{v_0}-p_{v_0})\]
Then $dF(p)$ is injective and so 
$F(x)$ is injective on a neighbourhood $U$ of $p$.
Note that $F(\tilde{\alpha}(t))=F(p)=0$ and so 
$\tilde{\alpha}(t)=p$ for all $t\in \tilde{\alpha}^{-1}(U)$.
Now $\alpha_v(t)=p_v+(\alpha_{v_0}(t)-p_{v_0})$  for all $t\in \tilde{\alpha}^{-1}(U)$.
Hence by Lemma \ref{ContinuousRigidMotions}, $\{\alpha_v:v\in V(G)\}$ is a trivial continuous flex of $(G,p)$.
We conclude that $(G,p)$ is continuously rigid.
More generally, if $(G,p)$ is infinitesimally rigid then $G$ contains a vertex-complete subgraph $H$ such that $(H,p)$ is minimally infinitesimally rigid.
By the above argument $(H,p)$ is continuously rigid and so $(G,p)$ is also continuously rigid.

\endproof

From this equivalence and our remarks following Theorem \ref{genericCauchy} it follows  that
a generic convex simplicial polyhedron $\P$ is continuously flexible, as a bar-joint framework, in any non-Euclidean spaces. Also $\P$ becomes continuously rigid following the addition of three non-incident internal bars. In analogy with Example \ref{f:non-Euclideanflexes}, for $1<q<\infty$, $q\neq 2$, any connected component of placements of $\P$ with a vertex fixed at the origin is a $3$-dimensional embedded manifold in $\bR^{3|V(G)|}$.

\subsection{Nonequivalence for infinite frameworks.} 
That a countable  infinitesimally flexible framework  can be continuously rigid in a generic realisation can be seen  by exploiting the triangle inequality to arrange that a vertex complete inclusion chain of finite subframeworks has diminishing flexibility, tending to zero. We see polytope examples of this below. On the other hand
the following example shows that a continuously flexible regular framework can be infinitesimally rigid. 

\begin{eg}\label{InfRigCtsFlx}{\bf Infinitesimally rigid and 
continuously flexible.} Consider  the Euclidean planar framework $(G,p)$ suggested by Figure \ref{f:InfRig}.
\begin{center}
\begin{figure}[ht]
\centering
\includegraphics[width=7cm]{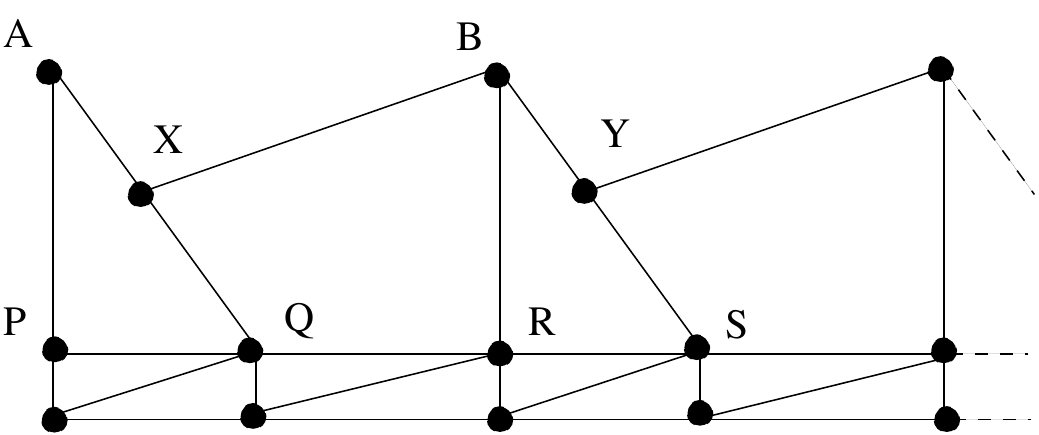}
\caption{Infinitesimally rigid and continuously flexible.}
\label{f:InfRig}
\end{figure}
\end{center}
The triples of points $\{A,X,Q\}, \{B,Y,S\}, \dots $ are  collinear and the placement is  periodic to the right. 
Suppose, by way of contradiction, that $u$ is a nonzero infinitesimal flex 
of  which assigns zero velocities to the
the vertices lying on and below the line through PQRS, with velocities $u_X, u_Y, u_Z, \dots $ for the vertices $X,Y,Z, 
\dots $.  One of these velocities must be nonzero and without 
loss of generality we may assume  $u_X\neq 0$. Thus $u_B\neq 0$ and since $B,Y,S$ are collinear $u_B$ is in the direction of the 
positive $x$-axis. However, $u_S=0$ and $B,Y,S$ are collinear and so 
this is a contradiction since there is no choice of finite velocity $u_Y$ to satisfy the flex conditions for the edges $BY$ and $YS$. 

To see that the framework can be continuously flexible  assume that  $|XQ| < |QR|$ and  $|QB| >|XB| >|RB|$. Consider the 
finite subframework,
$(G_1,\pi)$ say, supported by the labelled vertices and the four vertices below $P,Q,R,S$.
For this subframework, consider the joints $P,Q,R,S$ as fixed. As $AP$ 
rotates
through a clockwise angle $t>0$ the bar $XQ$ rotates clockwise 
achieving a horizontal position for the angle $t=t_1$ say. 
The induced angular position $\theta(t)$ of $BR$  
first achieves a local maximum, $\theta_{max}$, when $QX$ and $XB$ are 
collinear, and then retreats through positive values to a final value 
$\theta_{fin}=\theta(t_1)$. In view of the geometry 
$\theta_{fin} >0$.  Thus the angular range of the continuous function 
$\theta$ is included in the angular range of its argument $t$.
It follows, by iteration of this inclusion principle, that 
the resulting continuous flex $\pi(t)$ of $(G_1,\pi)$, with flex parameter  $t= 
t_1$, is extendible, uniquely, to a continuous flex $p(t)$ of any finite strip and indeed of the entire 
infinite framework. 

One might say, colloquially, that the paradox of this example arises because
we do not admit infinitesimal flexes  with infinite magnitudes while we do
admit continuous flexes $p(t)$ for which $t \to p_k(t)$ may fail to be differentiable at $t=0$ for some joint $p_k$. 
\end{eg}

\subsection{Infinitesimal rigidity with respect to continuous velocity fields}
\label{velocityfield}
There can be placements of countable graphs, including generic placements, which while failing to be infinitesimally rigid are nevertheless infinitesimally rigid with respect to well-behaved or admissible velocity fields. In fact this is already a feature in the rigidity theory of  periodic graphs and crystallographic frameworks where periodicity of some form is required, leading  to properties such as  periodic rigidity, affinely periodic rigidity and phase-periodic rigidity. 

The next definition is based on the requirement that there is a continuous variation of velocity vectors over the framework.
One might say that such frameworks are continuously infinitesimally rigid but for clarity we refrain from doing so.
For the remainder of this section we   consider only placements in Euclidean spaces.

\begin{defn}
Let $(G,p)$ be a countable bar-joint framework in the Euclidean space $\bR^d$
for the placement $p:V(G)\to \bR^d$ and let $\U_c(G,p)$ be the vector space of velocity fields  
$u : V(G) \to \bR^d$ which are continuous for the  topology induced by  $p^{-1}$. 
Then $(G,p)$ is infinitesimally rigid for continuous velocity fields
if
\[
\F(G,p)\cap \U_c(G,p) =\F(G,p)\cap \U_c(G,p)\cap \T(G,p)
\]
\end{defn}

The infinite strip graph $G$ in Figure \ref{strip}, which in fact is the structure graph for  Example \ref{InfRigCtsFlx},
serves to illustrate that the nature of infinitesimal flexes varies considerably according to the geometry of the placement.

\begin{center}
\begin{figure}[ht]
\centering
\includegraphics[width=4.5cm]{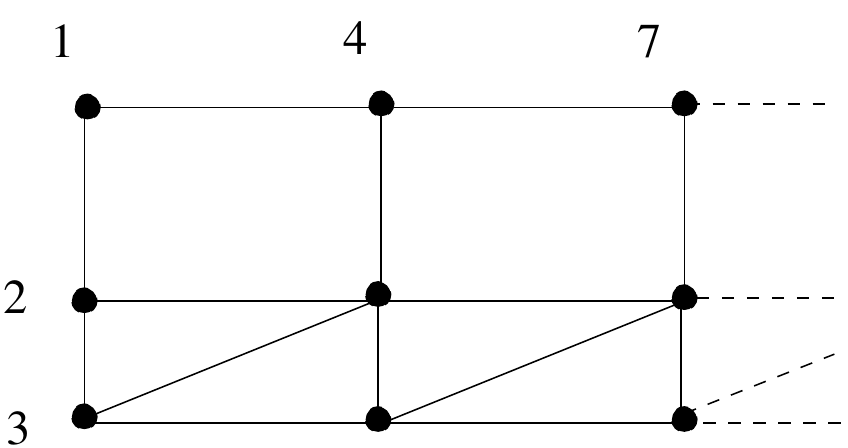}
\caption{The strip graph $G$.}
\label{strip}
\end{figure}
\end{center}

Consider for example the radial placement in which the vertical edges of Figure \ref{strip} are placed on the lines $x=\frac{1}{2^k}$, for $k=1,2,\dots$ and the vertices are placed on the three radial lines $y=-x$, $y=0$ and $y=x$ in the left half-plane. The origin is the unique accumulation  point of the vertex placements.
Then there is  an infinitesimal flex $u = (u_k)$ with $u_k = (1,0)$ for $k=1, 4, 7, \dots $ and $u_k=(0,0)$ otherwise. Any infinitesimal flex in $\F(G,p)$ is a linear combination $\lambda u +\mu w$ with $w $ in $\T(G,p)$ and so it follows that the framework is infinitesimally rigid for continuous velocity fields.

In fact one can construct a  placement $p=(p_k)$ which admits a flex for which the velocities $u_1,u_4,u_7,\dots $  realise an arbitrary sequence of speeds. That is, given positive speeds $s_1, s_4, s_7, \dots $ there is a placement  and an infinitesimal flex $u$ with
$\|u_k\|_2=s_k$ for $k=1, 4, 7, \dots $ and $u_k=(0,0)$ otherwise.

The next example shows how the rigidity matrix can  be used to 
determine infinitesimal rigidity for continuous velocity fields. 
 
\begin{eg}{\bf Whirlpool frameworks.} Let $G$ be the countable graph suggested by Figure \ref{whirlpool}. We define a \emph{whirlpool framework} to be one that arises from
a planar placement of $G$ with a single (finite) accumulation point for the (distinct) vertex placements. Note that the addition of any edge to $G$ gives a countable minimally $2$-rigid  graph and so $\dim_{\rm fl}(G) = 1$. 
If $(G,p)$ is a generic whirlpool framework then $\dim_{\rm fl}(G,p) = 1$ and
every nontrivial infinitesimal flex is uniquely associated with a nontrivial flex of the outer rectangular subframework. Such flexes  $u=(u_k)$ are determined by the rigidity matrix $R_2(G,p)$ and this calculation can be made explicit in the presence of symmetry. We now show this for the  affinely periodic placement indicated in Figure \ref{whirlpool}.
\begin{center}
\begin{figure}[ht]
\centering
\includegraphics[width=4.5cm]{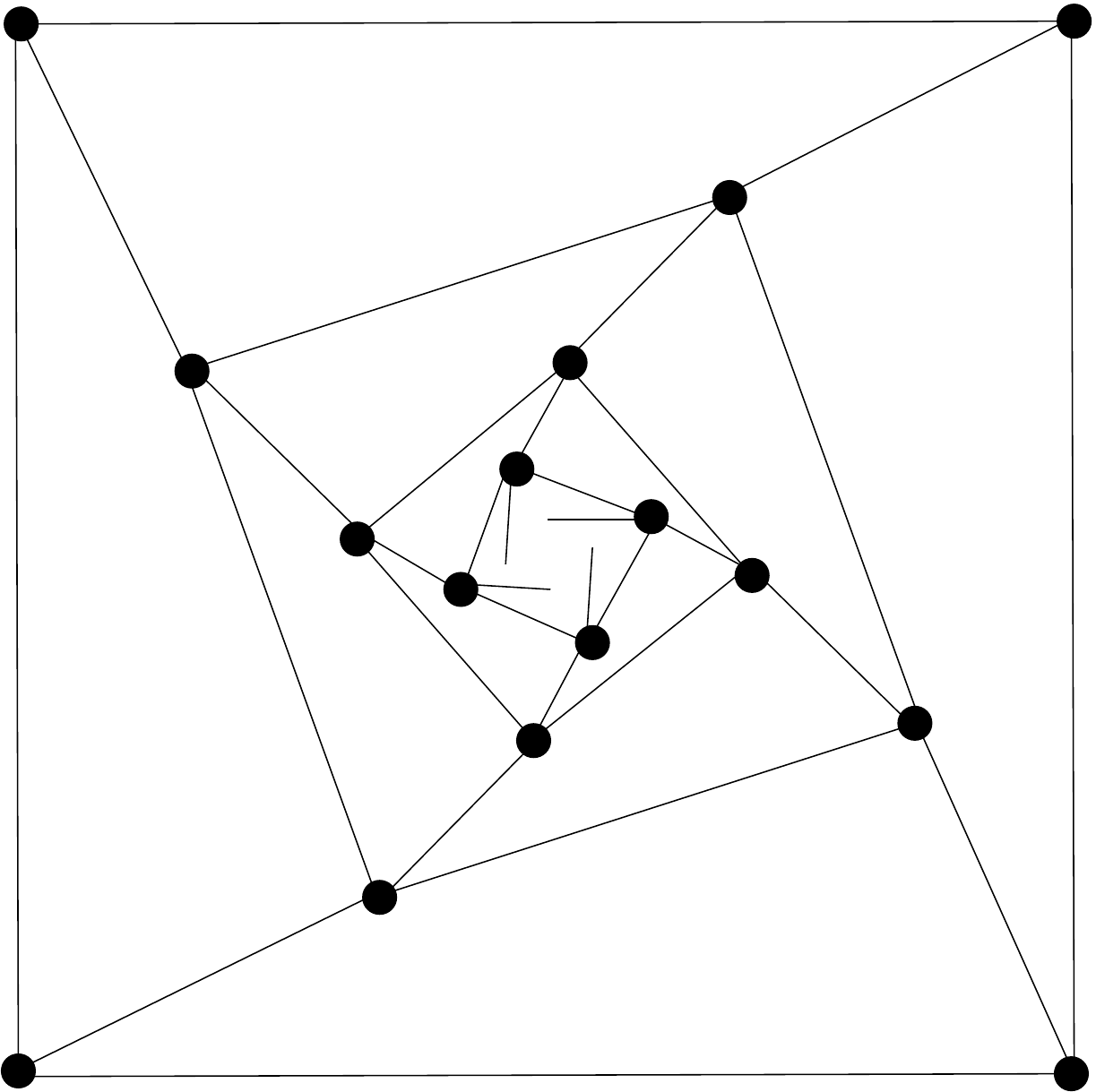}
\caption{An affinely periodic whirlpool framework.}
\label{whirlpool}
\end{figure}
\end{center}

We have
\[
p_1=(3,3), \,\,\,\,\, p_2=(-3,3), \,\,\,\,\,p_3=(-3,-3), \,\,\,\,\,p_4=(3,-3)\]
\[p_5=(1,2),\,\,\,\,\,p_6=(-2,1), \,\,\,\,\,p_7= (-1,-2), \,\,\,\,\,p_8=(2,-1) \]
and successive placements satisfy $p_{k+4}= Up_k$, with
$
U=\frac{1}{3}\left[ \begin {array}{cc} 1&2\\
2&1
\end {array} \right].
$
Here the $p_k$ are viewed as their column vectors.

The finite framework $(G_1,(p_1,\dots ,p_8))$ supported by the first eight vertices has a finite structure graph with edges $e_1,e_2,\dots , e_{12},$ for the pairs \[(12),\,(23),\,(34),\,(41),\,(56),\,(67),\,(78),\,(85),\,(15),\,(26),\,(37),\,(48)\]  The rigidity matrix $R=R(G,(p_1,\dots ,p_8))$ has the form
\[
R=\left[ \begin {array}{cc} R_1&0\\
0&R_2\\
X&-X\end {array} \right] 
\]
where
\[
R_1 = \left[\small{ \begin {array}{cccccccc} 6&0&-6&0&0&0&0&0\\ \noalign{\medskip}0
&0&0&6&0&-6&0&0\\ \noalign{\medskip}0&0&0&0&-6&0&6&0
\\ \noalign{\medskip}0&6&0&0&0&0&0&-6\end {array} }\right]\]
and \[R_2= 
 \left[\small{ \begin {array}{cccccccc} 3&1&-3&-1&0&0&0&0
\\ \noalign{\medskip}0&0&-1&3&1&-3&0&0\\ \noalign{\medskip}0&0&0&0&-3&
-1&3&1\\ \noalign{\medskip}-1&3&0&0&0&0&1&-3\end {array} }\right]
\]
are the rigidity matrices for the outer and inner square frameworks, and 
\[
X= \left[ \small{\begin {array}{cccccccc} 2&1&0&0&0&0&0&0\\ \noalign{\medskip}0
&0&-1&2&0&0&0&0\\ \noalign{\medskip}0&0&0&0&-2&-1&0&0
\\ \noalign{\medskip}0&0&0&0&0&0&1&-2\end {array} }\right] 
\]
Note that $a=((1,1),(1,-1),(-1,-1),(-1,1))$ is an infinitesimal flex of
the outer square, with half turn symmetry. 
Let $b\in \bR^8$ so that $u=(a,b)$ is a velocity vector for the $8$-vertex subframework.
Solving the linear system  $Ru^t=0$ for $b$ leads to an infinitesimal flex and taking $b$ with half-turn symmetry reduces this calculation to one with  $4$ rather than $8$ indeterminates. We have
\[
b = ((3/4,3/2),\,(3/2,-3/4),\,(-3/4,-3/2),\,(-3/2,3/4) 
\]
Also this infinitesimal flex of the inner square has the same form as that for the outer square in that its velocities are directed
along the diagonal directions of the square. In particular, the infinitesimal flex $u=(a,b,\dots )$ of the infinite whirlpool framework is now determined by the affine symmetry.
Since $\|(3/2,3/4)\|_2/\|(1,1)\|_2 = \sqrt{45/32} >1$ we have  $u=(u_k)$ with $\|u_k\| \to \infty $ as $k\to \infty$. Since the framework is generic with flexibility dimension $\dim_{\rm fl}(G,p)=1$ it follows that this divergence property holds for any nontrivial infinitesimal flex. Thus $(G,p)$ is infinitesimally  rigid for continuous velocity fields.

If a continuous flex of a finite  bar-joint framework
is differentiable then the derivative at $t=0$ is an infinitesimal flex. The same assertion therefore holds for infinite frameworks.
Thus we conclude immediately that  the whirlpool framework  does not admit a continuous flex that is smooth in the sense of being the restriction of a differentiable velocity field on $\bR^2$.  In fact one can show  that $(G,p)$ is continuously rigid.

Note that a whirlpool placement $(G,q)$ which is fully symmetric, with symmetry group of the square, has $\dim_{\rm fl}(G,p)=\infty$ since each square $4$-cycle subframework supports an infinitesimal rotation flex. In this case $(G,q)$  is not infinitesimally rigid for continuous velocity fields.
\end{eg}

\subsection{Continuous rigidity with respect to regular motions.}
The continuous motion of a translationally periodic strip framework can be quite chaotic. One way to make this precise is first to recall a theorem of Kempe \cite{kem} to the effect that
an arbitrary algebraic finite curve in the plane can be "drawn" by a finite mechanism. That is, the curve agrees with the trace of a joint in a (standardized) flex of uniquely continuously flexible finite bar-joint framework. In particular one can realise a chaotic logistic map $t \to \eta(t) = at(1-t)$ by a mechanism $M$ where $t$ and $\eta(t)$ are angle changes. Figure \ref{f:periodicmechanism} indicates a periodic mechanism built from such a finite mechanism in which the angular deviation $t$ from the vertical position at $A$ induces an angular deviation $\eta(t)$ at $B$.
\begin{center}
\begin{figure}[ht]
\centering
\includegraphics[width=7cm]{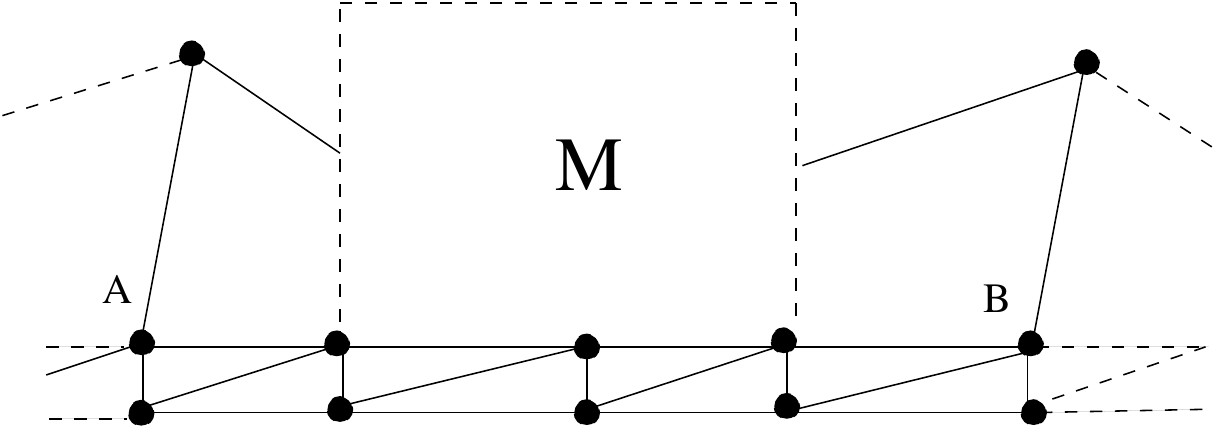}
\caption{Periodic mechanism framework.}
\label{f:periodicmechanism}
\end{figure}
\end{center}
The deviation induced by the increase of $t$ from $0$ to $t_0$ at the $n^{th}$ repeat, is
$\eta \circ \dots \circ \eta(t_0)$ ($n$ compositions).
It follows in particular that the sequence of speeds  of the placed vertices at any positive time is an unbounded sequence.

The same unbounded speeds property holds for the two-way infinite version of the strip framework of 
Figure \ref{f:InfRig}. However this framework does not exhibit chaotic continuous motion. In fact there are  two translationally periodic positions that may occur in any continuous flex (which fixes the base framework) namely the initial placement (given in Figure \ref{f:InfRig}) and  the periodic position in which all inclination angles
are at the fixed point value for the propagation function $\theta$.
For all other positions the inclination angles tend to $0$  to the left, while to the right the inclination angles tend towards the positive fixed point angle in an oscillatory fashion.

It follows that these frameworks can be regarded as being continuously rigid with respect to regular motions in the sense that the vertex speeds are uniformly bounded at each time instant. 

We also remark that there is  a countable bar-joint framework variant of Kempe's theorem for  the trace realisation of any finite continuous curve  $t \to \eta(t)$ (Owen and Power \cite{owe-pow-kempe}). In these realisations there are a small number of vertices of countable degree. It follows that with an appropriate infinite black box mechanism one may in principle construct countable graphs with periodic  placements with
diverse chaotic  motion. It also follows from such elaborate constructions that there exist countable graphs with generic placements which are mechanisms (uniquely continuously flexible frameworks) whose motion is nowhere differentiable.

\subsection{Continuously rigid infinite polytopes.}\label{ContinuousRigidity}
We now construct  continuously rigid  placements of simplicial graphs of finite connectivity in the Euclidean space $\bR^3$.

For a general dimension $d$ and sufficiently large finite graph $G$ with placement $(G,p)$    write $\hat{V}(G,p)$ for the normalised variety of equivalent frameworks $(G,q)$ with the  property that a specific set of $d(d+1)/2$
coordinates of $p$ agree with the corresponding set for $q$. This might be considered as the variety of standard placements equivalent to  $p$. Thus, for generic $p$ the set $\hat{V}(G,p)$ is finite if and only if $G$ is $d$-rigid. Also, for 
a sufficiently small open ball $B(p,\delta)$ the intersection
$
\hat{V}(G,p)^\delta = \hat{V}(G,p)\cap B(p,\delta)
$
is a manifold. 

It is also convenient to write  $\hat{V}_0(G,p)$ for the connected component
in  $\hat{V}(G,p)$ which contains $p$. Thus $\hat{V}_0(G,p)=\{p\}$ if and only if the framework is infinitesimally rigid, and otherwise there are continuous flexes taking values in  $\hat{V}_0(G,p)$.

We also write $\hat{V}(G,p)$ and $\hat{V}_0(G,p)$ for the corresponding subsets of the configuration space when $G$ is infinite.

\begin{lem}\label{l:diameter}
Let  $\{G_k:k\in\bN\}$ be an edge-complete tower  for the countable simple graph $G$, and 
for $k>j$ let  $\pi_{j,k}: \bR^{d|V(G_k)|}\to \bR^{d|V(G_j)|}$ be the canonical linear projection.
Suppose that  for sufficiently small $\delta$ the diameter of $\pi_{j,k}(\hat{V}_0(G_k,p)^\delta)$
tends to zero as $k\to \infty$ for each $j\in\bN$.
Then $(G,p)$ is continuously rigid.
\end{lem}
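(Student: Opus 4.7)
The plan is to show that every continuous flex $\alpha$ of $(G,p)$ must be trivial. First I would normalise $\alpha$ using a continuous family of rigid motions $\Gamma_t$ of $\bR^d$, obtaining the equivalent continuous flex $\tilde\alpha_v(t)=\Gamma_t^{-1}\alpha_v(t)$ whose distinguished $d(d+1)/2$ coordinates agree with those of $p$ at every $t$; then $\tilde\alpha(t)\in\hat V(G,p)$ and $\tilde\alpha(0)=p$. Establishing $\tilde\alpha\equiv p$ on $[-1,1]$ forces $\alpha_v(t)=\Gamma_t(p_v)$, a trivial flex. For each $k$ the restriction $\pi_k\tilde\alpha\colon[-1,1]\to \bR^{d|V(G_k)|}$ is continuous, starts at $\pi_kp$, and by path-connectedness stays inside the path component $\hat V_0(G_k,p)$. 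The edge-completeness of the tower ensures every non-isolated vertex of $G$ lies in some $V(G_j)$, so it suffices to prove $\pi_j\tilde\alpha\equiv \pi_jp$ for each $j\in\bN$.

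Fix $j$. The key coupling is that whenever $\pi_k\tilde\alpha(t)\in B(\pi_kp,\delta)$ for some $k>j$, the containment $\pi_k\tilde\alpha(t)\in\hat V_0(G_k,p)^\delta$ gives
\[
\|\pi_j\tilde\alpha(t)-\pi_jp\|=\|\pi_{j,k}\pi_k\tilde\alpha(t)-\pi_{j,k}\pi_kp\|\le\mathrm{diam}\bigl(\pi_{j,k}(\hat V_0(G_k,p)^\delta)\bigr),
\]
which by hypothesis tends to $0$ as $k\to\infty$. Continuity of $\pi_k\tilde\alpha$ at $0$ furnishes a time $\tau_k>0$ with $\pi_k\tilde\alpha([-\tau_k,\tau_k])\subseteq B(\pi_kp,\delta)$, so on $[-\tau_k,\tau_k]$ one has $\|\pi_j\tilde\alpha(t)-\pi_jp\|\le\epsilon_k$ with $\epsilon_k\to 0$. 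If $\tau^*=\liminf_k\tau_k>0$, then for any $|t|<\tau^*$ I select $k$ along a subsequence with $\tau_k>|t|$ and let $k\to\infty$ to obtain $\pi_j\tilde\alpha(t)=\pi_jp$ exactly, yielding local triviality about $0$. A routine open-closed continuation on $[-1,1]$, using closedness of the set where $\pi_j\tilde\alpha=\pi_jp$ and time-translating the flex at any boundary point to re-invoke the same local argument, then propagates this to the whole interval.

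The hard part is to exclude the alternative $\tau_k\to 0$, which is a priori possible because $V(G_k)$ enlarges with $k$ while pointwise continuity of the individual $\tilde\alpha_v$ gives no uniform control across the growing vertex set. My approach to this step is to apply the intermediate value theorem to the continuous map $s\mapsto \|\pi_k\tilde\alpha(s)-\pi_kp\|$: at a first exit time $s_k$ of $\pi_k\tilde\alpha$ from $B(\pi_kp,\delta)$ the path still lies in the closed $\delta$-neighbourhood, so the diameter estimate extends up to $s=s_k$, giving $\|\pi_j\tilde\alpha(s_k)-\pi_jp\|\le\epsilon_k\to 0$. A subsequential limit in $[0,1]$ of the $s_k$ together with the continuity of $\pi_j\tilde\alpha$ should then force $\pi_j\tilde\alpha$ to equal $\pi_jp$ on an interval $[0,s^*]$, and combining this with the open-closed step completes the argument. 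The delicate point is that one must simultaneously control the rate at which the diameter $\epsilon_k$ shrinks and the rate at which the exit times $s_k$ decrease, and it is this interplay between the two limits that constitutes the main technical hurdle of the proof.
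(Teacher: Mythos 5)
Your overall strategy is the one the paper uses -- normalise the flex into the standardised configuration space, observe that its restriction to the $G_j$-coordinates is trapped in a projection of the configuration component of $G_k$ for every $k>j$, and let the shrinking diameters force constancy -- but your execution introduces a time-localisation that creates exactly the gap you flag at the end, and that gap is genuine. Because you only invoke the diameter hypothesis on $\hat{V}_0(G_k,p)^\delta=\hat{V}_0(G_k,p)\cap B(p,\delta)$, you need the path $\pi_k\tilde\alpha$ to remain inside $B(\pi_k p,\delta)$ on a time interval that does not collapse as $k\to\infty$. Nothing in your argument prevents the exit times $\tau_k$ (or $s_k$) from decreasing to $0$: the coordinates indexed by $V(G_k)\setminus V(G_j)$ proliferate with $k$, and a deviation of size $\delta$ concentrated on newly added vertices at ever earlier times is compatible both with continuity of each individual $\tilde\alpha_v$ and with every set $\pi_{j,k}(\hat{V}_0(G_k,p)^\delta)$ having small diameter (Example \ref{InfRigCtsFlx} shows that continuous flexes of infinite frameworks really can have unbounded vertex speeds). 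In that regime your estimate $\|\pi_j\tilde\alpha(t)-\pi_j p\|\le\epsilon_{k}$ is only available for $k$ bounded in terms of the fixed time $t>0$, so you obtain no equality at any positive time -- not even local triviality near $t=0$ -- and the subsequential-limit/IVT device degenerates to the vacuous statement $\pi_j\tilde\alpha(0)=\pi_j p$. As written, the proposal therefore does not prove the lemma; the ``main technical hurdle'' you name is a missing step, not a detail.

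The paper's (very terse) proof sidesteps this by never localising in time: the whole path $\pi_k\tilde\alpha([-1,1])$ is a connected subset of $\hat{V}(G_k,p)$ containing $\pi_k p$, hence lies in the full component $\hat{V}_0(G_k,p)$, so the \emph{entire range} of $\pi_j\tilde\alpha$ is contained in $\pi_{j,k}(\hat{V}_0(G_k,p))$ for every $k>j$; once these projections have diameters tending to zero, each restriction $\pi_j\tilde\alpha$ is constant outright and no exit times or open--closed continuation are needed. The price is that the diameter hypothesis is then really being applied to the unlocalised component (in the intended application, Theorem \ref{infinitepolytoperigid}, the placements are built so that the relevant components are already small, so $\hat{V}_0(G_k,p)$ and $\hat{V}_0(G_k,p)^\delta$ effectively coincide). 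To repair your version you should either argue with the full component as the paper does, or supply an additional argument pinning the normalised flex inside the $\delta$-neighbourhood on a time interval independent of $k$; the latter does not follow from the stated hypotheses alone.
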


\begin{proof}
If $(G,p)$ is continuously flexible then there is a continuous flex taking values in $\hat{V}_0(G,p)$ which is not constant on $(G_j,p)$ for some index $j$. Moreover the sets $\pi_{j,k}(\hat{V}_0(G_k,p))$, for all $k>j$
contain the range of the  restriction of the flex to $(G_j,p)$. In view of the assumptions this is a contradiction for sufficiently large $k$.
\end{proof}

The next lemma concerns  the approximate rigidity of 
what might be referred to as the {\em slack placements} of a finite graph.
In such placements a number of edge incidence conditions are relaxed to proximity conditions.
In the case of
incidence relaxation at a single  vertex $v_1$ the vertex $v_1$ is removed but not the edges incident to it.
The incidence relation is recorded in terms of a reconnection map $\pi$ and this map also  features in placements that satisfy an approximate incidence condition.

To be more 
precise, let $G=(V,E)$ be a finite simple graph with $V=\{v_1,\dots ,v_n\}$ and suppose that
 $v_1$ has degree $m-1$ and incident edges $v_1v_2,\dots, v_1v_m$.
Let $\tilde{G}=(\tilde{V}, \tilde{E})$ where 
\[
\tilde{V} = (V\backslash \{v_1\}) \cup \{w_2,\dots ,w_m\}\]
\[\tilde{E} =
 (E\backslash \{v_1v_2,\dots ,v_1v_m\}) \cup \{w_2v_2,\dots ,w_mv_m\}
\]
so that  $\tilde{G}$ arises by disconnecting the edges of $G$ at $v_1$.
The reconnection map is the map $\pi : \tilde{V} \to {V}$ with  $\pi :w_i \to v_1$ for $2\leq i\leq m$. 

More generally the disconnection graph
$\tilde{G}$ may be defined over a finite subset, $V_2$ say, where $V=V_1\cup V_2$, by repeating of the disconnection operation. 
The associated reconnection map $\pi: \tilde{V} \to V$   is a surjection which induces a bijection $\tilde{E} \to E$.
With slight abuse of notation we may let $V_1$ denote the subset in $\tilde{V}$
corresponding to $V_1$,
so that $\tilde{V}=V_1\cup \pi^{-1}(V_2)$.

If
$p:V \to \bR^d$ is a placement for a bar-joint framework $(G,p)$  then there is a (non-injective) placement 
$\tilde{p}:V \to \bR^d$
which we call the derived placement, for which $\tilde{p}_v=p_v$ for $v\in V_1$ and  $\tilde{p}_w=p_{\pi(w)}$ for $ w\in \pi^{-1}(V_2)$.

\begin{lem}\label{l:Wdelta}
Let $G=(V,E)$ be a finite simple graph with $V=V_1\cup V_2$, let $p:V(G)\to \bR^d$ be a rigid, not necessarily  generic, placement and assume that $\hat{V}_0(G,p)$ is the standardised configuration space for a choice of $d(d+1)/2$
coordinates of the points $p_v$ with $v\in V_1$. Let $(\tilde{G},\tilde{p})$ be the derived placement for the disconnection graph of $G$ over $V_2$ and for $\delta>0$ let $W_\delta$ be the set of normalised $\delta$-placements of $\tilde{G}$ associated with reconnection map $\pi$;
\[
W_\delta=\{\tilde{q}\in 
\hat{V}(\tilde{G},\tilde{p}):\|\tilde{q}_w-\tilde{q}_{w'}\|\leq \delta \mbox{ for } w,w'\in V_2, \pi(w)=\pi(w')\}
\]
Then for $\epsilon >0$ there exists $\delta >0$ such that
\[
W_\delta \subseteq B(\tilde{p},\epsilon)
\]
\end{lem}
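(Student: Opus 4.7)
The plan is a compactness-and-contradiction argument, with the rigidity of $(G,p)$ supplying the final contradiction. Assume for contradiction that there exist $\epsilon > 0$ and a sequence $\tilde{q}_n \in W_{1/n}$ with $\|\tilde{q}_n - \tilde{p}\| \geq \epsilon$ for all $n \in \bN$.

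First I would extract a limit point of $\{\tilde{q}_n\}$. The defining constraints of $W_{1/n}$---the edge-length equalities of $\tilde{G}$, the normalisation fixing $d(d+1)/2$ coordinates of the selected $V_1$-vertices, and the coupling $\|\tilde{q}_{n,w} - \tilde{q}_{n,w'}\| \leq 1/n$ on fibres of $\pi$---combine with the connectivity of $G$ (implied by rigidity for $d\geq 2$) to bound $\{\tilde{q}_n\}$ uniformly: starting from the normalised coordinates one reaches every vertex of $\tilde{G}$ in a bounded walk that alternates between $\tilde{G}$-edges and hops across $1/n$-close $\pi$-fibres. By Bolzano--Weierstrass, pass to a subsequence $\tilde{q}_{n_k} \to \tilde{q}^*$. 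Continuity of the constraints yields $\tilde{q}^* \in \hat{V}(\tilde{G}, \tilde{p})$ with $\tilde{q}^*_w = \tilde{q}^*_{w'}$ whenever $\pi(w) = \pi(w')$, so $\tilde{q}^*$ reconnects to a placement $q^* \in \hat{V}(G,p)$ sharing the normalisation of $p$. The hypothesis $\|\tilde{q}^* - \tilde{p}\| \geq \epsilon$ forces $q^* \neq p$.

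The final step, and the main obstacle, is to derive a contradiction from the isolation of $p$ in $\hat{V}_0(G,p)=\{p\}$ supplied by the rigidity of $(G,p)$. One must show that $q^*$ lies in the connected component of $p$ in $\hat{V}(G,p)$, equivalently that $\tilde{q}^*$ lies in the connected component of $\tilde{p}$ in $\hat{V}(\tilde{G}, \tilde{p})$. For this I would invoke the $C^1$-manifold structure near $\tilde{p}$ guaranteed by Lemma \ref{t:rigiditythm1}, and for each sufficiently large $k$ construct a continuous path in $\hat{V}(\tilde{G},\tilde{p})$ from $\tilde{q}_{n_k}$ to $\tilde{p}$ by continuously shrinking the $1/n_k$-gaps between copies of $V_2$-vertices while deforming along the manifold. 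Passing to the limit places $\tilde{q}^*$ in the component of $\tilde{p}$, whence $q^* \in \hat{V}_0(G,p) = \{p\}$, contradicting $q^* \neq p$. The delicate point is constructing these closing-up paths uniformly, so that they have limits as $k \to \infty$; this is where the normalisation choice in $V_1$ and the transversality of the coupling constraints to the tangent space of $\hat{V}(\tilde{G},\tilde{p})$ at $\tilde{p}$ will be essential.
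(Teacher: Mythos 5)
Your overall skeleton---$W_\delta$ nested and compact, so it suffices to show $\bigcap_{\delta>0}W_\delta=\{\tilde p\}$---is exactly the paper's (one-line) proof, and your boundedness argument via connectivity, fixed edge lengths, the $\delta$-small fibre hops and the pinned normalisation coordinates is sound. The gap is in your final step. The intersection $\bigcap_{\delta>0}W_\delta$ is in bijection with the \emph{whole} normalised fibre $\hat{V}(G,p)$, not with the component $\hat{V}_0(G,p)$, so to finish you must show that $p$ is the only normalised placement equivalent to $(G,p)$. Your proposed route---join $\tilde q_{n_k}$ to $\tilde p$ by paths in $\hat{V}(\tilde G,\tilde p)$ that shrink the fibre gaps, then pass to the limit to conclude $q^*\in\hat{V}_0(G,p)$---does not deliver this. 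Points along such a path live in the configuration space of the \emph{disconnected} graph $\tilde G$ and are free to violate the incidence constraints of $G$; two points of $\hat{V}(G,p)$ lying in different components of $\hat{V}(G,p)$ can perfectly well be joined by a path in the much larger, highly flexible space $\hat{V}(\tilde G,\tilde p)$. So even granting the (unproved) uniform convergence of your closing-up paths, you would only learn that $\tilde q^*$ and $\tilde p$ lie in the same component of $\hat{V}(\tilde G,\tilde p)$---which is essentially automatic and carries no information---not that the reconnected point $q^*$ lies in $\hat{V}_0(G,p)$. A path argument that would actually invoke $\hat{V}_0(G,p)=\{p\}$ must stay inside the reconnected space $\hat{V}(G,p)$ itself, and shrinking gaps produces no such path.

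What is genuinely needed, and what the paper's proof silently asserts when it states that the nested compact sets intersect in the singleton $\{\tilde p\}$, is uniqueness of the normalised equivalent placement, i.e. $\hat{V}(G,p)=\{p\}$; this is a global statement that does not follow from $\hat{V}_0(G,p)=\{p\}$ alone (a rigid generic framework typically has several non-congruent realisations with the same edge lengths, each giving a point of $\bigcap_\delta W_\delta$ at positive distance from $\tilde p$, which would defeat the conclusion for small $\epsilon$). In the paper's application the guide frameworks are convex simplicial polytope placements, where this uniqueness is available, so the lemma is used legitimately there; but your argument as written cannot close without either importing that global uniqueness as a hypothesis or restricting to such placements. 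You correctly identified the crux---credit for that---but the mechanism you propose to resolve it is not valid.
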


\begin{proof}It suffices to note
that the sets $W_\delta$ are compact and nested, and that their intersection is the singleton set $\{\tilde{p}\}$.
\end{proof}

Lemmas \ref{l:diameter} and \ref{l:Wdelta} may be used in a number of ways to  construct  continuously rigid placements for various countable graphs which are obtained by the refinement of $3$-rigid finite graphs.  
In particular, let $\P$ be a simplicial polytope in $\bR^3$ with $\kappa$ vertices and let $G$ be an associated countable simplicial graph $G$ of topological connectivity $\kappa$ arising from a triangulation of the faces. Then one can construct a continuous convex placement $(G,p)$ with polytope body $\P(G,p)=\P$ which is continuously rigid.
We omit the proof since it is a straightforward variant of the proof of
Theorem \ref{infinitepolytoperigid}.

Let us say that a continuous semi-convex placement $(G,p)$ of a countable
simplicial graph $G$ of connectivity $\kappa\geq 1$ is a \emph{flat placement} if the compact infinite polytope $\P(G,p)$ has well-defined tangent planes at the $\kappa$
points of accumulation of the vertex placements. Flatness is of interest as a  property for strictly convex
infinitely-faceted polytopes which is not present for finite simplicial polytopes. 
The next theorem shows that flatness is not in itself a handicap to continuous rigidity. 

\begin{thm}\label{infinitepolytoperigid}
Let $G$ be a simplicial graph of refinement type $\rho$ and finite connectivity $\kappa\geq 1$. Then there is a  convex flat continuous placement $(G,p)$ in $\bR^3$ which is continuously rigid. 
\end{thm}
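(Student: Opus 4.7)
The plan is to construct a specific flat convex placement in which the non-triangular face cycles of every finite approximation become arbitrarily small, and then to rule out continuous flex via Lemmas \ref{l:diameter} and \ref{l:Wdelta}.

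To construct the placement, I would fix an edge-complete tower $G_1\subseteq G_2 \subseteq \cdots$ of finite simplicial subgraphs of $G$, drawn from the planar representation as a triangulation of $S^2\backslash F$, so that the non-triangular faces of each $G_k$ are a finite collection of cycles $C_{k,1},C_{k,2},\ldots$ enclosing the accumulation points in $F$. I would place $V(G)$ on the boundary of a strictly convex compact body $\P_0\subset \bR^3$ whose boundary possesses supporting tangent planes at each of the $\rho$ accumulation points $a_1,\ldots,a_\rho\in F$, so that the resulting placement is automatically flat. By adjusting the refinement of the triangulation approaching each accumulation point, I would arrange that the placement diameters $\delta_{k,i}=\mathrm{diam}(p(V(C_{k,i})))$ all tend to zero as $k\to\infty$; the existence of tangent planes at the accumulation points is precisely what permits this decay.

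Next I would augment each $G_k$ to a finite simplicial polytope graph $G_k^+$ by inserting triangulating chord diagonals inside each cycle $C_{k,i}$. Since $p$ is strictly convex this yields a convex simplicial polyhedron framework of connectivity zero, and Cauchy's rigidity theorem (Theorem \ref{t:GenericCauchy}, whose argument applies to any strictly convex simplicial placement) ensures that $(G_k^+,p)$ is infinitesimally, hence continuously, rigid. By rigidity of $G_k^+$, the configuration space $\hat{V}_0(G_k,p)$ is parameterised by the tuple of added-diagonal lengths $\vec{\ell}(q)=(\ell_e(q))_{e\in E(G_k^+)\setminus E(G_k)}$: each admissible tuple determines the placement of $G_k$ (in the component of $p$) up to Euclidean motion. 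Since a flex preserves all cycle edge lengths, the cycle $C_{k,i}$ in $q$ has perimeter equal to that in $p$, namely $O(\delta_{k,i})$, and therefore also has diameter $O(\delta_{k,i})$. Consequently each $\ell_e(q)$ is confined to a shrinking interval as $k$ grows.

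To convert this diagonal-length control into configuration-space control, I would invoke Lemma \ref{l:Wdelta} applied to $G_k^+$ with the disconnection set $V_2=\bigcup_i V(C_{k,i})$. A flex $q$ of $G_k$ produces a placement of the disconnected graph $\tilde{G}_k^+$ by assigning each split copy of a cycle vertex $v$ to the location determined by the edges connecting $v$ to the rigid polyhedral complement of the cycles. The cycle-diameter bound $O(\delta_{k,i})$ forces these split copies to be mutually within $O(\max_i\delta_{k,i})$ of each other, so the derived placement belongs to $W_{\eta(k)}$ for some $\eta(k)\to 0$. Lemma \ref{l:Wdelta} then squeezes $\hat{V}_0(G_k,p)^\delta$ into any prescribed neighbourhood of $p$ for sufficiently large $k$, and hence $\mathrm{diam}(\pi_{j,k}(\hat{V}_0(G_k,p)^\delta))\to 0$ for each fixed $j$, at which point Lemma \ref{l:diameter} delivers continuous rigidity of $(G,p)$.

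The principal technical obstacle is the disconnection-reconnection step: one must verify that in any flex $q$ of $G_k$, the rigid polyhedral complement of the cycles pins down the cycle-vertex locations up to errors of size $O(\delta_{k,i})$, uniformly in $k$, so that the split-copy distances required by Lemma \ref{l:Wdelta} are genuinely controlled by the shrinking face diameters. The tangent-plane geometry at each accumulation point is the key ingredient that makes this uniform control possible.
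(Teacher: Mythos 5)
Your overall strategy --- control the standardised configuration spaces of the finite stages by comparing with a rigid finite completion via Lemma \ref{l:Wdelta}, then conclude with Lemma \ref{l:diameter} --- is the same as the paper's, but the execution has a genuine gap in the order of quantifiers. You fix the placement $p$ once and for all, arranging only that the hole-cycle diameters $\delta_{k,i}$ tend to zero, and then hope that for large $k$ the induced reconnection slack $\eta(k)=O(\max_i\delta_{k,i})$ falls below the threshold $\delta$ that Lemma \ref{l:Wdelta} supplies for the rigid reference $G_k^+$. But that threshold comes from a compactness argument applied to $G_k^+$ \emph{with its placement}, so it depends on $k$, and nothing prevents it from decaying faster than your pre-chosen $\delta_{k,i}$: long chains of triangles can amplify a small incidence slack into a large displacement, and the amplification grows with the combinatorial depth of $G_k$. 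You flag exactly this as the ``principal technical obstacle'' and assert that the tangent-plane geometry yields the required uniformity in $k$, but no argument is given, and uniform quantitative rigidity over the whole family $\{G_k^+\}$ is a much stronger statement than the qualitative Lemma \ref{l:Wdelta}. The paper avoids the issue by building $p$ \emph{adaptively}: it places $G_1$, forms a rigid reference (the multigraph $G_1'$ obtained by contracting the ring of new vertices around each hole to a single apex), extracts the threshold $\delta$ from Lemma \ref{l:Wdelta} for that \emph{already placed} finite framework, and only then places the subsequent layers so that the non-triangular face perimeters are smaller than $\delta$; iterating with $\epsilon_k\to 0$ delivers the hypothesis of Lemma \ref{l:diameter} by construction. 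Your argument needs to be reorganised in this inductive way for the quantifiers to come out right.

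A secondary point: your rigid reference $G_k^+$ is $G_k$ with triangulating diagonals inserted into the hole cycles, and you then disconnect over the cycle vertices. A flex $q$ of $G_k$ preserves the lengths of the edges of $G_k$ but not of the inserted diagonals, so the derived placement of the disconnected graph does not lie in $\hat{V}(\tilde{G}_k^+,\tilde{p})$ as the definition of $W_\delta$ requires; you only get that the diagonal lengths are perturbed by $O(\delta_{k,i})$. This is repairable (strengthen Lemma \ref{l:Wdelta} to allow $\delta$-slack in the edge lengths as well, which the same nested-compact-sets argument gives, or reposition the split copies so the diagonal lengths are exact and absorb the error into the reconnection slack), but it must be addressed. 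The paper's reference graph --- the contraction $G_k'$, whose disconnection has for its extra edges the actual spokes of $G_{k+1}$ into the new ring --- is chosen precisely so that the flex controls those edge lengths more directly.
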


\begin{proof}
For notational convenience we assume that $\kappa = \rho$. 
The general case follows similarly.
Let $G_1\subseteq G_2\subseteq \dots $ be an inclusion tower of edge-complete (vertex-induced) finite subgraphs which are simplicial, of finite connectivity $\kappa$ and span $G$. We may suppose moreover that for each $k$ the graph
$G_{k+1}$ has vertex set equal to the union of the vertices of $G_k$ and the vertices adjacent to $G_k$.

Let $G_k^+$ be the connected component containing $G_k$ of the disconnection of $G_{k+1}$ over $V(G_{k+1})\backslash V(G_k)$. Also let $G_k'$ be the multi-graph obtained by identifying the vertices of $G_k^+$ that are not in $G_k$.
Figure \ref{gk_sequence} indicates the relatedness of placements in $\bR^3$ of the graphs  $G_1, G_2, G_1^+$ and $G_1'$ in the case $\kappa=1$.

\begin{center}
\begin{figure}[ht]
\centering
\includegraphics[width=11cm]{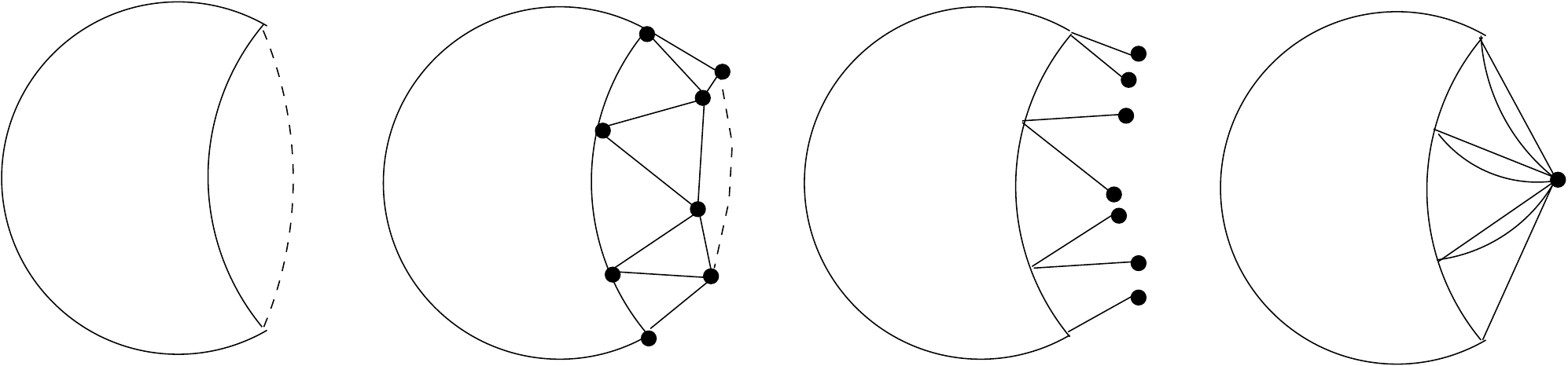}
\caption{Placements in $\bR^3$ of the graphs $G_1, G_2, G_1^+$ and $G_1'$.}
\label{gk_sequence}
\end{figure}
\end{center}
We have
\[
V(G_1')=V(G_1)\cup \{v_1',\dots , v_\kappa'\}
\]
and $G_1^+$ is the disconnection of $G_1'$ over the vertices 
$\{v_1',\dots v_\kappa'\}$. Note that $G_1'$ is a simplicial graph of connectivity zero with
possibly some edges repeated.

We proceed by making progressive placements of the graphs $G_k$. For the placement of $G_2$ following the placement of $G_1$ we make use of a temporary placement of the $3$-rigid multi-graph $G_3'$ as a guide. In this step, which also provides the argument for subsequent placements, the placement of $G_1$ is extended to a placement of $G_2$ so that the diameter of the projection to $G_1$ coordinates of the standardised variety of $G_2$ has diameter less than $\epsilon.$ 

Let $\epsilon>0$. Place $G_1$ generically on the unit sphere to create $(G_1, p(1))$.
Since $G_1'$ is a simplicial graph of connectivity zero with
possibly some edges repeated there is an extended generic placement $p'(1)=(p(1),a_1,a_2,\dots , a_\kappa)$ of
$G_1'$ on the unit sphere which is continuously rigid. By Lemma \ref{l:Wdelta}
it follows that there exists $\delta >0$ such that
\[
W_{\delta}(G_1^+,p'(1)) \subseteq B(p'(1),\epsilon)
\]

With this control of the diameter of the standardised configuration space of the $\delta_1$-slack placements of $(G_1^+,p'(1))$ we now specify a placement of $G_3$ which extends the placement of $G_1$. We denote this as $(G_3,p(3))$ where
$p(3)=(p(1),q(3))$ and we do this in such a manner that the length of each  perimeter of the nontriangular faces of $G_2$ is less than $\delta$.
This ensures that the projection of the standardised variety
$\hat{V}(G_2,p(3)|_{G_2})$ to $\bR^{3|V(G_1)|}$ is a subset of the corresponding projection of $W_{\delta}(G_1^+,p'(1))$ and hence of  the corresponding projection of $B(p'(1),\epsilon_1)$.

Similarly, with $G_3$ playing the role of $G_1$ we may determine an extended placement of $G_4$ which reduces the diameter of the projection
to $G_2$ coordinates of the standardised variety of $G_3$. 
For a sequence $\epsilon_k>0$ which tends to zero we may progressively determine a placement $p$ for $G$ so that  the diameter of the projection to $G_k$ coordinates of the standardised variety of the placement of $G_{k+1}$ has diameter less than $\epsilon_k$. That $(G,p)$ is continuously rigid follows from Lemma \ref{l:diameter}.
\end{proof}

\subsection{Remarks}
Generic placements do not play a significant role in the argument above since the guiding placements of the $3$-rigid  simplicial polyhedral graphs $G_k'$ 
will be continuously rigid when given a convex placement.
In particular  
it follows that the polytope frameworks in Examples \ref{eg:diamond} and \ref{polytopeframeworks} have latitude specifications which give continuously rigid placements. The determination of whether arbitrary semi-convex continuous placements of simplicial polytope graphs of finite connectivity are continuously rigid is an interesting issue which requires a more refined  investigation of relative continuous rigidity. 

The bar-joint frameworks of strip graphs and their two-way infinite translationally periodic variants are also considered in Owen and Power \cite{owe-pow-crystal}. 

We note that Servatius and Servatius \cite{ser-ser} have recently shown
that a sufficiently winding continuous flex of a finite framework $(G_1,q)$ need not extend fully to \emph{any} extension framework $(G_2,q^+)$, where $G_2$ is obtained from $G_1$ by a Henneberg $2$ move
and $q^+$ is the augmentation of $q$ by the new vertex placement. This shows that some caution is required if one wishes to define a continuous flex of an infinite framework by means of extensions through an infinite a construction sequence of  Henneberg moves.


\bibliographystyle{abbrv}
\def\lfhook#1{\setbox0=\hbox{#1}{\ooalign{\hidewidth
  \lower1.5ex\hbox{'}\hidewidth\crcr\unhbox0}}}

\end{document}